\numberwithin{equation}{section}
\newtheorem{theorem}{Theorem}[section]
\newtheorem{lemma}[theorem]{Lemma}
\newtheorem{question}[theorem]{Question}
\newtheorem{corollary}[theorem]{Corollary}
\newtheorem{fact}[theorem]{Fact}
\newtheorem*{alef}{Theorem \ref{t:alef}}
\newtheorem*{x}{Theorem \ref{mainthm}}
\newtheorem*{g}{Theorem \ref{t:gauge}}
\newtheorem*{y}{Theorem \ref{t:dtn}}
\newtheorem*{c1}{Corollary \ref{c:main1}}
\newtheorem*{f1}{Theorem \ref{t:fractal1}}
\newtheorem*{f2}{Corollary \ref{c:fractal2}}
\newtheorem*{b1}{Theorem \ref{t:bound}}
\newtheorem*{m}{Theorem \ref{t:max}}
\newtheorem*{ex}{Theorem \ref{t:ex}}
\newtheorem*{ex2}{Fact \ref{f:ex2}}
\newtheorem*{cm}{Corollary \ref{c:maxtop}}
\newtheorem*{alt}{Corollary \ref{c:alt}}
\theoremstyle{definition}
\newtheorem{remark}[theorem]{Remark}
\newtheorem{definition}[theorem]{Definition}
\newtheorem{notation}[theorem]{Notation}
\DeclareMathOperator{\diam}{diam}
\DeclareMathOperator{\dist}{dist}
\DeclareMathOperator{\inter}{int}
\DeclareMathOperator{\cl}{cl}
\DeclareMathOperator{\pr}{pr}
\newcommand{\NN}{\mathbb{N}}
\newcommand{\RR}{\mathbb{R}}
\newcommand{\ZZ}{\mathbb{Z}}
\newcommand{\eps}{\varepsilon}
\newcommand{\iA}{\mathcal{A}}
\newcommand{\iB}{\mathcal{B}}
\newcommand{\iC}{\mathcal{C}}
\newcommand{\iD}{\mathcal{D}}
\newcommand{\iG}{\mathcal{G}}
\newcommand{\iH}{\mathcal{H}}
\newcommand{\iK}{\mathcal{K}}
\newcommand{\iL}{\mathcal{L}}
\newcommand{\iF}{\mathcal{F}}
\newcommand{\iP}{\mathcal{P}}
\newcommand{\iN}{\mathcal{N}}
\newcommand{\iU}{\mathcal{U}}
\newcommand{\iV}{\mathcal{V}}
\newcommand{\iW}{\mathcal{W}}
\newcommand{\iQ}{\mathcal{Q}}
\begin{document}

\title{Dimensions of fibers of generic continuous maps}
\author{Rich\'ard Balka}
\address{Department of Mathematics, University of British Columbia, and Pacific Institute for the Mathematical Sciences, Vancouver, BC V6T 1Z2, Canada}
\email{balka@math.ubc.ca}
\thanks{Supported by the National Research, Development and Innovation Office--NKFIH, 104178.}

\subjclass[2010]{26B99, 28A78, 46E15, 54F45}

\keywords{Hausdorff dimension, packing dimension, topological dimension, generic, typical, continuous functions,
level sets, fibers}

\begin{abstract} In an earlier paper Buczolich, Elekes, and the author described the Hausdorff dimension of
the level sets of a generic real-valued continuous function (in the sense of Baire
category) defined on a compact metric space $K$ by introducing
the notion of topological Hausdorff dimension. Later on, the
author extended the theory for maps from $K$ to $\mathbb{R}^n$.
The main goal of this paper is to generalize the relevant results for topological and packing dimensions
and to obtain new results for sufficiently homogeneous spaces $K$ even in the case case of Hausdorff dimension.

Let $K$ be a compact metric space and let us denote by $C(K,\mathbb{R}^n)$ the set of continuous maps from $K$ to $\mathbb{R}^n$ endowed with the maximum norm. Let $\dim_{*}$ be one of the topological dimension $\dim_T$, the Hausdorff dimension $\dim_H$, or the packing dimension $\dim_P$. Define
\[d_{*}^n(K)=\inf\{\dim_{*}(K\setminus F): F\subset K \textrm{ is $\sigma$-compact with } \dim_T F<n\}.\]
We prove that $d^n_{*}(K)$ is the right notion to describe the
dimensions of the fibers of a generic continuous map $f\in C(K,\mathbb{R}^n)$.
In particular, we show that $\sup\{\dim_{*}f^{-1}(y): y\in \mathbb{R}^n\} =d^n_{*}(K)$
provided that $\dim_T K\geq n$, otherwise every fiber is finite.
Proving the above theorem for packing dimension requires entirely new ideas.
Moreover, we show that the supremum is attained on the left hand side of the above equation.

Assume $\dim_T K\geq n$. If $K$ is sufficiently homogeneous, then we can say much more.
For example, we prove that $\dim_{*}f^{-1}(y)=d^n_{*}(K)$ for a generic $f\in C(K,\mathbb{R}^n)$ for all
$y\in \inter f(K)$ if and only if $d^n_{*}(U)=d^n_{*}(K)$ or $\dim_T U<n$ for all open sets $U\subset K$.
This is new even if $n=1$ and $\dim_{*}=\dim_H$. It is known that for a generic $f\in C(K,\mathbb{R}^n)$ the interior of $f(K)$ is not empty.
We augment the above characterization by showing that $\dim_T \partial f(K)=\dim_H \partial f(K)=n-1$ for a generic $f\in C(K,\mathbb{R}^n)$.
In particular, almost every point of $f(K)$ is an interior point.

In order to obtain more precise results, we use the concept of
generalized Hausdorff and packing measures, too.
\end{abstract}

\maketitle

\section{Introduction}
Over the last three decades there has been a large interest in determining the various fractal dimensions of graphs, images, and fibers of generic continuous maps in the sense of Baire category. 
In the case of graphs see the papers of Mauldin and Williams~\cite{MW}, Balka, Buczolich, and Elekes~\cite{BBE2} for Hausdorff dimension, Humke and Petruska~\cite{HP}, Liu, Tan, and Wu \cite{LTW} for packing dimension, and Hyde, Laschos, Olsen, Petrykiewicz, and Shaw~\cite{HLOPS} for box dimensions. Dimensions of images of generic continuous maps were determined by 
Balka, Farkas, Fraser, and Hyde~\cite{BFFH}.

This paper considers the case of fibers. Let $\dim_T$, $\dim_H$, and $\dim_P$ denote the topological, Hausdorff, and packing dimension, respectively. We adopt the convention $\dim \emptyset=-1$ for all dimensions. 

\subsection{The Hausdorff dimension of fibers} In this subsection we present known results about the Hausdorff dimension of fibers of generic maps and connect them to our terminology. For a compact metric space $K$ and $n\in \NN^+$ let $C(K,\RR^n)$ denote the set of continuous maps from $K$ to $\mathbb{R}^n$ equipped with the maximum norm. Since this is a complete metric space, we can use Baire category arguments. 

The following theorem is folklore. 
\begin{theorem} For a generic $f\in C([0,1],\RR)$ for all $y\in f([0,1])$ we have
\[\dim_H f^{-1}(y)=0.\]
\end{theorem}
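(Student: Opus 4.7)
The plan is to exhibit, for every pair of rationals $s,\varepsilon>0$, a dense open set $U_{s,\varepsilon}\subseteq C([0,1],\RR)$ such that every $f\in U_{s,\varepsilon}$ satisfies $\iH^{s}_{\varepsilon}(f^{-1}(y))\leq \varepsilon$ for all $y\in\RR$ (here $\iH^{s}_{\varepsilon}$ denotes the $\varepsilon$-approximate $s$-dimensional Hausdorff premeasure). Intersecting over countably many such pairs yields a dense $G_\delta$ of $f$ for which $\dim_H f^{-1}(y)=0$ for every $y\in f([0,1])$.

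The density step is to approximate an arbitrary $f\in C([0,1],\RR)$ within $\delta$ (in the sup-norm) by a piecewise linear \emph{sawtooth} $g$. On a sufficiently fine partition of $[0,1]$ I replace each secant of $f$ by a rapidly oscillating zig-zag; the resulting $g$ is piecewise affine and strictly monotone on each of at most $N$ maximal affine pieces, with a common slope of absolute value at least $M$. The parameters $N$, $M$, and the approximation error $\delta$ may be prescribed in advance in terms of $s$ and $\varepsilon$.

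The openness step is then the following observation. If $\|h-g\|_\infty<\eta$, then for every $y\in\RR$,
\[
  h^{-1}(y)\;\subseteq\;g^{-1}\bigl([y-\eta,y+\eta]\bigr),
\]
and the right-hand side is a union of at most $N$ intervals, each of length at most $2\eta/M$. Hence $\iH^{s}_{2\eta/M}(h^{-1}(y))\leq N(2\eta/M)^{s}$. Choosing $M$ large and $\eta$ small enough that $2\eta/M<\varepsilon$ and $N(2\eta/M)^{s}<\varepsilon$ makes the open ball $B(g,\eta)$ sit inside $U_{s,\varepsilon}$; as $g$ ranges over sawtooth approximants to arbitrary $f\in C([0,1],\RR)$, the union of these balls is a dense open subset of $U_{s,\varepsilon}$.

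The main subtlety is the \emph{uniformity in $y$}: pointwise information (finite fibers of $g$) does not by itself transfer to fibers of a $C^{0}$-nearby $h$, since level sets may rearrange violently under small perturbations. Enforcing the large-slope condition on a controlled number $N$ of monotone pieces is precisely what upgrades "finite fibers of $g$" to a quantitative cover of $h^{-1}(y)$ whose $s$-Hausdorff cost is small simultaneously for every $y\in\RR$.
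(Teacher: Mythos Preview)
The paper states this theorem as folklore and gives no proof; it is later subsumed by the Main Theorem (with $K=[0,1]$, $n=1$, and $d^1_H([0,1])=0$ from Theorem~\ref{t:dHn}), but no separate argument appears. Your direct approach is correct and is essentially the standard folklore argument.

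Two small points of wording. First, you say ``$N$, $M$, and $\delta$ may be prescribed in advance in terms of $s$ and $\varepsilon$,'' but $N$ necessarily depends on the particular $f$ being approximated (a rapidly oscillating $f$ forces a finer initial partition). This does no harm, because you choose the ball radius $\eta$ \emph{after} $g$---and hence after $N$ and $M$---is fixed; the dense open set is then simply the union of the resulting balls $B(g,\eta)$, open as a union of open sets and dense because every $f$ admits such a $g$ arbitrarily close. Second, ``a dense open subset of $U_{s,\varepsilon}$'' should read ``a dense open subset of $C([0,1],\RR)$ contained in $\{f:\iH^s_\varepsilon(f^{-1}(y))\leq\varepsilon\text{ for all }y\}$.''

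By way of comparison: the paper's route to this special case, via Corollary~\ref{c:Hur} and Fact~\ref{f:inf}, relies on the general theory of topological dimension (decomposition, $G_\delta$ hulls, Hurewicz's theorem) and works for arbitrary compact $K$. Your argument is far more elementary---only piecewise-linear approximation and a slope estimate---but is tied to the linear structure of $[0,1]$ and would not extend to general $K$ without new ideas. Each buys what one would expect: yours is a clean self-contained page, the paper's is a machine.
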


The following higher dimensional version is due to Kirchheim \cite[Theorem~2]{BK}, which requires more elaborate arguments.

\begin{theorem}[Kirchheim] \label{t:Kirchheim} Let $m,n\in \mathbb{N}^+$ with $m\geq n$. For a generic continuous map
$f\in C([0,1]^m,\mathbb{R}^n)$ for all $y\in \inter f\left([0,1]^m\right)$ we have
\[\dim_H f^{-1}(y)=m-n.\]
\end{theorem}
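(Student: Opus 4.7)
The plan is to establish two complementary inequalities: the upper bound $\dim_H f^{-1}(y) \leq m-n$ for a generic $f$ and every $y \in \RR^n$, and the lower bound $\dim_H f^{-1}(y) \geq m-n$ for a generic $f$ and every $y \in \inter f([0,1]^m)$.

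For the upper bound, I would fix $s > m-n$, $\delta > 0$, and $k \in \NN^+$ and consider
\[
U_{s,\delta,k} = \bigl\{ f \in C([0,1]^m,\RR^n) : \iH^s_\delta(f^{-1}(y)) < 1/k \text{ for every } y \in \RR^n \bigr\},
\]
where $\iH^s_\delta$ denotes the $\delta$-localized Hausdorff content. A countable intersection over $s_j \searrow m-n$ and rational $\delta, k$ then yields a dense $G_\delta$ set on which every fiber satisfies $\iH^s = 0$ for every $s > m-n$, whence $\dim_H f^{-1}(y) \leq m-n$. Openness of $U_{s,\delta,k}$ reduces to upper semicontinuity of $f \mapsto \sup_y \iH^s_\delta(f^{-1}(y))$: if $f_i \to f$ uniformly, $y_i \to y$, and a subsequence of $f_i^{-1}(y_i)$ Hausdorff-converges to a compact $F$, then $F \subset f^{-1}(y)$, and the Hausdorff $\delta$-content is upper semicontinuous under Hausdorff convergence. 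For density, I would approximate $f$ by a piecewise linear map $g$ on a triangulation of $[0,1]^m$ of mesh $\eta \ll \delta$ via vertex interpolation; after a small generic perturbation of vertex values, any $n+2$ simplex-images in $\RR^n$ have empty common intersection by a general-position (Helly-type) argument, so every fiber $g^{-1}(y)$ is a union of at most $n+1$ affine $(m-n)$-dimensional polytopes of diameter $\lesssim \eta$, giving $\iH^s_\delta(g^{-1}(y)) \lesssim (n+1)\eta^s$ uniformly in $y$.

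For the lower bound, the Hurewicz inequality from classical dimension theory is the key input: for a closed continuous surjection $F\colon X \to Y$ of compact metric spaces one has $\dim_T X \leq \dim_T Y + \sup_y \dim_T F^{-1}(y)$. Applied to the restriction $f\colon f^{-1}(\bar B) \to \bar B$ where $\bar B$ is any closed ball contained in $\inter f([0,1]^m)$, and using that $f^{-1}(B)$ is a nonempty open subset of $[0,1]^m$ hence of topological dimension $m$, this yields $\sup_{z \in \bar B} \dim_T f^{-1}(z) \geq m - n$. Thus $\{ z \in \inter f([0,1]^m) : \dim_H f^{-1}(z) \geq m-n \}$ is dense for every continuous $f$. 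To upgrade to every $y \in \inter f([0,1]^m)$ for a generic $f$, I would show that for each rational ball $B \subset \RR^n$ the collection of $f$ for which $B \subset \inter f([0,1]^m)$ implies $\dim_H f^{-1}(y) \geq m-n$ for every $y \in B$ is co-meager, by producing piecewise linear perturbations that realize large $(m-n)$-dimensional fibers transversally and uniformly over $B$.

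The main obstacle is this last step: upgrading the lower bound from a dense subset of the image to every $y$ in the interior. The Hurewicz inequality is universal but only supplies the bound on a dense subset; forcing the bound at \emph{every} $y$ in the interior requires an essential use of genericity, via a transversality or PL-approximation argument that rules out exceptional points where the fiber dimension could drop. The density step in the upper bound, by contrast, is a direct consequence of finite-dimensional general position and is comparatively routine.
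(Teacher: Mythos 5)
Your upper-bound half is workable in outline: the sets of maps on which every fiber has small $\delta$-localized $s$-content are open (the compactness/Hausdorff-convergence argument you sketch is the standard one, modulo routine care with the diameter constraint $\leq\delta$ versus $<\delta$), and piecewise linear maps with vertex values in general position are dense and lie in these sets. One claim there is wrong, though harmlessly so: general position cannot force a value $y$ to lie in at most $n+1$ simplex images, since the images of the $m$-simplices are $n$-dimensional bodies in $\RR^n$ and a typical value is attained in on the order of $\eta^{-(m-n)}$ simplices; what saves you is only that $g^{-1}(y)$ is a \emph{finite} union of convex polytopes of dimension $m-n$, each of which has $\iH^s=0$ for $s>m-n$, so the ``Helly'' count and the bound $(n+1)\eta^s$ should be discarded.

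The genuine gap is the lower bound, i.e.\ exactly the part you flag as ``the main obstacle'' and then do not carry out. The Hurewicz inequality argument is correct but only yields, for each fixed $f$, a \emph{dense} set of $y\in\inter f([0,1]^m)$ with $\dim_T f^{-1}(y)\geq m-n$; the content of Kirchheim's theorem is the upgrade to \emph{every} interior value for a generic $f$, and ``producing PL perturbations that realize large fibers transversally and uniformly over $B$'' is not a proof of it. A single perturbation controls fibers only at one scale and only for the perturbed map; to conclude anything about the limit you must (a) make the surjectivity onto a fixed ball stable under all sufficiently small further perturbations (this is where maps with stable values enter, cf.\ Lemma~\ref{l:stab} and the sets $\iD_m$ of Definition~\ref{d:D}, whose images are stable in a quantified way), (b) iterate the construction over infinitely many scales inside nested regular closed pieces so that each fiber over $B$ contains a Cantor-type set surviving to the limit, and (c) extract from this nested family a lower bound on Hausdorff dimension (Kirchheim in fact gets $\iH^{m-n}(f^{-1}(y))>0$), which requires a Frostman-type or mass-distribution estimate, not just the presence of approximate $(m-n)$-dimensional pieces. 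This is precisely the machinery the present paper deploys for its generalizations: the Banach--Mazur game construction in the proof of Theorem~\ref{t:gauge}, and in Theorem~\ref{t:fractal1} the second-category argument with the sets $\iH(r_1,r_2,y_0)$, whose Baire property itself needs the $\sigma(\mathbf{A})$-measurability of $(f,y)\mapsto\dim_H f^{-1}(y)$ (Lemma~\ref{l:Baire}); your proposed ``co-meager for each rational ball $B$'' step silently needs an analogous measurability/Baire-property input and, more importantly, the actual multi-scale construction, neither of which is supplied. Note also that the paper does not reprove this statement but cites Kirchheim and derives a strengthening (Corollary~\ref{c:Ki}) from the $d^n_{*}$ framework, so the missing step in your proposal is exactly the hard analytic core that both Kirchheim's proof and the paper's general machinery are built to handle.
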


In order to generalize Kirchheim's theorem for 
$C(K,\RR)$, Balka,
Buczolich, and Elekes~\cite{BBE} introduced a new dimension for metric spaces, the topological Hausdorff dimension. First recall the definition of topological dimension.

\begin{definition} Set $\dim _{T} \emptyset = -1$. The \emph{topological dimension}
of a non-empty metric space $X$ is defined by induction as
\[
\dim_{T} X=\inf\{d: X \textrm{ has a basis } \mathcal{U}  \textrm{ such that }
\dim_{T} \partial {U} \leq d-1 \textrm{ for all } U\in \mathcal{U} \}.
\]
\end{definition}

The topological Hausdorff dimension is defined analogously. However, it is not inductive, and it can attain non-integer values as well.

\begin{definition}\label{deftoph}
Set $\dim _{TH} \emptyset=-1$. The \emph{topological Hausdorff
dimension} of a non-empty metric space $X$ is defined as
\[
\dim_{TH} X=\inf\{d:
 X \textrm{ has a basis } \mathcal{U} \textrm{ such that } \dim_{H} \partial {U} \leq d-1 \textrm{ for all }
 U\in \mathcal{U} \}.
\]
\end{definition}

Then next theorem is due to Balka, Buczolich, and Elekes~\cite[Theorem~7.12]{BBE}. 

\begin{theorem}[Balka--Buczolich--Elekes] \label{t:BBE} 
Let $K$ be a compact metric space with $\dim_{T} K\geq 1$. Then for a generic $f\in C(K,\RR)$ we have
\begin{enumerate}[(i)]
\item $\dim_{H} f^{-1} (y)\leq \dim_{TH} K-1$ for all $y\in \mathbb{R}$,
\item for every $d<\dim_{TH} K$ there exists a non-degenerate interval $I_{f,d}$ such that $\dim_{H} f^{-1} (y)\geq d-1$ for every $y\in I_{f,d}$.
\end{enumerate}
\end{theorem}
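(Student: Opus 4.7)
\textbf{Proof plan for Theorem~\ref{t:BBE}.}

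For part (i), the strategy is to use the definition of topological Hausdorff dimension to split $K$ into a Hausdorff-small piece and a topologically zero-dimensional piece, then estimate $f^{-1}(y)$ separately on each. Fix rational $\eps>0$ and, via Definition~\ref{deftoph}, take a countable basis $\{U_k\}_{k\in\NN}$ of $K$ with $\dim_H\partial U_k\leq\dim_{TH}K-1+\eps$. Let $B=\bigcup_k\partial U_k$; then $\dim_H B\leq\dim_{TH}K-1+\eps$ by countable stability. The sets $U_k\cap(K\setminus B)$ form a clopen basis of $K\setminus B$ (their boundaries in $K$ lie in $B$), so $\dim_T(K\setminus B)=0$. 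The main work is to show that
\[
\iG_\eps=\bigl\{f\in C(K,\RR):\dim_H\bigl(f^{-1}(y)\cap(K\setminus B)\bigr)=0\text{ for every }y\in\RR\bigr\}
\]
is comeager. I would establish this via approximation: for each $n$, construct a dense open $\iD_n\subset C(K,\RR)$ of functions $g$ for which $K\setminus B$ decomposes into a countable family of clopen (in $K\setminus B$) pieces $C_i$ of diameter less than $1/n$ on which $\mathrm{osc}(g,\overline{C_i})<1/n$, and whose images $g(\overline{C_i})$ are arranged so that each horizontal level $y\in\RR$ is crossed by at most countably many pieces. Intersecting over $n$ and then over rational $\eps$, on the resulting comeager set one has $\dim_H f^{-1}(y)\leq\max\{\dim_H B,0\}\leq\dim_{TH}K-1+\eps$ for every $y$. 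The \emph{main obstacle} in (i) is precisely the density of $\iD_n$: the pieces $C_i$ are neither open nor closed in $K$, so the perturbation that separates ranges must be performed globally on $K$ while patching continuously across $B$.

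For part (ii), the strategy is to localize the failure of a small-boundary basis and transfer it to boundaries of sublevel sets of $f$. Given $d<\dim_{TH}K$, Definition~\ref{deftoph} combined with a standard localization produces a point $x_0\in K$ and an open neighborhood $V$ of $x_0$ such that every open $U$ with $x_0\in U\subset V$ satisfies $\dim_H\partial U\geq d-1$. For a generic $f\in C(K,\RR)$, the restriction $f|_V$ is non-constant, so $f(V)$ is a non-degenerate interval around $y_0:=f(x_0)$; a generic non-extremality property at $x_0$ then furnishes a non-degenerate sub-interval $I_{f,d}\subset f(V)$, say on the upper side of $y_0$, such that for every $y\in I_{f,d}$ the connected component $U_y$ of $V\cap f^{-1}((-\infty,y))$ containing $x_0$ satisfies $\overline{U_y}\subset V$. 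Each such $U_y$ qualifies as a sub-$V$ neighborhood of $x_0$, so $\dim_H\partial U_y\geq d-1$; since $\partial U_y\subset f^{-1}(y)$, this yields $\dim_H f^{-1}(y)\geq d-1$ for all $y\in I_{f,d}$.

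The \emph{main obstacle} in (ii) is securing a non-degenerate interval $I_{f,d}$ along which $U_y$ stays strictly inside $V$: one must combine continuity of $y\mapsto\overline{U_y}$ in the Hausdorff metric with a generic property at $x_0$ that prevents $U_y$ from suddenly jumping to fill $V$ as $y$ crosses a critical value. Since generic $f$ has local extrema dense in $K$, a small case analysis (upper versus lower side of $y_0$, and the possibility of $x_0$ itself being an extremum) is required; the choice of $V$ with sufficiently regular boundary, together with the hypothesis $\dim_T K\geq 1$ ensuring local continua through $x_0$, makes this manageable.
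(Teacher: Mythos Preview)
Your plan for part~(i) is broadly on the right track but the core step is not justified as written. After forming $B=\bigcup_k\partial U_k$ you correctly have $\dim_T(K\setminus B)=0$, but your proposed sets $\iD_n$ do not deliver $\dim_H\bigl(f^{-1}(y)\cap(K\setminus B)\bigr)=0$: the condition ``each level $y$ is crossed by at most countably many pieces'' is automatic (there are only countably many pieces), and countably many covering sets of diameter $<1/n$ say nothing about Hausdorff dimension. The clean fix, and the route actually taken in the paper, is to swap the roles of your two pieces. Since Hausdorff dimension admits $G_\delta$ hulls, enlarge $B$ to a $G_\delta$ set $G\supset B$ with $\dim_H G=\dim_H B$; then $F:=K\setminus G$ is $F_\sigma$ with $\dim_T F\le 0$, and Hurewicz's theorem (applied to the compact pieces of $F$, as in Corollary~\ref{c:Hur}) gives that a generic $f$ is injective on $F$. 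Hence $f^{-1}(y)\subset G\cup\{\text{one point}\}$ and $\dim_H f^{-1}(y)\le\dim_H G\le\dim_{TH}K-1+\eps$. No delicate perturbation across $B$ is needed.

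Your plan for part~(ii) has a genuine gap that cannot be patched within the same framework. The argument hinges on taking the connected component $U_y$ of $V\cap f^{-1}((-\infty,y))$ containing $x_0$ and concluding both that $U_y$ is an \emph{open} neighbourhood of $x_0$ and that $\partial U_y\subset f^{-1}(y)$. Both conclusions require $K$ to be locally connected, which is nowhere assumed: in a general compact metric space components of open sets need not be open, and boundary points of $U_y$ may lie in other components of $f^{-1}((-\infty,y))$ rather than on the level set. The hypothesis $\dim_T K\ge 1$ does not supply local continua through $x_0$. The paper's approach (carried out for $\dim_T$ in Theorem~\ref{t:top} and for $\dim_H$ in the cited \cite{B}) avoids connectivity entirely: one shows that the set $\iN^1(d)$ of ``$d$-level narrow'' maps---those with a dense set of values whose fibres have dimension $\le d$---is nowhere dense in $C(K,\RR)$ whenever $d<\dim_{TH}K-1$, via a partition/separation argument tied directly to the definition of topological Hausdorff dimension. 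A generic $f$ then lies outside $\iN^1(d)$, which immediately produces the interval $I_{f,d}$.
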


In order to generalize the above theorem to higher dimensions, we consider the following inductive definition due to Darji and Elekes \cite{DE}. The topological Hausdorff dimension corresponds to the case $n=1$.

\begin{definition} Let $\dim^{n}_{TH} \emptyset=-1$ for all $n\in \mathbb{N}$. Let $\dim^{0}_{TH}X=\dim_H X$ for each non-empty metric space $X$.
The \emph{$n$th inductive topological Hausdorff dimension} is defined inductively as
\[\dim^{n}_{TH} X=\inf \left\{ d: X \textrm{ has a basis } \mathcal{U} \textrm{ such that }
\dim^{n-1}_{TH} \partial {U} \leq d-1 \textrm{ for all } U\in \mathcal{U} \right\}.\]
\end{definition}

All above notions of dimension can attain the value $\infty$ as well,
we adopt the convention $\infty-1 = \infty$, hence $d = \infty$ is a member of the
above sets. For more information see \cite{B}. Here we extend the definition of $d_{*}^{n}$ to separable metric spaces.
\begin{definition} \label{d:dn} Let $n\in \NN^+$ and let $X$ be a separable metric space. Let $\dim_{*}$ be one of $\dim_T$, $\dim_H$, or $\dim_P$.
Then
\[d^n_{*}(X)=\inf \{\dim_{*} (X\setminus F): F\subset X \textrm{ is an } F_{\sigma}\textrm{ set with } \dim_T F\leq n-1\}.\]
\end{definition}

As Hausdorff dimension admits $G_{\delta}$ hulls, \cite[Theorem~4.4]{B} yields the following.

\begin{theorem} \label{t:equ} If $n\in \NN^+$ and $X$ is a separable metric space with $\dim_T X\geq n$ then
\[d^n_H(X)=\inf\{\dim_{H}(X\setminus A): A\subset X \textrm{ with } \dim_T A<n\}=\dim^n_{TH} X-n.\]
\end{theorem}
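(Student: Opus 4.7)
The plan is to prove the theorem in two steps. First I would verify that the two infima appearing in the statement coincide, which is where the hypothesis on $G_\delta$ hulls for Hausdorff dimension enters. Second I would invoke \cite[Theorem~4.4]{B} to identify the common value with $\dim^n_{TH} X - n$. Writing $D(X) = \inf\{\dim_H(X\setminus A): A\subset X,\ \dim_T A < n\}$ for the weaker infimum, the task splits cleanly as $d^n_H(X) = D(X)$ (the new content) and $D(X) = \dim^n_{TH} X - n$ (the cited fact).

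For the first equality, note that topological dimension is integer-valued (or $\infty$), so $\dim_T F \leq n-1$ is the same as $\dim_T F < n$; thus every $F_\sigma$ set competing in the definition of $d^n_H(X)$ also competes in the definition of $D(X)$, giving $d^n_H(X) \geq D(X)$. For the reverse direction, I would fix $\eps > 0$ and choose $A \subset X$ with $\dim_T A < n$ and $\dim_H(X\setminus A) < D(X) + \eps$. Using that $\dim_H$ admits $G_\delta$ hulls, I would select a $G_\delta$ set $G \subset X$ with $G \supset X\setminus A$ and $\dim_H G = \dim_H(X\setminus A)$. Setting $F := X \setminus G$, the set $F$ is $F_\sigma$ in $X$, $F \subset A$, and therefore by monotonicity of topological dimension for separable metric spaces $\dim_T F \leq \dim_T A \leq n-1$. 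Since $X \setminus F = G$, this yields $\dim_H(X\setminus F) = \dim_H G < D(X) + \eps$, so $F$ witnesses $d^n_H(X) \leq D(X) + \eps$. Letting $\eps \to 0$ completes the equality.

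For the second equality, the identity $D(X) = \dim^n_{TH} X - n$ under $\dim_T X \geq n$ is precisely the statement proved in \cite[Theorem~4.4]{B}, which characterizes the inductive topological Hausdorff dimension via the unrestricted infimum of $\dim_H$ over complements of small-topological-dimension subsets. The main obstacle is really this second step, but it has been carried out in the reference, so once the first step is established the theorem follows by concatenation. The passage from the unrestricted infimum $D(X)$ to the $F_\sigma$-restricted infimum $d^n_H(X)$ via $G_\delta$ hulls is exactly the reason the remark preceding the statement emphasizes that $\dim_H$ admits such hulls.
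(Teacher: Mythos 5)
Your proposal is correct and follows essentially the same route as the paper, which derives the theorem in one line by combining the $G_\delta$-hull property of Hausdorff dimension (to identify the $F_\sigma$-restricted infimum $d^n_H(X)$ with the unrestricted one) with the cited identity from \cite[Theorem~4.4]{B}; your explicit hull argument ($G\supset X\setminus A$, $F=X\setminus G$, $\dim_T F\leq\dim_T A\leq n-1$) is just the spelled-out version of that bridging step.
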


Assume that $K$ is a compact metric space and $n\in \NN^+$. 
If $\dim_T K<n$ then the fibers of a generic $f\in C(K,\RR^n)$ are all finite,
see Theorem~\ref{t:Hurewicz}.

From now on suppose that $\dim_T K\geq n$. By Theorem~\ref{t:equ}
the main theorem of \cite{B} can be formalized as follows, which extends Theorem~\ref{t:BBE}.
\begin{theorem}[Balka] \label{t:B} 
Let $n\in \mathbb{N}^+$ and assume that $K$ is a compact metric space with $\dim_{T}
K\geq n$. Then for a generic $f\in C(K,\RR^n)$ we have
\begin{enumerate}[(i)]
\item $\dim_{H} f^{-1} (y)\leq d^n_H(K)$ for all $y\in \mathbb{R}^n$,
\item for every $d<d^n_H(K)$ there exists a non-empty
open set $U_{f,d}\subset \mathbb{R}^n$ such that $\dim_{H} f^{-1} (y)\geq d$ 
for every $y\in U_{f,d}$.
\end{enumerate}
\end{theorem}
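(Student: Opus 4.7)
The plan is to handle the upper bound (i) and the lower bound (ii) separately, using Theorem~\ref{t:equ} to replace $d^n_H(K)$ with $\dim^n_{TH} K-n$ when the inductive structure of topological Hausdorff dimension is useful.

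For (i) I would fix $d>d^n_H(K)$ and, by Definition~\ref{d:dn}, pick an $F_\sigma$ set $F=\bigcup_k F_k\subset K$ with $F_k$ compact, $\dim_T F\leq n-1$, and $\dim_H(K\setminus F)<d$. For any continuous $f$ and any $y$, monotonicity of $\dim_H$ gives $\dim_H\bigl(f^{-1}(y)\cap(K\setminus F)\bigr)\leq\dim_H(K\setminus F)<d$, so it remains to control $f^{-1}(y)\cap F$. The Hurewicz-type Theorem~\ref{t:Hurewicz}, applied to each $F_k$, yields that a generic $g\in C(F_k,\RR^n)$ has countable fibers; since the restriction $C(K,\RR^n)\to C(F_k,\RR^n)$ is continuous and admits a continuous Tietze right-inverse, preimages of residual sets remain residual. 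Intersecting over $k$ shows that for a generic $f\in C(K,\RR^n)$ every $f^{-1}(y)\cap F$ is countable, hence of Hausdorff dimension $0$. A countable intersection along $d_m\downarrow d^n_H(K)$ then gives (i).

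For (ii) I would argue by induction on $n$, with the base case $n=1$ furnished by Theorem~\ref{t:BBE} (with $d+1<\dim_{TH} K$, so that the interval $I_{f,d+1}$ plays the role of $U_{f,d}$). Assume the conclusion for $n-1$, fix $d<d^n_H(K)$, and rewrite as $d+n<\dim^n_{TH} K$ via Theorem~\ref{t:equ}. The definition of $\dim^n_{TH}$ produces a non-empty open $V\subset K$ such that every open $U\subset V$ has $\dim^{n-1}_{TH}\partial U>d+n-1$. Split $f=(f',f_n)$ with $f'=(f_1,\dots,f_{n-1})$. A generic $f_n|_V$ should have level sets $L_{y_n}:=f_n^{-1}(y_n)\cap V$ of $\dim^{n-1}_{TH}$ exceeding $d+n-1$ over a non-degenerate interval $I$ of $y_n$; this is the iterated analogue of Theorem~\ref{t:BBE} and is proved in parallel with the main inductive claim. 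The inductive hypothesis applied to $f'|_{L_{y_n}}\colon L_{y_n}\to\RR^{n-1}$ then yields non-empty open sets $U_{y_n}\subset\RR^{n-1}$ on which $\dim_H(f'|_{L_{y_n}})^{-1}(y')\geq d$, and the required $U_{f,d}$ is extracted from $\bigcup_{y_n\in I}U_{y_n}\times\{y_n\}$.

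The main obstacle is the fusion step in (ii): ensuring that $\bigcup_{y_n\in I}U_{y_n}\times\{y_n\}$ actually contains an open box of $\RR^n$, since the inductive hypothesis yields only a Baire-category statement with no built-in continuity of $U_{y_n}$ in $y_n$. I expect the correct remedy is to construct $f$ simultaneously in all $n$ coordinates via a Kirchheim-style piecewise-linear approximation indexed by decreasing meshes of $K$; at each stage one uses $\dim^n_{TH} K>d+n$ to guarantee that enough cells are mapped thickly onto cubes in $\RR^n$, and a diagonal argument over meshes together with a sequence $d_m\uparrow d^n_H(K)$ packages everything into a single residual set. A further technical subtlety, underscored by the paper's use of generalized Hausdorff measures, is converting cell-wise mass estimates into sharp pointwise lower bounds on $\dim_H f^{-1}(y)$ rather than on its topological dimension alone.
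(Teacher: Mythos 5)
Your argument for part (i) is essentially the paper's: remove an $F_\sigma$ set $F$ with $\dim_T F\le n-1$ witnessing (or approximating) $d^n_H(K)$, use Hurewicz's Theorem~\ref{t:Hurewicz} on the compact pieces of $F$, transfer genericity through the restriction maps, and bound $\dim_H(f^{-1}(y)\setminus F)$ by monotonicity. (Two small remarks: by Fact~\ref{f:inf} the infimum in the definition of $d^n_H(K)$ is attained, so the sequence $d_m\downarrow d^n_H(K)$ is unnecessary; and the correct justification for the transfer step is that the restriction map is \emph{open} and surjective, as in Lemma~\ref{l:cat} and Corollary~\ref{c:R} --- a continuous right inverse alone does not pull comeager sets back to comeager sets, as simple examples show.)

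Part (ii), however, contains a genuine gap, and it is exactly at the two places you half-acknowledge. First, the slicing induction needs the inductive hypothesis for $f'|_{L_{y_n}}\in C(L_{y_n},\RR^{n-1})$, but the level sets $L_{y_n}=f_n^{-1}(y_n)\cap V$ depend on the map $f$ itself and range over a continuum of parameters $y_n$; genericity of $f$ in $C(K,\RR^n)$ gives no information about the restrictions of $f'$ to these $f$-dependent sets, since the available transfer tools (Corollary~\ref{c:R}) apply only to countably many compact sets fixed in advance. So the assertion that the needed statement about $f_n|_V$ and its level sets ``is proved in parallel with the main inductive claim'' is precisely the content that is missing, not a routine parallel. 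Second, even granting open sets $U_{y_n}\subset\RR^{n-1}$ for every $y_n$ in an interval $I$, the set $\bigcup_{y_n\in I}U_{y_n}\times\{y_n\}$ need not contain any open subset of $\RR^n$, and your proposed remedy (a Kirchheim-style multi-scale construction with mass estimates) is only described in outline; that construction is where all the work lies. A further small gap: the claim that $\dim^n_{TH}K>d+n$ ``produces'' a non-empty open $V$ all of whose open subsets have large boundary dimension requires a localization argument (an exhaustion by the maximal open set of small dimension, in the spirit of Lemma~\ref{l:dn} or Lemma~\ref{l:int}), not just the definition. For comparison, the present paper does not reprove part (ii) at all --- it is quoted from \cite{B} --- and the analogues it does prove go by different routes: for $\dim_T$ via the family of $d$-level narrow maps and the theorem on partitions (Subsection~\ref{ss:top}), and for $\dim_P$ via a Banach--Mazur game producing fibers of infinite generalized packing measure (Theorem~\ref{t:gauge}); neither attempts to build $U_{f,d}$ coordinate-by-coordinate, which is the step your sketch cannot complete.
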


\subsection{The Main Theorem} We generalize Theorem~\ref{t:B} for topological and packing dimensions as well, which is the main result of our paper.

\begin{x}[Main Theorem, simplified version] Let $n\in \NN^+$ and let $K$ be a compact metric space with $\dim_T K\geq n$.
Let $\dim_{*}$ be one of $\dim_T$, $\dim_H$, or $\dim_P$. Then for a generic $f\in C(K,\RR^n)$ we have
\begin{enumerate}[(i)]
\item \label{m1} $\dim_{*} f^{-1} (y)\leq d^n_{*}(K)$ for all $y\in \RR^n$,
\item for every $d<d^n_{*}(K)$ there exists a non-empty open set $U_{f,d}\subset \RR^n$ such that $\dim_{*} f^{-1}(y)\geq d$ for all $y\in U_{f,d}$.
\end{enumerate}
\end{x}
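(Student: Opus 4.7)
The plan is to use Theorem~\ref{t:B} for $\dim_H$ as a template and supply separate arguments for $\dim_T$ and $\dim_P$. Both extensions are needed because $\dim_T K$ is in general strictly smaller than $\dim_H K$ while $\dim_P K$ is not comparable with $\dim_H K$; consequently, neither bound in the Main Theorem follows formally from the Hausdorff case, even though the overall strategy is parallel.

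For part~(i), I would fix, for the chosen $\dim_{*}$, a countable sequence of $F_\sigma$ sets $F^k\subset K$ with $\dim_T F^k\leq n-1$ and $\dim_{*}(K\setminus F^k)\to d^n_{*}(K)$. For each fixed $k$ split
\[f^{-1}(y) = \bigl(f^{-1}(y)\cap F^k\bigr)\cup \bigl(f^{-1}(y)\cap (K\setminus F^k)\bigr).\]
The right piece is bounded by monotonicity. For the left, the key point is that since $\dim_T F^k\leq n-1$, a Hurewicz-type genericity result should force $f(F^k)$ to be negligible in the relevant sense: $F^k$ admits a generic $\sigma$-compact decomposition into pieces $F^k_j$ whose $f$-images have zero, or at least $\sigma$-finite, $n$-dimensional premeasure in the gauge corresponding to $\dim_{*}$. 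This ensures that for every $y\in\RR^n$ the contribution of $f^{-1}(y)\cap F^k$ to $\dim_{*} f^{-1}(y)$ is at most $\dim_{*}(K\setminus F^k)$; letting $k\to\infty$ gives (i).

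For part~(ii), the approach is a Baire category construction. Given $d<d^n_{*}(K)$, the definition of $d^n_{*}(K)$ precludes covering $K$ by an $F_\sigma$ set of topological dimension at most $n-1$ together with a piece of $\dim_{*}$-dimension below $d$. Fix a countable basis of dyadic cubes $Q\subset\RR^n$. For each $Q$, I would show that the set of $f\in C(K,\RR^n)$ for which some $y\in Q$ satisfies $\dim_{*}f^{-1}(y)<d$ is nowhere dense: given such $f$, one perturbs $f$ locally on a subset $K'\subset K$ chosen so that $K'$ itself cannot be decomposed into a low-topological-dimension piece and a small $\dim_{*}$-piece, producing a large fiber above a ball inside $Q$. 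Intersecting over a countable dense family of cubes and a sequence $d_m\nearrow d^n_{*}(K)$ gives a comeager set of $f$ satisfying (ii).

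The main obstacle is (ii) in the packing dimension case, which the abstract flags as requiring entirely new ideas. Unlike $\dim_H$, which admits lower bounds via Frostman measures and mass distribution, $\dim_P$ is defined through countable decompositions into pieces of controlled upper box dimension and is notoriously hard to bound from below. To force $\dim_P f^{-1}(y)\geq d$ one must forbid every $\sigma$-decomposition of the fiber into box-thin pieces, and I expect the proof to rely on the generalized packing measures introduced later in the paper: the inductive perturbation must preserve, at every scale, a sufficiently uniform net inside the forming fiber so that no subsequent countable decomposition can shrink its upper box dimension, and synchronizing this with the Baire scheme is the technical heart of the argument.
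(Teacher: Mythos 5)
There are genuine gaps in both halves. For part (i), your mechanism does not work as stated: knowing that the images $f(F^k_j)$ are null (or $\sigma$-finite) in some gauge says nothing about $\dim_{*}\bigl(f^{-1}(y)\cap F^k\bigr)$ for those $y$ that \emph{do} lie in $f(F^k)$, and (i) must hold for every $y\in\RR^n$, not just for $y$ outside a negligible set. What is actually needed --- and what the paper uses --- is Hurewicz's theorem (Theorem~\ref{t:Hurewicz}): a generic $f$ is at most $n$-to-one on any compact set of topological dimension $<n$, whence (Corollary~\ref{c:Hur}) $f^{-1}(y)\cap F$ is \emph{finite} for all $y$ simultaneously. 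Combined with Fact~\ref{f:inf}, which produces a single $F_{\sigma}$ set $F$ with $\dim_T F\leq n-1$ and $\dim_{*}(K\setminus F)=d^n_{*}(K)$, this gives $\dim_{*}f^{-1}(y)\leq\dim_{*}(K\setminus F)=d^n_{*}(K)$ directly; no limit over a sequence $F^k$ and no statement about images is required.

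For part (ii), the genericity claim you formulate is false: for a fixed cube $Q$, the set of $f$ for which some $y\in Q$ has $\dim_{*}f^{-1}(y)<d$ contains the nonempty open set of maps whose image is disjoint from $Q$ (their fibers over $Q$ are empty), so it cannot be nowhere dense; moreover, intersecting over all cubes would yield the false statement that \emph{every} $y\in\RR^n$ has a large fiber, contradicting (i) for $y\notin f(K)$. The open set $U_{f,d}$ must depend on $f$, and the paper achieves this in two quite different ways. For $\dim_T$ it does not construct large fibers by perturbation at all: it shows (Theorem~\ref{t:3eq}) that the set $\iN^n(d)$ of maps admitting a \emph{dense} set of $y$ with $\dim_T f^{-1}(y)\leq d$ is nowhere dense whenever $d<d^n_T(K)$, by a dimension-theoretic argument with the separation theorem and the theorem on partitions, resting on the formula $d^n_T(K)=\dim_T K-n$ (Theorem~\ref{t:dtn}); being outside $\iN^n(d)$ automatically supplies the $f$-dependent open set. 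For $\dim_P$ it proves the stronger Theorem~\ref{t:gauge}: after using Lemma~\ref{l:dn} and Corollary~\ref{c:R} to reduce to a compact space in which \emph{every} nonempty open set $U$ satisfies $\dim_T U\geq n$ and $\iP^h(U)=\infty$ (this hereditary localization is essential and is only gestured at by your choice of $K'$), it runs a Banach--Mazur game, building inside the fiber a nested family of regular closed sets with packing-premeasure bounds $P^h_{2^{-k}}\geq 2^k$ at every scale (Lemma~\ref{l:tech}, via stable values from Lemma~\ref{l:stab}), and then defeats arbitrary countable decompositions not by hand but by Haase's Lemma~\ref{l:pack1}, concluding non-$\sigma$-finiteness of $\iP^h$ on $f^{-1}(y)$ for all $y\in\inter f(K)$; Theorem~\ref{t:int} then gives $U_{f,d}=\inter f(K)\neq\emptyset$. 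These two devices --- the level-narrow sets for $\dim_T$ and the gauge-measure/game argument for $\dim_P$ --- are precisely the steps your sketch leaves unsupplied.
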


\begin{c1}
Let $n\in \NN^+$ and let $K$ be a compact metric space with $\dim_T K\geq n$. Let $\dim_{*}$ be one of $\dim_T$, $\dim_H$, or $\dim_P$.
For a generic $f\in C(K,\RR^n)$ we have
\[\sup\{\dim_{*}f^{-1}(y): y\in \mathbb{R}^n\}=d^n_{*}(K).\]
\end{c1}

In Section~\ref{s:main} we prove Theorem~\ref{mainthm}~\eqref{m1}. In Subsection~\ref{ss:top} we show the Main Theorem for topological dimension. 
We simplify the proof of Theorem~\ref{t:B} by using the rich theory of topological dimension and the following formula for $d_T^{n}(K)$. 

\begin{y} If $n\in \NN^+$ and $X$ is a separable metric space with $\dim_T X \geq n$ then
	\[d_{T}^n (X)=\dim_T X-n.\]
\end{y}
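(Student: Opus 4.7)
The plan is to prove the two inequalities $d_T^n(X)\geq \dim_T X - n$ and $d_T^n(X)\leq \dim_T X - n$ using three classical facts from the dimension theory of separable metric spaces: the addition theorem $\dim_T(A\cup B)\leq \dim_T A + \dim_T B + 1$; the decomposition theorem, which says that $\dim_T Y\leq k$ iff $Y$ is a union of $k+1$ subspaces of topological dimension at most $0$; and the fact that $\dim_T$ admits $G_\delta$ hulls, i.e.\ every $A\subset X$ is contained in a $G_\delta$ set $B\subset X$ with $\dim_T B=\dim_T A$. All three can be found for instance in Engelking's dimension theory monograph.

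For the lower bound, let $F$ be any $F_\sigma$ subset of $X$ with $\dim_T F\leq n-1$. The addition theorem applied to the decomposition $X = F\cup(X\setminus F)$ gives
\[\dim_T X\leq \dim_T F + \dim_T(X\setminus F) + 1\leq (n-1) + \dim_T(X\setminus F) + 1,\]
so $\dim_T(X\setminus F)\geq \dim_T X - n$. Taking the infimum over $F$ yields $d_T^n(X)\geq \dim_T X - n$. With the convention $\infty-n=\infty$ this already settles the case $\dim_T X=\infty$, so from now on we may assume $k:=\dim_T X$ is finite, hence $k\geq n$.

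For the upper bound, apply the decomposition theorem to write $X=X_0\cup X_1\cup \cdots\cup X_k$ with $\dim_T X_i\leq 0$ for every $i$, and set $B=X_n\cup\cdots\cup X_k$. Repeated application of the addition theorem gives $\dim_T B\leq k-n=\dim_T X - n$. Now choose a $G_\delta$ hull $B\subset B'\subset X$ with $\dim_T B'=\dim_T B$, and put $F:=X\setminus B'$. Then $F$ is $F_\sigma$; since $F\subset X\setminus B\subset X_0\cup\cdots\cup X_{n-1}$, the addition theorem gives $\dim_T F\leq n-1$; and $\dim_T(X\setminus F)=\dim_T B'\leq \dim_T X - n$. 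Thus $F$ is an admissible witness, proving $d_T^n(X)\leq \dim_T X - n$.

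The bulk of the work is hidden in the classical literature, so the argument itself is short. The only non-cosmetic step is the appeal to $G_\delta$ hulls, which is what promotes a generic decomposition $X=A\cup B$ coming from the decomposition theorem into one where the ``small'' piece $F$ has the extra regularity of being $F_\sigma$, as required by Definition~\ref{d:dn}. I do not anticipate any serious obstacle.
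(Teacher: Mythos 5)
Your proof is correct and follows essentially the same route as the paper: the lower bound via the addition theorem applied to $X=F\cup(X\setminus F)$, and the upper bound via the decomposition theorem into zero-dimensional pieces, a $G_\delta$ hull of the union of the top $k-n+1$ pieces, and the addition theorem to bound both $\dim_T F$ and $\dim_T(X\setminus F)$. Your explicit handling of the case $\dim_T X=\infty$ is a minor extra care not spelled out in the paper, but the argument is otherwise identical.
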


In Subsection~\ref{ss:packing} we prove the Main Theorem for packing dimension. We need to apply different methods,
because the packing dimension, unlike the topological and Hausdorff dimensions, does not admit $G_{\delta}$ hulls.
In fact, we prove a stronger result using the concept of generalized packing measure $\iP^h$, see Section~\ref{s:pre} for the definition.

\begin{g} Let $n\in \NN^+$ and let $h$ be a left-continuous gauge function.
Let $K$ be a compact metric space such that $\dim_T U\geq n$ and $\iP^h(U)=\infty$ for all
non-empty open sets $U\subset K$. Then for a generic $f\in C(K,\RR^n)$ for all $y\in \inter f(K)$ we have
\[ \iP^h(f^{-1}(y))=\infty.\]
Moreover, the measures $\iP^h|_{f^{-1}(y)}$ are not $\sigma$-finite.
\end{g}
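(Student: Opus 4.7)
The plan is to use a Baire category argument in $C(K,\RR^n)$ parallel to the proof of Theorem~\ref{t:B}, but with new ingredients tailored to packing measure. First I would fix a countable base $\{B_k\}_{k\in\NN}$ of $\RR^n$ consisting of open balls with rational centers and radii. Since $f(K)$ is closed in $\RR^n$, $y\in\inter f(K)$ iff $y\in B_k\subset f(K)$ for some $k$, so it is enough to show that for each $k$ the set
\[
\iR_k=\{f\in C(K,\RR^n):B_k\not\subset f(K)\text{ or }\iP^h|_{f^{-1}(y)}\text{ is non-}\sigma\text{-finite for every }y\in B_k\}
\]
is comeager, and then intersect over $k$.

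The next step is to reformulate non-$\sigma$-finiteness combinatorially. Since by definition $\iP^h(A)=\inf\sum\iP^h_0(E_i)$ over countable Borel covers, where $\iP^h_0$ is the packing premeasure, $\iP^h|_{f^{-1}(y)}$ is non-$\sigma$-finite iff $f^{-1}(y)$ admits no countable Borel cover by sets of finite $\iP^h_0$. The hypothesis $\iP^h(U)=\infty$ on all non-empty open $U$ passes to the premeasure, giving $\iP^h_0(U)=\infty$, which says that $U$ carries finite packings of arbitrarily large $h$-sum at arbitrarily small scales. The density step then becomes: given $f\in C(K,\RR^n)$, $\eps>0$, and $k$ with $B_k\subset f(K)$, produce $g$ with $\|f-g\|_\infty<\eps$ and an open neighborhood $V\ni g$ in $C(K,\RR^n)$ such that for every $g'\in V$ and every $y\in B_k$, no countable family of Borel sets of finite $\iP^h_0$ covers $g'^{-1}(y)$.

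To construct such $g$, I would partition $B_k$ into a nested dyadic family of small sub-balls $\{B_k^\alpha\}$. Each preimage $f^{-1}(B_k^\alpha)$ is open in $K$, hence has $\iP^h_0=\infty$, so it supports a finite packing of arbitrarily large $h$-sum at scales below any prescribed threshold. Using $\dim_T f^{-1}(B_k^\alpha)\geq n$ to ensure enough topological room, I would modify $f$ by an $\eps$-small coordinate perturbation so that the individual balls of each chosen packing fall into distinct fibers over $B_k^\alpha$, making each such fiber inherit a large partial packing inside $K$. Iterating at finer scales and arbitrarily small $\iP^h_0$-thresholds yields a function $g$ whose fibers over $B_k$ each carry packings of unbounded total gauge, spread across so many disjoint regions of $K$ that no countable $\iP^h_0$-finite Borel cover can catch them all simultaneously. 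Finite packings are witnessed by finitely many positive-radius balls, so they persist under sufficiently small $C^0$-perturbations, furnishing the open neighborhood $V\ni g$.

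The main obstacle, and the reason the packing case requires genuinely new ideas, is the failure of $G_\delta$-hulls for $\iP^h$: in the Hausdorff setting of Theorem~\ref{t:B} one reduces fiber estimates to estimates on open neighborhoods via a $G_\delta$-hull approximation, and this reduction is simply unavailable here. The perturbation scheme must therefore produce packings that live in the fibers themselves, not merely in their neighborhoods. Moreover, the non-$\sigma$-finiteness conclusion, strictly stronger than $\iP^h(f^{-1}(y))=\infty$, forces a diagonal construction against every countable $\iP^h_0$-finite decomposition at once while remaining stable under $C^0$-perturbations; keeping these two requirements compatible is the combinatorial heart of the argument.
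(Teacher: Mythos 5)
Your reduction to the sets $\iR_k$ is fine, but the engine you propose for proving them co-meager does not work: you claim to produce, near any $f$, a map $g$ together with an \emph{open} neighborhood $V\ni g$ such that every $g'\in V$ has non-$\sigma$-finite $\iP^h$ on every fiber over $B_k$. No such open set exists in general. Take $K=[0,1]^{n+1}$ and $h(x)=x^{n+1/2}$, which satisfy the hypotheses of Theorem~\ref{t:gauge}; smooth maps are dense in $C([0,1]^{n+1},\RR^n)$, and for a smooth $g'$ almost every $y$ in the interior of the image is a regular value, so $g'^{-1}(y)$ is a compact $1$-manifold and $\iP^h(g'^{-1}(y))=0$. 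Hence every non-empty open subset of $C(K,\RR^n)$ contains maps with $\sigma$-finite (indeed null) fibers over $B_k$, and the set you want to capture has empty interior. The underlying reason is that a finite packing of positive radii only certifies a lower bound for $P^h_\delta$ at \emph{one} scale, whereas $\iP^h(f^{-1}(y))=\infty$ (let alone non-$\sigma$-finiteness) needs control at all scales simultaneously; moreover a packing centered on a fiber of $g$ does not ``persist'' into the fiber of a perturbed $g'$ --- fibers of continuous maps can move or vanish under arbitrarily small perturbations unless one invokes stable values. Relatedly, your targeting step is backwards: to make a single fiber carry a large packing you need that fiber to meet \emph{every} one of many disjoint small sets $B_{ij}$ arranged along a packing configuration, which is exactly what the paper arranges by mapping each $B_{ij}$ (of topological dimension $\geq n$) \emph{onto} a whole ball $V_i$ stably via Lemma~\ref{l:stab}; sending the packing balls into ``distinct fibers'' gives each fiber only one ball and nothing accumulates.

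The paper's proof is therefore structurally different and the difference is essential: co-meagerness of $\iG_m$ is obtained through a Banach--Mazur game, where at stage $k$ Player II uses Lemma~\ref{l:tech} (stable values plus left-continuity of $h$) to force, for all $g$ in the next open move, that every fiber over the relevant piece of $V_0$ meets each of the disjoint sets $B_{i_1\dots i_{2k+2}}$, yielding only the one-scale bound $P^h_{2^{-k}}\geq 2^k$; the conclusion holds only for $f$ in the countable intersection $\bigcap_k \iV_k$, never on an open set. Finally, your plan to get non-$\sigma$-finiteness by diagonalizing against every countable cover by sets of finite premeasure is not carried out and is not how one should proceed: the paper instead extracts a compact set $C\subset f^{-1}(y)$ such that $P_0^h(C\cap U)=\infty$ for every open $U$ meeting $C$, and then applies Haase's result (Lemma~\ref{l:pack1}) to conclude that $\iP^h|_C$ is not $\sigma$-finite. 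Without replacing your one-step density-plus-openness scheme by such an infinite nested construction, and without the stable-value mechanism and Lemma~\ref{l:pack1}, the argument does not go through.
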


The following theorem characterizes the infinite
fibers of a generic $f\in C(K,\RR^n)$.
Its first part follows from a result of Kato~\cite[Theorem~4.6]{K}, while the second part is an easy corollary of Theorem~\ref{t:gauge}.

\begin{alef}
Let $n\in \NN^+$ and let $K$ be a compact metric space with $\dim_T K\geq n$. Then for a generic
$f\in C(K,\RR^n)$ we have
\begin{enumerate}[(i)]
\item  $\#f^{-1}(y)\leq n$ if $y\in \partial f(K)$,
\item $\# f^{-1}(y)=2^{\aleph_0}$ if $y\in \inter f(K)$.
\end{enumerate}
\end{alef}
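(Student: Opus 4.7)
The plan is to handle the two parts separately, each by invoking a single input.

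For part (i), the approach is simply to verify that our setting (compact metric $K$ with $\dim_T K\ge n$, and $f\in C(K,\RR^n)$ generic) matches the hypotheses of \cite[Theorem~4.6]{K}; Kato's theorem then gives at once that $\#f^{-1}(y)\le n$ for every $y\in\partial f(K)$, so there is nothing to add beyond the citation.

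For part (ii) the plan is to reduce to Theorem~\ref{t:gauge} by passing to a homogeneous kernel. Set
\[K^{*}:=\{x\in K : \dim_T V\ge n \text{ for every open neighborhood } V\ni x\text{ in }K\}.\]
Then $L:=K\setminus K^{*}$ is open, and by the countable sum theorem for topological dimension $\dim_T L<n$; thus $K^{*}$ is a non-empty compact subset of $K$ with $\dim_T U\ge n$ for every non-empty relatively open $U\subseteq K^{*}$. Since $K^{*}$ is a perfect compactum in which every open set has full topological dimension, choose a left-continuous gauge $h$ so that $\iP^{h}(U)=\infty$ for every non-empty open $U\subseteq K^{*}$. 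Applying Theorem~\ref{t:gauge} to $(K^{*},h)$ yields that for a generic $g\in C(K^{*},\RR^{n})$, $\iP^{h}(g^{-1}(y))=\infty$ for every $y\in\inter g(K^{*})$.

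To transfer this back, observe that the restriction map $r\colon C(K,\RR^{n})\to C(K^{*},\RR^{n})$, $f\mapsto f|_{K^{*}}$, is continuous, surjective (Tietze), and open (using a norm-preserving extension), so preimages of comeager sets are comeager; hence for a generic $f\in C(K,\RR^{n})$ the map $g:=f|_{K^{*}}$ enjoys the above conclusion. Writing $L=\bigcup_{j}L_{j}$ with $L_{j}$ compact and $\dim_{T}L_{j}<n$, Theorem~\ref{t:Hurewicz} (combined with openness of further restrictions) gives that generically each $f|_{L_{j}}$ has only finite fibers, so standard dimension theory (images of light maps on compacta preserve topological dimension) gives $\dim_{T}f(L_{j})\le\dim_{T}L_{j}<n$, forcing each compact set $f(L_{j})\subset\RR^{n}$ to be nowhere dense and $f(L)=\bigcup_{j}f(L_{j})$ to be meager. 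A Baire argument then shows $\inter f(K)\subseteq f(K^{*})$: otherwise a neighborhood of some $y\in\inter f(K)$ would avoid the closed set $f(K^{*})$ while lying in $f(K)=f(K^{*})\cup f(L)$, forcing it to lie in the meager set $f(L)$. Since $\inter f(K)$ is open, $\inter f(K)\subseteq\inter f(K^{*})=\inter g(K^{*})$, and for every such $y$ we have $\iP^{h}(f^{-1}(y))\ge\iP^{h}(g^{-1}(y))=\infty$; hence $f^{-1}(y)$ is uncountable, and being closed in the Polish space $K$ it has cardinality $2^{\aleph_{0}}$.

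The main obstacle is the choice of the gauge $h$: one must produce a single left-continuous gauge such that $\iP^{h}$ is infinite on every non-empty open subset of the kernel $K^{*}$. This should follow from a careful construction exploiting uniform lower bounds for covering numbers of open subsets of the perfect compactum $K^{*}$. The kernel construction, the openness of the restriction map, and the Baire-category manipulations above are comparatively routine by contrast.
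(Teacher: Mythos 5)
Your overall architecture matches the paper's: part (i) is exactly the citation of Kato, and part (ii) is proved by passing to the kernel $K^{*}$ (which is the paper's set $C=K\setminus V$ from Lemma~\ref{l:int}) and applying Theorem~\ref{t:gauge}. However, two steps you rely on are genuinely problematic. First, the step you explicitly defer --- producing a left-continuous gauge $h$ with $\iP^{h}(U)=\infty$ for every non-empty open $U\subseteq K^{*}$ --- is precisely the crux, and your proposed route (a construction ``exploiting uniform lower bounds for covering numbers'') is both unnecessary and dubious, since $K^{*}$ carries no homogeneity whatsoever. The paper's point is that no construction is needed: take the constant gauge $h\equiv 1$, which is non-decreasing and left-continuous; then $\iP^{h}=\iP^{0}$ is the counting measure, so $\iP^{h}(U)=\infty$ for every non-empty open $U\subseteq K^{*}$ (such $U$ is infinite because $\dim_T U\geq n\geq 1$), and the hypotheses of Theorem~\ref{t:gauge} hold automatically. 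Note also that with an unspecified gauge your final inference ``$\iP^{h}(f^{-1}(y))=\infty$, hence $f^{-1}(y)$ is uncountable'' is not automatic: if $h(0+)>0$ then countably infinite sets already have infinite $\iP^{h}$-measure. The paper instead uses the ``not $\sigma$-finite'' clause of Theorem~\ref{t:gauge}, which for the counting measure is exactly uncountability, and then concludes $\#f^{-1}(y)=2^{\aleph_0}$ by the perfect set theorem, as you do.

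Second, your transfer step invokes the claim that ``images of light maps on compacta preserve topological dimension'' to get $\dim_T f(L_j)\leq \dim_T L_j<n$. This is false: light, and even finite-to-one, maps can raise dimension --- the Cantor set maps continuously onto $[0,1]^n$, and Hurewicz's dimension-raising theorem only gives $\dim_T f(L_j)\leq \dim_T L_j+(n-1)$ when fibers have at most $n$ points, which is $\geq n$ as soon as $n\geq 2$, so your own route via Theorem~\ref{t:Hurewicz} does not close the gap either. What makes the needed inequality true is genericity, not lightness: you should use Kato's theorem (Theorem~\ref{t:kato}), which says that $\dim_T f(L_j)\leq \dim_T L_j$ for a generic $f$, i.e.\ exactly the content of Lemma~\ref{l:int}~(ii), which the paper invokes at this point (your subsequent Baire-category argument that $\inter f(K)\subseteq \inter f(K^{*})$ is then fine and parallels the paper's). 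With these two repairs --- take $h\equiv 1$ and use non-$\sigma$-finiteness, and replace the light-map claim by Kato's theorem/Lemma~\ref{l:int} --- your argument becomes essentially the paper's proof.
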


\subsection{Fibers on fractals} If $K$ is sufficiently homogeneous, our Main Theorem can be generalized. The following
theorem is \cite[Theorem~7.9]{B}. The case $n=1$ is due to Balka, Buczolich, and Elekes~\cite[Theorem~3.6]{BBE2}.

\begin{theorem}[Balka] \label{t:Bss}
	Let $n\in \NN^+$ and let $K$ be a self-similar compact metric space with $\dim_{T} K\geq n$.
	For a generic $f\in C(K,\RR^n)$ for any $y\in \inter f(K)$ we have
	\[ \dim_{H} f^{-1}(y)=d_H^{n}(K).\]
\end{theorem}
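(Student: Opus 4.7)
The upper bound $\dim_H f^{-1}(y) \leq d_H^n(K)$ for every $y \in \RR^n$ is immediate from Theorem~\ref{t:B}(i) applied to a generic $f$, so the plan is to focus entirely on the matching lower bound. Write $K = \bigcup_{i=1}^N \phi_i(K)$ for the self-similar structure and, for each finite word $w$ in the alphabet $\{1,\ldots,N\}$, set $K_w = \phi_w(K)$. Since each $K_w$ is a similar copy of $K$, the bi-Lipschitz invariance of $d_H^n$ gives $d_H^n(K_w) = d_H^n(K)$, and the countable family $\{K_w\}$ is a basis for the topology of $K$.

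The first step is to make the conclusion of Theorem~\ref{t:B}(ii) available on every cylinder simultaneously. For each $w$ the restriction map $r_w \colon C(K,\RR^n) \to C(K_w,\RR^n)$ is continuous and surjective, and it is open thanks to a Tietze-type extension with sup-norm control: any perturbation of $f|_{K_w}$ can be lifted to a perturbation of $f$ on all of $K$ of the same size. Openness forces preimages of comeager sets to be comeager. Pulling back the generic conclusion of Theorem~\ref{t:B}(ii) from $C(K_w,\RR^n)$ and intersecting over all words $w$ and all $d_k$ in a fixed sequence $d_k \nearrow d_H^n(K)$ produces a comeager set $A \subset C(K,\RR^n)$ with the following property: for every $f \in A$, $w$, and $k$, there is a non-empty open $U_{f,w,k} \subset \RR^n$ with $\dim_H (f|_{K_w})^{-1}(y) \geq d_k$ for all $y \in U_{f,w,k}$.

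Fix $f \in A$ and $y \in \inter f(K)$. It suffices to produce, for each $k$, a word $w$ with $y \in U_{f,w,k}$; then $\dim_H f^{-1}(y) \geq d_k$ and letting $k \to \infty$ finishes the proof. To find such a $w$, I would exploit homogeneity: any preimage $x \in f^{-1}(y)$ lies in cylinders $K_w$ of arbitrarily small diameter, so the images $f(K_w)$ shrink to $\{y\}$ and the sets $U_{f,w,k} \subset f(K_w)$ cluster at $y$. This already shows that $\bigcup_w U_{f,w,k}$ is dense in $\inter f(K)$. To upgrade density to equality, I would refine $A$ further using that for a generic $f$ the boundaries $\partial f(K_w)$ all satisfy $\dim_H \partial f(K_w) \leq n-1$ (a residual condition of the type highlighted in the abstract), and combine a Baire category argument in $\RR^n$ with the self-similar rescaling that identifies $f|_{K_w}$ with a generic continuous map on a similar copy of $K$, thereby eliminating any exceptional $y \in \inter f(K) \setminus \bigcup_w \inter f(K_w)$.

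The main obstacle is precisely this final covering step. Theorem~\ref{t:B}(ii) applied on each cylinder only guarantees the fiber lower bound on some open subset of $\inter f(K_w)$, whereas Theorem~\ref{t:Bss} demands the bound at \emph{every} $y \in \inter f(K)$. Closing that gap requires both the homogeneity of $K$, to apply the generic conclusion to arbitrarily small similar copies, and a delicate Baire/boundary argument in $\RR^n$ to rule out any exceptional target points.
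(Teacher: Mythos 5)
Your argument, as you yourself note at the end, stops exactly at the hard point, and the fix you sketch does not close it. Pulling Theorem~\ref{t:B}~(ii) back to every cylinder $K_w$ only gives, for each $w$ and $k$, \emph{some} non-empty open set $U_{f,w,k}\subset\RR^n$ on which the fibers of $f|_{K_w}$ are large; since the cylinders have small diameter, the union $\bigcup_w U_{f,w,k}$ is dense in $\inter f(K)$, but its complement in $\inter f(K)$ is merely nowhere dense, not empty. A Baire category argument in $\RR^n$ can therefore only yield the conclusion for comeager-many $y\in\inter f(K)$, i.e.\ the analogue of statement~\eqref{ii} of Theorem~\ref{t:fractal1}, whereas the theorem asserts it for \emph{every} $y\in\inter f(K)$ (statement~\eqref{i}). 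Your proposed refinement via $\dim_H\partial f(K_w)\le n-1$ does not repair this: knowing $y\in\inter f(K_w)$ for a small cylinder does not put $y$ into $U_{f,w,k}$, because $U_{f,w,k}$ is an uncontrolled open subset of $f(K_w)$, not all of $\inter f(K_w)$; the exceptional set is $\inter f(K)\setminus\bigcup_w U_{f,w,k}$, which is not covered by $\bigcup_w\partial f(K_w)$.

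The missing idea (which is how the paper proceeds, in the proof of the implication $\eqref{iii}\Rightarrow\eqref{i}$ of Theorem~\ref{t:fractal1}, following \cite[Theorem~7.9]{B}) is to work with the classes $\iH(r_1,r_2,y_0)$ of Definition~\ref{d:H}, which demand the fiber lower bound on a \emph{prescribed} ball $B(y_0,r_1)$ while confining the image to $B(y_0,r_2)$. One shows these sets have the Baire property via the $\sigma(\mathbf{A})$-measurability of $(f,y)\mapsto\dim_{*}f^{-1}(y)$ (Lemmas~\ref{l:mm}--\ref{l:FBaire}); the Main Theorem on a small ball $K_i$ (using $d^n_H(K_i)=d^n_H(K)$, which is where self-similarity enters) plus Baire category give second category for \emph{some} parameters; and the homeomorphism-transfer Lemma~\ref{l:iF} upgrades this to \emph{all} parameters, so the fiber bound holds on balls $B(f_0(x_i),\eps/4)$ centered wherever one wants. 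Finally the stable-image class $\iD_m$ (Definition~\ref{d:D}, Lemma~\ref{l:D}) guarantees that these prescribed balls cover $f(K)\setminus B(\partial f(K),1/m)$ for all $f$ near $f_0$, which is what eliminates every potential exceptional $y$. Without some substitute for this "prescribed target ball plus Baire property plus transfer" mechanism, your outline proves only density of the good set of $y$'s, so there is a genuine gap.
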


In Section~\ref{s:fractal} we characterize the compact sets for which the analogues of the above statement hold. This is new even in the case of Hausdorff dimension and $n=1$. Let $\dim_{*}$ be one of $\dim_T$, $\dim_H$, or $\dim_P$. For each $f\in C(K,\RR^n)$ let
\begin{equation*} R_{*}(f)=\{y\in f(K): \dim_{*} f^{-1}(y)=d^n_{*}(K)\}.
\end{equation*}

If $d_{*}^n(K)=0$ then the Main Theorem yields that $\dim_{*} f^{-1}(y)=d^n_{*}(K)=0$ for a generic $f\in C(K,\RR^n)$ for all
$y\in f(K)$, so we may assume that $d_{*}^n(K)>0$.

\begin{f1} Let $n\in \NN^+$ and assume that $\dim_{*}$ is one of $\dim_T$, $\dim_H$, or $\dim_P$.
Let $K$ be a compact metric space with $d^n_{*}(K)>0$. The following are equivalent:
\begin{enumerate}[(i)]
\item $R_{*}(f)=\inter f(K)$ for a generic $f\in C(K,\RR^n)$;
\item $R_{*}(f)$ is dense in $\inter f(K)$ for a generic $f\in C(K,\RR^n)$;
\item $d^n_{*}(U)=d^n_{*}(K)$ or $\dim_T U<n$ for all open sets $U\subset K$.
\end{enumerate}
\end{f1}

Since $\dim_T K\geq n$, we have $\inter f(K)\neq \emptyset$ for a generic $f\in C(K,\RR^n)$, see
Theorem~\ref{t:int}. Thus the first statement of the above theorem is never vacuous.

We obtain the following corollary. The extra assumption is
that each non-empty open subset of $K$ has topological dimension at least $n$,
which makes $\inter f(K)$ dense in $f(K)$ for a generic $f\in C(K,\RR^n)$.

\begin{f2}
Let $n\in \NN^+$ and assume that $\dim_{*}$ is one of $\dim_T$, $\dim_H$, or $\dim_P$.
Let $K$ be a compact metric space with $d^n_{*}(K)>0$. The following are equivalent:
\begin{enumerate}[(1)]
\item $R_{*}(f)=\inter f(K)$ and $\inter f(K)$ is dense in $f(K)$ for a generic $f\in C(K,\RR^n)$;
\item $R_{*}(f)$ is dense in $f(K)$ for a generic $f\in C(K,\RR^n)$;
\item  $d^n_{*}(U)=d^n_{*}(K)$ for all non-empty open sets $U\subset K$.
\end{enumerate}
\end{f2}

In particular, we obtain an extension of Kirchheim's theorem for topological and packing dimensions, see Corollary~\ref{c:Ki}.

\subsection{The boundary of generic images} Kirchheim~\cite[Theorem~2]{BK} proved the following result, which augments Theorem~\ref{t:Kirchheim}.

\begin{theorem}[Kirchheim] Let $m\geq n$ be positive integers.
Then for a generic $f\in C([0,1]^m,\RR^n)$ we have
\[ \dim_H \partial f(K)=n-1.\]
\end{theorem}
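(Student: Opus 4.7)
The plan is to establish both $\dim_H \partial f([0,1]^m) \geq n-1$ and $\dim_H \partial f([0,1]^m) \leq n-1$ for a generic $f \in C([0,1]^m, \RR^n)$ and combine them. For the lower bound, by Theorem~\ref{t:int} a generic $f$ has $\inter f([0,1]^m) \neq \emptyset$, and the compactness of $f([0,1]^m)$ gives $\RR^n \setminus f([0,1]^m) \neq \emptyset$ as well. The closed set $\partial f([0,1]^m)$ therefore separates the two disjoint nonempty open sets $\inter f([0,1]^m)$ and $\RR^n \setminus f([0,1]^m)$, so the classical Hurewicz--Wallman theorem on separators of $\RR^n$ yields $\dim_T \partial f([0,1]^m) \geq n-1$, and hence $\dim_H \partial f([0,1]^m) \geq n-1$ by the Szpilrajn inequality.

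For the upper bound it suffices to show that for each $\eps > 0$ and $N \in \NN^+$ the set
\[W_{N,\eps} = \bigcup_{\alpha > 0} \left\{ f \in C([0,1]^m, \RR^n) : \iH^{n-1+\eps}_\alpha\!\left(\partial f([0,1]^m)\right) < 1/N \right\}\]
contains a dense open subset of $C([0,1]^m, \RR^n)$, where $\iH^s_\alpha$ denotes the $\alpha$-restricted Hausdorff content. Intersecting the resulting dense open sets over $N \in \NN^+$ and over $\eps = 1/k$ then produces a residual set of $f$ on which $\iH^{n-1+\eps}(\partial f([0,1]^m)) = 0$ for every $\eps > 0$, and hence $\dim_H \partial f([0,1]^m) \leq n-1$.

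The engine is a dense family $\iN \subset C([0,1]^m, \RR^n)$ of PL maps $g$ whose image $P_g := g([0,1]^m)$ is an $n$-dimensional polytope with $\iH^{n-1}(\partial P_g) < \infty$, and such that $g$ has nonzero local degree at every $y \in \inter P_g$, in the sense that there is a top-dimensional simplex $\sigma$ and an $n$-dimensional affine slice $A \subset \sigma$ on which $g|_A$ is an affine homeomorphism onto a neighborhood of $y$; this is where $m \geq n$ is used. Stability of local degree under uniform convergence provides $r_g > 0$ such that every $h$ with $\|h - g\|_\infty < r_g$ satisfies $\inter P_g \subset h([0,1]^m) \subset N_{r_g}(P_g)$, and consequently $\partial h([0,1]^m) \subset N_{r_g}(\partial P_g)$. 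Because $\iH^{n-1}(\partial P_g) < \infty$, after shrinking $r_g$ one finds $\alpha > 0$ with $\iH^{n-1+\eps}_\alpha(N_{r_g}(\partial P_g)) < 1/N$, so the whole $C$-ball of radius $r_g$ around $g$ lies in $W_{N,\eps}$. Taking the union of these balls over $g \in \iN$ yields a dense open subset of $W_{N,\eps}$.

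The hard part will be producing the family $\iN$ and verifying its density. Given $f$ and $\eta > 0$, I would triangulate $[0,1]^m$ finely enough that $f$ varies by less than $\eta/3$ on each top-dimensional simplex $\sigma$, and on each $\sigma$ replace $f|_\sigma$ by a PL map folding $\sigma$ onto one of its $n$-dimensional faces followed by a PL surjection of nonzero degree onto a small polytope close to $f(\sigma)$. Gluing these local pieces compatibly along common faces to obtain a global continuous PL map $g$ with $\|g - f\|_\infty < \eta$ and with nonzero local degree at every point of $\inter P_g$ is the delicate combinatorial step that carries the whole upper-bound argument.
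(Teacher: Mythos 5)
Your overall architecture is sound and is essentially the one the paper uses to prove the general Theorem~\ref{t:bound} (of which the present statement is the special case $K=[0,1]^m$): the lower bound comes from $\inter f(K)\neq\emptyset$ for generic $f$ (Theorem~\ref{t:int}) plus the classical fact that a closed set separating two non-empty open subsets of $\RR^n$ has topological dimension at least $n-1$, and $\dim_H\geq\dim_T$; the upper bound comes from a dense family of maps whose images have $(n-1)$-rectifiable boundary together with a quantitative stability property, so that for every nearby $h$ the set $\partial h(K)$ lies in a thin neighbourhood of that boundary, which is then covered economically (the paper's Lemma~\ref{l:cn}). The problem is that the engine of your upper bound is not actually built. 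The construction of the dense family $\iN$ of PL maps --- triangulate, fold each simplex onto an $n$-face, map onto a small polytope, and ``glue these local pieces compatibly'' with nonzero local degree at \emph{every} interior point of the global image $P_g$ --- is precisely where all the difficulty sits, and you defer it. Note also that pointwise nonzero local degree (your affine-slice condition) is not by itself enough: you need a \emph{uniform} modulus, i.e.\ a single $t>0$ such that every $y\in P_g$ with $\dist(y,\partial P_g)\geq t$ is hit by every $h$ with $\|h-g\|_\infty<t$, which requires controlling $\dist\bigl(y, g(\partial A_y)\bigr)$ uniformly in $y$; without an explicit construction or a compactness argument this does not follow from your description. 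The paper sidesteps the combinatorics entirely: it takes maps with stable values ({[}HW, Theorem~VI~2{]}) composed with the radial projection onto a ball (Lemma~\ref{l:stab}), installs them on finitely many \emph{disjoint} small balls of the domain and patches with Tietze so that the global image is exactly a finite union of balls of one radius $r$ and the stability $\bigcup_i B(y_i,r-t)\subset h(K)$ for all $h\in B(g,t)$ is immediate (Lemma~\ref{l:GnK}); then $\partial h(K)$ lies in annuli of width $2\eps$ and Lemma~\ref{l:cn} gives the content estimate. Adapting that device (for the cube: project small subcubes onto $\RR^n$ and surject onto balls) would fill your gap much more cheaply than the PL gluing you propose.

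A second, repairable, error: the claim that degree stability gives an $r_g>0$ with $\inter P_g\subset h([0,1]^m)$ for \emph{all} $h\in B(g,r_g)$ is false --- already for $m=n=1$, $g(x)=x$ and $h=g-\delta$ the image $[-\delta,1-\delta]$ misses part of $(0,1)$. The correct statement (and the one encoded in Definition~\ref{d:GnK}) is that $\|h-g\|_\infty<t$ only guarantees the points of $P_g$ at distance $>t$ from $\partial P_g$ lie in $h([0,1]^m)$. Your subsequent deduction survives with $\partial h([0,1]^m)\subset B(\partial P_g,2t)$ and an adjusted choice of $\alpha$, so this does not doom the plan, but as written the step is wrong; the genuine gap remains the unproved density of $\iN$ with the uniform stability property.
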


In Section~\ref{s:partial} we generalize this theorem by replacing $[0,1]^m$ with an arbitrary compact metric space $K$ satisfying
$\dim_T K\geq n$. We  denote by $\iH^h$ the generalized Hausdorff measure, for the definition see Section~\ref{s:pre}.  

\begin{b1}Let $n \in \NN^+$ and let $K$ be a compact metric space with $\dim_T K\geq n$.
Then for a generic $f\in C(K,\RR^n)$ we have
\[\dim_T \partial f(K)=\dim_H \partial f(K)=n-1.\]
Moreover, let $h$ be a gauge function with $\lim_{r\to 0+} h(r)/r^{n-1}=0$.
Then for a generic $f\in C(K,\RR^n)$ we have $\iH^h(\partial f(K))=0$ and $\iH^{n-1}(\partial f(K))>0$.
\end{b1}

For each $f\in C(K,\RR^n)$ let
\begin{equation*} S_{*}(f)=\{y\in f(K): \dim_{*} f^{-1}(y)<d^n_{*}(K)\}.
\end{equation*}

Theorems~\ref{t:fractal1} and~\ref{t:bound} imply the following.

\begin{alt} Let $n\in \NN^+$ and assume that $\dim_{*}$ is one of $\dim_T$, $\dim_H$, or $\dim_P$.
Let $K$ be a compact metric space with $d_{*}^n(K)>0$. Exactly one of the following holds:
\begin{enumerate}[(a)]
\item \label{a1} $\dim_H S_{*}(f)=n-1$ for a generic $f\in C(K,\RR^n)$;
\item  $\inter S_{*} (f)\neq \emptyset$ for a generic $f\in C(K,\RR^n)$.
\end{enumerate}
Moreover, \eqref{a1} is equivalent to the statements of Theorem~\ref{t:fractal1}.
\end{alt}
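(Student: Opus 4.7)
My plan is to derive Corollary~\ref{c:alt} from Theorem~\ref{t:fractal1}, Theorem~\ref{t:bound}, and the Main Theorem, together with the observation that the failure of condition (iii) of Theorem~\ref{t:fractal1} forces, for a generic $f$, an open ball inside $S_{*}(f)$.

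First I would record the basic structural facts. By the Main Theorem, $\dim_{*} f^{-1}(y)\leq d^n_{*}(K)$ for every $y\in\RR^n$, so $R_{*}(f)$ and $S_{*}(f)$ partition $f(K)$. Whenever $\inter S_{*}(f)\neq\emptyset$, this interior contains a non-empty open subset of $\RR^n$, so $\dim_H S_{*}(f)\geq n>n-1$. Consequently no single $f$ can satisfy both (a) and (b); since the intersection of two comeager sets in the Baire space $C(K,\RR^n)$ is non-empty, (a) and (b) cannot both be generic, giving the ``at most one'' half of the dichotomy.

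For (iii)$\Rightarrow$(a): assuming (iii), Theorem~\ref{t:fractal1}(i) gives $R_{*}(f)=\inter f(K)$ for a generic $f$, so $S_{*}(f)=\partial f(K)$ for a generic $f$. Intersecting with the generic set on which $\dim_H\partial f(K)=n-1$ from Theorem~\ref{t:bound} yields $\dim_H S_{*}(f)=n-1$ generically, i.e.\ (a). The substantive step is the converse: if (iii) fails, then (b) holds generically (and then mutual exclusivity gives (a)$\Rightarrow$(iii), completing the equivalence and the dichotomy). Fix an open $U\subset K$ with $\dim_T U\geq n$ and $d^n_{*}(U)<d^n_{*}(K)$. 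By the countable sum theorem for topological dimension applied to the $\sigma$-compact set $U$, there is a compact $L\subset U$ with $\dim_T L\geq n$, and a short argument with the definition of $d^n_{*}$ gives the monotonicity $d^n_{*}(L)\leq d^n_{*}(U)<d^n_{*}(K)$. For a generic $f\in C(K,\RR^n)$ I would produce an open ball $B\subset\inter f(L)\setminus f(K\setminus L)$; since $K\setminus L$ is compact, so is $f(K\setminus L)$, and every fiber $f^{-1}(y)$ with $y\in B$ is then contained in $L$. Applying the Main Theorem to $f|_L\in C(L,\RR^n)$ gives $\dim_{*} f^{-1}(y)=\dim_{*}(f|_L)^{-1}(y)\leq d^n_{*}(L)<d^n_{*}(K)$, so $B\subset\inter S_{*}(f)$ and (b) follows.

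The main obstacle is producing the ball $B$ for a generic $f$. I would argue in two steps: a Tietze--Dugundji pullback shows that the generic subset of $C(L,\RR^n)$ on which $\inter f(L)\neq\emptyset$, guaranteed by Theorem~\ref{t:int} since $\dim_T L\geq n$, pulls back to a generic subset of $C(K,\RR^n)$; simultaneously, for each open ball $B_0\subset\RR^n$ the set of $f\in C(K,\RR^n)$ with $B_0\not\subset f(K\setminus L)$ is open and dense in the maximum norm, because small perturbations of $f$ can move any chosen target point off the compact set $f(K\setminus L)$. Combining these across a countable base of balls produces the required $B$. I expect this step to parallel the local construction used to prove (ii)$\Rightarrow$(iii) of Theorem~\ref{t:fractal1}, and it is where the delicate interplay between generic behavior on $L$ and on $K\setminus L$ must be handled carefully.
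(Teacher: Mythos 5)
Your opening steps are fine and are exactly how the paper reads the corollary off its earlier results: by Theorem~\ref{mainthm}~(i) the sets $R_{*}(f)$ and $S_{*}(f)$ partition $f(K)$ generically, so under condition (iii) of Theorem~\ref{t:fractal1} one gets $S_{*}(f)=f(K)\setminus \inter f(K)=\partial f(K)$ for a generic $f$, and Theorem~\ref{t:bound} gives alternative (a); mutual exclusivity is as you say.

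The gap is in the step ``(iii) fails $\Rightarrow$ (b) holds generically''. A small slip first: $K\setminus L$ is open, not compact (the paper avoids this in the proof of (ii)$\Rightarrow$(iii) of Theorem~\ref{t:fractal1} by sandwiching $C\subset \inter D\subset D\subset U$ and using the compact set $K\setminus \inter D$). The fatal problem is your density claim. The set of $f$ with $B_0\not\subset f(K\setminus L)$ is in general \emph{not} dense: if $K\setminus L$ contains a compact set $M$ with $\dim_T M\geq n$ (the typical situation), Lemma~\ref{l:stab} produces $f$ such that $B(y_0,r-t)\subset g(M)\subset g(K\setminus L)$ for \emph{every} $g\in B(f,t)$; a ball is covered stably, and no small perturbation moves any of its points off the image (this is the same stable-value phenomenon behind Theorem~\ref{t:int}, which your claim would contradict). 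Moreover, even granting it, ``$B_0\not\subset f(K\setminus L)$ for every basic ball'' only says $f(K\setminus L)$ has empty interior; it does not yield a ball \emph{disjoint} from $f(K\setminus L)$, which is what $B\subset \inter f(L)\setminus f(K\setminus L)$ requires. In fact the genericity you aim for is unattainable: take $n=1$, $\dim_{*}=\dim_H$, and $K$ the disjoint union of an arc $A$ and a square $Q$, so that (iii) fails with $U=A$. On the open set of maps $g$ for which $g(A)$ lies robustly inside $\inter g(Q)$ (take $g_0|_Q$ stably covering a long interval and $g_0|_A$ constant in its middle), Theorem~\ref{t:Kirchheim} pulled back by Corollary~\ref{c:R} gives, co-meagerly in that ball, $\dim_H g^{-1}(y)\geq 1=d^1_H(K)$ for all $y\in \inter g(K)=\inter g(Q)$, hence $S_H(g)\subset \partial g(K)$ is a two-point set and $\inter S_H(g)=\emptyset$; so $\{f:\inter S_{*}(f)\neq\emptyset\}$ is not co-meager there. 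What the proof of Theorem~\ref{t:fractal1} actually supplies when (iii) fails (via $\iC=\{f: f(C)\cap f(K\setminus \inter D)=\emptyset\}$, which contains a ball because $C$ and $K\setminus \inter D$ are disjoint compacta) is a \emph{second category} set of $f$ with a non-empty open $U_f\subset S_{*}(f)$; this already forces $\dim_H S_{*}(f)=n$ non-meagerly, rules out (a), and is the ingredient through which the dichotomy and the ``moreover'' clause are meant to be obtained --- it is this localized second-category construction, not an everywhere-dense perturbation argument, that your proposal needed.
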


\subsection{Fibers of maximal dimension}
In Section~\ref{s:max} we show that the supremum is attained in Corollary~\ref{c:main1}.

\begin{m} Let $n\in \NN^+$ and let $K$ be a compact metric space with $\dim_T K\geq n$.
Let $\dim_{*}$ be one of $\dim_T$, $\dim_H$, or $\dim_P$. For a generic $f\in C(K,\RR^n)$ we have
\[\max\{\dim_{*}f^{-1}(y): y\in \RR^n\}=d^n_{*}(K).\]
\end{m}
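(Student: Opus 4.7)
\emph{Topological case.} By Theorem~\ref{t:dtn}, $d := d_T^n(K) = \dim_T K - n$ is a non-negative integer (the case $d = \infty$ is handled analogously to the Hausdorff/packing argument below). Since $\dim_T$ takes only integer values with convention $\dim_T \emptyset = -1$, applying Theorem~\ref{mainthm}(ii) with $d' = d - 1/2$ yields a non-empty open $U \subset \RR^n$ on which $\dim_T f^{-1}(y) \geq d - 1/2$, forcing $\dim_T f^{-1}(y) \geq d$ by integrality. Combined with Theorem~\ref{mainthm}(i), this gives $\dim_T f^{-1}(y) = d$ on the whole of $U$, so the maximum is attained.

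\emph{Hausdorff and packing cases: kernel construction.} Here $d := d_*^n(K)$ may be non-integer, so the integrality trick fails. My plan is to construct a closed \emph{dimensionally homogeneous kernel} $L \subset K$ satisfying $d_*^n(L) = d$, $\dim_T L \geq n$, and $d_*^n(V) = d$ for every non-empty relatively open $V \subset L$. I would build $L$ via a gauge function $h$ tailored to dimension $d$ (satisfying $h(r)/r^{d-\eps} \to 0$ for every $\eps > 0$, but with $\mu^h(K) = \infty$ in a non-$\sigma$-finite way, where $\mu^h \in \{\iH^h, \iP^h\}$ corresponds to $\dim_*$). Setting $W = \bigcup\{V \text{ open in } K : \mu^h|_V \text{ is $\sigma$-finite}\}$ and $L = K \setminus W$, by Lindel\"of $\mu^h|_W$ is $\sigma$-finite, so every non-empty open $V \subset L$ inherits $\mu^h(V) = \infty$ non-$\sigma$-finitely, which via Theorem~\ref{t:gauge} (or its Hausdorff analogue) corresponds to having $d_*^n$ locally equal to $d$.

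\emph{Application and transfer.} Corollary~\ref{c:fractal2} applied to $L$ (equivalently, its gauge-function refinement Theorem~\ref{t:gauge} in the packing case) yields: for generic $g \in C(L,\RR^n)$, $\dim_* g^{-1}(y) = d$ for all $y \in \inter g(L) \neq \emptyset$. The restriction map $r: C(K,\RR^n) \to C(L,\RR^n)$ is continuous and surjective by the Tietze extension theorem, and a straightforward argument using norm-controlled Tietze extensions shows that $r^{-1}(\mathcal{M})$ is comeager in $C(K,\RR^n)$ whenever $\mathcal{M}$ is comeager in $C(L,\RR^n)$. Hence for generic $f \in C(K,\RR^n)$, $f|_L$ lies in the generic set of $C(L,\RR^n)$, producing some $y^* \in \inter f(L)$ with $\dim_*(f|_L)^{-1}(y^*) = d$; since $(f|_L)^{-1}(y^*) \subset f^{-1}(y^*)$, Theorem~\ref{mainthm}(i) then forces $\dim_* f^{-1}(y^*) = d$. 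The main obstacle will be the kernel construction: for packing dimension especially, since $d_P^n$ lacks $G_\delta$-hull behavior, the gauge $h$ must be chosen with care so as to simultaneously ensure $d_*^n(L) = d$, $\dim_T L \geq n$, and that non-$\sigma$-finiteness of $\mu^h$ on open subsets of $L$ delivers the sharp lower bound $\dim_* \geq d$ on fibers via Theorem~\ref{t:gauge}.
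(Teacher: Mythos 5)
The topological-dimension case (for finite $\dim_T K$) is fine and coincides with the paper's Corollary~\ref{c:maxtop}. The gap is in the kernel construction that everything else rests on: a closed set $L\subset K$ with $\dim_T L\geq n$ and $d_{*}^n(V)=d_{*}^n(K)$ for \emph{every} non-empty relatively open $V\subset L$ simply need not exist, and the paper's own examples show this. If such an $L$ existed, then Corollary~\ref{c:fractal2} applied to $L$ together with the restriction transfer (Corollary~\ref{c:R}) would give, for a generic $f\in C(K,\RR^n)$, an entire non-empty open set of points $y\in\inter f(L)$ with $\dim_{*}f^{-1}(y)\geq d_{*}^n(K)$. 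But for the compact set of Theorem~\ref{t:ex} one has $d_H^n(K)=1$ and $d_P^n(K)=n+1$, while for a generic $f$ (Hausdorff case), respectively for every $f$ (packing case), all fibers except at most the single one over $y_f$ have strictly smaller dimension; similarly Fact~\ref{f:ex2} rules out your deferred treatment of the case $\dim_T K=\infty$. So no choice of gauge $h$ can produce the homogeneous kernel in general: the quantity $d_{*}^n(K)$ may be a strict supremum that is not locally attained on any subset, and in that situation only one exceptional fiber can reach the maximal dimension, which is incompatible with any argument that produces an open set of maximal fibers. (A secondary issue: non-$\sigma$-finiteness of $\iH^h$ or $\iP^h$ on open subsets of $L$ controls $\dim_{*}$, not $d_{*}^n$, which also requires discarding $F_\sigma$ sets of topological dimension $<n$ and guaranteeing $\dim_T L\geq n$; your construction does not address this, but it is moot given the counterexample.)

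The paper's proof avoids exactly this obstruction by homogenizing only \emph{below} the supremum: by Lemma~\ref{l:DD} there is a point $x_0\in K$ and, for each $d_m\nearrow d_{*}^n(K)$, a compact set $K_m\subset B(x_0,1/m)\setminus\{x_0\}$ with $d_{*}^n(K_m\cap U)>d_m$ for every open $U$ meeting $K_m$ (such sets do exist for each fixed $d_m<d_{*}^n(K)$, using countable stability of $d_{*}^n$). Then Lemma~\ref{l:max} gives that a generic $f$ satisfies $f(x_0)\in f(K_m)$ for infinitely many $m$, and Lemma~\ref{l:max2} (proved via the Kuratowski--Ulam theorem and the measurability Lemma~\ref{l:Baire}) gives the dichotomy that for each $m$ either $\dim_{*}f^{-1}(f(x_0))\geq d_m$ or $f(x_0)\notin f(K_m)$; combining the two, the \emph{single} fiber $f^{-1}(f(x_0))$ has dimension $\geq d_m$ for infinitely many $m$, hence $\geq d_{*}^n(K)$, and the Main Theorem's upper bound finishes the proof. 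If you want to salvage your strategy, you would need to replace the homogeneous kernel by this kind of one-point clustering construction; as written, the kernel step fails.
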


If $\dim_{*}=\dim_T$ and $\dim_T K<\infty$ then the Main Theorem yields more.

\begin{cm} Assume that $n\in \NN^+$ and $K$ is a compact metric space satisfying $n\leq \dim_T K<\infty$.
Then for a generic $f\in C(K,\RR^n)$ there is a non-empty open set $U_f\subset \RR^n$ such that for all $y\in U_f$ we have
\[\dim_T f^{-1}(y)=d_{T}^n(K).\]
\end{cm}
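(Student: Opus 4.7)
The plan is to derive this corollary directly from the Main Theorem and Theorem~\ref{t:dtn}, by exploiting the fact that topological dimension on separable metric spaces attains only integer values (together with $-1$ for the empty set and $\infty$).

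First, the hypothesis $n\leq\dim_T K<\infty$ combined with Theorem~\ref{t:dtn} gives that $d:=d_T^n(K)=\dim_T K-n$ is a finite non-negative integer. Apply the Main Theorem with $\dim_*=\dim_T$: for a generic $f\in C(K,\RR^n)$ both \eqref{m1} and (ii) hold. Part~\eqref{m1} yields $\dim_T f^{-1}(y)\leq d$ for every $y\in\RR^n$. Part~(ii), applied with the specific choice $d':=d-\tfrac12$ (which is admissible because $d'<d=d_T^n(K)$), produces a non-empty open set $U_f:=U_{f,d'}\subset\RR^n$ on which $\dim_T f^{-1}(y)\geq d-\tfrac12$.

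The key observation is that each fiber $f^{-1}(y)$ is a closed, hence separable, subset of $K$, so $\dim_T f^{-1}(y)\in\{-1\}\cup\NN\cup\{\infty\}$ by the inductive definition. In particular, the inequality $\dim_T f^{-1}(y)\geq d-\tfrac12$ automatically sharpens to $\dim_T f^{-1}(y)\geq d$. Combined with the upper bound from \eqref{m1}, this yields $\dim_T f^{-1}(y)=d=d_T^n(K)$ for every $y\in U_f$, which is exactly the claim.

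I do not expect any genuine obstacle in this argument; the corollary is essentially a repackaging of the Main Theorem once one knows that $d_T^n(K)$ is an integer. The only reason a separate statement is needed, rather than this open-set conclusion being immediate for all three dimensions simultaneously, is precisely this integrality: for $\dim_H$ and $\dim_P$ the value $d_*^n(K)$ may be non-integer, so a strict lower bound $d'<d_*^n(K)$ cannot be promoted to $\dim_* f^{-1}(y)\geq d_*^n(K)$. Likewise, the finiteness assumption $\dim_T K<\infty$ is essential to the argument, since if $d_T^n(K)=\infty$ no single real $d'$ can force $\dim_T f^{-1}(y)=\infty$ on an open set.
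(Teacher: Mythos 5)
Your proof is correct and is essentially the paper's own (implicit) argument: the corollary is stated as an immediate consequence of the Main Theorem, using exactly the integrality of topological dimension together with the finiteness $d_T^n(K)=\dim_T K-n$ from Theorem~\ref{t:dtn} to promote the strict lower bound from part (ii) to equality with the upper bound from part \eqref{m1}.
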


We show that otherwise Theorem~\ref{t:max} is sharp in general.
For $n=1$ the Hausdorff dimension part of the next theorem follows from \cite[Theorem~4.5]{BBE2}.

\begin{ex} For each $n\in \NN^+$ there is a compact set $K\subset \RR^{n+1}$ such that for each $f\in C(K,\RR^n)$ there is a
$y_f\in \RR^n$ such that
\begin{enumerate}[(i)]
\item $d_H^n(K)=1$ and $d_{P}^{n}(K)=n+1$,
\item $\dim_H f^{-1}(y)<1$ for a generic $f\in C(K,\RR^n)$ for all $y\in \RR^n\setminus \{y_f\}$,
\item $\dim_P f^{-1}(y)<n+1$ for every $f\in C(K,\RR^n)$ for all $y\in \RR^n\setminus \{y_f\}$.
\end{enumerate}
\end{ex}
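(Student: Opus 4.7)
My plan is to construct $K \subset \RR^{n+1}$ as a union of two pieces: a \emph{skeleton} $S$ that gives the Hausdorff side, and a \emph{padding} $P$ that gives the packing side, glued so that they share exactly one distinguished accumulation point $p_0$. Concretely, fix a Cantor set $C \subset [0,1]$ with $\dim_H C = 0$ and $\dim_P C = 1$, and let $P$ be built from countably many scaled, disjoint copies of $[0,1] \times C^n$ (or $[0,1]^n \times C$ in $\RR^{n+1}$), arranged so that they shrink to $p_0 \in \RR^{n+1}$. Take $S$ to be a topologically $n$-dimensional compact set of $\dim_H S = 1$ containing $p_0$, for instance a suitable tree-like $n$-dimensional continuum of Hausdorff dimension $1$. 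Then $K = S \cup P$ is compact.

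The dimension identities in (i) go as follows. One has $\dim_H K = \max(\dim_H S, \dim_H P) = 1$, which gives $d_H^n(K) \le 1$; conversely $S \subset K$ has $\dim_T S \ge n$, so no $F_\sigma$ set of topological dimension $\le n-1$ can cover $S$, and the leftover must contain a topologically $n$-dimensional subset of $S$, forcing $\dim_H(K \setminus F) \ge 1$. For the packing side, each piece of $P$ has $\dim_P = n+1$, and by the countable stability of packing dimension this descends to $K$; the key robustness claim is that removing any $F_\sigma$ set $F$ with $\dim_T F \le n-1$ from a product $[0,1]^k \times C^{n+1-k}$ cannot lower its packing dimension below $n+1$, which I will deduce by reducing to the fact that such an $F$ cannot carry an open subset of the appropriate slicing, combined with a Fubini-type lower bound on packing dimension of slices.

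For (ii) and (iii), set $y_f := f(p_0)$. If $y \ne y_f$, pick a neighborhood $V$ of $y_f$ missing $y$; by continuity of $f$ at $p_0$, the preimage $f^{-1}(y)$ is contained in $S \cup P'$, where $P'$ is a finite union of the scaled copies making up $P$, and $p_0 \notin f^{-1}(y)$. For (iii), each component of $P'$ is a compact set with the property that any proper compact subset of it avoiding a small neighborhood of its "accumulation direction" has packing dimension strictly less than $n+1$; I will engineer the copies so that collapsing any one of them to a single $y \ne y_f$ would force, by connectedness or by a Cantor-set calibration argument, $\dim_P f^{-1}(y) < n+1$. For (ii), I invoke the Main Theorem on the restriction $f|_S$: since $S$ is chosen with $d_H^n(S) = 0$ (its product structure being "rectifiable $\times$ zero-dimensional"), every fiber of a generic $f|_S$ has Hausdorff dimension $< 1$, and the finitely many copies of $P$ contributing to $f^{-1}(y)$ are handled in parallel by applying the Main Theorem to each.

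The main obstacle is the packing claim in (iii), which must hold for \emph{every} $f$, not merely generic. This rules out Baire category arguments and forces me to arrange the geometry of $P$ so that any continuous $f$ collapsing a full-packing-dimension subset must collapse it to $y_f = f(p_0)$. The plan to achieve this is to use the accumulation structure of $P$ at $p_0$: if $\dim_P f^{-1}(y) = n+1$, then $f^{-1}(y)$ meets infinitely many scaled copies of $P$ (by the countable stability of $\dim_P$ and the control on each single copy), which by compactness of $f^{-1}(y)$ forces $p_0 \in f^{-1}(y)$, i.e. $y = f(p_0) = y_f$. The delicate point is to prove that a single scaled copy of the product base cannot by itself contribute a subset of packing dimension $n+1$ unless it contains a neighborhood of $p_0$, and this is where the specific choice of $C$ with $\dim_P C = 1$ but no positive $1$-dimensional packing measure on any compact piece not touching the accumulation will be used.
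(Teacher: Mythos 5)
There is a genuine gap, and it sits exactly at the point you flag as "delicate": part (iii) cannot be salvaged with padding copies that individually have full packing dimension. If each scaled copy of your padding is (a scaled copy of) $[0,1]^n\times C$ with $\dim_P C=1$, then by Lemma~\ref{l:dimp} each single copy already has packing dimension exactly $n+1$. Now take one copy $P_1$ lying away from $p_0$, let $f$ be constant on $P_1$ with value $y$, and extend $f$ continuously to $K$ with $f(p_0)\neq y$ (Tietze); then $f^{-1}(y)\supset P_1$ and $\dim_P f^{-1}(y)=n+1$ while $y\neq y_f$, so (iii) fails for this $f$. Since (iii) must hold for \emph{every} $f$, no genericity or accumulation argument can help: your claimed implication ``$\dim_P f^{-1}(y)=n+1$ forces the fiber to meet infinitely many copies, hence to contain $p_0$'' is false already at the first step, because one copy suffices. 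The hoped-for rescue via ``$C$ has no positive $1$-dimensional packing measure on compact pieces'' conflates measure with dimension: statement (iii) is about the dimension being $<n+1$, and a set can have packing dimension $n+1$ with zero (or non-$\sigma$-finite, or whatever) measure at the critical gauge. There are also problems on the Hausdorff side: for $n\geq 2$ a skeleton $S$ with $\dim_T S\geq n$ and $\dim_H S=1$ does not exist ($\dim_H\geq\dim_T$), and your proposal needs $d_H^n$-type value $1$ for (i) but $d_H^n(S)=0$ for (ii), which together with $d_H^n([0,1]^n\times C)=\dim_H C=0$ (Lemma~\ref{l:prodH}) and countable stability would force $d_H^n(K)=0$, not $1$. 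More structurally, any single compact piece realizing $d_H^n=1$ would by Theorem~\ref{t:max} (and, in homogeneous situations, Theorem~\ref{t:fractal1}) generically produce fibers of Hausdorff dimension exactly $1$ over points that need not equal $y_f$, so (ii) also cannot be obtained from one fixed piece of the target dimension.

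The repair is to make the target dimensions unattained by any single piece and approached only along the sequence accumulating at the distinguished point: take compact $C_i\subset[0,1/i]$ with $\dim_H C_i=\dim_P C_i=1-1/i$ and set $K=\bigcup_i K_i$ with $K_i=[0,1/i]^n\times C_i$, so $K$ is compact with the only accumulation of pieces at $\mathbf{0}$, and put $y_f=f(\mathbf{0})$. Then (i) follows from countable stability of $d^n_H$ and $d^n_P$ for closed sets together with Lemmas~\ref{l:prodH} and~\ref{l:prodP}; any fiber over $y\neq y_f$ is a compact set missing $\mathbf{0}$, hence contained in a finite union $D_k=\bigcup_{i\leq k}K_i$, which gives (iii) for \emph{every} $f$ since $\dim_P D_k=n+1-1/k<n+1$, and gives (ii) generically by applying the Main Theorem to each $D_k$ (where $d^n_H(D_k)=1-1/k<1$) and pulling back along the restriction maps via Corollary~\ref{c:R}.
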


\begin{ex2} There is a compact metric space $K$ such that
$\dim_T K=\infty$, and for each $f\in C(K,\RR^n)$ there is a
$y_f\in \RR^n$ such that $\dim_{T} f^{-1}(y)<\infty$ for all $y\in \RR^n\setminus \{y_f\}$.
\end{ex2}

\subsection{An open question} We are curious whether Theorem~\ref{t:bound} can be extended.

\begin{question} Let $n\geq 2$ be a positive integer and let $K$ be a compact metric space with $\dim_T K\geq n$. 
	Is it true that for a generic continuous map $f\in C(K,\RR^n)$ we have $\dim_P \partial f(K)=n$ and 
	the measure $\iH^{n-1}|_{\partial f(K)}$ is not $\sigma$-finite? 
\end{question}

\section{Preliminaries} \label{s:pre}

Let $(X,\rho)$ be a metric space, and let $A,B\subset X$ be arbitrary
sets. We denote by $\cl A$, $\inter A$, and $\partial A$ the closure,
interior, and boundary of $A$, respectively.
The diameter of $A$ is denoted by $\diam A$. We use
the convention $\diam \emptyset = 0$. The distance of the sets $A$
and $B$ is defined by $\dist (A,B)=\inf \{\rho(x,y): x\in A, \, y\in
B\}$. Let $B(x,r)=\{y\in X: \rho(x,y)\leq r\}$, and $U(x,r)=\{y\in X:
\rho(x,y)< r\}$. We say that $A$ is a \emph{regular closed} set if $A=\cl (\inter A)$.
The set $A$ is a regular closed set iff there is an open set $U\subset X$ such that $A=\cl U$.

\begin{fact} \label{f:reg} Let $X_0$ be a metric space and let $k\in \NN^+$. Let $X_k\subset X_{k-1}\subset \dots \subset X_0$ such that
$X_k\neq \emptyset $ and $X_{i}$ is a regular closed set in the
relative topology of $X_{i-1}$ for all $i\in \{1,\dots, k\}$. Then $X_k$ contains a non-empty open subset of $X_0$.
\end{fact}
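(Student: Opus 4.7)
\medskip

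\noindent\textbf{Proof plan for Fact~\ref{f:reg}.} The plan is a finite downward induction on the index: I will produce, for each $i$ running from $k-1$ down to $0$, a non-empty set $U_i$ which is open in the relative topology of $X_i$ and which satisfies $U_i\subset X_k$. For $i=0$ this is exactly the statement of the fact.

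The base case $i=k-1$ is immediate from the definition of regular closed set. Since $X_k$ is regular closed in $X_{k-1}$ we have $X_k=\cl_{X_{k-1}}(\inter_{X_{k-1}} X_k)$, and because $X_k\neq\emptyset$ this forces $U_{k-1}:=\inter_{X_{k-1}} X_k$ to be non-empty; by construction $U_{k-1}$ is open in $X_{k-1}$ and contained in $X_k$. For the inductive step, suppose a non-empty set $U_i$ open in $X_i$ with $U_i\subset X_k$ has been produced. Choose $V$ open in $X_{i-1}$ with $U_i=V\cap X_i$, and set $W=\inter_{X_{i-1}} X_i$. I propose to take $U_{i-1}:=V\cap W$, which is open in $X_{i-1}$. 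Containment in $X_k$ is automatic, since $V\cap W\subset V\cap X_i=U_i\subset X_k$. After $k$ iterations the set $U_0$ is a non-empty open subset of $X_0$ contained in $X_k$.

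The only delicate point I anticipate is verifying that $U_{i-1}=V\cap W$ is non-empty, and this is where regular closedness is used in an essential way. Pick $x\in U_i=V\cap X_i$; because $X_i$ is regular closed in $X_{i-1}$ we have $x\in X_i=\cl_{X_{i-1}}W$, and $V$ is an $X_{i-1}$-open neighborhood of $x$, so $V$ must meet $W$. Without the regularity assumption $V$ could conceivably meet $X_i$ only on $\partial_{X_{i-1}} X_i$ and miss the interior $W$ entirely, which would break the induction; this is precisely the failure mode that regular closedness forbids, so the induction goes through and the proof is complete.
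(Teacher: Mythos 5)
Your proof is correct: the downward induction is well set up, the base case uses regular closedness of $X_k$ in $X_{k-1}$ to get a non-empty relative interior, and the key non-emptiness step in the induction (a point of $U_i\subset X_i=\cl_{X_{i-1}}(\inter_{X_{i-1}}X_i)$ has every $X_{i-1}$-neighborhood meeting $\inter_{X_{i-1}}X_i$) is exactly where the hypothesis is needed. The paper states Fact~\ref{f:reg} without proof, and your argument is the natural one it implicitly relies on, so there is nothing to reconcile.
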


Let $X$ be a \emph{complete} metric space. A set is \emph{somewhere
dense} if it is dense in a non-empty open set, otherwise it is
called \emph{nowhere dense}. We say that $M \subset X$ is
\emph{meager} if it is a countable union of nowhere dense sets, and
a set is of \emph{second category} if it is not meager. A set is
called \emph{co-meager} if its complement is meager. By Baire's
category theorem a set is co-meager iff it contains a dense $G_\delta$ set. We
say that a \emph{generic} element $x \in X$ has property $\mathcal{P}$ if
$\{x \in X : x \textrm{ has property } \mathcal{P} \}$ is co-meager. The set
$A\subset X$ has the \emph{Baire property} if $A=U\Delta M$ where $U$ is open
and $M$ is meager.
If $A\cap U$ contains a set which is of second category and has the Baire property for all
non-empty open sets $U\subset X$, then $A$ is co-meager.
A separable, complete metric space is called a \emph{Polish space}.
If $X$ is a Polish space and $A\subset X$, then $A$ is \emph{analytic}
if it is a continuous image of a Polish space.
Let $\sigma(\mathbf{A})$ denote the $\sigma$-algebra generated by the analytic sets.
Clearly $\sigma(\mathbf{A})$ is closed under taking continuous images.
Every analytic set has the Baire property, so sets in $\sigma(\mathbf{A})$ have the Baire property, too.
For more information see \cite{Ke}.

If $E\subset X\times Y$ then for $x\in X$ let $E_x=\{y\in Y: (x,y)\in E\}$, and
for $y\in Y$ let $E^y=\{x\in X: (x,y)\in E\}$. Let $\mathbf{0}$ denote the origin of $\RR^n$.

If $X$ is a metric space and $A,B$ are disjoint subsets of $X$
then we say that $P\subset X$ is a \emph{partition between
$A$ and $B$} if there are open sets $U$, $V$ such that $A\subset U$,
$B\subset V$, $U\cap V=\emptyset$, and $P=X\setminus (U\cup V)$.

Topological dimension admits $G_{\delta}$ hulls, see \cite[Theorem~1.5.11]{E}.

\begin{theorem}[Enlargement theorem] Let $X$ be a metric space with a separable subspace $Y\subset X$.
There is a $G_{\delta}$ set $G\subset X$ such that $Y\subset G$ and $\dim_{T} G=\dim_{T} Y$.
\end{theorem}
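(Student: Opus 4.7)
The plan is to reduce the statement to two classical theorems of dimension theory: N\"obeling's embedding theorem and Lavrentiev's homeomorphism extension theorem. The case $\dim_T Y = \infty$ is immediate (take $G = X$ and use monotonicity $\dim_T X \geq \dim_T Y$), so assume $n := \dim_T Y < \infty$. The overall idea is to build the hull inside the completion of $X$, using an embedding of $Y$ into a $G_\delta$ subset of a Euclidean space of controlled topological dimension, and then intersect back with $X$.

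First I would invoke N\"obeling's embedding theorem: every separable metric space of topological dimension at most $n$ is homeomorphic to a subspace of the N\"obeling space $N^{2n+1}_n \subset \RR^{2n+1}$, which is a $G_\delta$ subset of $\RR^{2n+1}$ of topological dimension $n$. This yields a homeomorphism $\phi \colon Y \to Y'$ with $Y' \subset N^{2n+1}_n$. Let $\hat X$ denote the completion of $X$, and view $Y$ as a subspace of $\hat X$. Since both $\hat X$ and $\RR^{2n+1}$ are completely metrizable, Lavrentiev's theorem extends $\phi$ to a homeomorphism $\Phi \colon \widetilde G \to \widetilde H$, where $\widetilde G \supset Y$ is $G_\delta$ in $\hat X$ and $\widetilde H \supset Y'$ is $G_\delta$ in $\RR^{2n+1}$. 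Replacing $\widetilde H$ by $\widetilde H \cap N^{2n+1}_n$ (still $G_\delta$ in $\RR^{2n+1}$) and $\widetilde G$ by $\Phi^{-1}$ of this intersection, I may assume $\widetilde H \subset N^{2n+1}_n$, so $\dim_T \widetilde H \leq n$ and hence $\dim_T \widetilde G \leq n$ by the homeomorphism.

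Finally, set $G := \widetilde G \cap X$. Since $\widetilde G$ is $G_\delta$ in $\hat X$ and $X$ carries the subspace topology, $G$ is $G_\delta$ in $X$, and clearly $Y \subset G \subset \widetilde G$. Monotonicity of topological dimension under subspaces then gives $\dim_T Y \leq \dim_T G \leq \dim_T \widetilde G \leq n = \dim_T Y$, establishing the required equality. The main obstacle is that $X$ itself need not be completely metrizable, so Lavrentiev's theorem cannot be invoked directly on $X$; passing to the completion $\hat X$ and afterwards intersecting the extended hull with $X$ is precisely the device that overcomes this.
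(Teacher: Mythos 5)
Your argument is correct. The only things worth noting are trivial omissions (the case $Y=\emptyset$, where one takes $G=\emptyset$), and that every ingredient you invoke is legitimately available at the stated generality: N\"obeling's theorem applies because $Y$ is separable with $\dim_T Y=n<\infty$, the N\"obeling space $N^{2n+1}_n$ is indeed a $G_\delta$ subset of $\RR^{2n+1}$ of dimension $n$, and Lavrentiev's theorem does not require separability of the ambient spaces, so passing to the completion $\hat X$ is exactly the right move; the pullback $\Phi^{-1}(\widetilde H\cap N^{2n+1}_n)$ is $G_\delta$ in $\widetilde G$, hence in $\hat X$, and relative $G_\delta$ sets intersect down to $G_\delta$ sets of $X$. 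However, your route differs from the source the paper relies on: the paper gives no proof, citing Engelking's Dimension Theory (Theorem~1.5.11), whose standard proof is more elementary and internal to dimension theory --- one first proves the enlargement theorem for zero-dimensional separable subspaces of an arbitrary metric space, then writes $Y=Z_1\cup\dots\cup Z_{n+1}$ with $\dim_T Z_i\leq 0$ by the decomposition theorem, enlarges each $Z_i$ to a zero-dimensional $G_\delta$ set $Z_i^{*}\subset X$, and uses that a finite union of $G_\delta$ sets is $G_\delta$ together with the addition theorem to conclude $\dim_T\bigl(Z_1^{*}\cup\dots\cup Z_{n+1}^{*}\bigr)\leq n$. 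Your proof is shorter but leans on two heavy classical results (the universal space theorem and Lavrentiev extension), whereas the textbook argument is self-contained, stays inside $X$ without completing it, and uses only tools the present paper already quotes (decomposition and addition theorems); both are perfectly acceptable.
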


For the following theorem see \cite[Theorem~1.5.3]{E}.

\begin{theorem}[Countable stability for $F_{\sigma}$ sets] Let $X$ be a separable metric space and let $X_i\subset X$ be
$F_{\sigma}$ subsets such that $X=\bigcup_{i=1}^{\infty} X_i$. Then
\[\dim_T X=\sup_{i\geq 1} \dim_T X_i.\]
\end{theorem}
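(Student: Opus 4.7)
The plan is to establish the two inequalities separately. The easy direction $\sup_i \dim_T X_i \leq \dim_T X$ follows immediately from the monotonicity of $\dim_T$ under arbitrary (not merely closed) subspaces, which is built into the inductive definition since any basis of $X$ restricts to a basis of $X_i$ with boundaries of no greater topological dimension.

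For the reverse direction, my first move would be to reduce the $F_\sigma$ case to the closed case. Each $X_i$ can be written as $X_i=\bigcup_{j=1}^\infty F_{i,j}$ with $F_{i,j}$ closed in $X$, and by monotonicity $\dim_T F_{i,j}\leq \dim_T X_i$. Hence it suffices to prove the closed countable sum theorem: if a separable metric space $X$ is a countable union $X=\bigcup_{i=1}^\infty F_i$ of closed sets with $\dim_T F_i\leq d$, then $\dim_T X\leq d$.

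This reduced statement I would prove by induction on $d$, with the base case $d=-1$ (i.e.\ $X=\emptyset$) being vacuous. For the inductive step, fix $d\geq 0$, a point $x\in X$, and an open neighborhood $U$ of $x$ in $X$; the goal is to produce an open set $V$ with $x\in V\subset \cl V\subset U$ and $\dim_T \partial V\leq d-1$. I would construct $V$ as the limit of an iterative refinement through the sets $F_i$. At stage $i$, the hypothesis $\dim_T F_i\leq d$ supplies, via the partition characterization of topological dimension, an open set $V_i$ separating $x$ from $X\setminus U$ whose boundary meets $F_i$ in a set of topological dimension $\leq d-1$. The delicate part is arranging the $V_i$ to be nested so that the limiting boundary $\partial V$ is captured by a countable union of closed sets each having topological dimension $\leq d-1$ in some $F_i$; the inductive hypothesis applied to this $F_\sigma$ decomposition of $\partial V$ then yields $\dim_T \partial V\leq d-1$.

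The main obstacle is executing this simultaneous nested-partition construction in a separable metric space without local compactness: one must refine partitions at each stage while respecting the partitions from previous stages, which requires careful use of the separation theorem for closed disjoint sets and the fact that partitions in closed subspaces can be extended to partitions in the ambient space with controlled boundaries. This is the classical Menger--Urysohn argument, presented in full detail in Engelking's treatment, and invoking it here completes the proof.
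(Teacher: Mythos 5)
The paper does not prove this statement at all---it is quoted from Engelking \cite[Theorem~1.5.3]{E}---and your outline (the easy direction via the subspace theorem, reduction of the $F_\sigma$ case to the closed countable sum theorem, then induction on $d$ producing nested partitions whose limiting boundary is a countable union of closed sets of dimension at most $d-1$, handled by the inductive hypothesis) is precisely the classical Menger--Urysohn argument carried out in that source. So your proposal is correct and follows essentially the same route as the proof the paper relies on by citation.
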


The next result is \cite[Theorem~1.5.10]{E}.
\begin{theorem}[Addition theorem] \label{t:add}
If $X,Y$ are separable subspaces of a metric space then
\[\dim_T(X\cup Y)\leq \dim_T X+\dim_T Y+1.\]
\end{theorem}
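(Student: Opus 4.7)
The plan is to induct on $p+q$, where $p=\dim_T X$ and $q=\dim_T Y$ (both assumed finite; the infinite case is immediate by the convention $\infty-1=\infty$). The base case $p+q=-2$ forces $X=Y=\emptyset$, giving $\dim_T(X\cup Y)=-1=p+q+1$.

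For the inductive step, I use the \emph{partition characterization} of topological dimension: for a separable metric space $W$, $\dim_T W\leq k$ iff every pair of disjoint closed sets $A,B\subset W$ admits a partition $C\subset W$ between them (meaning $W\setminus C=U\sqcup V$ with $U,V$ open, $A\subset U$, $B\subset V$) satisfying $\dim_T C\leq k-1$. Setting $Z=X\cup Y$, fix disjoint closed $A,B\subset Z$; it suffices to produce a partition $C$ with $\dim_T C\leq p+q$. Applying the characterization inside $X$ to the disjoint closed sets $A\cap X, B\cap X$ yields a partition $C_X\subset X$ with $\dim_T C_X\leq p-1$, splitting $X=G_X\sqcup H_X\sqcup C_X$ with $G_X\supset A\cap X$ and $H_X\supset B\cap X$ open and disjoint in $X$.

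The crux is to extend this to a partition $C$ of $Z$ between $A$ and $B$ with the key inclusion $C\cap X\subset C_X$. Using normality of $Z$, I would choose open $U_A, U_B\subset Z$ containing $A,B$ with disjoint closures, lift $G_X, H_X$ to open sets of $Z$ via the subspace topology, and glue these with $U_A, U_B$ while trimming against the opposite closures using distance functions in the ambient metric; verifying disjointness in $Z$ while preserving the required equations on $X$ is the main technical obstacle. Once such a $C$ is built, we have $C\subset C_X\cup(C\cap Y)$ with $\dim_T C_X\leq p-1$ and $\dim_T(C\cap Y)\leq q$ by monotonicity of $\dim_T$ on subspaces. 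Since $(p-1)+q<p+q$, the inductive hypothesis applies and gives $\dim_T(C_X\cup(C\cap Y))\leq(p-1)+q+1=p+q$, whence $\dim_T C\leq p+q$, closing the induction.
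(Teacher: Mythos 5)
Your inductive bookkeeping at the end is fine, but the step you yourself flag as ``the main technical obstacle'' is the entire content of the theorem, and in the form you state it it is actually false. You first fix a partition $C_X$ in $X$ between $A\cap X$ and $B\cap X$ and then seek a partition $C$ of $Z=X\cup Y$ between $A$ and $B$ with $C\cap X\subset C_X$. Take $Z=[0,1]$, $X=(0,1)$, $Y=\{0,1\}$, $A=\{0\}$, $B=\{1\}$: then $A\cap X=B\cap X=\emptyset$, so $C_X=\emptyset$ (with $G_X=X$, $H_X=\emptyset$) is a perfectly legitimate output of the partition characterization applied inside $X$, yet a partition of $[0,1]$ between $0$ and $1$ whose trace on $(0,1)$ is empty would disconnect $[0,1]$, which is impossible. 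So no gluing with distance functions can work for an arbitrary such $C_X$; the partition inside $X$ must be chosen compatibly with the ambient pair $(A,B)$. The standard repair is to first choose open sets $U_A\supset A$, $U_B\supset B$ in $Z$ with $\cl U_A\cap \cl U_B=\emptyset$, take $C_X$ to be a partition in $X$ between $X\cap \cl U_A$ and $X\cap \cl U_B$ (still of dimension at most $p-1$), write $X\setminus C_X=U'\cup V'$, and verify that $U_A\cup U'$ and $U_B\cup V'$ are separated in $Z$ (each misses the closure of the other, because $U_A,U_B$ are open with disjoint closures and $U',V'$ are disjoint open subsets of $X$ containing the respective traces of these closures); enlarging them to disjoint open subsets of $Z$ then yields a partition $C$ with $C\cap X\subset C_X$. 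This is precisely the Separation Theorem quoted in the paper (\cite[Theorem~1.5.13]{E}): invoking it with the subspace $X$ gives directly a partition $P$ in $Z$ between $A$ and $B$ with $\dim_T(P\cap X)\leq p-1$, after which your induction on $p+q$ closes exactly as you wrote, since $P=(P\cap X)\cup(P\cap Y)$ and $\dim_T(P\cap Y)\leq q$.

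For comparison: the paper offers no proof of this statement (it is cited from Engelking), and the shortest standard argument bypasses partitions altogether. By the Decomposition Theorem (also quoted in the paper), $X$ is a union of $p+1$ zero-dimensional subspaces and $Y$ is a union of $q+1$ zero-dimensional subspaces, so $X\cup Y$ is a union of $p+q+2$ zero-dimensional subspaces and therefore $\dim_T(X\cup Y)\leq p+q+1$. Your partition-based route is viable, but only after the correction above or an explicit appeal to the Separation Theorem.
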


See \cite[Theorem~1.5.13]{E} for the following.
\begin{theorem}[Separation theorem]
Let $X$ be a metric space with a separable subspace $Y\subset X$ and let $n\in \NN^+$.
If $\dim_T Y\leq n$, then for every pair $(A,B)$ of disjoint closed subsets of $X$ there is a partition
$P$ between $A$ and $B$ such that $\dim_T (P\cap Y)\leq n-1$.
\end{theorem}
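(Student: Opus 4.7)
My plan is to reduce the general case to a zero-dimensional base case via a standard boundary-collection trick, then handle the base case directly using a perturbed Tietze extension.

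For the reduction, since $Y$ is separable metric with $\dim_T Y\leq n$, the inductive definition of topological dimension gives a countable basis $\{U_i\}_{i=1}^\infty$ of the subspace topology on $Y$ with $\dim_T \partial_Y U_i\leq n-1$ for every $i$, where $\partial_Y$ denotes boundary in $Y$. Set
\[D:=\bigcup_{i=1}^\infty \partial_Y U_i.\]
Then $D$ is $F_\sigma$ in $Y$, so the Countable stability theorem for $F_\sigma$ sets yields $\dim_T D\leq n-1$. On $Y\setminus D$ the family $\{U_i\setminus D\}_{i=1}^\infty$ is a basis whose members are clopen in $Y\setminus D$: from $\overline{U_i}^{\,Y}\subseteq U_i\cup \partial_Y U_i\subseteq U_i\cup D$ one deduces $\overline{U_i\setminus D}^{\,Y\setminus D}\subseteq U_i\setminus D$. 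Hence $\dim_T(Y\setminus D)\leq 0$. Granting the base case --- for any zero-dimensional separable subspace $Z\subseteq X$ and disjoint closed sets $A,B\subseteq X$, there is a partition $P$ in $X$ between $A$ and $B$ with $P\cap Z=\emptyset$ --- we apply it to $Z:=Y\setminus D$; then $P\cap Y\subseteq D$, and monotonicity of $\dim_T$ gives $\dim_T(P\cap Y)\leq \dim_T D\leq n-1$, as required.

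The main obstacle is the base case. Zero-dimensionality of $Z$ produces a clopen decomposition $Z=V\sqcup (Z\setminus V)$ with $A\cap Z\subseteq V$ and $B\cap Z\subseteq Z\setminus V$, but this alone is insufficient because $V$ and $Z\setminus V$ need not have disjoint closures in the ambient space $X$, so one cannot simply invoke normality. My plan is to promote this decomposition by taking a Tietze extension $f:X\to[0,1]$ with $f\equiv 0$ on $A$ and $f\equiv 1$ on $B$, and adding a continuous perturbation $\varphi:X\to(-1/4,1/4)$ supported in a small $X$-neighborhood of $Z$, chosen so that $(f+\varphi)(z)\neq 1/2$ for every $z\in Z$. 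The perturbation is constructed piece-by-piece over a $\sigma$-discrete clopen refinement of a basis of $Z$ of diameter tending to zero, using perfect normality of the metric space $X$ at each stage to realize the desired values on nested clopen pieces of $Z$ while keeping the pieces strongly separated in $X$. The level set $(f+\varphi)^{-1}(1/2)$ is then a partition in $X$ between $A$ and $B$ that misses $Z$, and combined with the reduction above it yields the required bound $\dim_T(P\cap Y)\leq n-1$.
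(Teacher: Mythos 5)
Your reduction to the zero-dimensional case is correct and standard: it is essentially the Decomposition theorem (the paper's Theorem~\ref{t:dec}), reproved via the basis definition of $\dim_T$ together with countable stability for $F_\sigma$ sets, and the deduction ``partition missing $Z=Y\setminus D$ implies $P\cap Y\subset D$'' is exactly right. Note also that the paper does not prove this statement at all; it quotes it from Engelking \cite[Theorem~1.5.13]{E}, so the relevant comparison is with the classical proof there.

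The genuine gap is the base case, which is the entire content of the theorem and which you only sketch. Your plan hinges on the phrase ``keeping the pieces strongly separated in $X$,'' but that is precisely what fails in general, and it is the very obstacle you correctly identified two sentences earlier: clopen subsets of $Z$ need not have disjoint closures in $X$ (take $Z=\QQ\subset X=\RR$ split at an irrational), and passing to a $\sigma$-discrete clopen refinement with small diameters does not help, since a point of $X\setminus Z$ may be adherent to infinitely many pieces of every refinement. If the pieces could be strongly separated in $X$, a single Urysohn function would already finish the proof and no piece-by-piece scheme would be needed. A limiting construction $\varphi=\sum_k\varphi_k$ might be salvageable, but then you must address exactly what the sketch is silent on: how the stagewise perturbations cohere on overlapping $X$-neighborhoods of distinct pieces, why the sum is continuous, and why later corrections do not push points of $Z$ that were already moved off the level $1/2$ back onto it. The classical argument avoids functions entirely and is much shorter: choose open $W_A\supset A$, $W_B\supset B$ with $\cl W_A\cap \cl W_B=\emptyset$; since $\dim_T Z\le 0$, pick $C\subset Z$ clopen in $Z$ with $Z\cap \cl W_A\subset C$ and $C\cap \cl W_B=\emptyset$; then $A\cup C$ and $B\cup (Z\setminus C)$ are separated sets (each is disjoint from the closure of the other, using clopenness of $C$ in $Z$ and the choice of $W_A,W_B$), and since metric spaces are hereditarily normal they lie in disjoint open sets $U,V$; finally $P=X\setminus(U\cup V)$ is a partition between $A$ and $B$ with $P\cap Z=\emptyset$. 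Until you either switch to this route or actually carry out your perturbation scheme with the above issues resolved, the proof is incomplete.
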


The following theorem is \cite[Theorem~1.5.8]{E}.

\begin{theorem}[Decomposition theorem] \label{t:dec}
Let $X$ be a separable metric space and let $n\in \mathbb{N}^+$. Then the following statements are equivalent:
\begin{enumerate}[(i)]
\item $\dim_T X\leq n$;
\item $X=Z_1\cup\dots \cup Z_{n+1}$ such that $\dim_T Z_i\leq 0$ for all $i\in \{1,\dots,n+1\}$.
\end{enumerate}
\end{theorem}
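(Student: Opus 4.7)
The proof naturally splits into the easy direction (ii) $\Rightarrow$ (i), which is a short induction on $n$ using Theorem~\ref{t:add}, and the substantive direction (i) $\Rightarrow$ (ii), which is an induction on $n$ built on the inductive definition of $\dim_T$ together with countable stability for $F_{\sigma}$ sets.

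For (ii) $\Rightarrow$ (i) I would induct on $n$. The case $n = 0$ is tautological. For the step, regrouping $X = (Z_1 \cup \dots \cup Z_n) \cup Z_{n+1}$ and applying Theorem~\ref{t:add} together with the inductive hypothesis gives $\dim_T X \leq (n-1) + 0 + 1 = n$.

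For (i) $\Rightarrow$ (ii) I would again induct on $n$, the case $n = 0$ being trivial. Assume $\dim_T X \leq n$. Since $X$ is second countable, the inductive definition of $\dim_T$ supplies a countable basis $\mathcal{U} = \{U_k\}_{k \geq 1}$ with $\dim_T \partial U_k \leq n-1$ for every $k$. Set
\[Y = \bigcup_{k=1}^{\infty} \partial U_k \quad \text{and} \quad Z_{n+1} = X \setminus Y.\]
Since each $\partial U_k$ is closed, $Y$ is an $F_{\sigma}$ set, so the countable stability theorem for $F_{\sigma}$ sets gives $\dim_T Y \leq n-1$. The inductive hypothesis then decomposes $Y = Z_1 \cup \dots \cup Z_n$ with $\dim_T Z_i \leq 0$, and the $Z_i$ together with $Z_{n+1}$ exhaust $X$.

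The one nontrivial point — and the main obstacle — is to verify $\dim_T Z_{n+1} \leq 0$. For every $k$ the relative boundary of $U_k \cap Z_{n+1}$ inside $Z_{n+1}$ is contained in $\partial U_k \cap Z_{n+1} = \emptyset$, so each $U_k \cap Z_{n+1}$ is clopen in $Z_{n+1}$. Because $\{U_k\}$ is a basis for $X$, the traces $\{U_k \cap Z_{n+1}\}$ form a basis for $Z_{n+1}$ consisting of clopen sets; unwinding the definition of $\dim_T$ yields $\dim_T Z_{n+1} \leq 0$. The force of the construction is that choosing the initial countable basis so that all boundaries have dimension at most $n-1$ allows every single $\partial U_k$ to be swallowed by $Y$ at once, leaving a complement with a simultaneous clopen basis.
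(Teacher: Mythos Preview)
Your proof is correct. Note, however, that the paper does not supply its own proof of this statement: the Decomposition theorem is quoted as a preliminary from Engelking's \textit{Dimension Theory} (see the reference to \cite[Theorem~1.5.8]{E}), so there is nothing in the paper to compare against. What you have written is essentially the standard textbook argument (as in Engelking or Hurewicz--Wallman): the easy direction via the Addition theorem, and the substantive direction by peeling off the union of boundaries of a countable basis witnessing $\dim_T X\le n$, leaving a zero-dimensional complement. One small point worth making explicit in the write-up is why the basis may be taken countable: in a second-countable space every basis contains a countable subfamily that is still a basis, so the $F_\sigma$ stability step applies cleanly.
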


For the next theorem see \cite[Theorem~1.7.9]{E}.

\begin{theorem}[Theorem on partitions] Let $X$ be a separable metric space and let $n\in \NN^+$.
Then the following statements are equivalent:
\begin{enumerate}[(i)]
\item $\dim_T X\leq n$;
\item for every sequence $(A_1,B_1),\dots, (A_{n+1},B_{n+1})$ of $n+1$ pairs of
disjoint closed subsets of $X$ there are partitions $P_i$ between $A_i$ and $B_i$ such that
$\bigcap_{i=1}^{n+1} P_i=\emptyset$.
\end{enumerate}
\end{theorem}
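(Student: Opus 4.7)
The plan is to prove both implications by induction on $n$, with the Separation Theorem stated above as the main structural tool.

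For $(i) \Rightarrow (ii)$, the base case $n = 0$ is immediate: $\dim_T X \leq 0$ yields a clopen set separating $A_1$ from $B_1$, which is an empty partition. For the inductive step, given $\dim_T X \leq n$ and pairs $(A_i, B_i)$ for $i = 1, \dots, n+1$, I would apply the Separation Theorem with $Y = X$ to the last pair to obtain a partition $P_{n+1}$ between $A_{n+1}$ and $B_{n+1}$ with $\dim_T P_{n+1} \leq n - 1$. The first $n$ pairs restrict to disjoint closed pairs in $P_{n+1}$, so the inductive hypothesis applied inside $P_{n+1}$ produces partitions $P_i' \subset P_{n+1}$ with $\bigcap_{i=1}^n P_i' = \emptyset$. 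A standard extension lemma (a consequence of normality) then lifts each $P_i'$ to a partition $P_i$ of $X$ between $A_i$ and $B_i$ with $P_i \cap P_{n+1} = P_i'$, whence $\bigcap_{i=1}^{n+1} P_i \subset P_{n+1} \cap \bigcap_{i=1}^n P_i = \bigcap_{i=1}^n P_i' = \emptyset$.

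For $(ii) \Rightarrow (i)$, the base case $n = 0$ again follows since an empty partition is a clopen separation, which applied at every point/neighborhood pair produces a clopen basis and hence $\dim_T X \leq 0$. For the inductive step, I would use the coincidence of small and large inductive dimensions for separable metric spaces (Urysohn--Menger), which gives $\dim_T X \leq n$ if and only if every disjoint closed pair $(A, B)$ admits a partition $P$ between them with $\dim_T P \leq n - 1$. By the inductive hypothesis, the latter reduces to finding a partition $P$ satisfying $(ii)$ inside $P$ with $n$ pairs of disjoint closed subsets.

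The main obstacle will be producing a single partition $P$ between $(A, B)$ that satisfies $(ii)$ uniformly for every choice of $n$ pairs of disjoint closed subsets of $P$ -- simply applying $(ii)$ in $X$ to $(A, B)$ together with $n$ auxiliary pairs yields a partition depending on those auxiliary pairs. The standard resolution is the essential-families argument of classical dimension theory: one shows that if no such $P$ worked, a diagonal construction exploiting the separability of $X$ would assemble $n + 1$ pairs of disjoint closed sets in $X$ for which every selection of partitions has nonempty intersection, directly contradicting $(ii)$. This closes the induction.
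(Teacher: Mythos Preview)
The paper does not give its own proof of this statement: it is quoted in Section~\ref{s:pre} as a background result with a reference to Engelking \cite[Theorem~1.7.9]{E}, and is then applied as a tool in Subsection~\ref{ss:top}. There is therefore no proof in the paper to compare your argument against.

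On your proposal itself: the direction $(i)\Rightarrow(ii)$ is correctly outlined. (The extension lemma you invoke actually gives $P_i\cap P_{n+1}\subset P_i'$ rather than equality, but inclusion is all you need, since then $\bigcap_{i=1}^{n+1}P_i\subset\bigcap_{i=1}^{n}P_i'=\emptyset$.) The direction $(ii)\Rightarrow(i)$ is where the real content lies, and you have correctly isolated the difficulty: one must produce a \emph{single} partition $P$ between a given closed pair $(A,B)$ that satisfies the $n$-pair partition property for \emph{every} choice of $n$ closed pairs in $P$, whereas a direct application of $(ii)$ in $X$ only yields a partition tailored to one fixed auxiliary $n$-tuple. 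Your proposed fix---``a diagonal construction exploiting the separability of $X$''---is left as a slogan, and I do not see how to carry it out: the partitions between $A$ and $B$ do not form a countable family, and the essential $n$-tuples sitting inside different partitions have no evident coherence for a diagonal argument to exploit. The classical proofs (in Engelking or Hurewicz--Wallman) do not proceed this way; they typically route through the covering-dimension characterization or through the decomposition of $X$ into $n+1$ zero-dimensional subsets (the paper's Theorem~\ref{t:dec}). As written, this step is a genuine gap in your plan.
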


Let $X$ be a metric space and let $A\subset X$.
We use the convention $0^0=1$, and $\inf \emptyset=\infty$.
We say that $h\colon [0,\infty)\to [0,\infty)$ is a
\emph{gauge function} if it is non-decreasing.
We define the \emph{$h$-Hausdorff measure} of $A$ as
\begin{align*} \mathcal{H}^{h}(A)&=\lim_{\delta \to 0+} \iH^h_{\delta}(A), \textrm{ where} \\
\iH^h_{\delta}(A)&=\inf \left\{ \sum_{i=1}^\infty h(\diam A_{i}) : A\subset \bigcup_{i=1}^{\infty} A_{i},~ \forall i \, \diam A_i \leq \delta \right\}.
\end{align*}
If $h(x)=x^s$ for some $s\geq 0$ then we use the notation $\iH^h=\iH^s$ and $\iH^h_{\delta}=\iH^s_{\delta}$.
The \emph{Hausdorff dimension} of a non-empty $A$ is defined as
\[\dim_{H} A= \inf\{s \ge 0: \mathcal{H}^s(A) =0\}.\]
The regularity of $\mathcal{H}_{\delta}^{s}$ easily implies that
every set is contained in a $G_\delta$ set of the same Hausdorff dimension, and it is easy to see that countable stability holds.

For every $\delta>0$, a \emph{$\delta$-packing} of $A$ is a countable collection of disjoint balls $\{B(x_i,r_i)\}_{i\geq 1}$
with centers $x_i\in A$ and radii $0\leq r_i \leq \delta$. For a gauge function $h$ let $P_0^h(\emptyset)=0$ and define the \emph{$h$-packing number} of a non-empty $A$ as
\begin{align*} P_{0}^h(A)&=\lim_{\delta \to 0+} P^h_{\delta}(A), \textrm{ where} \\
P^h_{\delta}(A)&=\sup\left\{ \sum_{i=1}^{\infty} h(r_i): \{B(x_i,r_i)\}_{i\geq 1} \textrm{ is a $\delta$-packing of } A\right\}.
\end{align*}
As the countable subadditivity does not hold
for $P_{0}^h$, we consider the following modification.
The \emph{$h$-packing measure} of $A$ is defined as
\[ \iP^h(A)=\inf \left\{\sum_{i=1}^{\infty} P_{0}^h(A_i): A=\bigcup_{i=1}^{\infty} A_i \right\}. \]
If $h(x)=x^s$ for some $s\geq 0$ then we use the notation $\iP^s=\iP^h$.
The \emph{packing dimension} of a non-empty $A$ is defined as
\[ \dim_P A=\sup\{ s\geq 0: \iP^s(A)=\infty\}. \]

Countable stability holds for packing dimension, but it does not admit $G_{\delta}$ hulls.

\begin{fact} \label{f:close} Let $h$ be a left-continuous gauge function and let
$X$ be a metric space. Then $P_{0}^h(A)=P_0^h(\cl A)$ for all $A\subset X$.
\end{fact}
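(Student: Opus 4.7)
The plan is to prove the equality $P_0^h(A) = P_0^h(\cl A)$ by establishing $P_\delta^h(A) = P_\delta^h(\cl A)$ for every $\delta > 0$ and then letting $\delta \to 0+$. One direction is immediate: every $\delta$-packing of $A$ is tautologically a $\delta$-packing of $\cl A$ (same balls, and the centers $x_i \in A$ also lie in $\cl A$), so $P_\delta^h(A) \leq P_\delta^h(\cl A)$.

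For the reverse inequality, fix $\delta > 0$, $\eps > 0$, and a $\delta$-packing $\{B(x_i, r_i)\}_{i \geq 1}$ of $\cl A$. The idea is that for each index $i$ with $r_i > 0$ I replace $x_i \in \cl A$ by a nearby point $y_i \in A$ and shrink the radius slightly to $r_i - \eta_i$. The triangle inequality ensures that choosing $y_i \in A$ with $\rho(y_i, x_i) < \eta_i$ forces $B(y_i, r_i - \eta_i) \subset B(x_i, r_i)$, so the new balls are automatically pairwise disjoint because they sit inside the disjoint original balls, and they form a genuine $\delta$-packing of $A$ since $y_i \in A$ and $r_i - \eta_i \leq \delta$. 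The delicate point is controlling the loss in the $h$-sum: invoking left-continuity of $h$, I pick each $\eta_i \in (0, r_i)$ small enough that $h(r_i - \eta_i) \geq h(r_i) - \eps/2^i$, so the new $h$-sum is at least $\sum_i h(r_i) - \eps$. Taking suprema over $\delta$-packings of $\cl A$ and letting $\eps \to 0+$ yields $P_\delta^h(A) \geq P_\delta^h(\cl A)$.

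The only loose end involves indices with $r_i = 0$: if $h(0) = 0$ they contribute nothing and can be dropped from the construction, while if $h(0) > 0$ then any $\delta$-packing with finite $h$-sum can contain at most finitely many such balls, and these can be handled separately by choosing distinct $y_i \in A$ close enough to each $x_i$ to preserve disjointness from every other ball in the new packing. The main obstacle that the left-continuity hypothesis surmounts is that along an infinite packing the radii $r_i$ may cluster at zero, so the individual perturbations $\eta_i$ cannot be chosen uniformly; left-continuity is precisely what guarantees $h(r_i - \eta_i) \to h(r_i)$ as $\eta_i \to 0+$ for each $i$, which together with the summable error budget $\sum_i \eps/2^i = \eps$ closes the estimate.
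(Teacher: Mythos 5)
Your proof is correct, and since the paper states Fact~\ref{f:close} without proof (it is treated as standard), there is nothing to deviate from: your argument is the standard one, passing from a $\delta$-packing of $\cl A$ to a $\delta$-packing of $A$ by moving each center into $A$ and shrinking the radius, with left-continuity of $h$ supplying $h(r_i-\eta_i)\ge h(r_i)-\eps 2^{-i}$ and the inclusion $B(y_i,r_i-\eta_i)\subset B(x_i,r_i)$ (valid when $\rho(x_i,y_i)<\eta_i$) preserving disjointness, the center condition, and the radius bound. The only loose part is your treatment of the case $r_i=0$ with $h(0)>0$: as written you only handle packings of $\cl A$ with finite $h$-sum, and the phrase ``close enough to preserve disjointness from every other ball'' tacitly uses that such a packing has finitely many balls in total (which is true, since every ball contributes at least $h(0)$), not merely finitely many zero-radius balls — with infinitely many positive-radius balls accumulating at $x_i$ the nearby points of $A$ could all be trapped inside them. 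Packings with infinite $h$-sum are also not covered by your case analysis, though they are harmless: infinitely many disjoint balls with centers in $\cl A$ force $A$ to be infinite, so $P^h_\delta(A)=\infty$ already from zero-radius singletons at points of $A$. A cleaner way to dispose of all these edge issues at once is to observe that the $h$-sum of any packing is the supremum of the $h$-sums of its finite subfamilies, which are themselves $\delta$-packings of $\cl A$; one may therefore assume from the outset that the packing is finite, after which your perturbation argument goes through verbatim.
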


For the following see \cite[Lemma~4]{H}.

\begin{lemma} \label{l:pack1}
Let $K$ be a compact metric space and let $h$ be a left-continuous
gauge function. If $P_{0}^h(U)=\infty$ for all non-empty open sets
$U\subset K$, then $\iP^h|_{K}$ is not $\sigma$-finite.
\end{lemma}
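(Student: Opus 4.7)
The plan is to argue by contradiction, using Baire's category theorem on the compact (hence Baire) space $K$. Suppose $\iP^h|_K$ were $\sigma$-finite; then $K=\bigcup_{i=1}^\infty K_i$ with $\iP^h(K_i)<\infty$ for every $i$. Unfolding the definition of $\iP^h$, for each $i$ there is a further decomposition $K_i=\bigcup_{j=1}^\infty A_{i,j}$ with $\sum_j P_0^h(A_{i,j})<\infty$, and in particular $P_0^h(A_{i,j})<\infty$ for every $i,j$. Re-indexing produces a single countable cover $K=\bigcup_{n=1}^\infty A_n$ with $P_0^h(A_n)<\infty$ for every $n$.

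Next I would invoke Fact~\ref{f:close}, which gives $P_0^h(\cl A_n)=P_0^h(A_n)<\infty$. Setting $F_n=\cl A_n\subset K$, I obtain a countable cover of $K$ by \emph{closed} sets with $P_0^h(F_n)<\infty$. Since $K$ is a Baire space, some $F_{n_0}$ must have non-empty interior in $K$; that is, there is a non-empty open $U\subset K$ with $U\subset F_{n_0}$.

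To conclude, I use the monotonicity of $P_0^h$: any $\delta$-packing of $U$ has its centers in $U\subset F_{n_0}$ and therefore is also a $\delta$-packing of $F_{n_0}$, so $P_\delta^h(U)\leq P_\delta^h(F_{n_0})$; letting $\delta\to 0$ gives $P_0^h(U)\leq P_0^h(F_{n_0})<\infty$. This contradicts the hypothesis that $P_0^h(U)=\infty$ for every non-empty open $U\subset K$, finishing the proof.

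The only subtle point is the application of Fact~\ref{f:close} to reduce an arbitrary countable cover to a cover by closed sets, so that Baire's theorem can be applied; monotonicity of $P_0^h$ is immediate from the definition of a $\delta$-packing and is the only ingredient not recorded explicitly in the preliminaries.
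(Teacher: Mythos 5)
Your argument is correct and complete: unfolding $\sigma$-finiteness of $\iP^h|_K$ into a countable cover with finite pre-packing numbers, passing to closures via Fact~\ref{f:close}, applying Baire's category theorem in the compact (hence complete) space $K$, and using the trivial monotonicity of $P_0^h$ indeed yields the contradiction. The paper itself gives no proof of Lemma~\ref{l:pack1} but cites Haase \cite[Lemma~4]{H}, and your Baire-category argument is precisely the standard proof of that result, so there is nothing substantively different to compare.
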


The following lemma is \cite[Lemma~2.8.1~(ii)]{BP}. 

\begin{lemma}\label{l:pack} If $X$ is a separable metric space satisfying $\dim_P X>s$,
then there is a closed set $C\subset X$ such that $\dim_P (C\cap U)>s$ for all open sets $U$ with $C\cap U\neq \emptyset$.
\end{lemma}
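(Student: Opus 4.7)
The plan is to isolate the ``dimensionally small'' part of $X$ using a countable basis and the countable stability of packing dimension. First, since $X$ is separable, I fix a countable basis $\{U_i\}_{i \in \NN}$ of the topology of $X$. I then set
\[
\iI = \{i \in \NN : \dim_P(X \cap U_i) \leq s\}, \qquad V = \bigcup_{i \in \iI} U_i,
\]
which is an open subset of $X$, and define $C = X \setminus V$, which is closed in $X$.

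The next step is to verify $\dim_P C > s$. By countable stability of packing dimension applied to the countable cover $V = \bigcup_{i \in \iI}(X \cap U_i)$, one has $\dim_P V \leq s$. Combined with $\dim_P X > s$ and countable stability applied to $X = V \cup C$, this forces $\dim_P C > s$; in particular $C \neq \emptyset$.

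It remains to show that $\dim_P(C \cap U) > s$ for every open $U \subset X$ with $C \cap U \neq \emptyset$. Pick $x \in C \cap U$ and choose a basis element $U_i$ with $x \in U_i \subset U$. Since $x \notin V$, the index $i$ is not in $\iI$, so by the definition of $\iI$ one has $\dim_P(X \cap U_i) > s$. Write $X \cap U_i = (C \cap U_i) \cup (V \cap U_i)$; by monotonicity $\dim_P(V \cap U_i) \leq \dim_P V \leq s$, so countable stability gives $\dim_P(C \cap U_i) > s$, and since $C \cap U_i \subset C \cap U$ we conclude $\dim_P(C \cap U) > s$.

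The only real point to be careful about is the role of countable stability of packing dimension, which is what makes this work and is what fails for upper box-counting dimension (so this argument is specific to packing dimension, not to box dimensions). Everything else is a standard ``throw away the maximal open set of small dimension'' construction, and the argument is entirely parallel to the familiar one for Hausdorff dimension, which also admits countable stability.
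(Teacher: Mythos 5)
Your proof is correct. The paper does not prove this lemma at all --- it simply cites \cite[Lemma~2.8.1~(ii)]{BP} --- and your argument is the standard one behind that reference: remove the countable union $V$ of basis elements of packing dimension at most $s$, use the countable stability of $\dim_P$ (which the paper itself records, and which is exactly the property that fails for upper box dimension) to see $\dim_P V\leq s$ and hence $\dim_P C>s$, and then note that any basis element meeting $C$ has index outside $\iI$, so finite stability forces $\dim_P(C\cap U_i)>s$. All steps check out, so your proposal is a correct, self-contained substitute for the citation.
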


For the following lemma see \cite{Ho}.

\begin{lemma} \label{l:dimp} Let $X$ be a non-empty metric space and let $n\in \NN^+$. Then
\[ \dim_P (X\times [0,1]^n)=\dim_P X+n. \]
\end{lemma}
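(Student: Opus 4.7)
The plan is to prove the two inequalities $\dim_P(X \times [0,1]^n) \leq \dim_P X + n$ and $\dim_P(X \times [0,1]^n) \geq \dim_P X + n$ separately, both via the characterization
\[ \dim_P A = \inf \left\{ \sup_i \overline{\dim}_B A_i : A = \bigcup_{i=1}^{\infty} A_i \right\}, \]
where $\overline{\dim}_B$ denotes upper box dimension.

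For the upper bound, given any decomposition $X = \bigcup_i X_i$ with $\sup_i \overline{\dim}_B X_i$ close to $\dim_P X$, the sets $X_i \times [0,1]^n$ decompose the product, and the standard product inequality $\overline{\dim}_B(A \times [0,1]^n) \leq \overline{\dim}_B A + n$---proved by combining a near-optimal $\delta$-cover of $A$ with the canonical grid of $\sim \delta^{-n}$ cubes of side $\delta$ in $[0,1]^n$---yields the result after taking the infimum over decompositions.

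For the lower bound, fix $s < \dim_P X$ and an arbitrary decomposition $X \times [0,1]^n = \bigcup_j Y_j$; it suffices to produce one index $j_0$ with $\overline{\dim}_B Y_{j_0} > s + n$. By Lemma~\ref{l:pack} there is a closed $C \subset X$ with $\dim_P(C \cap U) > s$ for every open $U$ meeting $C$. Since $[0,1]^n = \bigcup_j (Y_j)_x$ for each $x \in C$ and $[0,1]^n$ is a Baire space, setting $C_j = \{x \in C : \inter \cl (Y_j)_x \neq \emptyset\}$ gives $C = \bigcup_j C_j$, so countable stability of $\dim_P$ produces some $C_{j_0}$ with $\dim_P C_{j_0} > s$. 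Stratify further: $C_{j_0} = \bigcup_{q,k} C_{j_0,q,k}$ where $C_{j_0,q,k} = \{x \in C_{j_0} : B(q, 1/k) \subset \cl (Y_{j_0})_x\}$ ranges over $q \in \QQ^n \cap [0,1]^n$ and $k \in \NN^+$. Countable stability once more yields $(q_0, k_0)$ with $W := C_{j_0,q_0,k_0}$ satisfying $\dim_P W > s$, hence $\overline{\dim}_B W > s$ since $\dim_P \leq \overline{\dim}_B$ always. By construction $W \times B(q_0, 1/k_0) \subset \cl Y_{j_0}$, and the elementary inequality $\overline{\dim}_B(A \times B) \geq \overline{\dim}_B A + \underline{\dim}_B B$ (obtained by pairing a maximal $\delta$-separated set of $A$ with a $\delta$-separated grid of size $\sim \delta^{-n}$ in $B(q_0, 1/k_0)$) gives $\overline{\dim}_B Y_{j_0} = \overline{\dim}_B \cl Y_{j_0} \geq \overline{\dim}_B W + n > s + n$, as required.

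The main obstacle is the lower bound, specifically the double countable stratification (first by $j$, then by the rational data $(q,k)$) that converts the pointwise Baire-category information ``some fiber has non-empty interior'' into the uniform statement ``a fixed ball $B(q_0, 1/k_0)$ lies in $\cl (Y_{j_0})_x$ for every $x$ in a set of packing dimension exceeding $s$''. Once this uniformity is in place, the product structure contributes the $+n$ by direct counting. A mild additional issue---that $X$ need not be complete---is harmless here because the Baire-category step is invoked only on the compact factor $[0,1]^n$.
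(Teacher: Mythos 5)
Your proof is correct, granted the two classical facts you invoke without proof: the identification of $\dim_P$ with the modified upper box dimension (which does hold for the radius-based packing measure in an arbitrary metric space) and the elementary box-dimension product estimates. Be aware, though, that the paper does not prove Lemma~\ref{l:dimp} at all: it quotes it from Howroyd \cite{Ho}, where it is a special case of general product inequalities such as $\dim_H X+\dim_P Y\le \dim_P(X\times Y)\le \dim_P X+\dim_P Y$, proved by quite different and heavier machinery; your argument is an elementary, self-contained substitute that exploits the fact that the second factor is a cube. It is also worth noticing that your lower-bound mechanism is essentially the device the paper itself uses for the stronger Lemma~\ref{l:prodP}: there a somewhere dense piece of a countable cover of $Y\times[0,1]^n$ is produced by Baire category (via Kuratowski--Ulam, which forces $Y$ to be complete and is why Lemma~\ref{l:pack} is needed there), and its closure contains a product $U\times Q$ on which Fact~\ref{f:close} is applied; you instead run Baire category fiberwise in the compact factor $[0,1]^n$ and recover uniformity through the rational stratification in $(q,k)$. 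That variant buys you independence from completeness of $X$, and in fact your appeal to Lemma~\ref{l:pack} is superfluous: the hereditary property of $C$ is never used, so you may take $C=X$ and stratify directly, which also removes the implicit separability hypothesis carried by Lemma~\ref{l:pack} (for non-separable $X$ both sides of the formula are $\infty$ anyway, since $X$ contains an uncountable $\eps$-separated set). One cosmetic point: the case $\dim_P X=0$ is covered by your argument only if you allow $s<0$, or observe directly that a single $x\in W$ already gives $\{x\}\times B(q_0,1/k_0)\subset \cl Y_{j_0}$ and hence $\overline{\dim}_B Y_{j_0}\ge n$; this is not a gap, just worth stating.
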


For more information on Hausdorff and packing dimensions see \cite{BP}, \cite{F}, and \cite{Ma}.

The following lemma is standard, see e.g.\ \cite[Lemma~3.8]{BFFH}.

\begin{lemma} \label{l:cat}
Let $X,Y$ be complete metric spaces and let $R\colon X\to Y$ be a
continuous open map. If $B\subset Y$ is co-meager, then
$R^{-1}(B)\subset X$ is co-meager, too.
\end{lemma}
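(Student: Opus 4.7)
The plan is to reduce the statement to the claim that preimages of nowhere dense sets under $R$ are nowhere dense, and then exploit the open-map hypothesis on $R$ to verify that claim directly.

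First I would reformulate the conclusion: a set is co-meager iff its complement is meager, so writing $Y\setminus B=\bigcup_{n=1}^{\infty} N_n$ with each $N_n$ nowhere dense gives
\[X\setminus R^{-1}(B)=R^{-1}(Y\setminus B)=\bigcup_{n=1}^{\infty} R^{-1}(N_n),\]
and thus it suffices to show that $R^{-1}(N)$ is nowhere dense in $X$ whenever $N\subset Y$ is nowhere dense.

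For the main step, I would work with closures. If $N$ is nowhere dense then $U:=Y\setminus \cl N$ is open and dense in $Y$. Continuity of $R$ gives that $R^{-1}(\cl N)$ is closed, equivalently that $R^{-1}(U)=X\setminus R^{-1}(\cl N)$ is open. To see that $R^{-1}(U)$ is dense, I would take an arbitrary non-empty open $V\subset X$; since $R$ is open, $R(V)$ is a non-empty open subset of $Y$, hence meets $U$ by density, which produces a point $x\in V$ with $R(x)\in U$, i.e., $V\cap R^{-1}(U)\neq\emptyset$. Thus $R^{-1}(\cl N)$ is closed with dense open complement, so it is nowhere dense; since $R^{-1}(N)\subset R^{-1}(\cl N)$, the preimage $R^{-1}(N)$ is nowhere dense as required.

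Combining this with the first reduction, $X\setminus R^{-1}(B)$ is a countable union of nowhere dense sets, hence meager, and $R^{-1}(B)$ is co-meager. No completeness of $X$ or $Y$ is actually needed for the meager-preservation argument itself; completeness is only used implicitly to make the notion of co-meager meaningful via the Baire category theorem. There is no serious obstacle — the only thing one must be careful about is not to confuse "$R^{-1}$ of a dense open set is dense open" (which requires $R$ open and continuous, and is what we prove) with the easier direction using continuity alone.
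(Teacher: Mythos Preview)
Your proof is correct and is the standard argument for this well-known fact. The paper itself does not give a proof at all; it simply cites \cite[Lemma~3.8]{BFFH} and moves on, so there is nothing to compare against beyond noting that your argument is exactly the routine one such a citation points to. Your remark that completeness plays no role in the argument is also accurate.
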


As Tietze's extension theorem holds in $\RR^n$, Lemma~\ref{l:cat} implies the following.

\begin{corollary} \label{c:R}
Let $K_1\subset K_2$ be compact metric spaces, and let $n\in \NN^+$. Define
\[ R \colon C(K_2,\RR^n)\rightarrow C(K_1,\RR^n), \quad R(f)=f|_{K_1}.\]
If $\mathcal{F}_1\subset C(K_1,\RR^n)$ is co-meager,
then $R^{-1}(\mathcal{F}_1)\subset C(K_2,\RR^n)$ is co-meager, too.
\end{corollary}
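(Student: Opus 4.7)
The plan is to invoke Lemma~\ref{l:cat} directly, so the entire task reduces to verifying that the restriction map $R$ is continuous, open, and that its domain and codomain are complete metric spaces. Completeness of $C(K_i,\RR^n)$ with the maximum norm is standard since $K_i$ is compact and $\RR^n$ is complete, so no real work is needed there.

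Continuity of $R$ is immediate: for $f,g\in C(K_2,\RR^n)$ we have
\[\|R(f)-R(g)\|_\infty=\sup_{x\in K_1}|f(x)-g(x)|\leq \sup_{x\in K_2}|f(x)-g(x)|=\|f-g\|_\infty,\]
so $R$ is $1$-Lipschitz. The substantive step is showing that $R$ is an open map, and this is exactly where Tietze's extension theorem (as noted in the statement preceding the corollary) enters.

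For openness, I would fix an open set $\mathcal{U}\subset C(K_2,\RR^n)$, a point $g\in R(\mathcal{U})$, and $f\in \mathcal{U}$ with $R(f)=g$, together with $r>0$ such that the open ball $B(f,r)\subset \mathcal{U}$. The claim to prove is that $B(g,r)\subset R(\mathcal{U})$. Given $g'\in B(g,r)$, set $h=g'-g\in C(K_1,\RR^n)$, which satisfies $\|h\|_\infty<r$. Applying Tietze's extension theorem componentwise, followed by a coordinatewise truncation at $\pm\|h\|_\infty$, I obtain an extension $\tilde h\in C(K_2,\RR^n)$ with $\tilde h|_{K_1}=h$ and $\|\tilde h\|_\infty=\|h\|_\infty<r$. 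Then $f'=f+\tilde h$ lies in $B(f,r)\subset\mathcal{U}$ and $R(f')=g+h=g'$, so $g'\in R(\mathcal{U})$, as required.

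The only potential obstacle is the norm-preserving feature of the extension; however, this is entirely routine for $\RR^n$-valued maps, being just Tietze applied to each coordinate combined with componentwise truncation (which does not increase the modulus of continuity or break continuity). Once $R$ is seen to be a continuous open map between complete metric spaces, Lemma~\ref{l:cat} directly gives that $R^{-1}(\mathcal{F}_1)$ is co-meager in $C(K_2,\RR^n)$ whenever $\mathcal{F}_1\subset C(K_1,\RR^n)$ is co-meager, finishing the proof.
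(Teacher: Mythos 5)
Your proposal is correct and follows essentially the same route as the paper, which proves the corollary in one line by combining Tietze's extension theorem (to make $R$ a continuous open surjection onto its image, exactly your openness argument) with Lemma~\ref{l:cat}. The only micro-caveat: if the norm on $\RR^n$ is Euclidean rather than the coordinatewise maximum, coordinatewise truncation alone need not give $\|\tilde h\|_\infty\leq\|h\|_\infty$, but composing the Tietze extension with the $1$-Lipschitz radial retraction of $\RR^n$ onto the closed ball $B(\mathbf{0},\|h\|_\infty)$ fixes this just as routinely.
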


For the following theorem see \cite[Proposition~3.2]{K}.

\begin{theorem}[Kato]\label{t:kato} Let $n\in \NN^+$ and let $K$ be a compact metric space. Then for a generic $f\in C(K,\RR^n)$ we have
$\dim_T f(K)\leq \dim_T K$.
\end{theorem}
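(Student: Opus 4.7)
If $d = \dim_T K = \infty$ there is nothing to prove, so assume $d < \infty$. My plan is to realize
\[ \iF = \{f \in C(K,\RR^n) : \dim_T f(K) \leq d\} \]
as a dense $G_\delta$ set. I will use the standard characterization that a compact metric space $Y$ satisfies $\dim_T Y \leq d$ if and only if, for every $k \in \NN^+$, $Y$ admits a finite closed cover of mesh less than $1/k$ and order at most $d$, meaning that no $d+2$ of the covering sets share a common point. When $Y = f(K) \subset \RR^n$ the covering sets may be taken to be compact subsets of $\RR^n$, so setting
\[ \iC_k = \{f \in C(K,\RR^n) : f(K) \subset C_1 \cup \dots \cup C_m \text{ for some compact } C_i \subset \RR^n, \ \diam C_i < 1/k, \ \text{order} \leq d\}, \]
one has $\iF = \bigcap_k \iC_k$, and it suffices to prove each $\iC_k$ is open and dense.

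For density, given $f \in C(K,\RR^n)$ and $\eps > 0$, I would use uniform continuity of $f$ to pick $\delta > 0$ so that $\diam f(A) < \eps$ whenever $\diam A < \delta$, and then take a finite open cover $\iU = \{U_1,\dots,U_m\}$ of $K$ of mesh below $\delta$ and order at most $d$, available because $\dim_T K \leq d$. A partition of unity $\{\phi_i\}$ subordinate to $\iU$ produces a continuous map $\pi \colon K \to N(\iU)$, $\pi(x) = \sum_i \phi_i(x) v_i$, into the geometric realization of the nerve, a simplicial complex of dimension at most $d$. Choosing $x_i \in U_i$, setting $g(v_i) = f(x_i)$ at vertices and extending affinely on each simplex gives a PL map $g \colon N(\iU) \to \RR^n$. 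Then $f' = g \circ \pi$ satisfies $\|f - f'\|_\infty < \eps$, and since $g(N(\iU))$ is a finite union of affine images of simplices of dimension at most $d$ in $\RR^n$, we have $\dim_T f'(K) \leq \dim_T g(N(\iU)) \leq d$, so $f' \in \iF \subset \iC_k$.

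For openness, suppose $f \in \iC_k$ with witnessing compact cover $C_1,\dots,C_m$ of mesh strictly less than $1/k$ and order at most $d$. Because each intersection $\bigcap_{i \in I} C_i$ with $|I| \geq d+2$ is an empty intersection of compact sets, and there are only finitely many such $I$, an elementary compactness argument produces $\eta > 0$ small enough that $\diam \cl N_\eta(C_i) < 1/k$ for every $i$ and $\bigcap_{i \in I} \cl N_\eta(C_i) = \emptyset$ for every $|I| \geq d+2$, where $N_\eta(A) = \{x \in \RR^n : \dist(x,A) < \eta\}$. If $\|f' - f\|_\infty < \eta$, then $f'(K) \subset N_\eta(f(K)) \subset \bigcup_i \cl N_\eta(C_i)$, so $f' \in \iC_k$. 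Thus $\iC_k$ is open, and by Baire category $\iF = \bigcap_k \iC_k$ is a dense $G_\delta$.

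The main obstacle is the openness step: the order of a cover is a joint property of the cover and of the set being covered, and naively thickening compact sets can create new high-order intersections. It is the compactness of each $C_i$, available because $f(K) \subset \RR^n$ is itself compact, combined with the finiteness of the cover, that allows a single $\eta > 0$ to control all the diameter and order constraints simultaneously; on the density side, the pivotal input is the existence of arbitrarily fine open covers of $K$ of order at most $d$, which is exactly the hypothesis $\dim_T K \leq d$.
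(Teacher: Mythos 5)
Your proof is correct, but it is worth pointing out that the paper itself does not prove this statement at all: it is quoted from Kato \cite[Proposition~3.2]{K}, so what you have written is a self-contained argument replacing a citation. Both halves of your argument are standard and fit together properly. The density step is the classical nerve/polyhedral approximation; it is essentially the same fact the paper invokes from \cite[Chapter~1.10]{E2} in the proof of Theorem~\ref{t:better}, namely that maps whose image lies in a $d$-dimensional polyhedron are dense, and your estimate $\|f-f'\|_\infty<\eps$ is valid because $f'(x)$ is a convex combination of values $f(x_i)$ with $x,x_i$ in a common element of the fine cover. The openness step rests on the compactness characterization of covering dimension (a compact metric space has $\dim_T\leq d$ iff it admits arbitrarily fine finite closed covers in which no $d+2$ members meet, see \cite{HW} or \cite{E}), plus the thickening argument: since the finitely many $(d+2)$-fold intersections of the compact witnesses $C_i$ are empty, a single $\eta>0$ keeps them empty and keeps all diameters below $1/k$ after passing to closed $\eta$-neighbourhoods, and this is exactly where the strict inequality $\diam C_i<1/k$ in your definition of $\iC_k$ is needed. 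The identity $\iF=\bigcap_k\iC_k$ is correct in both directions because the witnesses may be taken to be closed subsets of the compact set $f(K)$, so the order computed in $\RR^n$ coincides with the order computed in $f(K)$. Altogether your route is more elementary and more explicit than deferring to Kato's paper, at the modest cost of quoting the closed-cover characterization of dimension from classical dimension theory, which the paper cites freely elsewhere anyway.
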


\begin{lemma} \label{l:int} Let $n\in \NN^+$ and let $K$ be a compact metric space with $\dim_T K\geq n$.
Let $V$ be the maximal open set $V\subset K$ with $\dim_T V<n$ and let $C=K\setminus V$. Then
\begin{enumerate}[(i)]
\item \label{int1} $\dim_T (C\cap U)\geq n$ for all open sets $U\subset K$ intersecting $C$,
\item \label{int2} $\inter f(C)=\inter f(K)$ for a generic $f\in C(K,\RR^n)$.
\end{enumerate}
\end{lemma}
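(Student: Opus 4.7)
The plan is to first establish that the maximal open set $V$ described actually exists. The natural candidate is $V := \bigcup\{W \subset K : W \text{ open and } \dim_T W < n\}$. Since $K$ is separable this family admits a countable subfamily with the same union, and since every open set in a metric space is $F_\sigma$, the countable stability theorem for $F_\sigma$ sets forces $\dim_T V \leq n-1$; thus $V$ itself belongs to the family and is maximal. In particular $C$ is non-empty, since $\dim_T K \geq n > \dim_T V$.

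For part (i) I would argue by contradiction: suppose $U \subset K$ is open, meets $C$, and satisfies $\dim_T (C \cap U) \leq n-1$. Decompose $V \cup U = V \cup (C \cap U)$ into two $F_\sigma$ sets each of topological dimension at most $n-1$; countable stability then yields $\dim_T(V \cup U) \leq n-1$. But $V \cup U$ is open and strictly contains $V$, contradicting the maximality of $V$.

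For part (ii) the inclusion $\inter f(C) \subset \inter f(K)$ is immediate. For the reverse inclusion I would exhaust $V$ by the compact sets $V_i := \{x \in K : \dist(x, K \setminus V) \geq 1/i\}$, so that $V = \bigcup_{i \geq 1} V_i$ and $\dim_T V_i \leq \dim_T V \leq n-1$. Kato's Theorem~\ref{t:kato} applied to $V_i$ shows that $\{g \in C(V_i, \RR^n) : \dim_T g(V_i) \leq n-1\}$ is co-meager, and Corollary~\ref{c:R} pulls this property back to a co-meager subset of $C(K, \RR^n)$. Intersecting over $i$, for a generic $f$ each $f(V_i)$ is compact with $\dim_T f(V_i) \leq n-1$, whence countable stability gives $\dim_T f(V) \leq n-1$, and in particular $f(V)$ has empty interior in $\RR^n$. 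Now take $y \in \inter f(K)$ and $r > 0$ with $B(y,r) \subset f(K)$. Then $B(y,r) \setminus f(V) \subset f(C)$; since $f(V)$ is nowhere dense, this set is dense in $B(y,r)$, and since $f(C)$ is compact (hence closed), taking closures yields $B(y,r) \subset f(C)$, so $y \in \inter f(C)$.

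The one delicate point is that $V$ is open but typically not compact, so Kato's theorem cannot be applied to $V$ directly; the exhaustion by $V_i$ combined with the restriction-lifting Corollary~\ref{c:R} and countable stability for $F_\sigma$ sets circumvents this cleanly. The remainder of the argument relies only on the elementary observation that a subset of $\RR^n$ of topological dimension less than $n$ is necessarily nowhere dense.
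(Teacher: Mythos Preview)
Your proof is correct and follows essentially the same route as the paper's: the same definition of $V$ via the union of all small-dimensional open sets, the same contradiction argument for (i) (the paper decomposes $U$ rather than $V\cup U$, but this is an immaterial repackaging), and the same strategy for (ii) of exhausting $V$ by compacta, applying Kato's theorem plus Corollary~\ref{c:R} to force $\dim_T f(V)\leq n-1$ generically, and then using density of $\inter f(K)\setminus f(V)$ together with compactness of $f(C)$. One tiny wording slip: you say ``$f(V)$ is nowhere dense'', but what you actually proved and actually need is that $f(V)$ has empty interior, which already suffices for $B(y,r)\setminus f(V)$ to be dense in $B(y,r)$.
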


\begin{proof}
The maximal open set $V\subset K$ with $\dim_T V<n$ can be defined as
\[ V=\bigcup \{W \subset K: W \textrm{ is open and } \dim_T W<n\}.\]
Indeed, the countable stability of topological dimension for
$F_{\sigma}$ sets and the Lindel\"of property of $V$ imply that $\dim_T V<n$.

First we prove \eqref{int1}. Assume to the contrary that there is an open set $U\subset K$ intersecting $C$ such that
$\dim_T (C\cap U)<n$. Clearly $\dim_T (U\setminus C)\leq \dim_T V<n$, so the countable stability of topological dimension for
$F_{\sigma}$ sets implies that $\dim_T U<n$. Then $U\subset V$ by definition, thus $C\cap U=\emptyset$.
This is a contradiction, so \eqref{int1} holds.

Now we show \eqref{int2}. Let $K_i$ be compact sets such that $V=\bigcup_{i=1}^{\infty} K_i$.
Theorem~\ref{t:kato} and Corollary~\ref{c:R} imply that for all $i\in \NN^+$
a generic $f\in C(K,\RR^n)$ satisfies $\dim_T f(K_i)\leq n-1$.
As a countable intersection of co-meager sets is co-meager and topological dimension is
countably stable for closed sets, the set
\[ \iF=\{f\in C(K,\RR^n): \dim_T f(V)\leq n-1\}\]
is co-meager in $C(K,\RR^n)$. Assume $f\in \iF$ and $U=\inter f(K)$, we need to prove that $U\subset f(C)$.
Since $\dim_T f(V)<n$, the set $U\setminus f(V)$ is dense in $U$, and clearly
$U\setminus f(V) \subset f(C)$. As $f(C)$ is compact, we have $U\subset f(C)$. This proves \eqref{int2}.
\end{proof}

\section{Some basic properties of $d_{*}^n$} \label{s:basic}

Although our main examples for $\dim_{*}$ will
be $\dim_T$, $\dim_H$, and $\dim_P$,
we might consider more general dimensions as well.

\begin{definition}
We say that $\dim_{*}$ is a \emph{notion of dimension}, if
\begin{enumerate}[(i)]
\item $\dim_{*} \emptyset=-1$ and $\dim_{*}\{x\}=0$ for each singleton $x$,
\item $\dim_{*} X\leq \dim_{*} Y$ for all separable metric spaces $X\subset Y$.
\end{enumerate}
\end{definition}

We can extend Definition~\ref{d:dn} as follows.

\begin{definition}  Let $n\in \NN^+$, let $X$ be a separable metric space, and
let $\dim_{*}$ be a notion of dimension. Then
\[ d^n_{*}(X)=\inf \{\dim_{*} (X\setminus F): F\subset X \textrm{ is an } F_{\sigma}\textrm{ set with } \dim_T F\leq n-1\}. \]
\end{definition}

\begin{fact} \label{f:inf} The infimum is attained in the above definition, that is,
\[ d^n_{*}(X)=\min \{\dim_{*} (X\setminus F): F\subset X \textrm{ is an } F_{\sigma}\textrm{ set with } \dim_T F\leq n-1\}.\]
\end{fact}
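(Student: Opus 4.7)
The plan is to realize the infimum by taking a countable union of near-optimal witnesses. Concretely, first I distinguish the trivial case $d_{*}^n(X) = \infty$: then any admissible $F$ (for instance $F = \emptyset$, which is allowed since $\dim_T \emptyset = -1 \leq n-1$) attains the infimum. So from now on assume $d_{*}^n(X) < \infty$.

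Next, for each $k \in \NN^+$ choose an $F_{\sigma}$ set $F_k \subset X$ with $\dim_T F_k \leq n-1$ and
\[
\dim_{*}(X \setminus F_k) \leq d_{*}^n(X) + \tfrac{1}{k},
\]
using the definition of the infimum (if $d_{*}^n(X) = -1$ this means $X \setminus F_k = \emptyset$, and then $F_k = X$ works directly as a minimizer). Set $F = \bigcup_{k=1}^{\infty} F_k$. Being a countable union of $F_{\sigma}$ sets, $F$ is itself $F_{\sigma}$. By the countable stability of topological dimension for $F_{\sigma}$ sets,
\[
\dim_T F = \sup_{k \geq 1} \dim_T F_k \leq n-1,
\]
so $F$ is admissible in the defining family.

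To conclude, observe that $X \setminus F = \bigcap_{k} (X \setminus F_k) \subset X \setminus F_k$ for every $k$. The monotonicity axiom of a notion of dimension gives
\[
\dim_{*}(X \setminus F) \leq \dim_{*}(X \setminus F_k) \leq d_{*}^n(X) + \tfrac{1}{k}
\]
for every $k$, hence $\dim_{*}(X \setminus F) \leq d_{*}^n(X)$. The reverse inequality holds by definition because $F$ is admissible. Therefore $\dim_{*}(X \setminus F) = d_{*}^n(X)$, i.e., the infimum is attained.

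There is no real obstacle here; the only thing to keep in mind is that the argument uses only (a) countable stability of $\dim_T$ for $F_\sigma$ sets, proved in the Preliminaries, and (b) the monotonicity built into the definition of a notion of dimension, which applies uniformly to $\dim_T$, $\dim_H$, and $\dim_P$.
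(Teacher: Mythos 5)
Your proposal is correct and follows essentially the same route as the paper: take a countable union of near-optimal $F_\sigma$ witnesses, verify $\dim_T F\leq n-1$ via the countable stability of topological dimension for $F_\sigma$ sets, and conclude with the monotonicity of $\dim_{*}$. The separate handling of the cases $d_{*}^n(X)=\infty$ and $d_{*}^n(X)=-1$ is fine but not needed, since the same union-and-monotonicity argument covers them.
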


\begin{proof} Assume that $F_i\subset X$ are $F_{\sigma}$-sets such that $\dim_T F_i\leq n-1$ for all $i\in \NN^+$ and
$\dim_{*} (X\setminus F_i)\to d^n_{*} (X)$ as $i\to \infty$. Let $F=\bigcup_{i=1}^{\infty} F_i$, then $F$ is $F_{\sigma}$, too.
As $X$ is separable, the countable stability of topological dimension for $F_{\sigma}$
sets implies that $\dim_T F\leq n-1$.
The monotonicity of $\dim_{*}$ yields that
\[ \dim_{*} (X\setminus F)\leq \inf_{i\geq 1} \dim_{*}(X\setminus F_i)=d_{*}^n(X), \]
which completes the proof.
\end{proof}

\begin{fact} \label{f:equiv} Let $X$ be a separable metric space, let $\dim_{*}$ be a notion of dimension, and let $n\in \NN^+$. Then
\[ d^{n}_{*}(X)=-1 \Longleftrightarrow \dim_T X\leq n-1 \quad \textrm{and} \quad d^{n}_{*}(X)\geq 0 \Longleftrightarrow \dim_T X\geq n.\]
\end{fact}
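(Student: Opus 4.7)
The plan is to observe first that $d_*^n(X)$ can only take the value $-1$ or a value at least $0$, and then to reduce both equivalences to checking when the value $-1$ is attained. Concretely, if $F$ is an $F_\sigma$ set witnessing a finite value of $d_*^n(X)$, then $\dim_*(X\setminus F)$ is either $-1$ (when $X\setminus F=\emptyset$) or at least $0$; for in the latter case any singleton $\{x\}\subset X\setminus F$ together with monotonicity and $\dim_*\{x\}=0$ forces $\dim_*(X\setminus F)\geq 0$. So the range of $d_*^n$ lies in $\{-1\}\cup[0,\infty]$, and the two equivalences are contrapositives of each other modulo the fact that $\dim_T$ is integer-valued, so $\dim_T X>n-1$ is the same as $\dim_T X\geq n$.

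Next I would verify the first equivalence directly. For the forward direction of $\dim_T X\leq n-1\Rightarrow d_*^n(X)=-1$, I take $F=X$, which is trivially $F_\sigma$ in itself, has $\dim_T F\leq n-1$ by hypothesis, and produces $X\setminus F=\emptyset$ with $\dim_*(\emptyset)=-1$. Combined with the obvious lower bound $d_*^n(X)\geq -1$ (again from $\dim_*\emptyset=-1$ and monotonicity), this gives equality.

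For the converse, I invoke Fact~\ref{f:inf}, which guarantees that the infimum in the definition of $d_*^n$ is attained: if $d_*^n(X)=-1$, there is an $F_\sigma$ set $F\subset X$ with $\dim_T F\leq n-1$ and $\dim_*(X\setminus F)=-1$. By the observation in the first paragraph, $\dim_*(X\setminus F)=-1$ forces $X\setminus F=\emptyset$, hence $X=F$ is itself $F_\sigma$ with $\dim_T X\leq n-1$.

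The second equivalence now follows formally. Since $d_*^n(X)\in\{-1\}\cup[0,\infty]$, the condition $d_*^n(X)\geq 0$ is the negation of $d_*^n(X)=-1$; by the first equivalence this is the negation of $\dim_T X\leq n-1$, that is, $\dim_T X\geq n$. There is no real obstacle here: the statement is a book-keeping consequence of the definition of $d_*^n$, the axioms of a notion of dimension, and Fact~\ref{f:inf}, which supplies the one non-formal ingredient by letting us pick an actual optimizer $F$.
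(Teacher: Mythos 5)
Your proposal is correct, and since the paper states Fact~\ref{f:equiv} without proof, your argument is exactly the routine verification from the definition that it intends: the range of $d^n_{*}$ lies in $\{-1\}\cup[0,\infty]$ because $\dim_{*}(X\setminus F)$ is $-1$ or, via a singleton and monotonicity, at least $0$; taking $F=X$ gives one direction, and an optimizing $F$ gives the other, with the second equivalence following formally. The only remark worth making is that the appeal to Fact~\ref{f:inf} is not even needed: an infimum equal to $-1$ over values lying in $\{-1\}\cup[0,\infty]$ is automatically attained.
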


The following fact easily follows from the definition of $d^n_{*}$.

\begin{fact}[Monotonicity] Let $\dim_{*}$ be a notion of dimension and let $X\subset Y$ be separable metric spaces.
Then $d^n_{*}(X)\leq d^n_{*}(Y)$ for all $n\in \NN^+$.
\end{fact}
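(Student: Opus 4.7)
The plan is essentially a one-line unpacking of the definitions, so I would phrase it as a direct comparison between the two infima. Fix $n \in \NN^+$ and separable metric spaces $X \subset Y$. The goal is to take any admissible $F_\sigma$ set $G \subset Y$ witnessing the infimum defining $d^n_*(Y)$ and produce from it an admissible $F_\sigma$ set $F \subset X$ whose complement in $X$ has $\dim_*$ no larger than $\dim_*(Y \setminus G)$. The natural choice is $F := G \cap X$.

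I would then verify the three requirements in order. First, $F$ is $F_\sigma$ in $X$: writing $G = \bigcup_{i=1}^\infty G_i$ with each $G_i$ closed in $Y$, each $G_i \cap X$ is closed in the subspace $X$, so $F = \bigcup_{i=1}^\infty (G_i \cap X)$ is $F_\sigma$ in $X$. Second, by the monotonicity of topological dimension (applied in the subspace $X$ using $F \subset G$ viewed as subsets of $Y$, then passing to $X$), $\dim_T F \leq \dim_T G \leq n-1$. Third, $X \setminus F = X \setminus (G \cap X) = X \setminus G \subset Y \setminus G$, so the monotonicity axiom of the notion of dimension yields $\dim_*(X \setminus F) \leq \dim_*(Y \setminus G)$.

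Combining these, $d^n_*(X) \leq \dim_*(X \setminus F) \leq \dim_*(Y \setminus G)$, and taking the infimum over all admissible $G \subset Y$ gives $d^n_*(X) \leq d^n_*(Y)$.

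There is no real obstacle: the only thing to check carefully is that the three basic properties of $F = G \cap X$ (being $F_\sigma$ in $X$, having $\dim_T \leq n-1$, and yielding the correct complement) are exactly the monotonicity of the axioms already listed for a notion of dimension plus the trivial hereditary behavior of $F_\sigma$ sets under taking subspaces. In particular, no appeal to Fact \ref{f:inf} or the enlargement theorem is needed.
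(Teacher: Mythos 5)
Your argument is correct: restricting an admissible witness $G\subset Y$ to $F=G\cap X$ preserves the $F_\sigma$ property, the bound $\dim_T F\leq n-1$, and gives $X\setminus F\subset Y\setminus G$, so the monotonicity axiom of $\dim_{*}$ yields the inequality upon taking infima. This is exactly the routine verification the paper has in mind when it states that the fact ``easily follows from the definition,'' so there is nothing to add.
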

For the following theorem see \cite[Theorem~VII~2.]{HW} and \cite{F}.

\begin{theorem} \label{t:<} For every separable metric space $X$ we have
\[ \dim_T X\leq \dim_H X\leq \dim_P X. \]
\end{theorem}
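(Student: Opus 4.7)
The statement splits into two independent classical inequalities, $\dim_T X \leq \dim_H X$ and $\dim_H X \leq \dim_P X$; I would prove them separately. The second is a direct measure comparison, while the first is a theorem of Szpilrajn (see \cite{HW}), to be proved by induction using a slicing argument for spheres.

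For $\dim_H X \leq \dim_P X$, I would fix any $s > \dim_P X$ and aim for $\iH^s(X) = 0$. By the definition of $\iP^s$, I can decompose $X = \bigcup_i X_i$ with $\sum_i P_0^s(X_i)$ arbitrarily small. The key lemma, which I would prove separately, is the pointwise estimate $\iH^s(A) \leq C_s \, P_0^s(A)$ for every set $A$ and every $s\geq 0$: given a maximal $\delta$-packing $\{B(x_i, r_i)\}_i$ of $A$, the standard $5r$-covering theorem says that $\{B(x_i, 5 r_i)\}_i$ covers $A$, so $\iH^s_{10\delta}(A) \leq 10^s \sum_i r_i^s \leq 10^s P_\delta^s(A)$; letting $\delta \to 0$ yields the estimate. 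Countable subadditivity of $\iH^s$ then gives $\iH^s(X) \leq C_s \iP^s(X) = 0$, hence $\dim_H X \leq s$, and $s \downarrow \dim_P X$ finishes this direction.

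For $\dim_T X \leq \dim_H X$ I would induct on $n \in \NN$, showing the implication $\dim_H X < n+1 \Longrightarrow \dim_T X \leq n$. The crucial ingredient is a slicing inequality for the $1$-Lipschitz distance function $d_x(y) = \rho(x, y)$ with basepoint $x \in X$:
\[ \int_0^{\infty} \iH^{s-1}\bigl(d_x^{-1}(t)\bigr)\, dt \leq C\, \iH^s(X), \]
which I would verify directly from the definition of $\iH^s$ using that $d_x$ is $1$-Lipschitz and therefore does not increase diameters. Fixing $s \in (\dim_H X,\, n+1)$, this inequality produces, for every $x \in X$ and every $r > 0$, some $t \in (r/2,\, r)$ with $\dim_H d_x^{-1}(t) \leq s - 1 < n$; the inductive hypothesis then yields $\dim_T \partial U(x, t) \leq \dim_T d_x^{-1}(t) \leq n - 1$. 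Collecting such $U(x,t)$ over all $x$ and arbitrarily small $r$ supplies a basis of $X$ with $(n-1)$-dimensional boundaries, so $\dim_T X \leq n$ by definition of $\dim_T$. The base case $n = 0$ is the same argument with $s < 1$: almost every small sphere around $x$ has $\iH^0 = 0$, i.e.\ is empty, and the corresponding balls form a basis of clopen sets, forcing $\dim_T X = 0$.

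The main obstacle will be the slicing inequality. In Euclidean space it is standard coarea, but in a general metric space it must be established by hand; fortunately the $1$-Lipschitz property of $d_x$ reduces it to a short direct computation from the cover definition of $\iH^s$, and the whole argument can be read off from \cite{HW} and \cite{F}. Everything else is routine and uses only properties of $\dim_T$ already recorded in Section~\ref{s:pre}.
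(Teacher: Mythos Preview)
The paper does not prove Theorem~\ref{t:<}; it simply cites \cite[Theorem~VII~2.]{HW} for $\dim_T\le\dim_H$ and \cite{F} for $\dim_H\le\dim_P$. Your outline reproduces exactly the standard arguments found in those references: Szpilrajn's induction via the Eilenberg-type inequality $\int_0^\infty \iH^{s-1}(d_x^{-1}(t))\,dt\le \iH^s(X)$ for the first inequality, and the elementary comparison $\iH^s\lesssim P_0^s$ for the second. So there is nothing to compare---you are supplying what the paper takes as known, and the plan is sound.

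Two small cosmetic points. In the base case $n=0$ you should take $s=1$, not $s<1$: from $\dim_H X<1$ you get $\iH^1(X)=0$, and the slicing inequality with $s=1$ then gives $\iH^0(d_x^{-1}(t))=0$ for a.e.\ $t$, i.e.\ empty spheres; as written, $\iH^{s-1}$ with $s<1$ would have a negative exponent. For the packing comparison, the $5r$-covering lemma is a statement about covers, not packings; the clean route is to apply it to the cover $\{B(a,\delta):a\in A\}$, extract a disjoint subfamily (a $\delta$-packing with common radius $\delta$) whose $5\delta$-enlargements cover $A$, and read off $\iH^s_{10\delta}(A)\le 10^s P^s_\delta(A)$ directly. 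Neither point affects correctness.
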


The above theorem immediately implies the following.

\begin{fact} For every separable metric space $X$ and $n\in \NN^+$ we have
\[ d^n_{T}(X)\leq d^n_{H}(X)\leq d^n_{P}(X).\]
\end{fact}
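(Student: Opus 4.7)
The plan is entirely straightforward. The key structural observation is that the three quantities $d^n_T(X)$, $d^n_H(X)$, and $d^n_P(X)$ are each defined as an infimum over exactly the same collection of ``allowed'' sets, namely
\[
\iF_n(X) = \{F \subset X : F \text{ is } F_{\sigma} \text{ with } \dim_T F \leq n-1\},
\]
because the constraint on the admissible $F$ depends only on $\dim_T$, not on the dimension $\dim_{*}$ being infimized. So the only work is to compare the three dimensions of the complement $X\setminus F$ for a single fixed $F$.

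First I would fix an arbitrary $F\in \iF_n(X)$. Since $X$ is separable, so is $X\setminus F$, so Theorem~\ref{t:<} applies and yields
\[
\dim_T(X\setminus F) \;\leq\; \dim_H(X\setminus F) \;\leq\; \dim_P(X\setminus F).
\]
Next I would take the infimum over $F \in \iF_n(X)$ on all three sides simultaneously. Since the inequality holds pointwise on a common index set, passing to infima preserves it, giving $d^n_T(X) \leq d^n_H(X) \leq d^n_P(X)$.

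I do not expect any genuine obstacle. The only boundary case to mention is $\dim_T X \leq n-1$, in which $X$ itself lies in $\iF_n(X)$; then Fact~\ref{f:equiv} (or directly the convention $\dim_{*}\emptyset = -1$) makes all three quantities equal to $-1$, and the chain holds trivially. Otherwise the argument is a one-line consequence of Theorem~\ref{t:<} together with monotonicity of the infimum.
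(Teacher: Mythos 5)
Your proposal is correct and matches the paper's approach: the paper states that Theorem~\ref{t:<} immediately implies this Fact, and your argument --- apply $\dim_T \leq \dim_H \leq \dim_P$ to $X\setminus F$ for each admissible $F_\sigma$ set $F$ (the same index family for all three quantities) and pass to infima --- is exactly the intended one-line justification. The boundary remark is harmless but unnecessary, since $F=\emptyset$ is always admissible and the pointwise inequality already covers every case.
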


We are able to explicitly calculate $d^n_T(X)$.

\begin{theorem} \label{t:dtn} If $n\in \NN^+$ and $X$ is a separable metric space with $\dim_T X \geq n$ then
\[ d_{T}^n (X)=\dim_T X-n. \]
\end{theorem}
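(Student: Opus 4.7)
The plan is to prove the two inequalities separately; set $m:=\dim_T X$.

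For the lower bound $d_T^n(X)\geq m-n$, I would apply the Addition theorem directly: given any admissible $F$ (i.e., any $F_\sigma$ set with $\dim_T F\leq n-1$), the decomposition $X=F\cup(X\setminus F)$ yields $m\leq \dim_T F+\dim_T(X\setminus F)+1\leq n+\dim_T(X\setminus F)$, so $\dim_T(X\setminus F)\geq m-n$. Taking the infimum over $F$ gives $d_T^n(X)\geq m-n$. This argument already covers the case $m=\infty$, since it forces $\dim_T(X\setminus F)=\infty$, whence $d_T^n(X)=\infty=m-n$ under the convention $\infty-n=\infty$.

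For the upper bound with $m<\infty$, I must exhibit a single admissible witness $F$ with $\dim_T(X\setminus F)\leq m-n$. The natural candidate comes from the Decomposition theorem: write $X=Z_1\cup\dots\cup Z_{m+1}$ with $\dim_T Z_i\leq 0$, and let $B:=Z_{n+1}\cup\dots\cup Z_{m+1}$. Iterating the Addition theorem gives $\dim_T B\leq m-n$ and $\dim_T(Z_1\cup\dots\cup Z_n)\leq n-1$.

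The main obstacle is that the pieces $Z_i$ produced by the Decomposition theorem need not be $F_\sigma$, so the complement $X\setminus B\subset Z_1\cup\dots\cup Z_n$ is not in general $F_\sigma$ and cannot serve as $F$ in Definition~\ref{d:dn}. I would repair this by invoking the Enlargement theorem to find a $G_\delta$ hull $H\supset B$ in $X$ with $\dim_T H=\dim_T B\leq m-n$, and setting $F:=X\setminus H$. Then $F$ is $F_\sigma$ by construction; since $F\subset X\setminus B\subset Z_1\cup\dots\cup Z_n$, monotonicity of topological dimension together with the bound above gives $\dim_T F\leq n-1$; and $X\setminus F=H$ has $\dim_T\leq m-n$. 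Hence $F$ is an admissible witness achieving the desired estimate, and $d_T^n(X)\leq m-n$.
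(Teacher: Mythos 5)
Your proof is correct and follows essentially the same route as the paper: the lower bound via the Addition theorem applied to $X=F\cup(X\setminus F)$, and the upper bound via the Decomposition theorem combined with a $G_{\delta}$ hull (Enlargement theorem) of $Z_{n+1}\cup\dots\cup Z_{m+1}$, taking $F$ to be the complement of that hull. The only cosmetic differences are that you take the infimum over all admissible $F$ instead of invoking Fact~\ref{f:inf}, and you spell out the trivial $\dim_T X=\infty$ case explicitly.
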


\begin{proof} By Fact~\ref{f:inf} there exists an $F_{\sigma}$ set $F\subset X$ such that $\dim_T F\leq n-1$ and $d^n_{T}(X)=\dim_T (X\setminus F)$.
By the addition theorem we have
\[ \dim_T X\leq \dim_T (X\setminus F)+\dim_T F+1\leq d^{n}_T(X)+n. \]
Hence $d^{n}_T(X)\geq \dim_T X-n$.

For the other direction let $\dim_T X=d$. The decomposition theorem implies that
$X$ can be represented as a union $X=Z_1\cup \dots \cup Z_{d+1}$, where $\dim_T Z_i\leq 0$ for all $1\leq i\leq d+1$.
As topological dimension admits $G_{\delta}$ hulls,
there exists a $G_{\delta}$ set $G\subset X$ such that $Z_{n+1}\cup \dots \cup Z_{d+1}\subset G$ and
$\dim_T G=\dim_T(Z_{n+1}\cup \dots \cup Z_{d+1})$. Then $F=X\setminus G$ is an $F_{\sigma}$ set. Monotonicity and the addition theorem yield that
\[ \dim_T F\leq \dim_T(Z_1\cup \dots \cup Z_n)\leq n-1. \]
Similarly, we have
\[ \dim_T(X\setminus F)=\dim_T G=\dim_T(Z_{n+1}\cup \dots \cup Z_{d+1})\leq d-n=\dim_T X-n. \]
Therefore $F$ witnesses that $d^{n}_T(X)\leq \dim_T X-n$.
\end{proof}

Theorem~\ref{t:equ} and \cite[Theorem~3.5]{B} imply an upper estimate for $d^n_H(X)$.

\begin{theorem} \label{t:dHn} If $n\in \NN^+$ and $X$ is a separable metric space with $\dim_T X\geq n$ then
\[ d^n_H(X)\leq \dim_H X-n. \]
\end{theorem}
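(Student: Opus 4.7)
The plan is essentially a one-line combination of the two tools cited. Since $\dim_T X \geq n$, Theorem~\ref{t:equ} gives the identity
\[
d^n_H(X) = \dim^n_{TH} X - n.
\]
The content of \cite[Theorem~3.5]{B} is the universal upper bound $\dim^n_{TH} X \leq \dim_H X$, valid for every separable metric space $X$. Substituting this bound into the displayed equality immediately yields $d^n_H(X) \leq \dim_H X - n$, which is Theorem~\ref{t:dHn}.

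The only nontrivial input is the cited inequality $\dim^n_{TH} X \leq \dim_H X$, which I would indicate is proved by induction on $n$. The case $n = 0$ is the definitional equality $\dim^0_{TH} X = \dim_H X$. For the inductive step, fix any $s > \dim_H X$, so that $\iH^s(X) = 0$. For each $x \in X$ the function $y \mapsto \rho(x,y)$ is $1$-Lipschitz, and a Fubini/coarea estimate for $\iH^s$ under Lipschitz maps shows that for Lebesgue-almost every $r>0$ the sphere $\partial B(x,r) \cap X$ has $\iH^{s-1}$-measure zero, hence $\dim_H \partial B(x,r) \leq s-1$; by the inductive hypothesis applied to this sphere one then has $\dim^{n-1}_{TH} \partial B(x,r) \leq s - 1$ as well. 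Picking countably many centers $x$ whose balls form a base of the separable space $X$, together with a sequence of good radii shrinking to $0$ at each center, produces a basis witnessing $\dim^n_{TH} X \leq s$. Letting $s \downarrow \dim_H X$ closes the induction.

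The only real obstacle in this paper is verifying that the hypothesis $\dim_T X \geq n$ needed to invoke Theorem~\ref{t:equ} is precisely the hypothesis of Theorem~\ref{t:dHn}, which is immediate; all of the heavy lifting has been relegated to \cite{B}, so the argument is genuinely just a substitution.
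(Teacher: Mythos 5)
Your argument is exactly the paper's: Theorem~\ref{t:equ} gives $d^n_H(X)=\dim^n_{TH}X-n$ under the hypothesis $\dim_T X\geq n$, and the bound $\dim^n_{TH}X\leq \dim_H X$ from \cite[Theorem~3.5]{B} is substituted in, which is all the paper does. Your appended sketch of how the cited inequality is proved (induction on $n$ plus the Eilenberg/coarea-type estimate for level sets of the distance function) matches the standard argument in \cite{B} and is not needed for this theorem.
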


By the product of two metric spaces $(X,d_X)$ and $(Y,d_Y)$ we mean the $\ell^2$-product
\[ d_{X\times Y}((x_{1},y_{1}),(x_{2},y_{2}))= \sqrt{d^{2}_{X}(x_1,x_2)+d^{2}_{Y}(y_1,y_2)}. \]
It turns out that the upper estimate of
Theorem~\ref{t:dHn} is sharp for product sets of the form
$X\times [0,1]^n$. For the following lemma see \cite[Lemma~5.2]{B}.

\begin{lemma} \label{l:prodH} Let $X$ be a non-empty separable metric space and let $n\in \mathbb{N}^+$. Then
\[ d^n_{H}(X \times [0,1]^n)=\dim_{H} X. \]
\end{lemma}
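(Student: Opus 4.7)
The plan is to establish the two inequalities $d_H^n(X\times[0,1]^n)\leq \dim_H X$ and $d_H^n(X\times[0,1]^n)\geq \dim_H X$ separately. For the upper bound, since $X$ is non-empty, any $x_0\in X$ gives an isometric copy $\{x_0\}\times[0,1]^n$ of $[0,1]^n$ inside $X\times[0,1]^n$, so $\dim_T(X\times[0,1]^n)\geq n$ and Theorem~\ref{t:dHn} applies to yield $d_H^n(X\times[0,1]^n)\leq \dim_H(X\times[0,1]^n)-n$. The standard product-cover bound $\dim_H(X\times[0,1]^n)\leq \dim_H X+n$, which follows from the finiteness of $\iH^n$ on $[0,1]^n$, then gives $d_H^n(X\times[0,1]^n)\leq \dim_H X$.

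For the lower bound I fix an arbitrary $F_\sigma$ set $F\subset X\times [0,1]^n$ with $\dim_T F\leq n-1$ and show directly that $E:=(X\times[0,1]^n)\setminus F$ satisfies $\dim_H E\geq \dim_H X$. The crucial observation is that $F$ cannot contain an entire vertical fiber $\{x\}\times [0,1]^n$: such a fiber is homeomorphic to $[0,1]^n$ and has topological dimension exactly $n$, so its inclusion in $F$ would contradict $\dim_T F\leq n-1$ by monotonicity. Consequently $E\cap(\{x\}\times[0,1]^n)\neq \emptyset$ for every $x\in X$, which means that the projection $\pi\colon X\times[0,1]^n\to X$ maps $E$ \emph{onto} $X$. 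Since $\pi$ is $1$-Lipschitz for the product metric and Lipschitz maps do not increase Hausdorff dimension, we conclude
\[\dim_H E\geq \dim_H \pi(E)=\dim_H X.\]
Taking the infimum over all admissible $F$ completes the lower bound.

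The argument is very short once the vertical-fiber observation is made, and the only conceptual point to pin down carefully is the translation from the topological hypothesis on $F$ to the geometric non-emptiness of horizontal sections of $E$. This translation is immediate from monotonicity of $\dim_T$, so I do not anticipate any genuine obstacle.
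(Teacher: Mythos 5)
Your argument is correct. Note that the paper does not actually prove Lemma~\ref{l:prodH}: it is quoted from \cite[Lemma~5.2]{B}, so there is no in-text proof to compare with, and your write-up functions as an independent, self-contained argument. Its essential content is the lower bound, which is clean and even slightly stronger than needed: a set $F$ with $\dim_T F\leq n-1$ cannot contain any fiber $\{x\}\times[0,1]^n$ (monotonicity of $\dim_T$ together with $\dim_T[0,1]^n=n$), so the complement meets every fiber, is mapped onto $X$ by the $1$-Lipschitz coordinate projection, and therefore has Hausdorff dimension at least $\dim_H X$; this part does not even use that $F$ is $F_\sigma$. The upper bound via Theorem~\ref{t:dHn} is legitimate (that theorem precedes the lemma and does not rely on it), but your one-line justification of the product inequality should be tightened: $\dim_H(X\times[0,1]^n)\leq \dim_H X+n$ is not literally a consequence of the finiteness of $\iH^n([0,1]^n)$. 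The standard fact to invoke is $\dim_H(A\times Q)\leq \dim_H A+\overline{\dim}_B Q$ (Falconer's product inequality), with the upper box dimension $\overline{\dim}_B[0,1]^n=n$; alternatively, argue directly by covering $X$ with sets $A_i$ satisfying $\sum_i (\diam A_i)^{s}<\eps$ and splitting each slab $A_i\times[0,1]^n$ into roughly $(\diam A_i)^{-n}$ product pieces of diameter comparable to $\diam A_i$, which shows $\iH^{s+n}(X\times[0,1]^n)$ is controlled by $\iH^{s}(X)$. With that reference or two-line computation supplied, the proof is complete.
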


For $d^n_P(X)$ there is no non-trivial upper bound, $d^n_P(X)=\dim_P X$ is possible.
We show that this is the case for product sets of the form $X\times [0,1]^n$.

\begin{lemma} \label{l:prodP} Let $X$ be a non-empty Polish space and let $n\in \mathbb{N}^+$. Then
\[ d^n_{P}(X \times [0,1]^n)=\dim_P(X\times [0,1]^n)=\dim_{P} X+n. \]
\end{lemma}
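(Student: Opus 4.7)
The second equality $\dim_P(X\times[0,1]^n)=\dim_P X+n$ is exactly Lemma~\ref{l:dimp}, and the bound $d^n_P(X\times[0,1]^n)\leq \dim_P(X\times[0,1]^n)$ is immediate upon choosing $F=\emptyset$ in the definition of $d^n_P$ (note $\dim_T\emptyset=-1\leq n-1$). The content of the lemma is the reverse inequality, so set $Y:=X\times[0,1]^n$ and fix an arbitrary $F_\sigma$ set $F\subset Y$ with $\dim_T F\leq n-1$; I will show $\dim_P(Y\setminus F)\geq \dim_P X+n$. Write $F=\bigcup_m F_m$ with $F_m$ closed, so that $\dim_T F_m\leq n-1$ by monotonicity, and let $s<\dim_P X$ be arbitrary; it suffices to produce a subset of $Y\setminus F$ of packing dimension at least $s+n$. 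Using Lemma~\ref{l:pack} on $X$, choose a closed set $C\subset X$ satisfying $\dim_P(C\cap U)>s$ for every open $U\subset X$ with $C\cap U\neq\emptyset$, and put $Z:=C\times[0,1]^n$, a closed (hence Polish) subspace of $Y$.

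The next step is to establish two properties of $Z$. First, for every non-empty open $W\subset Z$ one has $\dim_P W>s+n$: such a $W$ contains a basic open set $(C\cap U)\times V$ with $U\subset X$ and $V\subset[0,1]^n$ open and $C\cap U\neq\emptyset$, and $V$ contains a non-degenerate closed cube $Q$; by the bi-Lipschitz invariance of packing dimension together with Lemma~\ref{l:dimp} one gets $\dim_P((C\cap U)\times Q)=\dim_P(C\cap U)+n>s+n$. Second, $F\cap Z$ is meager in $Z$: each $F_m\cap Z$ is closed in $Z$, and if it had non-empty interior the argument just given would produce a product $(C\cap U)\times Q\subset F_m$, whence $\dim_T F_m\geq \dim_T(\{c\}\times Q)=n$ for any $c\in C\cap U$, contradicting $\dim_T F_m\leq n-1$. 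Thus $G:=Z\setminus F$ is a dense $G_\delta$ subset of $Z$.

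The heart of the argument is then the following lemma, which I would prove separately: if $Z$ is a Polish space with $\dim_P W\geq t$ for every non-empty open $W\subset Z$, and $G\subset Z$ is a dense $G_\delta$, then $\dim_P G\geq t$. To prove it I would argue by contradiction. Suppose $\dim_P G<t'<t$; then $\iP^{t'}(G)<\infty$, so there is a cover $G\subset \bigcup_i A_i$ with $\sum_i P_0^{t'}(A_i)<\infty$. The gauge $r\mapsto r^{t'}$ is left-continuous, so Fact~\ref{f:close} gives $P_0^{t'}(\cl A_i)=P_0^{t'}(A_i)<\infty$, and the closures $\cl A_i$ (taken in $Z$) still cover $G$. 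Since $Z\setminus G$ is meager, the closed sets $\cl A_i$ cover a comeager subset of the Polish space $Z$, so Baire category forces some $\cl A_{i_0}$ to have non-empty interior $W\subset Z$. The hypothesis gives $\dim_P \cl A_{i_0}\geq \dim_P W\geq t>t'$, whereas $\iP^{t'}(\cl A_{i_0})\leq P_0^{t'}(\cl A_{i_0})<\infty$ combined with the monotonicity of $s\mapsto \iP^s$ yields $\dim_P \cl A_{i_0}\leq t'$, a contradiction.

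Applying this lemma with $t=s+n$ to the dense $G_\delta$ set $G\subset Z$ yields $\dim_P(Y\setminus F)\geq \dim_P G\geq s+n$, and letting $s\nearrow \dim_P X$ and $F$ vary finishes the proof. The main (though mild) obstacle is the general lemma on dense $G_\delta$ subsets of ``packing-regular'' Polish spaces; unlike Hausdorff dimension, packing dimension is not a priori preserved by passing to a comeager subset, and the proof requires the closure-invariance of $P_0^h$ for left-continuous gauges together with a Baire-category extraction of a set with non-empty interior.
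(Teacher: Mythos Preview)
Your proof is correct and follows essentially the same route as the paper: both pass via Lemma~\ref{l:pack} to a closed $C\subset X$ on which packing dimension is uniformly large, show every non-empty open subset of $C\times[0,1]^n$ has packing dimension exceeding $s+n$, argue that $F$ is meager there, and then use Baire category together with Fact~\ref{f:close} to conclude. The only notable difference is that you establish meagerness of $F\cap Z$ by a direct dimension-counting argument on the closed pieces $F_m$, whereas the paper invokes the Kuratowski--Ulam theorem; your route is slightly more elementary at that step, and your packaging of the Baire argument as a standalone lemma about dense $G_\delta$ subsets of packing-regular Polish spaces is a clean abstraction of what the paper does inline.
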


\begin{proof} Lemma~\ref{l:dimp} implies that $d^n_{P}(X \times [0,1]^n)\leq \dim_P(X \times [0,1]^n)=\dim_P X+n$.

For the other direction let $Z=X\times [0,1]^n$ and let $d<\dim_P X$ be fixed,
it is enough to prove that $d^n_{P}(Z)\geq d+n$. Let $F\subset Z$ be an $F_{\sigma}$ set with
$\dim_T F\leq n-1$, it is sufficient to show that $\iP^{d+n}(Z\setminus F)=\infty$. Let $\{A_i\}_{i\geq 1}$ be an arbitrary cover of
$Z\setminus F$, it is enough to prove that \begin{equation} \label{eq:p0a} P_{0}^{d+n}(A_i)=\infty \quad \textrm{for some } i.
\end{equation} 
By Lemma~\ref{l:pack} there is a closed set $Y\subset X$ such that every non-empty relatively open set $U$ in $Y$ satisfies
$\dim_P U>d$. Now we prove that for every non-empty relatively open set $V\subset Y\times [0,1]^n$ we have 
\begin{equation} \label{eq:p0v}  P_{0}^{d+n}(V)=\infty.
\end{equation} 
Indeed, there is a relatively open set $U$ in $Y$ and a cube $Q\subset [0,1]^n$
such that $U\times Q\subset V$, so applying Lemma~\ref{l:dimp} we obtain that
\[ \dim_P V\geq \dim_P(U\times Q)=\dim_P U+n>d+n. \]
Therefore $\iP^{d+n}(V)=\infty$, so $P_{0}^{d+n}(V)=\infty$. Hence \eqref{eq:p0v} holds.

Clearly $\dim_T F\leq n-1$ yields that $F$ is meager in $\{y\}\times [0,1]^n$ for all $y\in Y$, so $F$ is meager
in $Y\times [0,1]^n$ by the Kuratowski--Ulam Theorem \cite[Theorem~8.41]{Ke}. Since $Y\times [0,1]^n$ is complete,
Baire's category theorem implies that for some $i$ the set
$A_i$ is somewhere dense in $Y\times [0,1]^n$, so there is a non-empty relatively open set
$V\subset Y\times [0,1]^n$ such that $V\subset \cl A_i$. Thus Fact~\ref{f:close} and \eqref{eq:p0v} yield that
\[ P_{0}^{d+n}(A_i)=P_{0}^{d+n}(\cl A_i)\geq P_{0}^{d+n}(V)=\infty. \]
Hence \eqref{eq:p0a} holds, and the proof is complete.
\end{proof}

\begin{theorem} \label{t:stable} Let $n\in \NN^+$ and let $\dim_{*}$ be a notion of dimension. Assume that one of the following holds:
\begin{enumerate}[(i)]
\item $\dim_{*}$ is countably stable,
\item $\dim_{*}$ is countably stable for closed sets and admits $G_{\delta}$ hulls.
\end{enumerate}
Then $d^n_{*}$ is countably stable for closed sets.
\end{theorem}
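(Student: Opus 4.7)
The plan is to prove both inequalities in
\[d^n_{*}(X) = \sup_i d^n_{*}(X_i),\]
where $X = \bigcup_{i=1}^{\infty} X_i$ with each $X_i$ closed in $X$. The direction $\sup_i d^n_{*}(X_i) \leq d^n_{*}(X)$ is immediate from the monotonicity of $d^n_{*}$, so I focus on the reverse inequality, setting $s = \sup_i d^n_{*}(X_i)$.

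First I would build a single $F_{\sigma}$ ``exceptional'' set $F \subset X$ that simultaneously witnesses the value $d^n_{*}(X_i)$ on every piece. Applying Fact~\ref{f:inf} inside each $X_i$, pick an $F_{\sigma}$ subset $F_i \subset X_i$ with $\dim_T F_i \leq n-1$ and $\dim_{*}(X_i \setminus F_i) = d^n_{*}(X_i)$. Since $X_i$ is closed in $X$, the set $F_i$ is $F_{\sigma}$ in $X$ as well, so $F := \bigcup_i F_i$ is $F_{\sigma}$ in $X$, and the countable stability of $\dim_T$ for $F_{\sigma}$ sets gives $\dim_T F \leq n-1$.

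The key step is to bound $\dim_{*}(X \setminus F)$ by $s$. Let $Y = X \setminus F$, and decompose $Y = \bigcup_i (X_i \cap Y)$. Each piece $X_i \cap Y$ is closed \emph{in $Y$} (since $X_i$ is closed in $X$), and the inclusion $F \supset F_i$ yields $X_i \cap Y \subset X_i \setminus F_i$, so monotonicity gives
\[\dim_{*}(X_i \cap Y) \leq \dim_{*}(X_i \setminus F_i) = d^n_{*}(X_i) \leq s.\]
In case (i), the full countable stability of $\dim_{*}$ yields $\dim_{*} Y \leq \sup_i \dim_{*}(X_i \cap Y) \leq s$. In case (ii), the same conclusion follows by applying the countable stability of $\dim_{*}$ for closed sets to the closed-in-$Y$ cover $\{X_i \cap Y\}_i$. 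Either way, $F$ witnesses $d^n_{*}(X) \leq \dim_{*}(X \setminus F) \leq s$.

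The subtle point to watch is that $Y$ is only a $G_{\delta}$ subset of $X$, not $F_{\sigma}$, so one cannot decompose $Y$ into sets closed in the ambient $X$; the fix is to work with the subspace topology on $Y$, where $\{X_i \cap Y\}_i$ does form a closed cover. In case (ii) the $G_{\delta}$ hull hypothesis additionally gives the equivalent characterization $d^n_{*}(X) = \inf\{\dim_{*}(X \setminus A) : \dim_T A \leq n-1\}$ (by replacing any $A$ with the complement of a $G_{\delta}$ hull of $X\setminus A$), which can streamline the choice of $F_i$, but the argument above handles both cases uniformly.
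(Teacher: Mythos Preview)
Your proof is correct and follows the same skeleton as the paper's: choose witnesses $F_i$ via Fact~\ref{f:inf}, set $F=\bigcup_i F_i$, check that $F$ is $F_\sigma$ in $X$ with $\dim_T F\le n-1$, and then bound $\dim_*(X\setminus F)$ by $s$ using countable stability. The difference lies in how case~(ii) is handled. The paper first enlarges $F$ to a $G_\delta$ set $G\supset F$ with $\dim_T G\le n-1$ (using that $\dim_T$ admits $G_\delta$ hulls), applies countable stability to the pieces $X_i\setminus G$, and then invokes the $G_\delta$-hull hypothesis on $\dim_*$ to produce a hull $H\supset X\setminus G$ whose complement is the required $F_\sigma$ witness. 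You instead observe directly that $\{X_i\cap Y\}_i$ is a \emph{closed} cover of $Y=X\setminus F$ in its subspace topology, so countable stability for closed sets already applies in $Y$ with no detour. This is both simpler and slightly stronger: your argument never uses the $G_\delta$-hull hypothesis, so it in fact shows that countable stability of $\dim_*$ for closed sets alone suffices, collapsing cases~(i) and~(ii) into one.
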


\begin{proof}
Let $X$ be a separable metric space and assume that
$X_i\subset X$ are closed such that $X=\bigcup_{i=1}^{\infty} X_i$, we have to prove that $d^n_{*}(X)=\sup_{i} d^n_{*} (X_{i})$.

Monotonicity clearly implies $d^n_{*}(X) \geq \sup_{i} d^n_{*} (X_{i})$, so it is enough to prove the other direction.
By Fact~\ref{f:inf} for each $i\in \NN^+$ there exists an $F_{\sigma}$ set $F_i\subset X_i$ such that $\dim_T F_i\leq n-1$ and
$\dim_{*}(X_i\setminus F_i)=d^n_{*}(X_i)$. Let $F=\bigcup_{i=1}^{\infty} F_i$.
As $X_i$ are closed, $F$ is $F_{\sigma}$ in $X$. As $F_i$ are $F_{\sigma}$ in $F$ as well,
the countable stability of topological dimension for $F_{\sigma}$-sets implies that $\dim_T F\leq n-1$.

First assume that $\dim_{*}$ is countably stable, then
\[ d^n_{*}(X)\leq \dim_{*}(X\setminus F)=\sup_{i\geq 1} \dim_{*}(X_i\setminus F)=\sup_{i\geq 1} d^{n}_{*} (X_i)\]
completes the proof. 

Now assume that $\dim_{*}$ is countably stable for closed sets and admits $G_{\delta}$ hulls.
As the topological dimension admits $G_{\delta}$ hulls,
there exists a $G_{\delta}$ set $G\subset X$
such that $F\subset G$ and $\dim_{T} G\leq n-1$. 
Clearly $\dim_{*}$ is countably stable for $F_{\sigma}$ sets, too. This and monotonicity yield that
\begin{equation} \label{eq:Gd1} \dim_{*} (X\setminus G)=\sup_{i\geq 1} \dim_{*} (X_i\setminus G)
\leq \sup_{i\geq 1} \dim_{*} (X_i\setminus F_i)=\sup_{i\geq 1} d^n_{*} (X_{i}).
\end{equation}
Using countable stability for $F_{\sigma}$ sets again implies that there is a $G_{\delta}$ set $H\subset X$
such that $X\setminus G\subset H$ and $\dim_{*} H=\dim_{*} (X\setminus G)$. Then
$X\setminus H$ is $F_{\sigma}$ and we have $\dim_T (X\setminus H)\leq \dim_T G\leq n-1$.
Thus the definition of $d_{*}^n(X)$ and \eqref{eq:Gd1} imply that
\begin{equation*} d_{*}^n(X)\leq \dim_{*}H=\dim_{*} (X\setminus G)\leq \sup_{i\geq 1} d^n_{*} (X_{i}).
\end{equation*}
The proof is complete.
\end{proof}

\begin{corollary} \label{c:stable} Let $n\in \NN^+$ and let $\dim_{*}$ be one of $\dim_T$, $\dim_H$, or $\dim_P$. Then
$d^{n}_{*}$ is countably stable for $F_{\sigma}$ sets.
\end{corollary}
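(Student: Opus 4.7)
The plan is to deduce the corollary from Theorem~\ref{t:stable} by first checking that each of the three dimensions $\dim_T,\dim_H,\dim_P$ satisfies at least one of the hypotheses (i), (ii) in that theorem, and then upgrading countable stability from closed covers to $F_\sigma$ covers by a routine refinement.

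First I would observe that Theorem~\ref{t:stable} applies in each case. For $\dim_H$: Hausdorff dimension is countably stable and admits $G_\delta$ hulls, so either hypothesis works. For $\dim_T$: countable stability for $F_\sigma$ sets is stated after the enlargement theorem, and $\dim_T$ admits $G_\delta$ hulls, so (ii) applies (and in particular it is countably stable for closed sets, as we need). For $\dim_P$: packing dimension is countably stable but does not admit $G_\delta$ hulls, so hypothesis (i) is the relevant one. In all three cases Theorem~\ref{t:stable} gives that $d_*^n$ is countably stable for closed covers.

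Next I would lift this to $F_\sigma$ covers. Let $X$ be a separable metric space and suppose $X=\bigcup_{i=1}^\infty X_i$ with each $X_i\subset X$ an $F_\sigma$ set. Write $X_i=\bigcup_{j=1}^\infty C_{i,j}$ with each $C_{i,j}$ closed in $X$. Then $X=\bigcup_{i,j} C_{i,j}$ is a countable union of closed subsets, so closed countable stability of $d_*^n$ gives
\[
d_*^n(X)=\sup_{i,j} d_*^n(C_{i,j}).
\]
Applying the same principle inside each $X_i$ (the sets $C_{i,j}$ are relatively closed in $X_i$, which is itself a separable metric space) yields $d_*^n(X_i)=\sup_j d_*^n(C_{i,j})$. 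Combining the two displays gives $d_*^n(X)=\sup_i d_*^n(X_i)$, together with the trivial inequality $\sup_i d_*^n(X_i)\le d_*^n(X)$ from monotonicity.

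There is no serious obstacle here; the only subtlety is checking that the hypotheses of Theorem~\ref{t:stable} are indeed verified for all three dimensions (in particular that packing dimension is taken care of by (i) since it fails $G_\delta$-hulls), and making sure the reduction from $F_\sigma$ to closed covers is performed both globally and within each $X_i$ so that monotonicity alone does not have to carry the non-trivial direction.
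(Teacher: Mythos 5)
Your proposal is correct and follows exactly the route the paper intends: the corollary is stated right after Theorem~\ref{t:stable} precisely because each of $\dim_T$, $\dim_H$, $\dim_P$ satisfies one of its hypotheses ($\dim_P$ via full countable stability, $\dim_T$ and $\dim_H$ via closed-set stability plus $G_\delta$ hulls), and the upgrade from closed covers to $F_\sigma$ covers is the routine refinement you describe. Your double application of the closed-cover case (to $X$ and within each $X_i$) is sound, so nothing is missing.
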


\section{The Main Theorem} \label{s:main}

The goal of this section is to describe the dimensions of the fibers of a generic $f\in C(K,\RR^n)$ by proving our Main Theorem.
Note that the case $\dim_T K <n$ is completed by the following theorem of Hurewicz~\cite[page~124]{Ku}.

\begin{theorem}[Hurewicz] \label{t:Hurewicz}
Let $n\in \NN^+$ and let $K$ be a compact metric space with $\dim_T K<n$. Then $\#f^{-1}(y)\leq n$
for a generic $f\in C(K,\RR^n)$ for all $y\in \mathbb{R}^n$.
\end{theorem}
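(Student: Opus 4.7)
The plan is a standard Baire category argument: express the set of bad maps as a countable union of closed sets and show each is nowhere dense via a perturbation that exploits $\dim_T K \leq n-1$.

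For each $k \in \NN^+$ let
\[ D_k = \{(x_1, \dots, x_{n+1}) \in K^{n+1} : d(x_i, x_j) \geq 1/k \textrm{ for all } i \neq j\}, \]
which is compact, and let
\[ E_k = \{f \in C(K, \RR^n) : \exists (x_1, \dots, x_{n+1}) \in D_k,\ f(x_1) = \cdots = f(x_{n+1})\}. \]
A routine compactness argument shows $E_k$ is closed in $C(K, \RR^n)$, and $\{f : \#f^{-1}(y) \geq n+1 \textrm{ for some } y\} = \bigcup_k E_k$, so it suffices to prove each $E_k$ is nowhere dense.

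Given $f_0 \in C(K, \RR^n)$ and $\eps > 0$, I would construct $g \in C(K, \RR^n) \setminus E_k$ with $\|g - f_0\|_\infty < \eps$. Using $\dim_T K \leq n-1$, pick an open cover $\iU = \{U_1, \dots, U_m\}$ of $K$ with $\mesh \iU < \delta < 1/(2k)$ and $\ord \iU \leq n-1$, together with a subordinate partition of unity $\{\phi_j\}$. Consider the perturbation
\[ g_a(x) = f_0(x) + \sum_{j=1}^m \phi_j(x) a_j \qquad (a_j \in \RR^n,\ \|a_j\| < \eps/2), \]
so that $\|g_a - f_0\|_\infty < \eps$. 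For any $(x_1, \dots, x_{n+1}) \in D_k$ the supports $S_i = \{j : \phi_j(x_i) > 0\}$ are pairwise disjoint in $\{1, \dots, m\}$: otherwise $j \in S_i \cap S_{i'}$ would give $x_i, x_{i'} \in U_j$, contradicting $d(x_i, x_{i'}) \geq 1/k > \diam U_j$. This disjointness implies the $n$ vectors $(\phi_j(x_i) - \phi_j(x_1))_{j=1}^m \in \RR^m$, $i = 2, \dots, n+1$, are linearly independent, so that the set of $a$ for which $g_a$ identifies $(x_1, \dots, x_{n+1})$ is an affine subspace of $(\RR^n)^m$ of codimension $n^2$.

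The main obstacle is to verify that the union of these affine ``bad sets'', taken over $(x_1, \dots, x_{n+1}) \in D_k$, does not cover any ball of radius $\eps/2$ around the origin in $(\RR^n)^m \cong \RR^{mn}$. The heuristic dimension count is favorable: since $\dim_T D_k \leq (n+1)(n-1) = n^2 - 1$ and each affine fiber has dimension $mn - n^2$, the bad parameter set should have topological dimension at most $mn - 1$, and any analytic subset of $\RR^{mn}$ of topological dimension strictly less than $mn$ has empty interior, producing an admissible $a$. Making this rigorous for the non-manifold $D_k$ is the classical difficulty of Hurewicz's theorem; the standard treatments (e.g.\ in Engelking's dimension theory) proceed by induction on $n$ via nerves of decreasing-order refinements, or equivalently by reducing via Theorem~\ref{t:dec} to a $0$-dimensional decomposition $K = Z_1 \cup \cdots \cup Z_n$, applying pigeonhole (any $(n+1)$-tuple has two entries in a common $Z_l$), and proving a separate ``generic injective restriction'' statement on $0$-dimensional subspaces.
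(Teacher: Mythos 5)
The reduction to showing that each closed set $E_k$ has empty interior is correct, and so is the fixed-tuple computation: for $(x_1,\dots,x_{n+1})\in D_k$ the supports are disjoint, the vectors are independent, and the bad parameters $a$ form an affine set of codimension $n^2$. But the proof stops exactly at the decisive step, and the heuristic you offer for it is not a valid argument. Even granting that the incidence set $Z=\{(x,a)\in D_k\times \RR^{mn}: g_a \textrm{ collapses } x\}$, intersected with a compact ball in the $a$-variable, has topological dimension at most $mn-1$ (this does follow from Hurewicz's theorem on dimension-lowering closed maps applied to the projection onto $D_k$), this gives no bound on the dimension of its projection to the $a$-space: topological dimension can increase under continuous maps (Peano-type phenomena), so a compact set of dimension $mn-1$ can project onto a set with non-empty interior. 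Nor can you fall back on a measure count, since $\dim_T K\leq n-1$ imposes no upper bound on the Hausdorff dimension of $K$, hence none on $D_k$. So the assertion that the union of the affine bad sets misses some admissible $a$ is precisely what remains unproven; your closing sentence defers it to the literature, which is also all the paper does (Theorem~\ref{t:Hurewicz} is quoted from Kuratowski with no proof), so the write-up does not contain a proof.

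It is telling that the hypothesis you extract from $\dim_T K\leq n-1$, namely $\ord \iU\leq n-1$, is never used: your codimension-$n^2$ computation only uses the mesh condition and would be equally available when $\dim_T K\geq n$, where the conclusion is false. The classical argument uses the order bound pointwise: each support $S_i$ has at most $n$ elements, so a collapsed $(n+1)$-tuple forces a common point of $n+1$ vertex-disjoint simplices, each on at most $n$ of the finitely many vertex images. Taking the perturbation of the special form $g(x)=\sum_{j}\phi_j(x)p_j$ with $p_j$ close to $f_0(U_j)$, the bad configurations $(p_1,\dots,p_m)\in(\RR^n)^m$ form a finite union (over the combinatorial types $T_1,\dots,T_{n+1}$ of disjoint index sets with $\#T_i\leq n$) of closed semialgebraic sets of dimension at most $mn+(n+1)(n-1)-n^2=mn-1$; for semialgebraic sets the dimension count survives projection, so a dense set of choices of $(p_j)$ avoids all of them. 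Note that the quantity $(n+1)(n-1)$ here comes from the bound $\#T_i\leq n$, not from $\dim_T D_k$, and with supports of unbounded size the count fails. The alternative route you sketch via Theorem~\ref{t:dec} also needs care: the pieces $Z_l$ cannot all be taken closed, so transferring generic injectivity to them through restriction maps (Corollary~\ref{c:R}) requires arranging a $\sigma$-compact zero-dimensional decomposition, which is an additional argument, not a remark.
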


Consider the following technical version of the Main Theorem.

\begin{theorem}[Main Theorem] \label{mainthm} Let $n\in \NN^+$ and let $K$ be a compact metric space with $\dim_T K\geq n$.
Let $\dim_{*}$ be one of $\dim_T$, $\dim_H$, or $\dim_P$. Then
there exists a $G_{\delta}$ set $G\subset K$ with $\dim_{*} G=d^n_{*}(K)$ such that
for a generic $f\in C(K,\RR^n)$ we have
\begin{enumerate}[(i)]
\item \label{a} $\#(f^{-1}(y)\setminus G)\leq n$ for all $y\in \RR^n$, thus $\dim_{*} f^{-1} (y)\leq d^n_{*}(K)$ for all $y\in \RR^n$,
\item \label{b} for every $d<d^n_{*}(K)$ there exists a non-empty open set $U_{f,d}\subset \RR^n$ such that $\dim_{*} f^{-1}(y)\geq d$ for all $y\in U_{f,d}$.
\end{enumerate}
\end{theorem}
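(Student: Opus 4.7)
The plan is to establish the upper bound (i) uniformly across the three choices of $\dim_{*}$ by combining Hurewicz's theorem with the realizer of the infimum defining $d^n_{*}(K)$, and then to treat the lower bound (ii) by three separate arguments, one per notion of dimension.

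For part (i), I would use Fact~\ref{f:inf} to fix an $F_{\sigma}$ set $F \subset K$ with $\dim_T F \leq n-1$ and $\dim_{*}(K \setminus F) = d^n_{*}(K)$, and set $G := K \setminus F$; this is a $G_{\delta}$ set with $\dim_{*} G = d^n_{*}(K)$. Writing $F$ as an increasing union $F = \bigcup_{i=1}^{\infty} F_i$ of compact sets with $\dim_T F_i \leq n-1$, Theorem~\ref{t:Hurewicz} says that a generic $g \in C(F_i,\RR^n)$ has $\#g^{-1}(y) \leq n$ for every $y$; Corollary~\ref{c:R} transfers this to $C(K,\RR^n)$, giving generically $\#(f^{-1}(y) \cap F_i) \leq n$. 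A countable intersection of co-meager sets is co-meager, and for the increasing union $f^{-1}(y) \cap F = \bigcup_i (f^{-1}(y) \cap F_i)$ we have $\#\bigcup_i S_i = \sup_i \#S_i$, so generically $\#(f^{-1}(y) \setminus G) \leq n$ holds for every $y$. Finite sets have dimension $0$ and each of $\dim_T$, $\dim_H$, $\dim_P$ is stable under finite unions, so $\dim_{*} f^{-1}(y) \leq \dim_{*} G = d^n_{*}(K)$ follows.

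Part (ii) splits along the three notions of dimension. For $\dim_T$, I would invoke Theorem~\ref{t:dtn} to rewrite $d^n_T(K) = \dim_T K - n$, and then combine the decomposition theorem and the theorem on partitions with Theorem~\ref{t:kato} (which generically forces $\dim_T f(K) \leq n$) to produce, for each $d < \dim_T K - n$, a non-empty open set of values over which $\dim_T f^{-1}(y) \geq d$. For $\dim_H$ the assertion is exactly Theorem~\ref{t:B}, which I would cite. For $\dim_P$, the strategy is to apply Theorem~\ref{t:gauge}: given $d < d^n_P(K)$, I would locate a non-empty compact $K' \subset K$ and a left-continuous gauge $h$ (essentially a modification of $r \mapsto r^d$) such that every non-empty relatively open $U \subset K'$ satisfies both $\dim_T U \geq n$ and $\iP^h(U) = \infty$. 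Theorem~\ref{t:gauge} applied to $K'$ together with Corollary~\ref{c:R} then yield a generic $f \in C(K,\RR^n)$ with $\iP^h(f^{-1}(y)) = \infty$, hence $\dim_P f^{-1}(y) \geq d$, for all $y$ in the non-empty open set $\inter f(K')$.

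The main obstacle is the packing case of (ii). Packing dimension does not admit $G_{\delta}$ hulls, so the Hausdorff-dimension argument behind Theorem~\ref{t:B} does not adapt directly, and one cannot freely swap the $F_{\sigma}$ realizer of Fact~\ref{f:inf} for a $G_{\delta}$ complement controlling $\dim_P$. The delicate step will be producing the compact localization $K'$ with $\dim_T U \geq n$ and $\iP^h(U) = \infty$ on every relatively open $U$; this will rest on Lemma~\ref{l:pack} (to arrange packing dimension uniformly large on every relatively open piece of a closed subset), Lemma~\ref{l:int} (to secure the topological-dimension lower bound by passing to the complement of the maximal low-dimensional open set), and Fact~\ref{f:close} (to convert premeasure information $P_0^h$ into measure information $\iP^h$). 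Once $K'$ and $h$ are arranged, Theorem~\ref{t:gauge} performs the heavy lifting.
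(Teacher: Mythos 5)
Your part (i) follows the paper exactly (Fact~\ref{f:inf} to get $G=K\setminus F$, then Theorem~\ref{t:Hurewicz} transferred by Corollary~\ref{c:R}, which is the paper's Corollary~\ref{c:Hur}), and citing Theorem~\ref{t:B} for the Hausdorff case of (ii) is precisely what the paper does; only note that ``$\dim_T$ is stable under finite unions'' is false in general ($\QQ\cup(\RR\setminus\QQ)$), though the step you need --- finitely many extra points do not raise dimension above $\max\{\dim_T,0\}$ --- is true and is asserted by the paper at the same level of detail. The genuine problems are in the remaining two cases of (ii). For topological dimension you only name tools (decomposition theorem, theorem on partitions, Theorem~\ref{t:kato}) without a mechanism that produces a non-empty open set $U_{f,d}$ of values over which \emph{all} fibers have $\dim_T\geq d$. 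Those tools, pushed in the natural direction (a Hurewicz-type dimension-lowering inequality, using $\dim_T f(K)\leq n$ since $f(K)\subset\RR^n$), give at most one large fiber, i.e.\ a statement about $\sup_y\dim_T f^{-1}(y)$, which is Theorem~\ref{t:max}-type information, not statement (ii). The paper's proof of Theorem~\ref{t:top} requires an additional idea absent from your plan: it introduces the class $\mathcal{N}^n(d)$ of $d$-level narrow maps (maps with a dense set of levels whose fibers have $\dim_T\leq d$) and shows via Theorem~\ref{t:3eq} that $\mathcal{N}^n(d)$ is nowhere dense whenever $d<d^n_T(K)$; the nontrivial inclusion $N_n\subset D_n$ is proved by taking any $f$ in a ball where $\mathcal{N}^n(d)$ is dense, perturbing it by Tietze's theorem to a narrow $g$ whose coordinates separate prescribed pairs $(A_i,B_i)$, choosing a level $s\in S_g$ in the separating gaps so that $\bigcap_{i=1}^{n}P_i=g^{-1}(s)$ has $\dim_T\leq d$, and then applying the separation theorem $d+1$ more times to contradict $\dim_T K>n+d$ through the theorem on partitions. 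Kato's theorem plays no role there, and some Baire-category argument of this kind is indispensable for getting the open set of values.

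For packing dimension your architecture (localize, apply Theorem~\ref{t:gauge} with $h(x)=x^d$, finish with Corollary~\ref{c:R} and Theorem~\ref{t:int}) is the paper's, but the proposed construction of the localization $K'$ from Lemma~\ref{l:pack} and Lemma~\ref{l:int} does not work. Applying Lemma~\ref{l:pack} first (with $s=d$) may return a Cantor-type closed set of topological dimension $0$; then Lemma~\ref{l:int} inside it can yield the empty set, and even when it does not, deleting the maximal relatively open set of topological dimension $<n$ can destroy the local packing bound, which may be carried entirely by the deleted part. In the reverse order, the closed set produced by Lemma~\ref{l:pack} inside $C=K\setminus V$ need not have topological dimension $\geq n$ on any relatively open piece, and alternating the two lemmas has no reason to terminate with a non-empty set enjoying both properties. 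The point you miss is that the localization must be compatible with $d^n_P$ itself --- with the removal of an $F_\sigma$ set of topological dimension $\leq n-1$ --- not with $\dim_P$ and $\dim_T$ separately. This is exactly the content of the paper's Lemma~\ref{l:dn}: call an open set $W$ bad if some $F_\sigma$ set $F\subset W$ with $\dim_T F\leq n-1$ satisfies $\dim_P(W\setminus F)\leq d$; countable stability of $\dim_T$ for $F_\sigma$ sets and of $\dim_P$, together with the Lindel\"of property, show the union $V$ of all bad sets is bad, so $d<d^n_P(K)$ forces $C=K\setminus V\neq\emptyset$, and every open $U$ meeting $C$ satisfies $\dim_T(C\cap U)\geq n$ and $\dim_P(C\cap U)>d$, whence $\iP^d(C\cap U)=\infty$. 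Without this lemma (or an equivalent substitute) the packing half of (ii) is not proved; note that Lemma~\ref{l:pack} and Fact~\ref{f:close} are used in the paper only for the product computation in Lemma~\ref{l:prodP}, not in this localization.
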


\begin{corollary} \label{c:main1}
Let $K$ be a compact metric space with $\dim_T K\geq n$. Let $\dim_{*}$ be one of $\dim_T$, $\dim_H$, or $\dim_P$.
Then for a generic $f\in C(K,\RR^n)$ we have
\[ \sup\{\dim_{*}f^{-1}(y): y\in \mathbb{R}^n\}=d^n_{*}(K).\]
\end{corollary}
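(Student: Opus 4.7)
The plan is to derive Corollary~\ref{c:main1} as a direct consequence of the Main Theorem (Theorem~\ref{mainthm}), where the only subtlety is handling the two directions separately and then intersecting the relevant co-meager families of maps.

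First, the upper bound is immediate: by part~\eqref{a} of Theorem~\ref{mainthm}, for a generic $f \in C(K,\RR^n)$ we have $\dim_{*} f^{-1}(y) \leq d^n_{*}(K)$ for every $y \in \RR^n$, and in particular $\sup_{y \in \RR^n} \dim_{*} f^{-1}(y) \leq d^n_{*}(K)$.

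For the lower bound, I would argue as follows. Since $\dim_T K \geq n$, Fact~\ref{f:equiv} gives $d^n_{*}(K) \geq 0$. Choose a sequence $d_k < d^n_{*}(K)$ with $d_k \to d^n_{*}(K)$ as $k \to \infty$ (for instance $d_k = d^n_{*}(K) - 1/k$ when $d^n_{*}(K)$ is finite, or $d_k = k$ when $d^n_{*}(K) = \infty$). For each fixed $k$, part~\eqref{b} of Theorem~\ref{mainthm} yields a co-meager set $\iG_k \subset C(K,\RR^n)$ such that every $f \in \iG_k$ admits a non-empty open set $U_{f,d_k} \subset \RR^n$ on which $\dim_{*} f^{-1}(y) \geq d_k$ for all $y \in U_{f,d_k}$. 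The countable intersection $\iG := \bigcap_{k \geq 1} \iG_k$ remains co-meager by Baire's theorem, and for every $f \in \iG$ and every $k$ we obtain $\sup_{y \in \RR^n} \dim_{*} f^{-1}(y) \geq d_k$. Letting $k \to \infty$ gives $\sup_{y \in \RR^n} \dim_{*} f^{-1}(y) \geq d^n_{*}(K)$.

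Finally, intersecting $\iG$ with the co-meager set from part~\eqref{a} of Theorem~\ref{mainthm} produces a co-meager set of $f$ satisfying both inequalities, and the equality follows. There is no genuine obstacle here; the whole content of the corollary lies in the Main Theorem, and this argument is just the standard two-sided comparison coupled with the observation that co-meagerness is preserved under countable intersections.
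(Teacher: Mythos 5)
Your proof is correct and is essentially the argument the paper intends, since the corollary is stated as an immediate consequence of Theorem~\ref{mainthm}: part~\eqref{a} gives the upper bound and part~\eqref{b} applied along a sequence $d_k\nearrow d^n_{*}(K)$ gives the lower bound. The only (harmless) redundancy is that Theorem~\ref{mainthm} already places the quantifier ``for every $d<d^n_{*}(K)$'' inside the generic statement, so a single co-meager set already works and the extra countable intersection over the $d_k$ is not needed.
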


Theorem~\ref{t:Hurewicz} yields the following corollary, which states that a generic continuous map $f\in C(K,\RR^n)$ is almost injective on
any given $F_{\sigma}$ set of topological dimension smaller than $n$.

\begin{corollary} \label{c:Hur} Let $K$ be a compact metric space and let $n\in \NN^+$. Assume that $F\subset K$ is an $F_{\sigma}$ set such that
$\dim_T F<n$. Then $\#(f^{-1}(y)\cap F)\leq n$ for a generic $f\in C(K,\RR^n)$ for all $y\in \mathbb{R}^n$.
\end{corollary}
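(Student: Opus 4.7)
The plan is to reduce to Theorem~\ref{t:Hurewicz} applied to the compact pieces of $F$, and to pull the resulting genericity back to $C(K,\RR^n)$ via Corollary~\ref{c:R}. Since $F$ is $F_\sigma$, write $F=\bigcup_{i=1}^{\infty} F_i$ with each $F_i$ closed in $K$, and replace $F_i$ by $\bigcup_{j\leq i}F_j$ so that we may assume $F_1\subset F_2\subset\dots$; being closed in a compact metric space, each $F_i$ is compact. By the monotonicity of topological dimension for subspaces, $\dim_T F_i\leq \dim_T F<n$, so Theorem~\ref{t:Hurewicz} applies to $F_i$ and yields a co-meager set
\[\iG_i=\{g\in C(F_i,\RR^n): \#g^{-1}(y)\leq n \textrm{ for all } y\in\RR^n\}\subset C(F_i,\RR^n).\]

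Next I would apply Corollary~\ref{c:R} to the inclusion $F_i\subset K$: the restriction map $R_i\colon C(K,\RR^n)\to C(F_i,\RR^n)$, $R_i(f)=f|_{F_i}$, has the property that $R_i^{-1}(\iG_i)$ is co-meager in $C(K,\RR^n)$ for every $i$. A countable intersection of co-meager sets is co-meager, so
\[\iF=\bigcap_{i=1}^{\infty} R_i^{-1}(\iG_i)\subset C(K,\RR^n)\]
is co-meager. For any $f\in\iF$ and any $y\in \RR^n$, we have $\#(f^{-1}(y)\cap F_i)=\#(f|_{F_i})^{-1}(y)\leq n$ for every $i$.

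It remains to upgrade the uniform bound from each $F_i$ to $F$ itself, which is the only step where the argument is not purely formal. Because $F_1\subset F_2\subset\dots$ with $\bigcup_i F_i=F$, the sets $f^{-1}(y)\cap F_i$ form an increasing chain whose union is $f^{-1}(y)\cap F$; since each link has at most $n$ elements, so does the union. (Otherwise $n+1$ points in $f^{-1}(y)\cap F$ would already lie in some single $F_{i^\ast}$, contradicting the bound for $F_{i^\ast}$.) Hence $\#(f^{-1}(y)\cap F)\leq n$ for every $y\in\RR^n$, as required. The main (and only) subtlety is the monotonicity step at the end, which is precisely why we insisted on an increasing exhaustion; without it, the bounds on the individual $F_i$ would merely give $\#(f^{-1}(y)\cap F)\leq n\cdot\aleph_0$, which is useless.
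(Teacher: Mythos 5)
Your proof is correct and follows essentially the same route as the paper: exhaust $F$ by an increasing sequence of compact sets of topological dimension at most $n-1$, apply Theorem~\ref{t:Hurewicz} to each piece, pull back by the restriction maps via Corollary~\ref{c:R}, intersect, and use the increasing exhaustion to pass the cardinality bound from the pieces to $F$. The final monotonicity step you highlight is exactly how the paper concludes as well.
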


\begin{proof}
Choose compact sets $K_i$ such that $F=\bigcup_{i=1}^{\infty} K_i$ and $K_i\subset K_{i+1}$ for all $i$. For all $i\in \mathbb{N}^+$ let
\[ \mathcal{F}_i=\{f\in C(K_i,\RR^n): \#f^{-1}(y)\leq n \textrm{ for all } y\in \mathbb{R}^n  \}, \]
and define
\[ R_i\colon C(K,\RR^n)\to C(K_i,\RR^n),\quad R_i(f)=f|_{K_i}.\]
Finally, let us define
\[ \mathcal{F}=\bigcap_{i=1}^{\infty} R_i^{-1}(\mathcal{F}_i)\subset C(K,\RR^n).\]
As $\dim_T K_i\leq \dim_T F\leq n-1$, Theorem~\ref{t:Hurewicz} yields that the sets
$\mathcal{F}_i\subset C(K_i,\RR^n)$ are co-meager. Corollary~\ref{c:R} implies that $R_i^{-1}(\mathcal{F}_i)\subset C(K,\RR^n)$
are co-meager, too. As a countable intersection of co-meager sets $\iF\subset C(K,\RR^n)$ is also co-meager.
Fix $f\in \mathcal{F}$, we have $\#(f^{-1}(y)\cap K_i)\leq n$ for all $y\in \mathbb{R}^n$ and $i\in \mathbb{N}^+$.
Thus $\bigcup_{i=1}^{\infty} K_i=F$ and $K_i\subset K_{i+1}$ imply that $\# (f^{-1}(y)\cap F)\leq n$ for all
$f\in \mathcal{F}$ and $y\in \mathbb{R}^n$.
\end{proof}

Therefore we may remove an $F_{\sigma}$ set $F$ of topological dimension smaller than $n$ from $K$, and
the dimension of $K\setminus F$ estimates the dimension of the fibers of a generic continuous map from above. This yields
the first half of the Main Theorem.

\begin{proof}[Proof of Main Theorem~\eqref{a}] By Fact~\ref{f:inf} there is an $F_{\sigma}$ set $F\subset K$ such that $\dim_T F\leq n-1$ and $d^n_{*}(K)=\dim_{*}(K\setminus F)$. Let $G=K\setminus F$, then $\dim_{*} G=d^n_{*}(K)$ and Corollary~\ref{c:Hur} implies that for a generic $f\in C(K,\RR^n)$ for all $y\in \RR^n$ we have $\#(f^{-1}(y)\setminus G)=\#(f^{-1}(y)\cap F)\leq n$. As $\dim_T K\geq n$ yields $G\neq \emptyset$, we have $\dim_{*} f^{-1}(y)\leq \dim_{*} G=d^{n}_{*}(K)$ for a generic $f\in C(K,\RR^n)$ for all $y\in \RR^n$.
\end{proof}

The second part of the Main Theorem is much deeper, it states that the easy upper bound of the first part is always sharp.
In case of Hausdorff dimension this is the main result of \cite{B}, see Theorem~\ref{t:B}.
In the following two subsections we prove the Main Theorem in case of topological and packing dimensions.

\subsection{Topological dimension} \label{ss:top}

The goal of this subsection is to prove Theorem~\ref{t:top}, which is the second part of the Main Theorem for topological dimension.
The proof is based on the proof of \cite[Theorem~6.12]{B}, but it is much shorter. This is because we can use the
powerful theory of topological dimension, which in particular implied Theorem~\ref{t:dtn}.

\begin{theorem} \label{t:top} Let $n\in \NN^+$ and let $K$ be a compact metric space with $\dim_T K\geq n$. Then for a generic
$f\in C(K,\RR^n)$ for all $d<d_T^{n}(K)$ there is a non-empty open set $U_{f,d}\subset \RR^n$ such that for all $y\in U_{f,d}$ we have
$\dim_T f^{-1}(y)\geq d$.
\end{theorem}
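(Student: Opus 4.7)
The plan is to invoke Theorem~\ref{t:dtn} and the Theorem on Partitions to reduce to a concrete combinatorial condition. By Theorem~\ref{t:dtn}, $d_T^n(K) = \dim_T K - n$, and since $\dim_T$ is integer-valued, it suffices to fix an integer $d$ with $0 \leq d \leq d_T^n(K) - 1$ and prove the statement for a generic $f$; intersecting over the countably many such $d$ yields the full result. The hypothesis then reads $\dim_T K \geq n + d + 1$, so the Theorem on Partitions supplies a system $(A_i, B_i)_{i=1}^{n+d+1}$ of pairs of disjoint closed subsets of $K$ with the \emph{essential partition property}: $\bigcap_{i=1}^{n+d+1} P_i \neq \emptyset$ for every choice of partitions $P_i$ in $K$ between $A_i$ and $B_i$.

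The key pointwise implication is that if $f \in C(K, \RR^n)$ and some pair system $(A_i, B_i)_{i=1}^{n+d+1}$ with essential partition property satisfies $\max f_i(A_i) < \min f_i(B_i)$ for $i = 1, \ldots, n$, then $\dim_T f^{-1}(y) \geq d$ for every $y$ in the non-empty open cube $U_f := \prod_{i=1}^n (\max f_i(A_i), \min f_i(B_i))$. Indeed, for such $y$ each $f_i^{-1}(y_i)$ is a partition in $K$ between $A_i$ and $B_i$. Supposing $\dim_T f^{-1}(y) \leq d - 1$, the Theorem on Partitions applied inside $f^{-1}(y)$ to the $d + 1$ pairs $(A_{n+j} \cap f^{-1}(y), B_{n+j} \cap f^{-1}(y))$ produces partitions $P_j'$ with $\bigcap_j P_j' = \emptyset$; a standard extension lemma in normal metric spaces then extends each $P_j'$ to a partition $P_j$ of $K$ between $A_{n+j}$ and $B_{n+j}$ with $P_j \cap f^{-1}(y) \subset P_j'$. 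Combining $f_1^{-1}(y_1), \ldots, f_n^{-1}(y_n), P_{n+1}, \ldots, P_{n+d+1}$ gives $n + d + 1$ partitions whose intersection is contained in $f^{-1}(y) \cap \bigcap_j P_j \subset \bigcap_j P_j' = \emptyset$, contradicting the essential partition property.

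For each pair system $(A, B) = (A_i, B_i)$ with essential partition property, the set $\mathcal{O}_{(A,B)} := \{f : \max f_i(A_i) < \min f_i(B_i) \text{ for } i = 1, \ldots, n\}$ is open in $C(K, \RR^n)$, and by the implication above every $f \in \mathcal{O}_{(A,B)}$ realizes the desired open cube of $y$-values. The main obstacle, and the heart of the proof, is to exhibit a countable family of essential pair systems whose corresponding $\mathcal{O}_{(A,B)}$'s have dense union in $C(K, \RR^n)$. Given $f_0 \in C(K, \RR^n)$ and $\epsilon > 0$, I would argue as follows: consider the open slice $\mathcal{W} := \{F = (f, g) \in C(K, \RR^{n+d+1}) : \|f - f_0\|_\infty < \epsilon\}$, which is itself a complete metric space. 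Since $\dim_T K \geq n + d + 1$, the analogue of Theorem~\ref{t:int} applied on $\mathcal{W}$ (and transferred from the full space via Lemma~\ref{l:cat}) shows that a generic $F \in \mathcal{W}$ is essential in the partition-theoretic sense, which in turn produces an essential pair system of the form $A_i = F_i^{-1}((-\infty, a_i'])$, $B_i = F_i^{-1}([b_i', \infty))$ for rational parameters in $\inter F(K)$. Because $F_i = f_i$ for $i \leq n$, the map $f$ is automatically compatible, i.e., $f \in \mathcal{O}_{(A,B)}$. A countable dense subfamily of $\mathcal{W}$ combined with rational thresholds then yields the required countable family of pair systems. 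The subtle point requiring the most care is checking that generic $F$ in $\mathcal{W}$ is essential in the full partition sense (not merely surjective onto an open set); this is a classical but non-trivial density fact in the theory of essential maps, and it is what makes the Baire category argument go through.
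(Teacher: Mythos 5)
Your overall architecture is different from the paper's and could in principle be made to work, but it has a genuine gap exactly at the point you yourself flag as ``the heart of the proof'': the density step. You claim that the analogue of Theorem~\ref{t:int} on the slice $\mathcal{W}$ shows that a generic $F\in C(K,\RR^{n+d+1})$ is ``essential in the partition-theoretic sense'' and hence yields an essential system $A_i=F_i^{-1}((-\infty,a_i'])$, $B_i=F_i^{-1}([b_i',\infty))$. Theorem~\ref{t:int} only gives $\inter F(K)\neq\emptyset$, and a point of $\inter F(K)$ does \emph{not} in general produce an essential family of sublevel/superlevel pairs: non-surjectivity obstructions are much weaker than essentiality, as you concede, and no argument is offered for the stronger property. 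What would actually be needed is (a) that densely many $F$ (with prescribed first $n$ coordinates up to $\eps$) possess a \emph{stable value} --- this can be obtained by localizing Lemma~\ref{l:stab} (i.e.\ \cite[Theorem~VI~2]{HW}) to a small ball on which the given map oscillates little and gluing by Tietze, in the style of Lemmas~\ref{l:tech} and~\ref{l:GnK}; and (b) the classical implication that a stable value of $F$ with stability radius $\eta$ makes the sublevel pairs at distance $\eta'\le\eta$ essential, which requires its own construction (given partitions with empty intersection one must build a perturbation of $F$ of size $\le\eta'$ missing the value, e.g.\ by truncating $c_i+\eta' u_i$ into $[F_i-\eta',F_i+\eta']$ for a map $u$ witnessing inessentiality). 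Neither ingredient appears in your proposal, and the reference you do give ($\inter F(K)\neq\emptyset$) cannot replace them; so the key step is unproved.

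Two smaller points. First, your reduction to integers $0\le d\le d_T^n(K)-1$ loses the top of the range: for non-integer $d'\in(d_T^n(K)-1,\,d_T^n(K))$ the statement forces $\dim_T f^{-1}(y)\ge d_T^n(K)$, which your stated conclusion ($\ge d$ with $d\le d_T^n(K)-1$) does not give. This is fixable inside your own scheme: from an essential system of $n+d+1$ pairs the contradiction hypothesis should be $\dim_T f^{-1}(y)\le d$ (then the $d+1$ remaining pairs admit partitions with empty intersection in the fiber), yielding $\ge d+1$; your extension step (or, alternatively, iterating the quoted Separation theorem, which avoids the extension lemma altogether) is fine. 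Second, note how differently the paper proceeds: instead of densely exhibiting maps with compatible essential pair systems, it argues by contradiction at the level of $\dim_T K$ --- if the set $\iN^n(d)$ of $d$-level narrow maps were somewhere dense, a Tietze-perturbation $g$ of a nearby map, together with level sets $g_i^{-1}(s_i)$ through a point of the dense ``narrow'' set and $d+1$ applications of the Separation theorem, would force $\dim_T K\le n+d$, contradicting Theorem~\ref{t:dtn}. That route never needs the (true but nontrivial) fact that generic maps carry essential families, which is precisely the fact your proposal assumes without proof.
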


First we need some preparation. Assume that $K$ is fixed with $\dim_T K\geq n$.

\begin{definition} We say that $f\in C(K,\RR^n)$ is \emph{$d$-level narrow} if there is a dense set $S_f\subset \RR^n$ such that $\dim_{T} f^{-1}(y)\leq d$ for all $y\in S_f$. Let $\iN^n(d)$ be the set of $d$-level narrow maps. Let
\begin{align*} N_n=\{&d: \iN^n(d) \textrm{ is somewhere dense in } C(K,\RR^n)\}, \\
D_n=\{&d: \exists \textrm{ an } F_{\sigma} \textrm{ set } F\subset K \textrm{ with } \dim_T F\leq n-1 \textrm{ and } \dim_{T} (K\setminus F)\leq d\}.
\end{align*}
We adopt the convention that $\infty\in N_n, D_n$.
\end{definition}

\begin{theorem} \label{t:3eq} Let $n\in \NN^+$ and let $K$ be a compact metric space with $\dim_T K\geq n$. Then
\[d^n_{T}(K)=\min N_n.\]
\end{theorem}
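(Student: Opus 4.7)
The plan is to establish the equality $d_T^n(K)=\min N_n$ via two inequalities. Observing that $d_T^n(K)=\min D_n$ is exactly Fact~\ref{f:inf} specialized to $\dim_{*}=\dim_T$, the task reduces to showing $\min D_n=\min N_n$.

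For the easy direction $\min N_n\leq d_T^n(K)$, I would verify $d_T^n(K)\in N_n$. Fact~\ref{f:inf} supplies an $F_\sigma$ set $F\subset K$ with $\dim_T F\leq n-1$ and $\dim_T(K\setminus F)=d_T^n(K)$. By Corollary~\ref{c:Hur}, a generic $f\in C(K,\RR^n)$ satisfies $\#(f^{-1}(y)\cap F)\leq n$ for every $y\in\RR^n$. For such $f$, the decomposition $f^{-1}(y)=(f^{-1}(y)\cap F)\cup(f^{-1}(y)\setminus F)$ expresses the fiber as a union of a finite set (closed, hence $F_\sigma$) and its relatively open complement in $f^{-1}(y)$ (which is $F_\sigma$, since every open subset of a metric space is $F_\sigma$). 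Since $d_T^n(K)\geq 0$ by the assumption $\dim_T K\geq n$, countable stability for $F_\sigma$ sets forces $\dim_T f^{-1}(y)\leq \max(0,d_T^n(K))=d_T^n(K)$ for all $y\in\RR^n$. Thus the generic $f$ lies in $\iN^n(d_T^n(K))$ with $S_f=\RR^n$, so $\iN^n(d_T^n(K))$ is co-meager and in particular somewhere dense, giving $d_T^n(K)\in N_n$.

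For the harder direction $\min N_n\geq d_T^n(K)$, I would use Theorem~\ref{t:dtn} to reformulate the claim: assuming $\dim_T K\geq d+n+1$, it suffices to show that $\iN^n(d)$ is nowhere dense in $C(K,\RR^n)$. Given an arbitrary ball $B(h_0,r)\subset C(K,\RR^n)$, the goal is to produce a sub-ball disjoint from $\iN^n(d)$. Use uniform continuity to pick $\delta>0$ so small that $h_0$ oscillates by less than $r/4$ on every closed ball $\overline{B(x,\delta)}\subset K$. Cover $K$ by finitely many such balls and invoke countable stability to locate one, $C=\overline{B(x,\delta)}$, with $\dim_T C\geq d+n+1$. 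The Theorem on Partitions applied to $C$ yields $d+n+1$ disjoint closed pairs $(A_i,B_i)_{i=1}^{d+n+1}$ in $C$ such that every choice of partitions in $C$ has non-empty intersection; restricting any collection of partitions in $K$ to $C$ shows these pairs remain ``bad'' in $K$ itself. For $i=1,\dots,n$, pick Urysohn-type maps $\psi_i\colon K\to[-1,1]$ with $\psi_i|_{A_i}=-1$ and $\psi_i|_{B_i}=1$, and set $f^{*}=h_0+(r/2)(\psi_1,\dots,\psi_n)$, so $f^{*}\in B(h_0,r/2)$. With $\eps=r/8$, the oscillation bound yields $\min_{B_i}g_i-\max_{A_i}g_i\geq r/2$ for every $g\in B(f^{*},\eps)$, producing a fixed non-empty open box $V\subset\RR^n$ such that $g_i^{-1}(y_i)$ is a partition in $K$ between $A_i$ and $B_i$ whenever $g\in B(f^{*},\eps)$, $y\in V$, and $i\leq n$.

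If some $g\in B(f^{*},\eps)$ and $y\in V$ satisfied $\dim_T g^{-1}(y)\leq d$, iterated application of the Separation Theorem (starting from $Y_0=g^{-1}(y)$ and applying it once per pair $(A_{n+j},B_{n+j})$) would produce partitions $P_1,\dots,P_{d+1}$ of $K$ between these pairs with $\dim_T(g^{-1}(y)\cap P_1\cap\cdots\cap P_{d+1})\leq d-(d+1)=-1$, i.e.\ the intersection is empty. Together with the $n$ partitions $g_i^{-1}(y_i)$, whose joint intersection is exactly $g^{-1}(y)$, this yields $d+n+1$ partitions in $K$ between the bad pairs with empty intersection, a contradiction. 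Therefore $\dim_T g^{-1}(y)>d$ for every $g\in B(f^{*},\eps)$ and every $y\in V$, so $B(f^{*},\eps)\cap\iN^n(d)=\emptyset$ and $\iN^n(d)$ is not dense in $B(h_0,r)$. The main obstacle is the localization step: the Urysohn perturbation $f^{*}-h_0$ must be kept strictly inside the prescribed ball regardless of how badly $h_0$ oscillates, which is precisely why the bad pairs $(A_i,B_i)$ must be drawn from a small closed ball on which $h_0$ is nearly constant.
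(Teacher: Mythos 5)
Your proposal is correct, and its hard half is organized genuinely differently from the paper's. The paper proves the two inclusions $D_n\subset N_n$ and $N_n\subset D_n$; for the latter it \emph{assumes} $\iN^n(d)$ is dense in some ball and then verifies the partition criterion for an \emph{arbitrary} family of $n+d+1$ pairs in a small-diameter piece of $K$, by building (via Tietze) a nearby narrow map $g\in\iN^n(d)$ adapted to the first $n$ pairs and using one of its narrow fibers $g^{-1}(s)=\bigcap_{i\leq n}g_i^{-1}(s_i)$, then applying the separation theorem $(d+1)$ times; this gives $\dim_T K\leq n+d$, hence $d\geq d^n_T(K)$ by Theorem~\ref{t:dtn}. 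You argue the contrapositive directly: from $\dim_T C\geq d+n+1$ you extract one \emph{bad} family of pairs (the converse half of the theorem on partitions, noting that partitions in $K$ restrict to partitions in $C$), place it inside a small ball where $h_0$ has oscillation below $r/4$, and use a Urysohn perturbation to produce an entire sub-ball $B(f^{*},\eps)$ and a \emph{fixed} open box $V$ over which every map has all fibers of dimension $>d$; no narrow map is invoked at all. The dimension-theoretic ingredients coincide (localization by uniform continuity, coordinate hyperplane preimages as partitions whose intersection is the fiber, iterated separation theorem, Theorem~\ref{t:dtn}), but the logical packaging is reversed, and your version buys slightly more: it directly exhibits, in every ball, an open set of maps with a uniform box of high-dimensional fibers, which yields Theorem~\ref{t:top} without the $N_n$ bookkeeping, whereas the paper's formulation is what lets it handle arbitrary pairs when bounding $\dim_T K$. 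Your easy direction (Corollary~\ref{c:Hur} plus countable stability inside the fiber) is essentially the paper's $D_n\subset N_n$ via part (i) of the Main Theorem. Like the paper, you tacitly reduce to integer $d$ (integrality of $\dim_T$) and use the separation theorem down to the zero-dimensional step; these are shared conventions, not gaps.
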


The following two lemmas and Fact~\ref{f:inf} imply Theorem~\ref{t:3eq}.

\begin{lemma} $D_n\subset N_n$.
\end{lemma}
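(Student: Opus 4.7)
The plan is to show directly that if $d \in D_n$, then $\iN^n(d)$ is in fact co-meager in $C(K,\RR^n)$, which is a priori much stronger than somewhere dense and immediately gives $d \in N_n$. To start, I would fix a witness $F \subset K$, an $F_\sigma$ set with $\dim_T F \leq n-1$ and $\dim_T(K \setminus F) \leq d$. A preliminary observation I would record is that $d \geq 0$: applying the addition theorem to the decomposition $K = F \cup (K \setminus F)$ together with $\dim_T K \geq n$ gives $n \leq (n-1) + d + 1$, whence $d \geq 0$.

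The key ingredient will be Corollary~\ref{c:Hur}, which tells us that the set
\[\iG = \{g \in C(K,\RR^n) : \#(g^{-1}(y) \cap F) \leq n \text{ for all } y \in \RR^n\}\]
is co-meager. The plan is then to show $\iG \subseteq \iN^n(d)$. Fixing $g \in \iG$ and an arbitrary $y \in \RR^n$, I would decompose $g^{-1}(y) = A \cup B$ with $A = g^{-1}(y) \cap F$ and $B = g^{-1}(y) \cap (K\setminus F)$. Since $A$ is finite, it is closed in $g^{-1}(y)$, so $B$ is relatively open in $g^{-1}(y)$, hence $F_\sigma$ in $g^{-1}(y)$, and monotonicity gives $\dim_T B \leq \dim_T(K\setminus F) \leq d$. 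Countable stability of topological dimension for $F_\sigma$ sets applied within $g^{-1}(y)$ will then yield
\[\dim_T g^{-1}(y) = \max(\dim_T A, \dim_T B) \leq \max(0, d) = d.\]
As $y$ is arbitrary, $g$ is $d$-level narrow with $S_g = \RR^n$, establishing $\iG \subseteq \iN^n(d)$ and completing the argument.

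The main subtlety I expect to watch for -- really the reason the argument works at all -- is that one must avoid the $+1$ loss of the addition theorem. If $A$ were merely some set of topological dimension $\leq n-1$ sitting inside $g^{-1}(y)$, the best direct bound would be $\dim_T g^{-1}(y) \leq d + n$, which is useless. The role of Corollary~\ref{c:Hur} is precisely to reduce $A$ to a finite (and therefore closed) set, which makes $B$ an $F_\sigma$ subset of $g^{-1}(y)$ and opens the door to the sharper countable stability for $F_\sigma$ sets in place of the addition theorem. Once this decomposition is in place, everything else is bookkeeping.
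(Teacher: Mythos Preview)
Your proof is correct and follows essentially the same approach as the paper. The paper's version simply cites the already-proved part~\eqref{a} of the Main Theorem, whose proof (via Corollary~\ref{c:Hur}) is precisely the argument you spell out in detail; your decomposition $g^{-1}(y)=A\cup B$ with $A$ finite and closed, and the use of countable stability for $F_\sigma$ sets to avoid the $+1$ loss of the addition theorem, is exactly the mechanism underlying that proof.
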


\begin{proof}
Assume that $d\in D_n$ and $d<\infty$. The first part of the Main Theorem yields that for a generic $f\in C(K,\RR^n)$
we have $\dim_T f^{-1}(y)\leq d$ for all $y\in \RR^n$. Therefore
$\iN^n(d)$ is co-meager, thus everywhere dense in $C(K,\RR^n)$. Hence $d\in N_n$.
\end{proof}

The proof of the following lemma is based on the proof of \cite[Lemma~6.11]{B}.

\begin{lemma} $N_n\subset D_n$.
\end{lemma}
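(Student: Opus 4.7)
The plan is to show that if $d\in N_n$ is finite, then $\dim_T K\leq d+n$; by the decomposition and enlargement theorems, applied as in the proof of Theorem~\ref{t:dtn}, this yields an $F_\sigma$ set $F\subset K$ with $\dim_T F\leq n-1$ and $\dim_T(K\setminus F)\leq d$, exhibiting $d\in D_n$. To establish $\dim_T K\leq d+n$, I invoke the theorem on partitions: for an arbitrary family $(A_i,B_i)_{i=1}^{d+n+1}$ of pairs of disjoint closed subsets of $K$, I must construct partitions $P_i$ between $A_i$ and $B_i$ with $\bigcap_{i=1}^{d+n+1} P_i=\emptyset$.

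Fix such a family and let $B(f_0,\varepsilon)\subset C(K,\RR^n)$ be an open ball on which $\iN^n(d)$ is dense. Take Urysohn functions $u_i\colon K\to[0,1]$ satisfying $u_i|_{A_i}=0$ and $u_i|_{B_i}=1$ for $i=1,\dots,n$, set $u=(u_1,\dots,u_n)$, and fix $\eta\in(0,3\varepsilon/4)$. The map $h=f_0+\eta u$ lies in $B(f_0,\eta)$, so $B(h,\eta/3)\subset B(f_0,\varepsilon)$; by density choose $g\in\iN^n(d)\cap B(h,\eta/3)$. A direct computation then shows that on $A_i$ the quantity $g_i-(f_0)_i$ lies in $(-\eta/3,\eta/3)$, while on $B_i$ it lies in $(2\eta/3,4\eta/3)$; thus for every $c=(c_1,\dots,c_n)\in(\eta/3,2\eta/3)^n$ the set $P_i:=(g_i-(f_0)_i)^{-1}(c_i)$ is a partition between $A_i$ and $B_i$, and $\bigcap_{i=1}^n P_i=(g-f_0)^{-1}(c)$.

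The crux of the argument is to select $c$ in a dense subset of $(\eta/3,2\eta/3)^n$ for which $\dim_T(g-f_0)^{-1}(c)\leq d$. This is immediate when $f_0$ happens to be a constant function, since $(g-f_0)^{-1}(c)=g^{-1}(c+f_0)$ is then literally a fiber of $g$ and the bound follows from $g\in\iN^n(d)$ upon arranging $c+f_0\in S_g$. For general $f_0$ this is the main technical obstacle, and my plan is to overcome it by exploiting the invariance of $\iN^n(d)$ under translation by constant vectors and under post-composition with homeomorphisms of $\RR^n$ to pass to a sub-ball of $B(f_0,\varepsilon)$ in which the center may be taken essentially constant. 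Once $\dim_T\bigcap_{i=1}^n P_i\leq d$ is secured, the remaining $d+1$ pairs are handled by iterating the separation theorem: setting $Y_0:=\bigcap_{i=1}^n P_i$, for each $j=1,\dots,d+1$ the theorem furnishes a partition $P_{n+j}$ between $A_{n+j}$ and $B_{n+j}$ with $\dim_T(Y_{j-1}\cap P_{n+j})\leq d-j$; after $d+1$ steps the intersection has topological dimension at most $-1$, hence is empty, and the theorem on partitions gives $\dim_T K\leq d+n$.
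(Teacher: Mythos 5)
Your global strategy matches the paper's: reduce to showing $\dim_T K\leq d+n$, handle the first $n$ pairs $(A_i,B_i)$ by level sets of a map chosen from the dense set $\iN^n(d)$, and finish the remaining $d+1$ pairs by iterating the separation theorem. But the proof has a genuine gap exactly at the point you flag as the crux, and the repair you sketch cannot work. Your partitions are level sets of $g-f_0$, and membership of $g$ in $\iN^n(d)$ gives information only about the fibers of $g$ itself; it says nothing about $\dim_T (g-f_0)^{-1}(c)$ unless $f_0$ is constant. Your plan to fix this by ``passing to a sub-ball of $B(f_0,\varepsilon)$ in which the center may be taken essentially constant'' via translations and post-compositions fails for a quantitative reason: any sub-ball of $B(f_0,\varepsilon)$ has center within $\varepsilon$ of $f_0$, so its center can be nearly constant only if $f_0$ itself has small oscillation, which is not given; and post-composing with a homeomorphism $\phi$ of $\RR^n$ cannot help either, because if $\phi$ compresses $f_0(K)$ to diameter $\delta$, then the radius of a ball around $\phi\circ f_0$ on which density of $\iN^n(d)$ transfers is controlled by the modulus of continuity of $\phi^{-1}$, and a set of that small diameter must have $\phi^{-1}$-image of diameter $<\varepsilon$ --- so the ratio of the center's oscillation to the ball radius cannot be improved this way.

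The paper's missing ingredient is to shrink the \emph{domain}, not the ball: decompose $K$ into finitely many compact pieces of diameter $\leq\delta$, where $\delta$ comes from uniform continuity of $f_0$; by stability of topological dimension for closed sets one piece $C$ satisfies $\dim_T C=\dim_T K$, and (via Tietze) $d$-level narrowness remains somewhere dense after restriction to $C$. One may thus assume $\diam f_0(K)<\varepsilon/n$. Then the whole issue with $g-f_0$ evaporates: since each coordinate $f_{0,i}$ has oscillation $<\varepsilon/n$, one can prescribe $g_i$ on $A_i\cup B_i$ so that $\max g_i(A_i)<\min g_i(B_i)$ while keeping $\|g_i-f_{0,i}\|<\varepsilon/n$, extend by Tietze, perturb into $\iN^n(d)$ (the separation of values on $A_i$ and $B_i$ is an open condition), and choose $s\in S_g$ with $\max g_i(A_i)<s_i<\min g_i(B_i)$ by density of $S_g$. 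The partitions $P_i=g^{-1}(\{y:\ y_i=s_i\})$ then satisfy $\bigcap_{i=1}^{n}P_i=g^{-1}(s)$, an honest fiber of $g$, so $\dim_T\bigl(\bigcap_{i=1}^{n}P_i\bigr)\leq d$; your concluding use of the separation theorem and of the decomposition/enlargement theorems is then fine. Without the domain-shrinking step (or some substitute for it), the argument as proposed does not go through.
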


\begin{proof} Assume that $d\in N_n$ is a natural number, by Fact~\ref{f:inf} it is enough to prove
that $d^n_T(K)\leq d$.
As $d^{n}_T(K)=\dim_T K-n$ by Theorem~\ref{t:dtn}, it is enough to show that $\dim_T K\leq n+d$.
Fix $f\in C(K,\RR^n)$ and $\varepsilon>0$
such that $\mathcal{N}^n(d)$ is dense in $B(f,\varepsilon)$. The uniform continuity of $f$ implies that
there is a $\delta>0$ such that if $A\subset K$ with $\diam A\leq \delta$ then $\diam f(A)<\varepsilon/n$. Decompose $K$ into finitely many compact sets with diameter less than or equal to $\delta$. As topological dimension is (countably) stable for closed sets, there is a compact set $C\subset K$ such that $\diam C\leq \delta$ and $\dim_T C=\dim_T K$. Thus we may assume that $\diam K\leq \delta$, so $\diam f(K)<\varepsilon/n$.

Let $(A_1,B_1),\dots,(A_{n+d+1},B_{n+d+1})$ be arbitrary pairs of disjoint closed subsets of
$K$, then by the theorem on partitions
it is enough to prove that there exist partitions $P_i\subset K$ between $A_i$ and $B_i$ such that
$\bigcap_{i=1}^{n+d+1} P_i=\emptyset$. First we show that it is enough to find partitions
$P_i\subset K$ between $A_i$ and $B_i$ for $1\leq i\leq n$ such that
\begin{equation} \label{eq:par} \dim_T \left(\bigcap_{i=1}^{n} P_i\right)\leq d.
\end{equation}
Indeed, assuming \eqref{eq:par} and applying the separation theorem $(d+1)$-times
yields that for all $1\leq j\leq d+1$ there are partitions
$P_{n+j}$ between $A_{n+j}$ and $B_{n+j}$ with
\[ \dim_{T} \left(\bigcap_{i=1}^{n+j} P_i\right)\leq d-j. \] 
Therefore $\dim_T(\bigcap_{i=1}^{n+d+1} P_i)=-1$, so $\bigcap_{i=1}^{n+d+1} P_i=\emptyset$.

Finally, we prove \eqref{eq:par}.
Let $f_1, \dots, f_n \in C(K,\RR)$ be such that $f =(f_1, \dots, f_n)$ and observe
that we may construct for all $i\in \{1,\dots,n\}$ functions $g_i\in C(K,\RR)$ such that
\begin{enumerate}[(i)]
\item \label{eq:C1} $\max g_i(A_i)< \min g_i(B_i)$;
\item \label{eq:C2} $g_i \in B(f_i, \varepsilon/n)$;
\item \label{eq:C3} The map $g= (g_1, \dots, g_n) \in C(K,\RR^n)$ satisfies $g \in \mathcal{N}^n(d)$.
\end{enumerate}
Indeed, as $\diam f_i(K)\leq \diam f(K)<\varepsilon/n$, for every $i\in \{1,\dots,n\}$ we can define $g_i$ first on
$A_i\cup B_i$ and then we can extend it to $K$ by Tietze's extension theorem such that
\eqref{eq:C1} and \eqref{eq:C2} hold. Property~\eqref{eq:C2} implies that $g=(g_1,\dots,g_n)\in B(f,\varepsilon)$.
As $g\in B(f,\varepsilon)$ and $\mathcal{N}^n(d)$ is dense in $B(f,\varepsilon)$, we may assume that $g\in \mathcal{N}^n(d)$, so \eqref{eq:C3}
holds.

As $g\in \mathcal{N}^n(d)$,
there is a dense set $S_g\subset \mathbb{R}^n$ such that $\dim_T g^{-1}(s)\leq d$ for all
$s\in S_g$. We can choose $s=(s_1,\dots,s_n)\in S_g$ such that for every $i\in \{1,\dots,n\}$
its $i$th coordinate $s_i$ satisfies
$\max g_i(A_i)<s_i < \min g_i(B_i)$. For all $i\in \{1,\dots,n\}$ let
\[ S_i=\{(y_1,\dots,y_n) \in g(K): y_i=s_i\}. \]
Then \eqref{eq:C1} implies that $S_i$ is a partition between $g(A_i)$ and $g(B_i)$ in $g(K)$ for every
$i\in \{1,\dots,n\}$. For all $i$ define $P_i=g^{-1}(S_i)$.
Then $P_i$ is a partition between $A_i$ and $B_i$ such that
\[ \bigcap_{i=1}^{n}P_i=\bigcap_{i=1}^{n}g^{-1}(S_i)=
g^{-1}\left( \bigcap_{i=1}^{n} S_i \right)=g^{-1}(s). \]
Therefore $s\in S_g$ implies that
\[ \dim_T \left(\bigcap_{i=1}^{n}P_i\right)\leq \dim_T g^{-1}(s)\leq d. \]
Thus \eqref{eq:par} holds, and the proof is complete.
\end{proof}

Now we are ready to prove Theorem~\ref{t:top}.

\begin{proof}[Proof of Theorem~\ref{t:top}] By Theorem~\ref{t:3eq} we have
$d^n_T(K)=\min N_n$. Let us choose a sequence $d_k \nearrow d^n_{T}(K)$, then $\mathcal{N}^{n}(d_k)$ is nowhere
dense by the definition of $N_n$. Thus for every $f\in C(K,\RR^n)\setminus
\mathcal{N}^{n}(d_k)$ there exists a non-empty open set
$U_{f,d_k}\subset \mathbb{R}^n$ such that $\dim_{T}f^{-1}(y)\geq d_k$ for every $y\in U_{f,d_k}$. But then the theorem holds
for every $f\in C(K,\RR^n)\setminus \left(\bigcup_{k=1}^{\infty} \mathcal{N}^{n}(d_k)\right)$, and this latter set is
co-meager.
\end{proof}

Theorem~\ref{t:top} clearly implies the following known result, which will be useful later.

\begin{theorem} \label{t:int} Let $n\in \NN^+$ and let $K$ be a compact metric space with $\dim_T K\geq n$.
Then $\inter f(K)\neq \emptyset$ for a generic $f\in C(K,\RR^n)$.
\end{theorem}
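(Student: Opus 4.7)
The plan is to deduce the theorem immediately from Theorem~\ref{t:top}. First I would combine the hypothesis $\dim_T K \geq n$ with Theorem~\ref{t:dtn} to conclude $d_T^n(K) = \dim_T K - n \geq 0$, so in particular the interval $(-1, d_T^n(K))$ is non-empty. I would then fix any real number $d$ in this interval; for instance, $d = -1/2$ works in every case, since $d_T^n(K) \geq 0$ ensures $d < d_T^n(K)$.

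Applying Theorem~\ref{t:top} with this choice of $d$, I would obtain that for a generic $f \in C(K,\RR^n)$ there is a non-empty open set $U_{f,d} \subset \RR^n$ with $\dim_T f^{-1}(y) \geq d$ for every $y \in U_{f,d}$. The key observation is that under the convention $\dim_T \emptyset = -1 < d$, this lower bound forces $f^{-1}(y) \neq \emptyset$ for each $y \in U_{f,d}$. Hence $U_{f,d} \subset f(K)$, and since $U_{f,d}$ is open this yields $U_{f,d} \subset \inter f(K)$, so $\inter f(K) \neq \emptyset$ for a generic $f$.

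There is no genuine obstacle here; the only subtlety is remembering to pick $d$ strictly greater than $-1$ so that the dimension bound upgrades to non-emptiness of the fibers, which is automatic from $d_T^n(K) \geq 0$. Thus Theorem~\ref{t:int} is a direct corollary of the second half of the Main Theorem for topological dimension together with the formula $d_T^n(K) = \dim_T K - n$.
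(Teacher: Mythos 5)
Your proposal is correct and matches the paper's approach: the paper obtains Theorem~\ref{t:int} as an immediate consequence of Theorem~\ref{t:top}, and your argument simply makes explicit the choice of $d\in(-1,d_T^n(K))$ (nonempty since $d_T^n(K)\geq 0$) and the observation that $\dim_T f^{-1}(y)\geq d>-1$ forces the fibers over $U_{f,d}$ to be nonempty, hence $U_{f,d}\subset \inter f(K)$.
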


The origins of the above theorem date back to Hurewicz and Wallman~\cite{HW}, and Alexandroff~\cite{A}.
See the remark after \cite[Theorem~1.7]{BFFH} for more explanation.

\subsection{Packing dimension} \label{ss:packing}

The main goal of this subsection is to prove the following theorem, which is the second part of the Main Theorem for packing dimension. Topological and Hausdorff dimensions admit $G_{\delta}$ hulls, which was crucial in the proof of the Main Theorem for these dimensions. Since packing dimension does not admit $G_{\delta}$ hulls, we need to come up with different arguments.

\begin{theorem} \label{t:mainP} Let $n\in \NN^+$ and let $K$ be a compact metric space with $\dim_T K\geq n$. For a generic $f\in C(K,\RR^n)$ for every $d<d^n_{P}(K)$ there exists a non-empty open set $U_{f,d}\subset \RR^n$ such that for all $y\in U_{f,d}$ we have $\dim_{P} f^{-1}(y)\geq d$.
\end{theorem}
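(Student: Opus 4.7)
The plan is to reduce Theorem \ref{t:mainP} to Theorem \ref{t:gauge} applied on a carefully chosen compact subspace $K'\subset K$. Fix $d$ with $0\leq d<d^n_P(K)$ (values $d<0$ are handled trivially by taking $U_{f,d}:=\inter f(K)$). The target is to produce a non-empty compact $K'\subset K$ satisfying the hypotheses of Theorem \ref{t:gauge} for the continuous gauge $h(r)=r^d$: namely, $\dim_T V\geq n$ and $\iP^d(V)=\infty$ for every non-empty open $V\subset K'$. Once such a $K'$ is at hand, Theorem \ref{t:gauge} provides a co-meager $\mathcal{G}\subset C(K',\RR^n)$ on which $\iP^d(g^{-1}(y))=\infty$, and hence $\dim_P g^{-1}(y)\geq d$, for every $y\in\inter g(K')$; Theorem \ref{t:int} makes $\inter g(K')$ non-empty generically. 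Pulling back through the restriction map by Corollary \ref{c:R} and taking a countable intersection over a sequence $d_k\nearrow d^n_P(K)$ then yields the conclusion of the theorem.

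The subspace $K'$ will be built in two stages. The first stage applies Lemma \ref{l:int} to obtain the closed set $C_0:=K\setminus V$, where $V$ is the maximal open subset of $K$ with $\dim_T V<n$; by that lemma $\dim_T(U\cap C_0)\geq n$ whenever $U$ is open in $K$ and meets $C_0$. Since $V$ is an open (hence $F_\sigma$) set with $\dim_T V\leq n-1$, Fact \ref{f:equiv} yields $d^n_P(V)=-1$, and Corollary \ref{c:stable} applied to the $F_\sigma$ decomposition $K=V\cup C_0$ gives $d^n_P(C_0)=d^n_P(K)>d$. The second stage mimics the proof of Lemma \ref{l:pack} at the level of $d^n_P$. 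Let $W\subset C_0$ be the union of all relatively open $U\subset C_0$ satisfying $d^n_P(U)\leq d$. Using Lindelöf and Fact \ref{f:inf} to collect witnessing $F_\sigma$ sets $F_i\subset U_i$, one glues them to an $F_\sigma$ set $F\subset W$ with $\dim_T F\leq n-1$ (countable stability of $\dim_T$ for $F_\sigma$ sets) and $\dim_P(W\setminus F)\leq d$ (countable stability of $\dim_P$), which gives $d^n_P(W)\leq d$. Setting $K':=C_0\setminus W$, Corollary \ref{c:stable} applied to $C_0=W\cup K'$ forces $d^n_P(K')>d$, so in particular $K'\neq\emptyset$. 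For any open $U$ in $K$ meeting $K'$ the relatively open set $U\cap C_0$ cannot lie in $W$ (otherwise $U\cap K'=\emptyset$), so by the definition of $W$ we must have $d^n_P(U\cap C_0)>d$; decomposing $U\cap C_0=(U\cap K')\cup(U\cap C_0\cap W)$ and invoking Corollary \ref{c:stable} together with monotonicity then gives $d^n_P(U\cap K')>d$. From this Fact \ref{f:equiv} yields $\dim_T(U\cap K')\geq n$, and the trivial bound $d^n_P\leq\dim_P$ together with the definition of $\dim_P$ yields $\iP^d(U\cap K')=\infty$. Thus $K'$ meets the hypotheses of Theorem \ref{t:gauge}.

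The main obstacle will be Theorem \ref{t:gauge} itself, whose proof supplies the infinite generalized packing measure on fibers and is the genuinely new technical input that replaces the $G_\delta$-hull machinery available for $\dim_H$ and $\dim_T$; once it is granted, the reduction above is essentially formal. The only routine bookkeeping point is that local $F_\sigma$ witnesses $F_i\subset U_i$ glue to an $F_\sigma$ set in the ambient space, which is automatic since open subsets of metric spaces are themselves $F_\sigma$, so any subset that is $F_\sigma$ in an open subset of $K$ is $F_\sigma$ in $K$ as well.
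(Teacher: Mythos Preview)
Your proposal is correct and follows essentially the same route as the paper. Both arguments reduce to Theorem~\ref{t:gauge} (with $h(x)=x^d$) on a compact subspace in which every non-empty relatively open set has topological dimension at least $n$ and packing dimension exceeding $d$, then pull back via Corollary~\ref{c:R} and intersect over a sequence $d_k\nearrow d^n_P(K)$. The paper packages the construction of this subspace as a single step in Lemma~\ref{l:dn}, defining directly the family $\iW$ of open sets $W\subset K$ admitting an $F_\sigma$ set $F$ with $\dim_T F\leq n-1$ and $\dim_P(W\setminus F)\leq d$ (equivalently, $d^n_P(W)\leq d$), and taking $C=K\setminus\bigcup\iW$. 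Your two-stage version---first passing to $C_0$ via Lemma~\ref{l:int}, then removing the relatively open sets with $d^n_P\leq d$---yields the same set: the first stage is in fact redundant, since any open $V$ with $\dim_T V<n$ already satisfies $d^n_P(V)=-1\leq d$ and would be swept up in the second stage anyway.
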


Actually, we prove a stronger statement by using the concept of generalized packing measure.

\begin{theorem} \label{t:gauge} Let $n\in \NN^+$ and let $h$ be a left-continuous gauge function.
Let $K$ be a compact metric space such that $\dim_T U\geq n$ and $\iP^h(U)=\infty$ for all
non-empty open sets $U\subset K$. Then for a generic $f\in C(K,\RR^n)$ for all $y\in \inter f(K)$ we have
\[ \iP^h(f^{-1}(y))=\infty. \]
Moreover, the measures $\iP^h|_{f^{-1}(y)}$ are not $\sigma$-finite.
\end{theorem}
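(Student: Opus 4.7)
I plan to establish the non-$\sigma$-finiteness conclusion, from which $\iP^h(f^{-1}(y))=\infty$ follows automatically. Since $f^{-1}(y)$ is compact, Lemma~\ref{l:pack1} reduces this to showing that for a generic $f$, for every $y \in \inter f(K)$ and every open $U \subset K$ meeting $f^{-1}(y)$, one has $P_0^h(U \cap f^{-1}(y)) = \infty$. Fixing a countable base $\{V_j\}_{j \geq 1}$ of $K$ and using the monotone limit $P_0^h = \lim_{m \to \infty} P_{1/m}^h$, this reduces through a countable intersection over triples $(j, m, k) \in (\NN^+)^3$ to the claim that
\[ \iF(j, m, k) = \{f \in C(K, \RR^n) : \forall y \in \inter f(K),\ V_j \cap f^{-1}(y) = \emptyset \textrm{ or } P_{1/m}^h(V_j \cap f^{-1}(y)) > k\} \]
is co-meager for every such triple.

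Fix $(j, m, k)$. The central tool is a perturbation construction: given $f_0 \in C(K, \RR^n)$, $\eps > 0$, and $\delta < \eps/4$, I build $f \in B(f_0, \eps)$ together with a finite collection $\{y_\beta\} \subset f_0(V_j)$ and pairwise disjoint packing balls $\{B(x_{\beta, i}, r_{\beta, i})\}_{i=1}^{N_\beta} \subset V_j$ for each $\beta$, satisfying $r_{\beta, i} \leq 1/m$, $\sum_i h(r_{\beta, i}) > k$, and $f(B(x_{\beta, i}, r_{\beta, i})) \supset B(y_\beta, 2\delta)$ for every $i$. For each $\beta$, fix $p_\beta \in V_j$ with $f_0(p_\beta) = y_\beta$ and choose small $\eta > 0$; the open set $V_j \cap f_0^{-1}(B(y_\beta, \eta))$ is a non-empty neighborhood of $p_\beta$, so by hypothesis its $P_0^h$-premeasure is infinite, allowing extraction of a $1/m$-packing inside it of sum $> k$ with $B(x_{\beta, i}, 3 r_{\beta, i})$ pairwise disjoint; by shrinking $\eta$, packings for distinct $\beta$ are arranged mutually disjoint. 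Since each $B(x_{\beta, i}, r_{\beta, i})$ is a non-empty open subset of $K$ with $\dim_T \geq n$, Theorem~\ref{t:int} combined with Tietze extension permits modifying $f_0$ within each packing ball by a correction of sup-norm $< \eps$ (matching $f_0$ on the boundary) so that the resulting $f$ achieves the desired image inclusions. For any $y$ within $\delta$ of some $y_\beta$, each packing ball $B(x_{\beta, i}, r_{\beta, i})$ meets $f^{-1}(y)$, yielding (after a minor radius shrinkage permitted by left-continuity of $h$) a $1/m$-packing of $V_j \cap f^{-1}(y)$ with sum exceeding $k$.

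The main obstacle will be ensuring that $\{y_\beta\}$, chosen relative to $f_0$, covers all relevant points of $\inter f(K)$ for the perturbed $f$. I address this via Baire's theorem over a countable base $\{B_\alpha\}$ of small balls in $\RR^n$: writing $\iF(j, m, k) = \bigcap_\alpha \iG_\alpha$ with $\iG_\alpha = \{f : \textrm{the packing property holds for every } y \in B_\alpha \cap \inter f(K)\}$, it suffices that each $\iG_\alpha$ be co-meager. The perturbation above, applied with $\{y_\beta\}$ a $\delta$-net of $B_\alpha$ drawn from $f_0(V_j) \cap B_\alpha$ (non-empty with dense interior in $B_\alpha \cap \inter f_0(K)$ by Theorem~\ref{t:int} after an initial generic perturbation of $f_0$), produces $f \in B(f_0, \eps)$ lying in $\iG_\alpha$; the image inclusion $f(B(x_{\beta, i}, r_{\beta, i})) \supset B(y_\beta, 2\delta)$ is robust under further sufficiently small sup-norm perturbations of $f$, so $\iG_\alpha$ is open. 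Thus each $\iG_\alpha$ is dense and open, hence co-meager, and intersecting over $\alpha$ and $(j, m, k)$ completes the proof.
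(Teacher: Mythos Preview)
Your reduction via Lemma~\ref{l:pack1} to the co-meagerness of each $\iF(j,m,k)$ is correct, and the perturbation mechanism (extracting a large $1/m$-packing from $\iP^h(U)=\infty$, then using $\dim_T\geq n$ to force image inclusions) is exactly the content of the paper's Lemma~\ref{l:tech}. The gap is in the final step: the claim that $\iG_\alpha$ is open is false, and the $\delta$-net you describe cannot in general be built. You require $\{y_\beta\}\subset f_0(V_j)\cap B_\alpha$ to be a $\delta$-net of $B_\alpha$, but Theorem~\ref{t:int} says only that $\inter f_0(V_j)\neq\emptyset$ after a generic perturbation, not that $f_0(V_j)$ is dense in $B_\alpha\cap\inter f_0(K)$, let alone in $B_\alpha$; typically $f_0(\cl V_j)\cap B_\alpha$ is a proper nonempty subset of $B_\alpha$. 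When that happens, your net misses part of $B_\alpha$, and a small perturbation $g$ of your constructed $f$ can satisfy $V_j\cap g^{-1}(y)\neq\emptyset$ for some $y\in B_\alpha\cap\inter g(K)$ lying outside the $\delta$-neighbourhood of every $y_\beta$, with $V_j\cap g^{-1}(y)$ a single point. Both conditions ``$y\in\inter g(K)$'' and ``$V_j\cap g^{-1}(y)\neq\emptyset$'' are unstable in $g$, so $\iG_\alpha$ is genuinely not open. (A secondary issue: robustness of the image inclusion $f(B(x_{\beta,i},r_{\beta,i}))\supset B(y_\beta,2\delta)$ requires the stable-value Lemma~\ref{l:stab}, not merely Theorem~\ref{t:int}.)

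The paper avoids this instability by abandoning the search for open dense sets and instead playing the Banach--Mazur game. At the first move Player~II selects $f_0\in\iD_m$ (Lemma~\ref{l:D}), which fixes once and for all an open set $V_0=\inter f_0(K)$ with $g(K)\setminus B(\partial g(K),1/m)\subset V_0$ for every $g$ in a neighbourhood; this replaces the moving target $\inter g(K)$ by a fixed set. Player~II then applies the perturbation lemma (Lemma~\ref{l:tech}) at \emph{every} subsequent move, building a nested tree of regular closed sets $B_{i_1\dots i_{2k}}$ so that for each $y\in V_0$ one can follow branches and extract a compact $C\subset f^{-1}(y)$ with $P_0^h(C\cap U)=\infty$ for every open $U$ meeting $C$. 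The infinite iteration is essential: a single round of perturbation cannot survive Player~I's subsequent moves, which is exactly why your one-shot argument fails.
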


\begin{remark} Haase \cite{H} proved that if $\iP^h$ is not $\sigma$-finite on $K$, then
$K$ contains $2^{\aleph_0}$ disjoint compact subsets with non-$\sigma$-finite $\iP^h$ measure.
If $K$ also satisfies the conditions of Theorem~\ref{t:gauge}, then a generic $f\in C(K,\RR^n)$ provides $2^{\aleph_0}$ disjoint fibers of non-$\sigma$-finite $\iP^h$ measure.
\end{remark}

The first part of the next theorem follows from a result of Kato~\cite[Theorem~4.6]{K}. The second part might be known as well, but we could not find it in the literature.
However, it easily follows from Theorem~\ref{t:gauge}.

\begin{theorem} \label{t:alef}
Let $n\in \NN^+$ and let $K$ be a compact metric space with $\dim_T K\geq n$. Then for a generic
$f\in C(K,\RR^n)$ we have
\begin{enumerate}[(i)]
\item $\#f^{-1}(y)\leq n$ if $y\in \partial f(K)$,
\item \label{eq:h=1}  $\# f^{-1}(y)=2^{\aleph_0}$ if $y\in \inter f(K)$.
\end{enumerate}
\end{theorem}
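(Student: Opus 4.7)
For part (i), the plan is simply to invoke Kato's Theorem~4.6 from~\cite{K}, which asserts exactly that for a generic $f\in C(K,\RR^n)$ every $y\in \partial f(K)$ satisfies $\#f^{-1}(y)\leq n$; no separate argument is required.

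For part (ii), the plan is to reduce to Theorem~\ref{t:gauge} by passing to a suitable compact subset of $K$ and choosing a slow gauge function. Let $V\subset K$ be the maximal open set with $\dim_T V<n$ and put $C=K\setminus V$. By Lemma~\ref{l:int}(i), every non-empty relatively open subset $U$ of $C$ satisfies $\dim_T U\geq n\geq 1$, and Theorem~\ref{t:<} then gives $\dim_P U\geq \dim_H U\geq \dim_T U\geq 1$. Take the (continuous, hence left-continuous) gauge $h(r)=\sqrt{r}$. Since $\dim_P U>1/2$ for each such $U$, we have $\iP^h(U)=\iP^{1/2}(U)=\infty$, so the compact space $C$ and the gauge $h$ fulfil the hypotheses of Theorem~\ref{t:gauge}.

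Applying Theorem~\ref{t:gauge} to $C$, for a generic $g\in C(C,\RR^n)$ and every $y\in \inter g(C)$ the fiber $g^{-1}(y)$ has $\iP^{1/2}(g^{-1}(y))=\infty$. Because $P_{0}^{h}(\{x\})=\lim_{\delta\to 0+}\sqrt{\delta}=0$ for any singleton, a countable set would have $\iP^{h}$-measure zero, so $g^{-1}(y)$ is uncountable; being a closed subset of a compact metric space, it then has cardinality $2^{\aleph_0}$ by the Cantor--Bendixson theorem. Next, transfer this co-meager condition to $C(K,\RR^n)$ via the restriction map $R(f)=f|_C$ and Corollary~\ref{c:R}. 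Finally, Lemma~\ref{l:int}(ii) gives $\inter f(C)=\inter f(K)$ for a generic $f$, and combining everything yields that for a generic $f\in C(K,\RR^n)$ and every $y\in \inter f(K)$ we have $\#f^{-1}(y)\geq \#(f|_C)^{-1}(y)=2^{\aleph_0}$, as desired.

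The only subtlety is that Theorem~\ref{t:gauge} demands $\iP^h(U)=\infty$ for \emph{every} non-empty open set, a hypothesis that can certainly fail for open subsets of $K$ itself (namely those contained in $V$). Passing to $C$ is exactly what removes these offending open pieces, and the slow gauge $h(r)=\sqrt r$ then works uniformly because the bare topological-dimension bound $\dim_T U\geq n\geq 1$ suffices to force packing dimension above $1/2$; no further fine analysis of the geometry of $K$ is needed.
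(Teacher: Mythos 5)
Your proposal is correct and follows essentially the same route as the paper: part (i) is simply cited from Kato, and part (ii) is reduced via Lemma~\ref{l:int} and Corollary~\ref{c:R} to the case where every non-empty open subset has topological dimension at least $n$, after which Theorem~\ref{t:gauge} is applied with a suitable gauge. The only (harmless) difference is the gauge: the paper takes the constant gauge $h\equiv 1$, so that $\iP^h=\iP^0$ is the counting measure and non-$\sigma$-finiteness immediately gives uncountability, whereas your choice $h(r)=\sqrt r$ works equally well via $\dim_P U\geq \dim_T U\geq 1>1/2$ and the fact that singletons are $\iP^h$-null.
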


\begin{proof} We only need to prove \eqref{eq:h=1}. By Lemma~\ref{l:int} and
Corollary~\ref{c:R} we may assume that $\dim_T  U\geq n$ for all
non-empty open sets $U\subset K$. Let $h(x)=1$ for all $x\geq 0$.
Applying Theorem~\ref{t:gauge} for $h$ yields that for a generic $f\in C(K,\RR^n)$ for all
$y\in \inter f(K)$ the measures $\iP^h|_{f^{-1}(y)}$ are not $\sigma$-finite.
As $\iP^h=\iP^0$ is the counting measure, each $f^{-1}(y)$ is uncountable.
Then $\# f^{-1}(y)=2^{\aleph_0}$ by \cite[Corollary~6.5]{Ke}, and \eqref{eq:h=1} follows.
\end{proof}

Now we prove Theorem~\ref{t:mainP} based on Theorem~\ref{t:gauge}. We need the following.

\begin{lemma} \label{l:dn}
Let $n\in \NN^+$ and let $K$ be a compact metric space with $\dim_T K\geq n$. If $d<d_P^n(K)$ then there exists a compact set $C\subset K$ such that
for every open set $U$ intersecting $C$ we have $\dim_T (C\cap U)\geq n$ and $\dim_P (C\cap U)>d$.
\end{lemma}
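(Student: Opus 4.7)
The plan is to define a single maximal open ``bad'' set $V^\ast\subset K$ that absorbs both obstructions simultaneously, so that $C=K\setminus V^\ast$ inherits both required local lower bounds from one maximality argument. Let $\iV$ be the collection of open sets $V\subset K$ for which there exists an $F_\sigma$ set $F\subset V$ with $\dim_T F\leq n-1$ and $\dim_P(V\setminus F)\leq d$, and set $V^\ast=\bigcup_{V\in \iV} V$ and $C=K\setminus V^\ast$. The first task is to show $V^\ast\in \iV$: by second countability of $K$, choose a countable subfamily $\{V_i\}\subset \iV$ with $V^\ast=\bigcup_i V_i$ and witnesses $F_i\subset V_i$; then $F^\ast=\bigcup_i F_i$ is $F_\sigma$ with $\dim_T F^\ast\leq n-1$ by the countable stability of topological dimension for $F_\sigma$ covers, and $V^\ast\setminus F^\ast\subset \bigcup_i(V_i\setminus F_i)$ yields $\dim_P(V^\ast\setminus F^\ast)\leq d$ by countable stability of packing dimension. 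Note also that $C\neq \emptyset$, since otherwise $F^\ast$ would witness $d_P^n(K)\leq \dim_P(K\setminus F^\ast)\leq d$, contradicting $d<d_P^n(K)$.

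Next I establish the two local bounds on $C$ by parallel maximality arguments. Fix an open $U\subset K$ with $C\cap U\neq\emptyset$. If $\dim_T(C\cap U)\leq n-1$, I enlarge \emph{both} the open set and the witness simultaneously: set $V'=V^\ast\cup U$ and $F'=F^\ast\cup (C\cap U)$. Since $C\cap U$ is a closed set intersected with an open set (hence $F_\sigma$), $F'$ is $F_\sigma$ with $\dim_T F'\leq n-1$. A short case analysis shows $V'\setminus F'\subset V^\ast\setminus F^\ast$, because any point of $V'\setminus F'$ lying outside $V^\ast$ would belong to $U\cap C\subset F'$, contradicting its exclusion from $F'$. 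Hence $V'\in \iV$, but $V'\supsetneq V^\ast$ since $\emptyset\neq U\cap C\subset U\setminus V^\ast$, contradicting maximality. If instead $\dim_P(C\cap U)\leq d$, I keep the same witness: set $V'=V^\ast\cup U$ and $F'=F^\ast$. The inclusion $V'\setminus F^\ast\subset (V^\ast\setminus F^\ast)\cup (C\cap U)$, combined with countable stability of packing dimension, gives $\dim_P(V'\setminus F^\ast)\leq d$, so again $V'\in \iV$ and maximality is violated.

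The main obstacle is identifying the right family $\iV$. Neither of the two obvious strategies works in isolation: removing only a maximal open set of topological dimension less than $n$ (as in Lemma~\ref{l:int}) preserves the topological lower bound but says nothing about packing dimension, while applying only Lemma~\ref{l:pack} preserves the packing bound but may destroy the topological one when $d\geq n$ (since the removed maximal open set of small packing dimension can have topological dimension as large as $d$). The combined splitting $(F,V\setminus F)$ above is precisely what permits a unified maximality argument, with the $F_\sigma$ structure of $C\cap U$ matching exactly the hypothesis of countable stability for $\dim_T$.
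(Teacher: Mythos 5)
Your proof is correct and takes essentially the same route as the paper: your family $\iV$ is exactly the paper's $\iW$, and $C$ is the complement of its maximal (Lindel\"of-justified) element. The only difference is cosmetic --- the paper gets both bounds in one stroke by showing $U$ itself lies in $\iW$ (combining a hypothetical witness inside $C\cap U$ with a witness for the open set $U\setminus C\subset V$), whereas you run two separate maximality checks on $V^\ast\cup U$; both are the same contradiction with maximality.
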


\begin{proof}
Define
\begin{align*} \iW=\{&W\subset K: W \textrm{ is open and there exists  an } F_{\sigma} \textrm{ set } F\subset W  \\
&\textrm{such that } \dim_T F\leq n-1 \textrm{ and } \dim_P (W\setminus F)\leq d \}.
\end{align*}
The countable stability of topological dimension for $F_{\sigma}$ sets and the countable stability of the packing dimension imply that $\iW$ is closed for taking countable unions.
Let $V=\bigcup \iW$, then the Lindel\"of property of $V$ implies that $V\in \iW$. Clearly we have $W\in \iW$ if and only if $W\subset V$. Define $C=K\setminus V$, then $d<d_P^n(K)$ yields that $C$ is a non-empty compact set.
Let $U$ be an open set intersecting $C$, we need to show that $\dim_T (C\cap U)\geq n$ and $\dim_P(C\cap U)>d$. We prove the stronger statement
that there is no $F_{\sigma}$ set $F_1\subset C\cap U$ such that
\begin{equation} \label{eq:F1}
\dim_T F_1\leq n-1 \quad \textrm{and} \quad \dim_P ((C\cap U)\setminus F_1)\leq d.
\end{equation}
Assume to the contrary that there exists such a set $F_1$.
As $U\setminus C\subset V$ is open, we have $U\setminus C\in \iW$.
Thus there exists an $F_{\sigma}$ set $F_2\subset U\setminus C$ such that
\begin{equation} \label{eq:F2}
\dim_T F_2\leq n-1  \quad \textrm{and} \quad  \dim_P ((U\setminus C)\setminus F_2)\leq d.
\end{equation}
Then $F=F_1\cup F_2$ is an $F_{\sigma}$ set in $U$, and the countable stability of
topological dimension for $F_{\sigma}$ sets yields that $\dim_T F\leq n-1$.
The countable stability of packing dimension and the second inequalities
of \eqref{eq:F1} and \eqref{eq:F2} imply that
\[ \dim_P (U\setminus F)=\max\{\dim_P ((C\cap U)\setminus F_1),\,\dim_P ((U\setminus C)\setminus F_2) \}\leq d. \]
Therefore $F$ witnesses $U\in \iW$, so $U\subset V$. Hence $C\cap U=\emptyset$,
which is a contradiction. The proof is complete.
\end{proof}

\begin{proof}[Proof of Theorem~\ref{t:mainP}]
As a countable intersection of co-meager sets is co-meager, it is enough to prove the theorem for a fixed $d<d^n_{P}(K)$.
By Lemma~\ref{l:dn} and Corollary~\ref{c:R} we may assume that $\dim_T U\geq n$ and $\dim_P U>d$
for all non-empty open sets $U\subset K$. Applying Theorem~\ref{t:gauge} for $h(x)=x^d$
implies that for a generic $f\in C(K,\RR^n)$ for all $y\in \inter f(K)$ we have $\iP^d(f^{-1}(y))=\infty$,
so $\dim_P f^{-1}(y)\geq d$. Theorem~\ref{t:int} implies that the open set $U_f=\inter f(K)$ is not empty
for a generic $f\in C(K,\RR^n)$, which finishes the proof.
\end{proof}

In the remaining part of this subsection we prove Theorem~\ref{t:gauge}. First we need some preparation.

\begin{definition} \label{d:D} Assume that a compact metric space $K$ and $n\in \mathbb{N}^+$ are given.
For all $m\in \mathbb{N}^+$ define
\begin{align*} \mathcal{D}_m=\{&f\in C(K,\RR^n): \textrm{ there exists an } \eps>0 \textrm{ such that } \\
&g(K)\setminus B(\partial g(K),1/m)\subset f(K) \textrm{ for all } g\in B(f,\varepsilon)\}.
\end{align*}
If $f\in \mathcal{D}_m$ then there is a witness $\varepsilon(f,m)>0$ corresponding to the definition.
\end{definition}

The following lemma is \cite[Lemma~7.11]{B}.

\begin{lemma} \label{l:D} Let $K$ be a compact metric space in which every non-empty open set is uncountable.
Then $\mathcal{D}_m=\mathcal{D}_m(K,n)$ is dense in $C(K,\RR^n)$ for all $m,n\in \NN^+$.
\end{lemma}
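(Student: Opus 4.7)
The plan is to prove the stronger statement that $\mathcal{D}_m=C(K,\RR^n)$, from which density is immediate. Fix $f\in C(K,\RR^n)$ and set $A:=f(K)$, a non-empty compact subset of $\RR^n$. Define $H\colon\RR^n\to[0,\infty)$ by
\[
H(y)\ =\ \max\{\dist(w,A):w\in B(y,1/m)\}.
\]
The maximum is attained since $B(y,1/m)$ is compact, and a short estimate shows that $H$ is $1$-Lipschitz. The whole lemma reduces to proving
\[
\eta\ :=\ \inf\{H(y):y\in\RR^n\setminus A\}\ >\ 0,
\]
after which $\varepsilon:=\eta/2$ will serve as a witness that $f\in\mathcal{D}_m$.

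To establish $\eta>0$, I would argue by contradiction: suppose there is a sequence $y_k\in\RR^n\setminus A$ with $H(y_k)\to 0$. Since $H(y)\geq\dist(y,A)$, the distances $\dist(y_k,A)$ tend to $0$, so the $y_k$ accumulate on the compact set $A$; after passing to a subsequence, $y_k\to y^*\in A$. Because each $y_k\notin A$, one has $y^*\in\partial A$. On the other hand, continuity of $H$ gives $H(y^*)=0$, i.e.\ $B(y^*,1/m)\subset A$. In particular the open ball $U(y^*,1/m)$ is contained in $A$ and is a neighborhood of $y^*$, so $y^*\in\inter A$, contradicting $y^*\in\partial A$.

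With $\eta>0$ in hand and $\varepsilon:=\eta/2$, I consider any $g\in B(f,\varepsilon)$. The sup-norm bound forces $g(K)\subset B(A,\varepsilon)$. If $y\in g(K)$ with $\dist(y,\partial g(K))>1/m$, then $y\in\inter g(K)$, and the closed ball $B(y,1/m)$ is connected and disjoint from $\partial g(K)$, so it lies entirely in $\inter g(K)\subset g(K)\subset B(A,\varepsilon)$. Consequently every $w\in B(y,1/m)$ satisfies $\dist(w,A)\leq\varepsilon$, so $H(y)\leq\varepsilon<\eta$, and by the definition of $\eta$ this forces $y\in A=f(K)$. Hence $g(K)\setminus B(\partial g(K),1/m)\subset f(K)$ for all $g\in B(f,\varepsilon)$, which shows $f\in\mathcal{D}_m$.

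The main conceptual step is the reformulation of the two-parameter perturbation condition defining $\mathcal{D}_m$ into the intrinsic statement $\eta>0$ about the single compact set $A=f(K)$; once this translation is made, the rest is a straightforward compactness argument. With the approach above the hypothesis on $K$ is never used, so either the lemma holds in greater generality or this shortcut must be supplemented by a different, more constructive argument if one wishes to exploit that hypothesis.
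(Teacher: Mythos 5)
Your proof is correct, and it is a genuinely different route from the one the paper relies on: the paper does not prove Lemma~\ref{l:D} at all but cites \cite[Lemma~7.11]{B}, where the argument (like the related Lemmas~\ref{l:stab}, \ref{l:tech} and \ref{l:GnK} here) is constructive --- one approximates $f$ by maps whose images are finite unions of balls, obtained by sending uncountable (Cantor) subsets of small open pieces of $K$ onto balls, and this is exactly where the hypothesis that every non-empty open subset of $K$ is uncountable enters. Your compactness argument instead shows that, for $\mathcal{D}_m$ exactly as in Definition~\ref{d:D}, membership is automatic: $\mathcal{D}_m=C(K,\RR^n)$ for \emph{every} compact metric space $K$. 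The individual steps check out: if $y\in g(K)$ and $\dist(y,\partial g(K))>1/m$, then $y\in\inter g(K)$ and the connected ball $B(y,1/m)$ misses $\partial g(K)$, hence lies in $\inter g(K)\subset g(K)\subset B(f(K),\eps)$; the function $H$ is $1$-Lipschitz; and $\eta>0$ follows from the limit argument, since a limit point $y^{*}$ of points outside $A=f(K)$ with $H\to 0$ would be simultaneously in $\partial A$ and in $\inter A$. (Your argument even yields the marginally stronger fact, used in the proof of Theorem~\ref{t:gauge}, that the $1/m$-deep part of $g(K)$ lies in $\inter f(K)$: apply the conclusion to all points of a small neighbourhood of $y$, which are still $1/m$-deep in $g(K)$.) What each approach buys: yours is shorter, softer, and dispenses with the hypothesis on $K$, showing it is needed only for the cited constructive proof (or for stronger stability properties of the approximating maps, which the rest of the paper extracts from Lemmas~\ref{l:stab}--\ref{l:tech} rather than from Lemma~\ref{l:D} itself); the constructive route, on the other hand, produces explicit approximants whose images are stably covered unions of balls, which is the form actually exploited elsewhere in the paper. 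For the bare statement of Lemma~\ref{l:D} your argument suffices.
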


\begin{lemma} \label{l:stab} Let $n\in \NN^+$ and let $K$ be a compact metric space with $\dim_T K\geq n$. Then
for each $y\in \RR^n$ and $r>0$ there exists an onto map $f\colon K\to B(y,r)$ such that $B(y,r-t) \subset g(K)$
for all $0<t<r$ and $g\in B(f,t)$.
\end{lemma}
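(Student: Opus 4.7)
My plan is to construct $f$ as a composition $f = \psi \circ f_0$, where $f_0 \colon K \to Q := [-1,1]^n$ is built using the theorem on partitions and $\psi \colon Q \to B(\mathbf{0}, r)$ is a homeomorphism of closed topological $n$-balls taking $\partial Q$ onto $\partial B(\mathbf{0}, r)$ (such $\psi$ exists, e.g., via $z \mapsto z\|z\|_\infty/\|z\|_2$ with $0 \mapsto 0$, after scaling by $r$), and then to derive the stability assertion from a radial-retraction argument rooted in an essentiality obstruction. I will first reduce to $y = \mathbf{0}$ by translation.

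Since $\dim_T K \geq n$, the theorem on partitions yields $n$ disjoint closed pairs $(A_i, B_i)$ in $K$ such that any choice of partitions $P_i$ between $A_i$ and $B_i$ satisfies $\bigcap_{i=1}^n P_i \neq \emptyset$. By Urysohn's lemma I pick $f_{0,i} \in C(K, [-1,1])$ with $f_{0,i}(A_i) = \{-1\}$ and $f_{0,i}(B_i) = \{1\}$, and set $f_0 = (f_{0,1}, \dots, f_{0,n})$. For each $s \in (-1,1)^n$ every $f_{0,i}^{-1}(s_i)$ is a partition between $A_i$ and $B_i$, so the partition theorem forces $f_0^{-1}(s) \neq \emptyset$; taking closures gives $f_0(K) = Q$, whence $f := \psi \circ f_0$ is onto $B(\mathbf{0}, r)$. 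The crucial intermediate property will be the following \emph{essentiality}: $f|_{f^{-1}(\partial B(\mathbf{0},r))}$ admits no continuous extension to $K \to \partial B(\mathbf{0},r)$, equivalently, $f_0|_{f_0^{-1}(\partial Q)}$ has no continuous extension $\tilde f_0 \colon K \to \partial Q$. Indeed, any such $\tilde f_0$ would have each coordinate $\tilde f_{0,i}$ still sending $A_i$ to $-1$ and $B_i$ to $1$; then for each $s \in (-1,1)^n$ the sets $\tilde f_{0,i}^{-1}(s_i)$ would be partitions between $A_i$ and $B_i$, and the partition theorem would force $\tilde f_0^{-1}(s) \neq \emptyset$, contradicting $\tilde f_0(K) \subset \partial Q$.

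For the stability, I fix $0 < t < r$, $g \in B(f, t)$, $z \in B(\mathbf{0}, r - t)$, and assume toward contradiction that $z \notin g(K)$. For $x \in f^{-1}(\partial B(\mathbf{0}, r))$ one has $|f(x) - z| \geq |f(x)| - |z| = r - |z| \geq t$ and $|g(x) - f(x)| \leq t$, so the segment $\{(1-s) f(x) + s g(x) : s \in [0,1]\}$ avoids $z$ (at the endpoint $s = 1$ using $g(x) \neq z$). Let $\Phi_z \colon \RR^n \setminus \{z\} \to \partial B(\mathbf{0}, r)$ be the radial retraction from $z$. Then $\Phi_z \circ g \colon K \to \partial B(\mathbf{0}, r)$ is continuous, and the map $(x, s) \mapsto \Phi_z((1-s) f(x) + s g(x))$ provides a homotopy on $f^{-1}(\partial B(\mathbf{0}, r))$ within $\partial B(\mathbf{0}, r)$ from $f|_{f^{-1}(\partial B(\mathbf{0}, r))}$ (using $\Phi_z|_{\partial B(\mathbf{0}, r)} = \mathrm{id}$) to $\Phi_z \circ g|_{f^{-1}(\partial B(\mathbf{0}, r))}$. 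The homotopy extension property, valid because $\partial B(\mathbf{0}, r) \cong S^{n-1}$ is an ANR and $f^{-1}(\partial B(\mathbf{0}, r)) \subset K$ is closed, will then produce a continuous extension of $f|_{f^{-1}(\partial B(\mathbf{0}, r))}$ to all of $K \to \partial B(\mathbf{0}, r)$, contradicting essentiality. The hard part will be getting the sharp constant $r - t$: a naive partition-theoretic cube argument applied to $\psi^{-1} \circ g$ would only yield the weaker conclusion $B(\mathbf{0}, r/\sqrt{n} - t) \subset g(K)$ because $\psi^{-1}$ is not a small-Lipschitz map, and the sharp version is recovered only because $\psi$ is boundary-preserving and the Euclidean estimate $|f(x) - z| \geq r - |z|$ on the boundary sphere matches the $L^\infty$ perturbation bound exactly.
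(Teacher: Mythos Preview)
Your proof is correct and takes a genuinely different route from the paper. The paper invokes the Hurewicz--Wallman theorem on stable values as a black box to obtain some $f$ with a single stable value $y$ of radius $r$, and then upgrades this to the full conclusion $B(y,r-t)\subset g(K)$ by a short \emph{translation trick}: if $z\in B(y,r-t)\setminus g(K)$ for some $g\in B(f,t)$, set $h=g+(y-z)\in B(f,r)$, so $y\in h(K)$, which forces $z\in g(K)$. Finally it post-composes with the radial projection onto $B(y,r)$ to make $f$ surjective. Your argument instead \emph{constructs} the map explicitly via the partition theorem and Urysohn functions, proves essentiality of $f|_{f^{-1}(\partial B)}$ directly, and then derives stability from a radial-retraction and homotopy-extension argument into the ANR $S^{n-1}$. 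The paper's approach is shorter and entirely elementary once the cited stable-value theorem is granted (no ANR or HEP machinery), while yours is self-contained---in effect reproving the Hurewicz--Wallman input---and makes the topological obstruction transparent. Your closing remarks about why a naive cube argument loses the sharp constant are accurate commentary but not part of the logical flow; the essentiality/HEP argument you give already yields the sharp $r-t$.
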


\begin{proof} By \cite[Theorem~VI~2.]{HW} there exists a continuous map $f\colon K\to \mathbb{R}^n$ with a \emph{stable value},
that is, there exist $y_0\in \mathbb{R}^n$ and $r_0>0$ such that $y_0\in g(K)$ for all $g\in B(f,r_0)$.
We may assume that $y_0=y$ and $r_0=r$ by composing $f$ with affine maps of $\RR^n$.
We show that for all $0<t<r$ and $g\in B(f,t)$ we have $B(y,r-t) \subset g(K)$.
Assume to the contrary that there is a $t\in (0,r)$, a map $g\in B(f,t)$, and $z\in B(y,r-t)\setminus g(K)$.
Let $h(x)=g(x)+(y-z)$. Clearly $h\in B(g, r-t)\subset B(f,r)$, so $y\in h(K)$.
Let $x\in K$ such that $h(x)=y$. Then $g(x)=h(x)-y+z=z$, so $z\in g(K)$, which is a contradiction.

In particular, $B(y,r-t)\subset f(K)$ for all $0<t<r$. As $f(K)$ is closed, we have $B(y,r)\subset f(K)$.
We may assume that $f(K)=B(y,r)$ by replacing $f$ with $p \circ f$, where
$p$ is the radial projection of $\RR^n$ onto $B(y,r)$. This concludes the proof.
\end{proof}

\begin{lemma} \label{l:tech} Let $n\in \NN^+$ and let $\eps>0$.
Assume that $h$ is a left-continuous gauge function and $K$ is a compact metric space such that
$\dim_T U\geq n$ and $\iP^h(U)=\infty$ for every non-empty open set $U\subset K$.
Assume that $\iU\subset C(K,\RR^n)$ is open, and $V\subset \RR^n$ is a non-empty open set such that $V\subset f(K)$ for some $f\in \iU$. Then there is an open set $\iV\subset \iU$ and $N\in \NN^+$, for each $1\leq i\leq N$ there is an
$N_i\in \NN^+$ and an open set $V_i \subset \RR^n$, and for each $1\leq i \leq N$ and $1\leq j\leq N_i$ there is a
non-empty regular closed set $B_{ij}\subset K$ such that
\begin{enumerate}[(i)]
\item  \label{t1} $V=\bigcup_{i=1}^{N} V_{i}$,
\item \label{t1.5} $\diam B_{ij}\leq \eps$ for all $1\leq i \leq N$ and $1\leq j\leq N_i$,
\item \label{t2} $B_{ij}\cap B_{k\ell}=\emptyset$ if $(i,j)\neq (k,\ell)$,
\item \label{t3} $P^h_{\eps} (S_{i})\geq 1/\eps$ whenever $S_i$ meets $B_{ij}$ for all $1\leq j \leq N_{i}$,
\item \label{t4} $V_{i}\subset g(B_{ij})$ and $\diam g(B_{ij})\leq 6\eps$ for all $g\in \iV$, $1\leq i\leq N$, and $1\leq j\leq N_i$.
\end{enumerate}
\end{lemma}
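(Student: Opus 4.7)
I will slightly perturb $f$ to a nearby $\tilde f \in \iU$ whose image on many small pairwise disjoint regular closed \emph{source pieces} $B_{ij} \subset K$ is a controlled ball around a target $y_i \in \cl V_i$, and then take $\iV$ to be a small $C^0$-neighborhood of $\tilde f$ inside $\iU$. Stability of the image under perturbation is provided by Lemma~\ref{l:stab}, and condition~(iv) will follow from the hypothesis $\iP^h(U) = \infty$ on every nonempty open $U \subset K$ (which, since $P^h_0(U) \ge \iP^h(U)$, forces $P^h_\eta(U) = \infty$ for every $\eta > 0$), combined with the left-continuity of the gauge $h$.

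\textbf{Covering $V$ and placing the $B_{ij}$.} Fix $\delta_0 > 0$ with $B(f,\delta_0) \subset \iU$ and an auxiliary scale $0 < \mu < \min\{\eps,\delta_0\}/10$. Since $\cl V$ is compact in $\RR^n$, I cover $V$ by finitely many nonempty open sets $V_1,\dots,V_N$ of diameter less than $\mu$ (take a finite subcover of $\cl V$ by balls of small diameter and intersect with $V$), giving~(i); fix $y_i \in \cl V_i \subset f(K)$ with $V_i \subset B(y_i,\mu/2)$ and preimages $x_i \in f^{-1}(y_i)$. By uniform continuity of $f$, choose pairwise disjoint open balls $W_i \subset K$ around $x_i$ with $f(W_i) \subset B(y_i,\mu)$. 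Inside each $W_i$, since $P^h_\eta(W_i) = \infty$ for every $\eta > 0$, select an $\eps/2$-packing $\{B(z_{ij},s_{ij})\}_{j=1}^{N_i}$ with $\sum_j h(s_{ij})$ as large as desired, and set $B_{ij} := \cl U(z_{ij},s_{ij}/c)$ for a large constant $c$ to be specified. These are nonempty pairwise disjoint regular closed subsets of $W_i$ of diameter at most $\eps$, so (ii) and (iii) hold. For any transversal $y_{ij} \in B_{ij}$, the triangle inequality yields $d(y_{ij},y_{ik}) > (s_{ij}+s_{ik})(1-1/c)$, so the balls $B(y_{ij},s_{ij}')$ are pairwise disjoint whenever $s_{ij}' \le (1-1/c)\,s_{ij}$; choosing $c$ large and then invoking the left-continuity of $h$ at each $s_{ij}$ lets me pick such $s_{ij}' < \eps$ with $\sum_j h(s_{ij}') \ge 1/\eps$ (starting from a packing with $\sum_j h(s_{ij})$ safely above $1/\eps$), which establishes (iv).

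\textbf{Perturbation and choice of $\iV$.} In each $B_{ij}$ fix a regular closed $A_{ij} \subset \inter B_{ij}$ with nonempty interior in $K$; by hypothesis $\dim_T A_{ij} \ge n$, so Lemma~\ref{l:stab} provides an onto map $\phi_{ij}: A_{ij} \to B(y_i, r_i)$ with $B(y_i,r_i-t) \subset g(A_{ij})$ whenever $\|g-\phi_{ij}\|_{A_{ij}} \le t$, where I take $r_i = \mu/2 + \sigma$ for a small $\sigma > 0$ with $\sigma \le \mu/2$. Extend each $\phi_{ij}$ to $B_{ij}$ by Tietze with values still in $B(y_i,r_i)$ (using radial projection onto the ball), and use a continuous $\eta_{ij}: K \to [0,1]$ supported in $\inter B_{ij}$ with $\eta_{ij}\equiv 1$ on $A_{ij}$ to define $\tilde f := f + \sum_{i,j} \eta_{ij}(\phi_{ij}-f)$. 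Then $\tilde f=\phi_{ij}$ on $A_{ij}$, $\tilde f=f$ off $\bigcup B_{ij}$, and since both $f(B_{ij}) \subset B(y_i,\mu)$ and $\phi_{ij}(B_{ij}) \subset B(y_i,r_i) \subset B(y_i,\mu)$, convex combinations give $\tilde f(B_{ij}) \subset B(y_i,\mu)$; in particular $\|\tilde f-f\|_\infty < 2\mu < \delta_0$, so $\tilde f \in \iU$. Set $\iV := U(\tilde f, t_0) \cap \iU$ with $0 < t_0 < \sigma$: for every $g \in \iV$, $\|g-\phi_{ij}\|_{A_{ij}} < t_0$, so stability yields $V_i \subset B(y_i, \mu/2) \subset B(y_i, r_i - t_0) \subset g(A_{ij}) \subset g(B_{ij})$, while $g(B_{ij}) \subset B(y_i, \mu + t_0) \subset B(y_i, 2\mu)$ gives $\diam g(B_{ij}) \le 4\mu \le 6\eps$, proving (v). The main technical difficulty lies in arranging~(iv): the packing mass must survive replacing the nominal centers $z_{ij}$ by arbitrary transversal points of $B_{ij}$; the separation factor $c$ secures disjointness after this perturbation, while the left-continuity of $h$ is exactly what allows the slight shrinking of the packing radii without losing the $h$-mass bound.
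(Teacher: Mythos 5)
Your strategy is essentially the paper's (finite cover, finite packings of large $h$-mass shrunk to regular closed balls around the packing centers, Lemma~\ref{l:stab} plus Tietze gluing, and $\iV$ a small ball around the perturbed map), but there is a genuine gap where you place the sets $B_{ij}$. A $\delta$-packing of $W_i$ only requires the \emph{centers} $z_{ij}$ to lie in $W_i$; the balls $B(z_{ij},s_{ij})$ may stick far out of $W_i$, and since you shrink the radius only by the fixed factor $1/c$, not in terms of $\dist(z_{ij},K\setminus W_i)$, nothing forces $B_{ij}=\cl U(z_{ij},s_{ij}/c)\subset W_i$. That containment is exactly what you use for (a) disjointness of $B_{ij}$ and $B_{k\ell}$ when $i\neq k$ (conclusion (iii), and also needed so that the bumps $\eta_{ij}$ have disjoint supports and $\tilde f=\phi_{ij}$ on $A_{ij}$), and (b) the bound $f(B_{ij})\subset B(y_i,\mu)$, on which $\|\tilde f-f\|_\infty<2\mu$, $\tilde f\in\iU$, and $\diam g(B_{ij})\le 4\mu$ all rest. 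As written these steps can fail: centers belonging to two adjacent $W_i$'s may lie much closer to each other than $s_{ij}/c$. The repair is routine and uses your hypothesis once more: pack the smaller non-empty open set $W_i'=\{x\in W_i:\dist(x,K\setminus W_i)>\rho\}$ with radii at most $\min\{\eps/2,\rho\}$, so every packing ball (hence every $B_{ij}$) lies in $W_i$; the paper instead first fixes the finitely many centers and then chooses a single radius $\delta_3$ small enough that $B(z_{ij},\delta_3)$ are disjoint and contained in the corresponding neighborhood. Either way, the repair must be made.

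A second, smaller flaw is the inequality $d(y_{ij},y_{ik})>(s_{ij}+s_{ik})(1-1/c)$: it presupposes $d(z_{ij},z_{ik})>s_{ij}+s_{ik}$, which is how disjointness of balls reads in $\RR^n$ but not in a general compact metric space, where disjoint balls can have centers closer than the sum of the radii. The conclusion survives, because $B(y_{ij},(1-1/c)s_{ij})\subset B(z_{ij},s_{ij})$ by the triangle inequality, so the transplanted balls inherit disjointness from the packing; your left-continuity argument for $\sum_j h((1-1/c)s_{ij})\ge 1/\eps$ is then fine and is the same device as the paper's (which shrinks $s_{ij}$ to $s_{ij}-\delta_3$ instead). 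Finally, tidy up three small points: take the packing radii $s_{ij}>0$ (otherwise $B_{ij}=\cl U(z_{ij},0)=\emptyset$); choose the $y_i$, hence the $x_i$, pairwise distinct, since otherwise pairwise disjoint neighborhoods $W_i$ need not exist; and cover $V$ by sets of diameter less than $\mu/2$ if you want $V_i\subset B(y_i,\mu/2)$ with $y_i\in\cl V_i$.
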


\begin{proof} Fix $f\in \iU$ such that $V\subset f(K)$. We may assume that $U(f,5\eps)\subset \iU$, otherwise we replace
$\eps$ by a smaller positive number. Since $K$ is compact and $f$ is uniformly continuous, there
is a $\delta_1>0$ and there are finitely many distinct points $x_{1},...,x_{N}\in K$ such that
\begin{equation} \label{eq:KB}   K=\bigcup_{i=1}^{N} B(x_{i},\delta_1)
\end{equation}
and for all $i\in \{1,\dots,N\}$ we have
\begin{equation} \label{eq:fB}
f(B(x_i,\delta_1))\subset U(y_i,\eps),
\end{equation}
where $y_i=f(x_i)$. Let $V_i=U(y_i,\eps)\cap V$, then clearly
$ \bigcup_{i=1}^{N} V_i \subset V$. By \eqref{eq:fB} we have
$V\subset f(K)\cap V\subset \bigcup_{i=1}^{N} V_i$, so \eqref{t1} holds.

Choose $0<\delta_2<\delta_{1}$ such that the balls $U(x_{i},\delta_2)$ are disjoint.
Fix an arbitrary $1\leq i\leq N$.
Since $\iP^h(U(x_i,\delta_2))=\infty$, there exists a finite set
\[ Z_{i}=\{z_{ij}: 1\leq j\leq N_i \}\subset U(x_i,\delta_2)\]
such that $P^h_{\eps}(Z_i)> 1/\eps$.
By the left-continuity of $h$ there exists a $0<\delta_3<\eps/2$ such that
$B(z_{ij},\delta_3)$ are disjoint balls in $U(x_i,\delta_2)$
and $P^h_{\eps}(S_i)\geq 1/\eps$ for each set $S_i\subset K$ which meets $B(z_{ij},\delta_3)$ for all $1\leq j\leq N_i$.
For every $j\in \{1,\dots, N_i\}$ define the non-empty regular closed set
\[ B_{ij}=\cl U(z_{ij},\delta_3)\subset B(z_{ij},\delta_3).\]
Then $\diam B_{ij}\leq 2\delta_3<\eps$ yields \eqref{t1.5}. Since the sets
$\{U(x_i,\delta_2): 1\leq i\leq N\}$ and for all $1\leq i\leq N$ the sets
$\{B(z_{ij},\delta_3): 1\leq j \leq N_i\}$ consist of pairwise disjoint balls,
we obtain \eqref{t2}. The definition of $\delta_3$ implies \eqref{t3}.

As $\dim_T B_{ij} \geq n$, by Lemma~\ref{l:stab} there are onto maps
$f_{ij}\colon B_{ij}\to B(y_i,2\eps)$ such that
\begin{equation} \label{eq:Byi} B(y_i,\eps)\subset g_{ij}(B_{ij}) \quad \textrm{for all} \quad  g_{ij}\in B(f_{ij},\eps).
\end{equation}
Tietze's extension theorem and \eqref{eq:fB} yield that for each $i\in \{1,\dots, N\}$
there exists a continuous map $F_i\colon B(x_i,\delta_1)\to B(y_i,2\eps)$ such that $F_i=f_{ij}$ on $B_{ij}$ and $F_i=f$ on
$B(x_i,\delta_1)\setminus U(x_i,\delta_2)$. Let $F(x)=F_i(x)$
for all $x\in B(x_i,\delta_1)$ and $i\in \{1,\dots,N\}$. The construction, \eqref{eq:KB}, and \eqref{eq:fB}
imply that $F\in C(K,\RR^n)$ is well-defined and for all $1\leq i\leq N$ we have
\begin{equation} \label{eq:FB} F(B(x_i,\delta_1))\cup f(B(x_i,\delta_1))=B(y_i,2\eps).
\end{equation}
Clearly \eqref{eq:KB} and \eqref{eq:FB} yield that $F\in B(f,4\eps)$. Define $\iV=U(F,\eps)$, then we obtain $\iV\subset U(f,5\eps)\subset \iU$. Fix $1\leq i\leq N$, $1\leq j\leq N_i$, and $g\in \iV$ arbitrarily.
Then 
\[g(B_{ij})\subset B(f_{ij}(B_{ij}),\eps) \subset B(y_i,3\eps),\]
so $\diam g(B_{ij})\leq 6\eps$. Clearly $g_{ij}=g|_{B_{ij}}$ satisfies $g_{ij}\in B(f_{ij},\eps)$, thus
\eqref{eq:Byi} yields
\[ V_i\subset U(y_i,\eps)\subset g_{ij}(B_{ij})=g(B_{ij}), \]
so \eqref{t4} holds. The proof is complete.
\end{proof}

Now we are ready to prove Theorem~\ref{t:gauge}.

\begin{proof}[Proof of Theorem~\ref{t:gauge}]
For each $m\in \NN^+$ let
\[\iG_m=\{f\in C(K,\RR^n): \iP^h|_{f^{-1}(y)} \textrm{ is not $\sigma$-finite for all } y\in f(K)\setminus B(\partial f(K),1/m)\}.\]
Fix $m\in \NN^+$. It is enough to show that $\mathcal{G}_m$ is co-meager, since then $\iG=\bigcap_{m=1}^{\infty} \iG_m$ will be our desired co-meager set in $C(K,\RR^n)$. We will play the Banach-Mazur game in the complete metric space $C(K,\RR^n)$:
First Player I chooses a non-empty open set $\mathcal{U}_0\subset C(K,\RR^n)$, then Player II
chooses a non-empty open set $\mathcal{V}_0\subset \mathcal{U}_0$, Player I continues with a non-empty open set $\mathcal{U}_1\subset \mathcal{V}_0$, Player II chooses a non-empty open set $\iV_1\subset \iU_1$, and so on. By definition Player II wins this game if $\bigcap_{k=0}^{\infty} \mathcal{V}_k\subset \mathcal{G}_m$.
It is well known that Player II has a winning strategy iff $\mathcal{G}_m$ is co-meager in $C(K,\RR^n)$, see \cite[Theorem~1]{Ox}
or \cite[Theorem~8.33]{Ke}. Thus we need to prove that Player II has a winning strategy.

Assume that the non-empty open set $\iU_0\subset C(K,\RR^n)$ is given. Lemma~\ref{l:D} implies that we can fix
$f_0\in \mathcal{D}_m\cap \iU_0$ and a witness $\varepsilon=\varepsilon(f_0,m)>0$ corresponding to Definition~\ref{d:D}.
Let $\iV_0=B(f_0,\eps)\cap \iU_0$ and let $V_0=\inter f_0(K)$. The definition of $\iD_m$ yields that
for all $f\in \iV_0$ we have
\begin{equation} \label{eq:fbk} f(K)\setminus B(\partial f(K),1/m)\subset V_0.
\end{equation}
If $V_0=\emptyset $ then clearly $\iV_0\subset \iG_m$, and Player II wins independently of the subsequent moves.
Therefore we may assume that $V_0\subset \RR^n$ is a non-empty open set.

Let $\iU_1\subset \iV_0$ be given. By Lemma~\ref{l:tech} there is a non-empty
open set $\iV_1\subset \iU_1$ and
$N_{0}\in \NN^+$ such that for each $1\leq i_1\leq N_{0}$ there is an integer $N_{i_1}\in \NN^+$ and an
open set $V_{i_1} \subset \RR^n$, and for each $1\leq i_1\leq N_{0}$ and $1\leq i_2\leq N_{i_1}$ there is a non-empty regular closed set
$B_{i_1i_2}\subset K$ such that
\[ V_0=\bigcup_{i_1=1}^{N_0} V_{i_1},\]
for all $1\leq i_1\leq N_{0}$ and $1\leq i_2\leq N_{i_1}$ we have
\[ \diam B_{i_1 i_2}\leq 1,\]
we have
\[ B_{i_1i_2}\cap B_{j_1j_2}=\emptyset \quad \textrm{if} \quad (i_1,i_2) \neq (j_1,j_2),\]
for each $1\leq i_1\leq N_{0}$ if $S_{i_1}$ meets $B_{i_1i}$ for all $1\leq i \leq N_{i_1}$, then
\[ P^h_1(S_{i_1})\geq 1,\]
and for all $g\in \iV_1$, $1\leq i_1\leq N_0$, and $1\leq i_2\leq N_{i_1}$ we have
\[ V_{i_1}\subset g(B_{i_1i_2}) \quad \textrm{and}  \quad \diam g(B_{i_1 i_2})\leq 6.\]

We apply induction on $k$.
If $B_{i_1 \dots i_{2k}}\subset B_{i_1 \dots i_{2k-2}}\subset \dots \subset B_{i_1i_2}\subset K$ is
a nested sequence such that $B_{i_1\dots i_{2k}}\neq \emptyset $ and $B_{i_1\dots i_{2\ell}}$ is a regular closed set in the relative topology
of $B_{i_1\dots i_{2\ell-2}}$ for each $1\leq \ell \leq k$ (we adopt the convention $B_{i_1\dots i_0}=K$),
then Fact~\ref{f:reg} implies that $\inter B_{i_1\dots i_{2k}}\neq \emptyset$, thus $\dim_T B_{i_1\dots i_{2k}}\geq n$.
Therefore, if the pairwise disjoint closed sets $B_{i_1\dots i_{2k}}\subset K$ are given for which the above nested sequences exist,
we can apply Lemma~\ref{l:tech} for them. This yields closed sets $B_{i_1\dots i_{2k+2}}\subset B_{i_1\dots i_{2k}}$ and
non-empty open sets $\iV_{i_1\dots i_{2k+2}}\subset C(B_{i_1\dots i_{2k+2}},\RR^n)$. Then
by Tietze's extension theorem we can define the non-empty open set
\[ \iV_{k+1}=\iU_{k+1} \cap \{f\in C(K,\RR^n): f|_{B_{i_1\dots i_{2k+2}}} \in \iV_{i_1\dots i_{2k+2}} \textrm{ for all } i_1,\dots, i_{2k+2}\}.\]
That is, in the $(k+1)$st step we can define a non-empty open set $\iV_{k+1}\subset \iU_{k+1}$,
for each $1\leq i_{2k+1}\leq N_{i_1\dots i_{2k}}$ a non-empty open set $V_{i_1\dots i_{2k+1}}\subset \RR^n$ and a
positive integer $N_{i_1\dots i_{2k+1}}$, for every $1\leq i_{2k+1}\leq N_{i_1\dots i_{2k}}$ and $1\leq i_{2k+2}\leq N_{i_1\dots i_{2k+1}}$ a
set $B_{i_1\dots i_{2k+2}}\subset B_{i_1\dots i_{2k}}$ such that $B_{i_1\dots i_{2k+2}}$ is a non-empty regular closed set in the relative topology of $B_{i_1 \dots i_{2k}}$ and the following holds. For all $1\leq i_{2k}\leq N_{i_1\dots i_{2k-1}}$ we have
\begin{equation} \label{eqk1}
V_{i_1\dots i_{2k-1}}=\bigcup_{i_{2k+1}=1}^{N_{i_1\dots i_{2k}}} V_{i_1\dots i_{2k+1}},
\end{equation}
for all $1\leq i_{2k+1}\leq N_{i_1\dots i_{2k}}$ and $1\leq i_{2k+2}\leq N_{i_1\dots i_{2k+1}}$ we have
\begin{equation} \label{eqk3}
\diam B_{i_1\dots i_{2k+2}}\leq 2^{-k},
\end{equation}
we have
\begin{equation} \label{eqk5}
B_{i_1\dots i_{2k+2}}\cap B_{j_1\dots j_{2k+2}}=\emptyset \quad \textrm{if} \quad (i_1,\dots, i_{2k+2})\neq (j_1,\dots, j_{2k+2}),
\end{equation}
for each $1\leq i_{2k+1}\leq N_{i_1\dots i_{2k}}$ if $S_{i_1\dots i_{2k+1}}$ meets $B_{i_1\dots i_{2k+1}i}$ for all $1\leq i \leq N_{i_1\dots i_{2k+1}}$, then
\begin{equation} \label{eqk4}
P^h_{2^{-k}} (S_{i_1\dots i_{2k+1}})\geq 2^k,
\end{equation}
and for all $g\in \iV_{k+1}$, $1\leq i_{2k+1}\leq N_{i_1\dots i_{2k}}$, and $1\leq i_{2k+2}\leq N_{i_1\dots i_{2k+1}}$ we have
\begin{equation} \label{eqk2}
V_{i_1\dots i_{2k+1}}\subset g(B_{i_1\dots i_{2k+2}}) \quad \textrm{and} \quad \diam g(B_{i_1\dots i_{2k+2}})\leq 6\cdot 2^{-k}.
\end{equation}

The regularity of the closed sets $B_{i_1\dots i_{2k}}$ and \eqref{eqk5} are only needed to apply the induction, we will not use them later.

Fix $f\in \bigcap_{k=0}^{\infty} \iV_k$, we need to show that $f\in \iF$. Fix $y\in V_0$,
by \eqref{eq:fbk} it is enough to show that $\iP^h|_{f^{-1}(y)}$ is not $\sigma$-finite. Define the set
\begin{equation*}
C=\bigcap_{k=1}^{\infty} C_k, \quad \textrm{where} \quad  C_k=\bigcup \{ B_{i_1\dots i_{2k}}: y\in V_{i_1\dots i_{2k-1}}\}.
\end{equation*}
Then $\eqref{eqk2}$ implies that $f(x)=y$ for all $x\in C$, therefore $C\subset f^{-1}(y)$. Hence it is enough to prove that $\iP^h|_{C}$ is not $\sigma$-finite. Fix an arbitrary open set $U\subset K$ intersecting $C$, by Lemma~\ref{l:pack1} it is enough to prove that $P_{0}^h(C\cap U)=\infty$. 
By \eqref{eqk3} we can fix $k\in \NN^+$ and indexes $(i_1,\dots, i_{2k})$ such that $y\in V_{i_1\dots i_{2k-1}}$ and $B_{i_1\dots i_{2k}}\subset U$. Hence it is sufficient to show that $P_{0}^h(C\cap B_{i_1\dots i_{2k}})=\infty$. Fix an arbitrary
$\ell \geq k$, it is enough to prove that
\begin{equation} \label{eql} P^h_{2^{-\ell}}(C\cap B_{i_1\dots i_{2k}})\geq 2^{\ell}.
\end{equation}
By \eqref{eqk1} we can fix indexes $i_{2k+1},\dots, i_{2\ell+1}$ such that $y\in V_{i_1\dots i_{2\ell+1}}$.
The definition of $C$ and \eqref{eqk1} imply that $C\cap B_{i_1\dots i_{2\ell+1}i}\neq \emptyset$ for all $1\leq i\leq N_{i_1\dots i_{2\ell+1}}$.
Let
\[ S_{i_1\dots i_{2\ell+1}}=\bigcup \{C\cap B_{i_1\dots i_{2\ell+1}i}: 1\leq i\leq N_{i_1\dots i_{2\ell+1}}\}.\]
Then $S_{i_1\dots i_{2\ell+1}}\subset C\cap B_{i_1\dots i_{2k}}$, so \eqref{eqk4} yields that
\[ P^h_{2^{-\ell}}(C\cap B_{i_1\dots i_{2k}})\geq P^h_{2^{-\ell}}(S_{i_1\dots i_{2\ell+1}})\geq 2^{\ell}.\]
Thus \eqref{eql} holds, and the proof is complete.
\end{proof}

\section{Fibers on fractals} \label{s:fractal}

The main goal of this section is to prove Theorem~\ref{t:fractal1}.
Recall the following notation.

\begin{notation} Let $n\in \NN^+$ and let $K$ be a compact metric space. Let
$\dim_{*}$ be one of $\dim_T$, $\dim_H$, or $\dim_P$. For each $f\in C(K,\RR^n)$ let
\[ R_{*}(f)=\{y\in f(K): \dim_{*} f^{-1}(y)=d^n_{*}(K)\}.\]
\end{notation}

\begin{theorem} \label{t:fractal1} Let $n\in \NN^+$ and assume that $\dim_{*}$ is one of $\dim_T$, $\dim_H$, or $\dim_P$.
Let $K$ be a compact metric space with $d^n_{*}(K)>0$. The following are equivalent:
\begin{enumerate}[(i)]
\item \label{i} $ R_{*}(f)=\inter f(K)$ for a generic $f\in C(K,\RR^n)$;
\item \label{ii} $R_{*}(f)$ is dense in $\inter f(K)$ for a generic $f\in C(K,\RR^n)$;
\item \label{iii} $d^n_{*}(U)=d^n_{*}(K)$ or $\dim_T U<n$ for all open sets $U\subset K$.
\end{enumerate}
\end{theorem}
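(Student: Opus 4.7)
The plan is to prove the cycle $(i)\Rightarrow(ii)\Rightarrow(iii)\Rightarrow(i)$. The implication $(i)\Rightarrow(ii)$ is immediate, since $\inter f(K)$ is trivially dense in itself and is non-empty for a generic $f$ by Theorem~\ref{t:int}.

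For $(ii)\Rightarrow(iii)$ I argue contrapositively. Assume there is an open $U\subset K$ with $\dim_T U\geq n$ and $d^n_*(U)<d^n_*(K)$. By Fact~\ref{f:inf} and Corollary~\ref{c:stable} I construct an $F_\sigma$ set $F\subset K$ of topological dimension at most $n-1$ such that $F\cap U$ witnesses $d^n_*(U)$ while $F$ witnesses $d^n_*(K)$; then $G=K\setminus F$ satisfies $\dim_*(G\cap U)<d^n_*(K)=\dim_* G$, forcing $\dim_*(G\cap(K\setminus U))=d^n_*(K)$. Corollary~\ref{c:Hur} applied to $F$ gives generically $\dim_* f^{-1}(y)=\dim_*(f^{-1}(y)\cap G)$, so any $y\in R_*(f)$ must have its full-dimensional mass on $G\cap(K\setminus U)$; in particular $R_*(f)\subset f(K\setminus U)$. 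It therefore suffices to find, generically, a non-empty open $W\subset\inter f(K)\setminus f(K\setminus U)$, which would miss $R_*(f)$ and contradict (ii). Using Lemma~\ref{l:int} I pick a compact regular closed set $L=\overline{V}\subset U$ with $\dim_T L\geq n$; Theorem~\ref{t:int} through Corollary~\ref{c:R} yields $\inter f(L)\neq\emptyset$ generically, and a Tietze-extension perturbation supported in $\inter L$---which leaves $f|_{K\setminus U}$ fixed while letting $f|_{\inter L}$ vary freely inside any sup-norm ball---pushes a part of $\inter f(L)$ off the unchanged compact set $f(K\setminus U)$, supplying $W$. This perturbation step is the main obstacle, because $d^n_*(K\setminus U)=d^n_*(K)>0$ forces $\dim_T(K\setminus U)\geq n$ (Fact~\ref{f:equiv}) and Kato's theorem cannot make $f(K\setminus U)$ have empty interior; the argument relies only on $\RR^n\setminus f(K\setminus U)\neq\emptyset$ and the disjointness $\inter L\cap(K\setminus U)=\emptyset$.

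For $(iii)\Rightarrow(i)$, fix a rational $d<d^n_*(K)$ and show that generically $\dim_* f^{-1}(y)\geq d$ for every $y\in\inter f(K)$. Let $V_0$ be the maximal open subset of $K$ with $\dim_T V_0<n$ and set $C=K\setminus V_0$. By Lemma~\ref{l:int}, generically $\inter f(C)=\inter f(K)$, and every non-empty relatively open $U'\subset C$ has $\dim_T U'\geq n$. In the packing case, (iii) combined with $\dim_P U'\geq d^n_P(U')=d^n_P(K)>d$ gives $\iP^d(U')=\infty$, so Theorem~\ref{t:gauge} applied to $C$ with $h(x)=x^d$ yields $\dim_P f^{-1}(y)\geq d$ for every $y\in\inter f(K)$ generically. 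Intersecting the resulting co-meager sets over a sequence $d_k\nearrow d^n_P(K)$ and invoking the Main Theorem's upper bound yields (i). The Hausdorff case follows analogously using a generalized-Hausdorff-measure analogue of Theorem~\ref{t:gauge} obtained via a parallel Banach--Mazur game, and the topological case follows from Theorem~\ref{t:dtn} combined with the partition techniques of Subsection~\ref{ss:top}.
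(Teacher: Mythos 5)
Your implication (ii)$\Rightarrow$(iii) has a genuine logical gap. You reduce to showing that \emph{generically} there is a non-empty open $W\subset\inter f(K)\setminus f(K\setminus U)$, but this is simply not a generic property in general: for example if $K=[0,1]^n\sqcup[0,1]^{n+1}$ and $U$ is the $n$-cube component (so $\dim_T U\ge n$ and $d^n_{*}(U)<d^n_{*}(K)$), then by Lemma~\ref{l:stab} there is a whole open ball of maps $f$ with $f(U)\subset\inter f(K\setminus U)$, hence $\inter f(K)\subset f(K\setminus U)$ and no such $W$ exists; so the set of maps admitting $W$ is not co-meager. Your fallback — a Tietze perturbation supported in $\inter L$ — only produces individual maps near a given $f$, i.e.\ at best a dense family, and a dense set can be meager, so it cannot contradict the co-meager statement (ii). What is needed (and what the paper does) is a \emph{second-category} set of maps on which $R_{*}(f)$ misses a non-empty open subset of $\inter f(K)$: pick compact sets $C\subset\inter D\subset D\subset U$ with $\dim_T C\ge n$, note that $\{f: f(C)\cap f(K\setminus\inter D)=\emptyset\}$ contains a ball (map $C$ and $K\setminus\inter D$ to distinct constants and use Tietze), and intersect it with the two co-meager sets $\{\inter f(C)\neq\emptyset\}$ and $\{\dim_{*}(f^{-1}(y)\cap D)\le d^n_{*}(D)\ \forall y\}$; for such $f$ every $y\in\inter f(C)$ has $f^{-1}(y)\subset D$, so $\dim_{*}f^{-1}(y)\le d^n_{*}(D)<d^n_{*}(K)$, and second category already contradicts (ii). (A secondary issue: your step ``$\dim_{*}(G\cap(K\setminus U))=d^n_{*}(K)$'' uses finite stability of $\dim_{*}$ on arbitrary pieces, which fails for $\dim_T$; the paper's route avoids this.)

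The implication (iii)$\Rightarrow$(i) is only established in the packing case (where, as the paper remarks, Theorem~\ref{t:gauge} indeed suffices). For Hausdorff dimension the ``generalized-Hausdorff-measure analogue of Theorem~\ref{t:gauge}'' you invoke is not in the paper and, with parallel hypotheses, is false: take $K=[0,1]^2$, $n=1$, $h(x)=x^{3/2}$; then $\dim_T U\ge 1$ and $\iH^h(U)=\infty$ for every non-empty open $U\subset K$, yet by Theorem~\ref{t:B} a generic $f$ has $\dim_H f^{-1}(y)\le d^1_H(K)=1$ for all $y$ (Lemma~\ref{l:prodH}), so $\iH^h(f^{-1}(y))=0$ for every $y$. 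The proof of Theorem~\ref{t:gauge} hinges on packing-specific facts — finite sets $Z_i$ can carry large packing pre-measure and $P^h_0(A)=P^h_0(\cl A)$ — which have no Hausdorff counterpart, so no ``parallel Banach--Mazur game'' is available; a correct Hausdorff statement would have to be formulated in terms of $d^n_H$ and is essentially the assertion being proved. Similarly, the partition techniques of Subsection~\ref{ss:top} only give fibers of large topological dimension over \emph{some} non-empty open set of values, not over all of $\inter f(K)$. The paper's actual device for all three dimensions is the localization argument: cover $K$ by small balls $K_i$ (where (iii) gives $d^n_{*}(K_i)=d^n_{*}(K)$), apply the Main Theorem on each $K_i$, and upgrade ``some open set of values'' to prescribed balls of values via the Baire-property lemmas \ref{l:Baire}, \ref{l:FBaire} and the rescaling homeomorphism Lemma~\ref{l:iF}, together with the $\iD_m$ stability of images. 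Your sketch supplies no substitute for this step, and it also omits the inclusion $R_{*}(f)\subset\inter f(K)$ (fibers over $\partial f(K)$ are finite by Theorem~\ref{t:alef}, hence of dimension $<d^n_{*}(K)$), which is part of (i).
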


We obtain the following corollary.

\begin{corollary} \label{c:fractal2}
Let $n\in \NN^+$ and assume that $\dim_{*}$ is one of $\dim_T$, $\dim_H$, or $\dim_P$.
Let $K$ be a compact metric space with $d^n_{*}(K)>0$. The following are equivalent:
\begin{enumerate}[(1)]
\item \label{1} $R_{*}(f)=\inter f(K)$ and $\inter f(K)$ is dense in $f(K)$ for a generic $f\in C(K,\RR^n)$;
\item \label{2} $R_{*}(f)$ is dense in $f(K)$ for a generic $f\in C(K,\RR^n)$;
\item \label{3} $d^n_{*}(U)=d^n_{*}(K)$ for all non-empty open sets $U\subset K$.
\end{enumerate}
\end{corollary}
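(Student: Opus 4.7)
The plan is to prove the equivalence via the circular chain $(3) \Rightarrow (1) \Rightarrow (2) \Rightarrow (3)$, leveraging Theorem~\ref{t:fractal1} together with a Baire category argument for the last, hardest implication.

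For $(3) \Rightarrow (1)$, I first observe that $(3)$ combined with $d^n_{*}(K) > 0$ forces $\dim_T U \geq n$ for every non-empty open $U \subset K$ by Fact~\ref{f:equiv}; in particular condition (iii) of Theorem~\ref{t:fractal1} holds, so the theorem yields $R_{*}(f) = \inter f(K)$ generically. To upgrade this to (1) I must show that $\inter f(K)$ is generically dense in $f(K)$. Fixing a countable basis $\{U_i\}_{i\geq 1}$ of non-empty open subsets of $K$, Theorem~\ref{t:int} applied to each compact set $\cl U_i$ (which has $\dim_T \cl U_i \geq n$) together with Corollary~\ref{c:R} shows that generically $\inter f(\cl U_i) \neq \emptyset$ for every $i$; a brief uniform continuity argument then shows that each $y \in f(K)$ is approached by points of $\inter f(K)$. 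The implication $(1) \Rightarrow (2)$ is immediate.

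For $(2) \Rightarrow (3)$, a key preliminary observation is that $R_{*}(f) \subset \inter f(K)$ generically: by Theorem~\ref{t:alef}(i) we have $\#f^{-1}(y) \leq n$ for $y \in \partial f(K)$, so $\dim_{*} f^{-1}(y) = 0 < d^n_{*}(K)$ on the boundary. Hence (2) implies both that $R_{*}(f)$ is generically dense in $\inter f(K)$ (condition (ii) of Theorem~\ref{t:fractal1}) and that $\inter f(K)$ is generically dense in $f(K)$. By Theorem~\ref{t:fractal1}, condition (iii) of that theorem follows, so every open $U \subset K$ satisfies $d^n_{*}(U) = d^n_{*}(K)$ or $\dim_T U < n$. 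It remains to exclude the second alternative for non-empty $U$, which I expect to be the main obstacle.

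For this, let $V$ denote the maximal open subset of $K$ with $\dim_T V < n$ and $C = K \setminus V$; the task reduces to showing $V = \emptyset$. By Lemma~\ref{l:int}, a generic $f$ satisfies $\inter f(K) = \inter f(C) \subset f(C)$, and since $f(C)$ is closed the generic density of $\inter f(K)$ in $f(K)$ forces $f(K) = \cl(\inter f(K)) \subset f(C)$, i.e.\ $f(V) \subset f(C)$, generically. Therefore $\iE := \{f \in C(K,\RR^n) : f(V) \subset f(C)\}$ is co-meager. However $\iE$ is closed: rewriting the condition as $f(\cl V) \subset f(C)$ (using that $f(C)$ is closed), if $f_k \to f$ uniformly with $f_k \in \iE$ then for each $v \in \cl V$ there exists $c_k \in C$ with $f_k(v) = f_k(c_k)$, and passing to a convergent subsequence $c_{k_j} \to c \in C$ yields $f(v) = f(c) \in f(C)$. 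A closed co-meager subset of the Baire space $C(K,\RR^n)$ must equal the whole space, so $\iE = C(K,\RR^n)$. Yet if $V \neq \emptyset$, Tietze's extension theorem produces $f \in C(K,\RR^n)$ with $f|_C \equiv \mathbf{0}$ and $f(p) \neq \mathbf{0}$ for some chosen $p \in V$, so $f \notin \iE$, a contradiction. Hence $V = \emptyset$, and combined with the already-established condition (iii) of Theorem~\ref{t:fractal1} this gives (3).
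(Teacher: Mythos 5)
Your proposal is correct. The implications $(3)\Rightarrow(1)$ and $(1)\Rightarrow(2)$ run exactly as in the paper (Theorem~\ref{t:fractal1} plus a countable basis, Theorem~\ref{t:int}, and Corollary~\ref{c:R} for the density of $\inter f(K)$ in $f(K)$), but your $(2)\Rightarrow(3)$ is genuinely different. The paper argues locally: given a bad open set $U$ (which has $\dim_T U<n$ by Theorem~\ref{t:fractal1}), it chooses compacta $C\subset \inter D\subset D\subset U$, uses Hurewicz's theorem (Theorem~\ref{t:Hurewicz}) and Corollary~\ref{c:R} to make the fibers over $D$ finite generically, and a second-category set of maps separating $f(C)$ from $f(K\setminus \inter D)$ to exhibit a non-empty relatively open piece of $f(K)$ missed by $R_{*}(f)$, contradicting \eqref{2}. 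You instead globalize: after extracting from \eqref{2} (via Theorem~\ref{t:alef}(i), which places $R_{*}(f)$ inside $\inter f(K)$) that $\inter f(K)$ is generically dense in $f(K)$, you combine this with Lemma~\ref{l:int}~(ii) to get $f(K)\subset f(C)$ generically, where $C=K\setminus V$ and $V$ is the maximal open set with $\dim_T V<n$; the set $\{f: f(\cl V)\subset f(C)\}$ is closed by a routine compactness argument, and a closed co-meager set in the Baire space $C(K,\RR^n)$ is everything, which Tietze's theorem refutes unless $V=\emptyset$. This ``closed co-meager $=$ all'' trick is a neat replacement for the paper's Hurewicz-plus-second-category bookkeeping; the trade-off is that your route leans on Theorem~\ref{t:alef}(i) and Lemma~\ref{l:int}, which the paper's $(2)\Rightarrow(3)$ does not need. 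Two micro-points worth making explicit: $C\neq\emptyset$ because $d^n_{*}(K)>0$ forces $\dim_T K\geq n$ (Fact~\ref{f:equiv}), and on $\partial f(K)$ the fibers are non-empty and finite, so their dimension is $0<d^n_{*}(K)$; both are immediate.
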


Let $m\geq n$ be positive integers. Theorem~\ref{t:dtn} implies that
$d_T^n([0,1]^m)=m-n$, and Lemmas~\ref{l:prodH} and~\ref{l:prodP} yield
that $d^n_H([0,1]^m)=m-n$ and $d^n_{P}([0,1]^m)=m$.
Thus Theorem~\ref{t:fractal1} implies the following extension of
Theorem~\ref{t:Kirchheim}.

\begin{corollary} \label{c:Ki} Let $m,n\in \NN^+$ with $m\geq n$.
For a generic $f\in C([0,1]^m,\RR^n)$ for all $y\in \inter f([0,1]^m)$ we have
\[ \dim_T f^{-1}(y)=\dim_H f^{-1}(y)=m-n \quad \textrm{ and } \quad \dim_P f^{-1}(y)=m.\]
\end{corollary}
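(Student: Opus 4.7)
The plan is to reduce Corollary~\ref{c:Ki} to Theorem~\ref{t:fractal1} by computing $d^n_{*}([0,1]^m)$ explicitly and verifying the homogeneity condition (iii) of that theorem.

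First I will compute the three invariants. By Theorem~\ref{t:dtn}, $d_T^n([0,1]^m)=\dim_T[0,1]^m-n=m-n$. Writing $[0,1]^m=X\times[0,1]^n$ with $X=[0,1]^{m-n}$ (interpreted as a singleton when $m=n$), Lemma~\ref{l:prodH} gives $d_H^n([0,1]^m)=\dim_H X=m-n$, and Lemma~\ref{l:prodP} gives $d_P^n([0,1]^m)=\dim_P X+n=m$. These match the numbers asserted in the corollary.

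Second, I will check condition (iii) of Theorem~\ref{t:fractal1} for $K=[0,1]^m$. Any non-empty open $U\subset[0,1]^m$ satisfies $\dim_T U=m\geq n$, so the alternative ``$\dim_T U<n$'' is unavailable and I must establish $d^n_{*}(U)=d^n_{*}([0,1]^m)$. Monotonicity of $d^n_{*}$ gives one inequality. For the reverse, $U$ contains a small closed cube $Q$ which is bi-Lipschitz equivalent to $[0,1]^m$ via an affine map. Since topological dimension is a topological invariant and $\dim_H$, $\dim_P$ are bi-Lipschitz invariants, the definition of $d^n_{*}$ makes it bi-Lipschitz invariant too, hence $d^n_{*}(U)\geq d^n_{*}(Q)=d^n_{*}([0,1]^m)$.

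Provided $d^n_{*}([0,1]^m)>0$, Theorem~\ref{t:fractal1} now yields that for a generic $f\in C([0,1]^m,\RR^n)$ we have $R_{*}(f)=\inter f([0,1]^m)$, which is exactly the equation claimed for that particular $\dim_{*}$. The only exceptional case is $m=n$ with $\dim_{*}\in\{\dim_T,\dim_H\}$, where $d^n_{*}([0,1]^m)=0$; here the Main Theorem directly gives $\dim_{*}f^{-1}(y)\leq 0$ for all $y$, and since $f^{-1}(y)\neq\emptyset$ for $y\in f([0,1]^m)$, the matching lower bound $\dim_{*}f^{-1}(y)\geq 0$ is automatic. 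Intersecting the three resulting co-meager sets (one per dimension) produces a single generic $f$ satisfying all three equalities simultaneously. I do not anticipate a serious obstacle; the only mildly non-routine point is noting bi-Lipschitz invariance of $d^n_{*}$ directly from its definition.
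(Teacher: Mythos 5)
Your proposal is correct and follows essentially the same route as the paper: compute $d_T^n([0,1]^m)=m-n$ via Theorem~\ref{t:dtn}, $d_H^n([0,1]^m)=m-n$ and $d_P^n([0,1]^m)=m$ via Lemmas~\ref{l:prodH} and~\ref{l:prodP}, and then invoke Theorem~\ref{t:fractal1}. Your explicit verification of condition (iii) via small cubes and bi-Lipschitz invariance of $d^n_{*}$, and your separate treatment of the degenerate case $m=n$ for $\dim_T$ and $\dim_H$ (where $d^n_{*}=0$ and the Main Theorem suffices), are exactly the points the paper leaves implicit, so there is no gap.
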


Before proving Theorem~\ref{t:fractal1} we need some preparation.
The proof of $\eqref{iii} \Longrightarrow \eqref{i}$ will basically follow
the proof of \cite[Theorem~7.9]{B}, where the new ingredient will be the application of Lemma~\ref{l:iF}.
First we need some lemmas.

\begin{lemma} \label{l:mm} Let $X,Z$ be Polish spaces, and let $E\subset
X\times Z$ be a Borel set. Assume that $E_{x}$ is compact for all
$x\in X$. Let $\dim_{*}$ be one of $\dim_T$, $\dim_H$, or $\dim_P$. Define
\[ d_{*}\colon X\rightarrow [-1,\infty], \quad d_{*}(x)=\dim_{*} E_{x}.\]
Then $d_{*}$ is measurable for $\sigma(\mathbf{A})$.  \end{lemma}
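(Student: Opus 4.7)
The plan is to factor $d_{*}$ as a composition $d_{*}=\dim_{*}\circ\Phi$, where $\Phi\colon X\to K(Z)\cup\{\emptyset\}$ sends $x$ to $E_x$ and $K(Z)$ denotes the Polish hyperspace of non-empty compact subsets of $Z$ with the Vietoris topology (adjoining $\emptyset$ as an isolated point with $\dim_{*}\emptyset=-1$). The merit of this factorization is that each factor can be analyzed independently; the set $\{x: E_x=\emptyset\}=X\setminus\pr_X(E)$ is co-analytic anyway, so the isolated-point issue causes no trouble.

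First I would show that $\Phi$ is $\sigma(\mathbf{A})$-measurable. The Vietoris topology on $K(Z)$ has a subbase consisting of the sets $\{C: C\cap U\neq\emptyset\}$ and $\{C: C\subset U\}$ for $U\subset Z$ open. For the first type, $\Phi^{-1}\{C: C\cap U\neq\emptyset\}=\pr_X(E\cap(X\times U))$ is analytic as the projection of a Borel set; the preimage of the second type is the co-analytic complement (with $F=Z\setminus U$). Both lie in $\sigma(\mathbf{A})$, and since $K(Z)$ is second-countable the entire Borel $\sigma$-algebra of $K(Z)$ pulls back into $\sigma(\mathbf{A})$. Hence $\Phi$ is $\sigma(\mathbf{A})$-measurable.

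Next I would verify that $\dim_{*}\colon K(Z)\to[-1,\infty]$ is (at worst $\sigma(\mathbf{A})$-)measurable, treating the three dimensions separately. Fix a countable basis $\iB$ of $Z$. For Hausdorff dimension, compactness lets me restrict to finite covers by basis elements, so
\[\{C: \iH^s_\delta(C)<a\}=\bigcup\bigl\{\{C: C\subset\textstyle\bigcup \iF\}: \iF\subset\iB\text{ finite},\ \max_{B\in\iF}\diam B\le\delta,\ \sum_{B\in\iF}(\diam B)^s<a\bigr\}\]
is a countable union of Vietoris-open sets, hence Borel; standard countable combinations over rationals then show that $\{C: \dim_H C\le t\}$ is Borel. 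For topological dimension I would invoke the characterization that, for compact metric $C$, $\dim_T C\le n$ iff $C$ admits arbitrarily fine finite open covers of order at most $n+1$; the order condition is intrinsic to the chosen basis elements, so the same counting argument delivers a Borel set.

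The main obstacle is packing dimension, which is defined via an infimum over countable covers and is not directly visible from Vietoris-basic sets. I would use the characterization
\[\dim_P C=\inf\bigl\{\textstyle\sup_i\overline{\dim}_B K_i: K_i\in K(Z),\ K_i\subset C,\ C=\bigcup_i K_i\bigr\},\]
valid for compact $C$, where $\overline{\dim}_B$ is the upper box dimension (itself Borel on $K(Z)$ by a routine $\iota$-net argument). Writing $\{C: \dim_P C\le t\}$ as a countable intersection over $\ell\in\NN^+$ of projections to $K(Z)$ of subsets of $K(Z)\times K(Z)^\NN$ cut out by the conditions $K_i\subset C$, $\overline{\dim}_B K_i\le t+1/\ell$, and $\bigcup_i K_i=C$ will yield a set in $\sigma(\mathbf{A})$. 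The subtle point is the last equality: its negation is analytic as the projection of the Borel set $\{(C,(K_i),x): x\in C,\ \forall i\ x\notin K_i\}$ to $K(Z)\times K(Z)^\NN$, so $\bigcup_i K_i=C$ is co-analytic rather than Borel. Consequently $\{C: \dim_P C\le t\}$ sits in $\sigma(\mathbf{A})$ but need not be Borel, which is exactly why the lemma concludes with $\sigma(\mathbf{A})$-measurability rather than Borel measurability. Composing with $\Phi$ then finishes the proof.
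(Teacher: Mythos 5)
Your Hausdorff and topological cases are fine in outline (for $\dim_T$ you should cover by \emph{finite unions} of basis elements rather than single basis elements, since refining a cover can raise its order on $C$; this is a routine repair), and the factorization through the hyperspace is exactly the device the paper uses for $\dim_T$, while for $\dim_H$ and $\dim_P$ the paper simply quotes Mattila--Mauldin. The genuine gap is in your packing-dimension step. You write $\{C:\dim_P C\le t\}$ as a countable intersection of projections to $K(Z)$ of subsets of $K(Z)\times K(Z)^{\NN}$ which are intersections of a Borel set with the co-analytic condition $\bigcup_i K_i=C$, and you assert that such projections lie in $\sigma(\mathbf{A})$. That is unjustified: the projection of a co-analytic (or Borel-intersect-co-analytic) set is in general only $\Sigma^1_2$, and $\Sigma^1_2$ sets need not belong to $\sigma(\mathbf{A})$ (consistently they may even fail the Baire property, which every set in $\sigma(\mathbf{A})$ has). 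So your construction does not place $\{C:\dim_P C\le t\}$ in $\sigma(\mathbf{A})$, and your closing remark that the co-analyticity of ``$\bigcup_i K_i=C$'' is \emph{why} the lemma asserts only $\sigma(\mathbf{A})$-measurability misdiagnoses the difficulty. The standard fix argues on the complementary set: by Lemma~\ref{l:pack}, together with the Baire-category fact that a compact set all of whose non-empty relatively open subsets have upper box dimension at least $q$ has packing dimension at least $q$, one has $\dim_P C>t$ iff there are a rational $q>t$ and a non-empty compact $L\subset C$ with $\overline{\dim}_B(L\cap U)\ge q$ for every basic open $U$ meeting $L$. This exhibits $\{C:\dim_P C>t\}$ as a countable union of projections of Borel subsets of $K(Z)\times K(Z)$, hence analytic, so its complement is co-analytic and in particular in $\sigma(\mathbf{A})$; this is essentially the content of the Mattila--Mauldin result the paper cites.

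A second, related gap is the final composition. You only establish that $\Phi(x)=E_x$ is $\sigma(\mathbf{A})$-measurable, and in the packing case you compose it with a map on $K(Z)$ that is itself only $\sigma(\mathbf{A})$-measurable; but the preimage of a non-Borel set of $\sigma(\mathbf{A})$ under a merely $\sigma(\mathbf{A})$-measurable map need not lie in $\sigma(\mathbf{A})$, so ``composing with $\Phi$'' does not finish that case as stated. The remedy is to upgrade $\Phi$ to Borel measurability, which is where compactness of the sections $E_x$ is really used: $\{x:E_x\cap U\neq\emptyset\}=\pr_X(E\cap(X\times U))$ is Borel by the Arsenin--Kunugui theorem (\cite[Theorem~18.18]{Ke}; alternatively \cite[Theorem~28.8]{Ke} gives the Borel measurability of $x\mapsto E_x$ directly, which is how the paper proceeds for $\dim_T$). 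With $\Phi$ Borel, preimages of analytic sets are analytic, hence preimages of sets in $\sigma(\mathbf{A})$ stay in $\sigma(\mathbf{A})$, and the composition argument goes through.
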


\begin{proof} By \cite[Theorem~6.1]{MM} the map $d_H$ is Borel measurable, and
\cite[Theorem~6.4]{MM} yields that $d_P$ is measurable for $\sigma(\mathbf{A})$. Thus it is enough to
prove that $d_T$ is Borel measurable. Define
\[ D_T\colon \iK(K)\to [-1,\infty], \quad D_T(C)=\dim_T C,\]
where $\iK(K)$ is the set of compact subsets of
$K$ endowed with the Hausdorff metric, see \cite[Section~4.F]{Ke} for the definition.
By \cite[page~108,~Theorem~4]{Ku} the map $D_T$ is Borel measurable.
Define $S\colon X\to \iK(K)$ as $S(x)=E_x$. Then $S$ is Borel measurable
by \cite[Theorem~28.8]{Ke}. Therefore the map $d_T=D_{T} \circ S$ is Borel measurable, too.
\end{proof}

\begin{remark} Unlike \cite{MM}, we adopt the convention $\dim_{*}\emptyset=-1$,
which modifies some fibers of $d_{*}$ by $\{x\in X: E_x=\emptyset\}=(\pr_{X} (E))^{c}$,
where $\pr_{X}$ denotes the natural projection of $X\times Z$ onto $X$. In order to fix this problem
it is enough to show that $\pr_{X} (E)$ is Borel. Since $E$ is Borel and
$E_{x}$ is compact for every $x\in X$, this follows from \cite[Theorem~18.18]{Ke}.
\end{remark}

Now assume that a compact metric space $K$ and the numbers $n\in \NN^+$ and $d\in \RR$ are given.
Let $\dim_{*}$ be one of $\dim_T$, $\dim_H$, or $\dim_P$. The proof of the following lemma is analogous to that of \cite[Lemma~2.11]{BBE2}.

\begin{lemma} \label{l:Baire}
The set
\[ \Delta=\left\{(f,y)\in C(K,\RR^n)\times \mathbb{R}^n: \dim_{*} f^{-1}(y)<d\right\} \]
is in $\sigma(\mathbf{A})$.
\end{lemma}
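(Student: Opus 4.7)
The plan is to reduce Lemma~\ref{l:Baire} to a direct application of Lemma~\ref{l:mm}. To do this, I will set up the natural parameter space encoding fibers as sections of a Borel set.

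Let $X = C(K,\RR^n) \times \RR^n$ and $Z = K$. Both are Polish: $X$ is a product of Polish spaces (the function space is Polish since it is complete and separable, as $K$ is compact metric), and $Z$ is Polish as a compact metric space. Consider the evaluation map
\[ \Phi : X \times Z \to \RR^n, \qquad \Phi((f,y), x) = f(x) - y. \]
Since the evaluation $(f,x) \mapsto f(x)$ is continuous on $C(K,\RR^n) \times K$, the map $\Phi$ is continuous, so the set
\[ E = \Phi^{-1}(\{\mathbf{0}\}) = \{((f,y),x) \in X \times Z : f(x)=y\} \]
is closed, hence Borel, in $X \times Z$. For every $(f,y) \in X$ the fiber is
\[ E_{(f,y)} = \{x \in K : f(x)=y\} = f^{-1}(y), \]
which is a closed subset of the compact space $K$, therefore compact.

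Now Lemma~\ref{l:mm} applies directly to $X$, $Z$, and $E$: the function
\[ d_* : X \to [-1,\infty], \qquad d_*(f,y) = \dim_* E_{(f,y)} = \dim_* f^{-1}(y), \]
is measurable with respect to $\sigma(\mathbf{A})$. Since $[-1,d)$ is a Borel subset of $[-1,\infty]$, we conclude
\[ \Delta = \{(f,y) \in X : d_*(f,y) < d\} = d_*^{-1}([-1,d)) \in \sigma(\mathbf{A}), \]
which is exactly the claim.

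The argument is essentially a bookkeeping step once Lemma~\ref{l:mm} is available, so there is no genuine obstacle; the only things to check carefully are that the evaluation map is jointly continuous in $(f,x)$ (standard, using compactness of $K$ and the maximum norm on $C(K,\RR^n)$), and that the convention $\dim_* \emptyset = -1$ used here matches the one built into Lemma~\ref{l:mm}, which is addressed in the remark following that lemma.
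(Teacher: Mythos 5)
Your proposal is correct and follows essentially the same route as the paper: the same choice of $X=C(K,\RR^n)\times\RR^n$, $Z=K$, the same closed set $E=\{((f,y),x):f(x)=y\}$ with compact fibers $E_{(f,y)}=f^{-1}(y)$, an application of Lemma~\ref{l:mm}, and then $\Delta$ as the preimage of a Borel interval under the resulting $\sigma(\mathbf{A})$-measurable map. The extra remarks on joint continuity of evaluation and the $\dim_{*}\emptyset=-1$ convention are fine and consistent with the paper.
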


\begin{proof} Let $X=C(K,\RR^n)\times \RR^n$, let $Z=K$, and define
\[ E= \{(f,y,z)\in C(K,\RR^n)\times \RR^n\times K: f(z)=y\}\subseteq X\times Z. \]
Clearly $X,Z$ are Polish spaces and $E$ is closed, thus Borel.
For each $(f,y)\in X$ the set $E_{(f,y)}=\{z\in K: f(z)=y\}=f^{-1}(y)$ is compact. Thus
Lemma~\ref{l:mm} yields that the map
\[ d_{*}\colon X\rightarrow [0,\infty], \quad d_{*}((f,y))=\dim_{*}E_{(f,y)}=\dim_{*} f^{-1}(y) \]
is measurable for $\sigma(\mathbf{A})$. Therefore
\[ d_{*}^{-1}\left((-\infty, d)\right)=\left\{(f,y)\in C(K,\RR^n)\times \RR^n: \dim_{*} f^{-1}(y)<d \right\}=\Delta \]
is in $\sigma(\mathbf{A})$.
\end{proof}

\begin{definition} \label{d:H}
For all $0<r_1< r_2$ and $y_0\in \RR^n$ define
\begin{align*} \iH(r_1,r_2,y_0)=\{&f\in C(K,\RR^n): f(K)\subset B(y_0,r_2) \textrm{ and } \\
 &\dim_{*} f^{-1}(y)\geq d \textrm{ for all } y\in B(y_0,r_1)\}.
\end{align*}
\end{definition}

\begin{lemma} \label{l:FBaire} The sets $\iH(r_1,r_2,y_0)$ have the Baire property.
\end{lemma}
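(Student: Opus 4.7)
The plan is to show that $\iH(r_1,r_2,y_0)$ belongs to $\sigma(\mathbf{A})$, which by the preliminaries automatically implies it has the Baire property. The set is defined by two conditions: a closed condition on the image of $f$, and a ``for all $y$'' condition on fiber dimensions, so the job is to handle each separately and combine.

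First, I would observe that the condition $f(K)\subset B(y_0,r_2)$ is closed in $C(K,\RR^n)$. Indeed, it equals $\bigcap_{x\in K}\{f : f(x)\in B(y_0,r_2)\}$, each factor being the preimage of the closed ball $B(y_0,r_2)$ under the continuous evaluation $f\mapsto f(x)$. Hence this condition defines a Borel (in fact closed) subset of $C(K,\RR^n)$, which lies in $\sigma(\mathbf{A})$.

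Next, I would handle the fiber-dimension condition via Lemma~\ref{l:Baire}, which gives $\Delta=\{(f,y)\in C(K,\RR^n)\times\RR^n : \dim_{*} f^{-1}(y)<d\}\in\sigma(\mathbf{A})$. Since $C(K,\RR^n)\times B(y_0,r_1)$ is closed in the product Polish space, the intersection
\[
\Delta\cap\bigl(C(K,\RR^n)\times B(y_0,r_1)\bigr)
\]
still lies in $\sigma(\mathbf{A})$. Now let $\pi\colon C(K,\RR^n)\times\RR^n\to C(K,\RR^n)$ denote the coordinate projection; it is continuous, and the preliminaries state that $\sigma(\mathbf{A})$ is closed under continuous images, so
\[
\pi\bigl(\Delta\cap(C(K,\RR^n)\times B(y_0,r_1))\bigr)=\{f : \exists\, y\in B(y_0,r_1)\text{ with }\dim_{*} f^{-1}(y)<d\}
\]
belongs to $\sigma(\mathbf{A})$. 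Taking its complement in $C(K,\RR^n)$ (sigma-algebras are closed under complementation) produces the set of $f$ with $\dim_{*} f^{-1}(y)\geq d$ for all $y\in B(y_0,r_1)$, again in $\sigma(\mathbf{A})$.

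Finally, I would intersect this with the closed set from the first step to obtain $\iH(r_1,r_2,y_0)\in\sigma(\mathbf{A})$. Since every member of $\sigma(\mathbf{A})$ has the Baire property (as recorded in Section~\ref{s:pre}), this completes the argument. I do not anticipate a serious obstacle here: the only nontrivial input beyond bookkeeping is the measurability statement already packaged in Lemma~\ref{l:Baire}, together with the closure of $\sigma(\mathbf{A})$ under continuous images, which is invoked to pass from the two-variable descriptive-set-theoretic condition on $\Delta$ to the ``for all $y$'' condition on $f$ alone.
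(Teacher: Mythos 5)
Your argument is correct and is essentially the paper's own proof: both decompose $\iH(r_1,r_2,y_0)$ into the closed set $\{f: f(K)\subset B(y_0,r_2)\}$ intersected with the complement of the projection of $\Delta\cap(C(K,\RR^n)\times B(y_0,r_1))$, using Lemma~\ref{l:Baire} and the closure properties of $\sigma(\mathbf{A})$ recorded in the preliminaries. Nothing further is needed.
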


\begin{proof} Let $\iH=\iH(r_1,r_2,y_0)$ and let $\pr\colon C(K,\RR^n)\times \RR^n\to C(K,\RR^n)$ be the natural
projection of $C(K,\RR^n)\times \RR^n$ onto $C(K,\RR^n)$. Define
\[ \Delta=\left\{(f,y)\in C(K,\RR^n)\times \mathbb{R}^n: \dim_{*} f^{-1}(y)<d\right\}, \]
by Lemma~\ref{l:Baire} the set $\Delta$ is in $\sigma(\mathbf{A})$. It is easy to see that $\iH=\iH_1\cap \iH_2$, where
\begin{align*} \iH_1&=\{f\in C(K,\RR^n): f(K)\subset B(y_0,r_2)\}, \\
\iH_2&=\left(\pr\left(\Delta \cap (C(K,\RR^n)\times B(y_0,r_1)) \right)\right)^c.
\end{align*}
Clearly $\iH_1$ is closed. As $\sigma(\mathbf{A})$ is closed under taking intersection, projection, and
complement, we obtain that $\iH_2$ is in $\sigma(\mathbf{A})$. Therefore $\iH$ is in $\sigma(\mathbf{A})$, so it has the Baire property.
\end{proof}

\begin{lemma} \label{l:iF}
If $\iH(r_1,r_2,y_0)$ is of second category for some parameters $r_1,r_2,y_0$, then it is of second category for all parameters.
\end{lemma}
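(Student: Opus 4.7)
The plan is to show that for any two admissible parameter triples $(r_1, r_2, y_0)$ and $(r_1', r_2', y_0')$, with $0 < r_1 < r_2$ and $0 < r_1' < r_2'$ as in Definition~\ref{d:H}, there is a self-homeomorphism of $C(K, \RR^n)$ carrying $\iH(r_1, r_2, y_0)$ bijectively onto $\iH(r_1', r_2', y_0')$. Since a self-homeomorphism of the ambient Polish space preserves the class of sets of second category, this will finish the proof.

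First I would reduce to the case $y_0 = y_0'$ via the translation $f \mapsto f + (y_0' - y_0)$, which is obviously a self-homeomorphism of $C(K, \RR^n)$ carrying $\iH(r_1, r_2, y_0)$ onto $\iH(r_1, r_2, y_0')$. After this step only the two pairs of radii differ.

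The key step is then to produce an ambient radial homeomorphism $\phi$ of $\RR^n$, centered at $y_0$, that sends $B(y_0, r_1)$ onto $B(y_0, r_1')$ and $B(y_0, r_2)$ onto $B(y_0, r_2')$ simultaneously. Pick any strictly increasing continuous bijection $h \colon [0, \infty) \to [0, \infty)$ with $h(0) = 0$, $h(r_1) = r_1'$ and $h(r_2) = r_2'$ — for instance the piecewise linear function determined by these three values and extended linearly on $[r_2, \infty)$ — and set
\[
\phi(y) = y_0 + h(|y - y_0|)\, \frac{y - y_0}{|y - y_0|}, \qquad \phi(y_0) = y_0.
\]
Then $\phi$ is a homeomorphism of $\RR^n$ whose inverse has the same radial form with $h^{-1}$ in place of $h$, so post-composition $\Phi(f) = \phi \circ f$ is a self-homeomorphism of $C(K, \RR^n)$ with inverse $g \mapsto \phi^{-1} \circ g$. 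To see that $\Phi$ sends $\iH(r_1, r_2, y_0)$ into $\iH(r_1', r_2', y_0)$, observe that if $f$ lies in the former, then $(\phi \circ f)(K) \subset \phi(B(y_0, r_2)) = B(y_0, r_2')$, and for any $y \in B(y_0, r_1')$ its unique preimage $\phi^{-1}(y)$ lies in $B(y_0, r_1)$, so $(\phi \circ f)^{-1}(y) = f^{-1}(\phi^{-1}(y))$ has $\dim_{*}$-dimension at least $d$. The same argument applied to $\phi^{-1}$ gives the reverse inclusion, so $\Phi$ is a bijection between the two $\iH$ sets.

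The main subtlety overcome by this construction is that ordinary affine transformations $y \mapsto cy + b$ of $\RR^n$ preserve the ratio $r_1/r_2$, so scaling combined with monotonicity in the parameters alone can transfer second-category status only to triples whose ratio $r_1'/r_2'$ is at most $r_1/r_2$. The nonlinear radial stretch above is exactly what grants the missing freedom to prescribe both radii independently and thus reach every admissible target.
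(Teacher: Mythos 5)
Your proof is correct and follows essentially the same route as the paper: both arguments transfer second-category status by post-composing with an ambient homeomorphism of $\RR^n$ that maps $B(y_0,r_i)$ onto $B(y_0',r_i')$, so that $f\mapsto \phi\circ f$ is a self-homeomorphism of $C(K,\RR^n)$ carrying one set $\iH$ onto the other. The only difference is cosmetic: you construct the ambient map explicitly (translation plus a piecewise-linear radial stretch, whose Lipschitz profile makes $\phi$ and $\phi^{-1}$ uniformly continuous), while the paper simply asserts the existence of such a homeomorphism, affine outside $B(y_0,r_2)$.
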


\begin{proof}
Assume that $0<r_1<r_2$, $0<q_1<q_2$, and $y_0,z_0\in \RR^n$. We need to prove that if $\iH(r_1,r_2,y_0)$ is of second category, then
$\iH(q_1,q_2,z_0)$ is of second category, too. Clearly it is enough to find a homeomorphism $H\colon C(K,\RR^n)\to C(K,\RR^n)$
such that $H(\iH(r_1,r_2,y_0))=\iH(q_1,q_2,z_0)$. Define a homeomorphism $h\colon \RR^n\to \RR^n$ such that
$h(B(y_0,r_i))=B(z_0,q_i)$ for $i\in \{1,2\}$, and $h$ is affine on $\RR^n\setminus B(y_0,r_2)$.
Then $h$ and $h^{-1}$ are uniformly continuous, so
\[ H\colon C(K,\RR^n)\to C(K,\RR^n), \quad H(f)=h\circ f \]
is a homeomorphism with $H(\iH(r_1,r_2,y_0))=\iH(q_1,q_2,z_0)$.
\end{proof}

Now we are ready to prove Theorem~\ref{t:fractal1}.

\begin{proof}[Proof of Theorem~\ref{t:fractal1}]
The implication $\eqref{i} \Longrightarrow \eqref{ii}$ is straightforward.

Now we prove $\eqref{ii} \Longrightarrow \eqref{iii}$. Assume to the contrary that there exists an open set $U\subset K$
such that $\dim_T U\geq n$ and $d^n_{*}(U)<d^n_{*}(K)$. As $U$ is a countable union of compact sets,
the countable stability of topological dimension for closed
sets yields that there is a compact set $C\subset U$ such that $\dim_T C\geq n$.
Let $D$ be a compact set such that $C\subset \inter D\subset D\subset U$. Then clearly $d^n_{*}(D)<d^n_{*}(K)$.
Define the sets
\begin{align*}
\iA&=\{f\in C(K,\RR^n): \inter f(C)\neq \emptyset\}, \\
\iB&=\{f\in C(K, \RR^n): \dim_{*}(f^{-1}(y)\cap D)\leq d^{n}_{*}(D) \textrm{ for all } y\in \RR^n\}, \\
\iC&=\{f\in C(K,\RR^n): f(C)\cap f(K\setminus \inter D)=\emptyset\}.
\end{align*}
As $\dim_T C\geq n$, Theorem~\ref{t:int} and Corollary~\ref{c:R} imply that $\iA$ is co-meager.
The Main Theorem and Corollary~\ref{c:R} yield
that $\iB$ is co-meager, too. Tietze's extension theorem
implies that there are distinct points
$y_1,y_2\in \RR^n$ and there is a continuous map $f\in C(K,\RR^n)$ such that $f(C)=y_1$ and $f(K\setminus \inter D)=y_2$.
Let $r=|y_1-y_2|/2>0$, then clearly $U(f,r)\subset \iC$,
so $\iC$ is of second category. Define
\[ \iF=\iA\cap \iB \cap \iC, \]
then $\iF$ is of second category. Let $f\in \iF$ and let $U_f=\inter f(C)\subset \inter f(K)$.
Since $f\in \iA$, the set $U_f$ is non-empty open in $\inter f(K)$.
As $f\in \iC$, we have
$f^{-1}(y)\subset D$ for all $y\in U_f$. Therefore $f\in \iB$ implies that for all $y\in U_f$ we have
\begin{equation*}
\dim_{*} f^{-1}(y)=\dim_{*} (f^{-1}(y)\cap D)\leq d^{n}_{*}(D)< d^{n}_{*}(K).
\end{equation*}
Thus $R_{*}(f)\cap U_f=\emptyset$, so $R_{*}(f)$ is not dense in $\inter K$.
This contradicts \eqref{ii}, so the implication $\eqref{ii} \Longrightarrow \eqref{iii}$ follows.

Finally, we show that $\eqref{iii} \Longrightarrow \eqref{i}$. For a generic $f\in C(K,\RR^n)$ we obtain $R_{*}(f)\subset \inter f(K)$ by Theorem~\ref{t:alef} and $\dim_{*} f^{-1}(y)\leq d^n_{*}(K)$ for all $y\in \mathbb{R}^n$ by the Main Theorem.
Thus it is enough to prove that $\dim_{*} f^{-1}(y)\geq d_{*}^n(K)$ for a generic $f\in C(K,\RR^n)$ for all
 $y\in \inter f(K)$. Let $V\subset K$ be the maximal open set with $\dim_T V<n$. Clearly $d^{*}(A\setminus V)=d^{*}(A)$ for each $A\subset K$, so by Lemma~\ref{l:int} and Corollary~\ref{c:R} we may assume that $V=\emptyset$. Thus $\dim_T U\geq n$ for all non-empty open sets $U\subset K$.
Choose a sequence $d_m\nearrow d^n_{*}(K)$. Let us fix $m\in \mathbb{N}^{+}$ and define
\begin{equation*} \mathcal{G}_{m} = \{f\in C(K,\RR^n): \dim_{*} f^{-1}(y)\geq d_m \textrm{ for all } y\in f(K)\setminus B(\partial f(K),1/m)\}.
\end{equation*}

It is sufficient to verify that $\mathcal{G}_m$ is co-meager, since then
$\iG=\bigcap_{m=1}^{\infty} \iG_m$ will be our desired co-meager set in $C(K,\RR^n)$.
 As $\dim_T U\geq n$ for every non-empty open set $U\subset K$,
we can apply Lemma~\ref{l:D}. Let us fix an arbitrary
$f_0\in \mathcal{D}_m$ and a witness $\varepsilon=\varepsilon(f_0,m)>0$ corresponding to Definition~\ref{d:D}.
As $\mathcal{D}_m$ is dense in $C(K,\RR^n)$ by Lemma~\ref{l:D}, it is enough to show
that $\mathcal{G}_m \cap B(f_{0},\varepsilon)$ contains a set which is of second category and has the Baire property.

Since $K$ is compact and $f_0$ is uniformly continuous,
there is a $\delta_1>0$ and there are finitely many distinct points $x_{1},...,x_{k}\in K$
such that
\begin{equation}\label{*cover*}
K=\bigcup_{i=1}^{k}B(x_{i},\delta_1)
\end{equation}
and for each $i$ the oscillation of $f_{0}$ on
$B(x_{i},\delta_1)$ is less than $\varepsilon/4$. 
Then clearly
\begin{equation} \label{eq:cover} f_0(K)\subset \bigcup_{i=1}^k B(f_0(x_i),\eps/4).
\end{equation}
Let us choose $0<\delta_2<\delta_1$ such that the balls $K_i=B(x_{i},\delta_2)$ are disjoint and fix $i\in \{1,\dots,k\}$. According to Definition~\ref{d:H} for all $0<r_1< r_2$ and $y_0\in \RR^n$ let
\begin{align*} \iH_i(r_1,r_2,y_0)=\{&f\in C(K_i,\RR^n): f(K_i)\subset B(y_0,r_2) \textrm{ and } \\
 &\dim_{*} f^{-1}(y)\geq d_m \textrm{ for all } y\in B(y_0,r_1)\},
\end{align*}
and define
\[ \iH_i=\iH_i(\eps/4,\eps/2,f_0(x_i)). \]
Lemma~\ref{l:FBaire} yields that $\iH_i$ has the Baire property. Now we prove that
$\iH_i$ is of second category. As $\dim_T K_i\geq n$ and $d^n_{*}(K_i)=d^n_{*}(K)$, the Main Theorem implies that
for a generic $f\in C(K_i,\RR^n)$ there exists a non-empty open set $U_{f,d_m}\subset \RR^n$
such that $\dim_{*} f^{-1}(y)\geq d_m$ for all $y\in U_{f,d_m}$. Thus
Baire's category theorem yields that there exist $0<r_1<r_2$ and $y_0\in \RR^n$ such that $\iH_i(r_1,r_2,y_0)$ is of second category.
Then Lemma~\ref{l:iF} implies that $\iH_i$ is of second category.

Clearly $\iH_i\subset B(f_0|_{K_i},\eps)$. Set
\[ \iF=\bigcap_{i=1}^k \iF_i, \quad  \textrm{where} \quad \iF_i=\{f\in B(f_0,\eps): f|_{K_i}\in \iH_i\}. \]
As the sets $\iH_i$ have the Baire property, $\iF$ has the Baire property, too.
Clearly $\mathcal{F}\subset B(f_{0},\varepsilon)$, and repeating the proof of
\cite[Lemma~3.8]{BBE2} verbatim yields that $\mathcal{F}$ is of second category.
Therefore, it is enough to show that $\mathcal{F}\subset \mathcal{G}_m$. Assume that
$f\in \mathcal{F}$ and $y\in f(K)\setminus B(\partial f(K),1/m)$,
we need to prove that $\dim_{*} f^{-1}(y)\geq d_m$.
The definition of $\varepsilon=\varepsilon(f_0,m)$ and $f\in B(f_0,\varepsilon)$ yield
that $y\in f_{0}(K)$. By \eqref{eq:cover} there exists $i\in \{1,\dots,k\}$ such
that $y \in B(f_{0}(x_{i}),\eps/4)$. Then $f|_{K_i}\in \iH_i$ implies that
$\dim_{*} f^{-1}(y)\geq \dim_{*}(f|_{K_i})^{-1}(y)\geq d_m$. This completes the proof.
\end{proof}

\begin{remark} In the case of packing dimension the implication
$\eqref{iii} \Longrightarrow \eqref{i}$ simply follows from
Theorem~\ref{t:gauge}. However,
Lemma~\ref{l:Baire} is not superfluous even
in the case of packing dimension, we will use it to prove Theorem~\ref{t:max}.
\end{remark}

\begin{proof}[Proof of Corollary~\ref{c:fractal2}]
The implication $\eqref{1} \Longrightarrow \eqref{2}$ is straightforward.

Now we prove
$\eqref{2}  \Longrightarrow \eqref{3}$. Assume to the contrary that there is a non-empty open set
$U\subset K$ such that $d^n_{*}(U)<d^n_{*}(K)$. Theorem~\ref{t:fractal1} implies that $\dim_T U<n$.
Let $C,D$ be non-empty compact sets such that $C\subset \inter D\subset D\subset U$. Define
\begin{align*}
\iA&=\{f\in C(K,\RR^n): \dim_{*} (f^{-1}(y) \cap D)\leq 0 \textrm{ for all } y\in \RR^n\}, \\
\iB&=\{f\in C(K,\RR^n): f(C)\cap f(K\setminus \inter D)=\emptyset\}.
\end{align*}
Theorem~\ref{t:Hurewicz} and Corollary~\ref{c:R} imply that $\iA$ is prevalent.
Repeating the arguments of the proof Theorem~\ref{t:fractal1} yields that $\iB$ is of second category, so
$\iA\cap \iB$ is of second category, too. Fix $f\in \iA\cap \iB$ and let $U_f=f(K)\setminus f(K\setminus \inter D)$. Clearly
$U_f$ is relatively open in $f(K)$, and $f\in \iB$ implies that $U_f\neq \emptyset$. Let $y\in U_f$ be arbitrarily fixed. Clearly $f^{-1}(y)\subset D$, so $f\in \iA$ yields that
\[ \dim_{*} f^{-1}(y)=\dim_{*} (f^{-1}(y)\cap D)=0<d^n_{*}(K). \]
Thus $R_{*}(f)\cap U_f=\emptyset$, so $R_{*}(f)$ is not dense in $f(K)$, which contradicts \eqref{2}.

Finally, we show $\eqref{3} \Longrightarrow \eqref{1}$. By Theorem~\ref{t:fractal1} it is enough to prove that
$\inter f(K)$ is dense in $f(K)$ for a generic $f\in C(K,\RR^n)$. Let $\iD$ be a countable dense subset of $K$ and let
$\iB=\{B(x,1/i): x\in \iD,~i\in \NN^+\}$. Let $B\in \iB$ be given. As $d^n_{*}(B)=d^n_{*}(K)$,
Fact~\ref{f:equiv} yields that $\dim_T B\geq n$. Therefore
Theorem~\ref{t:int} and Corollary~\ref{c:R} imply that $\inter f(B)\neq \emptyset $ for a generic $f\in C(K,\RR^n)$.
As a countable intersection of co-meager sets is co-meager, for a generic $f\in C(K,\RR^n)$ for all $B\in \iB$ we have
$\inter f(B)\neq \emptyset$, so $\inter f(K)$ is dense in $f(K)$. The proof is complete.
\end{proof}

\section{Dimensions of the boundary of generic images} \label{s:partial}

The goal of this section is to prove the following theorem.

\begin{theorem} \label{t:bound} Let $n \in  \NN^+$ and let $K$ be a compact metric space with $\dim_T K\geq n$.
Then for a generic $f\in C(K,\RR^n)$ we have
\[ \dim_T \partial f(K)=\dim_H \partial f(K)=n-1. \]
Moreover, let $h$ be a gauge function with $\lim_{r\to 0+} h(r)/r^{n-1}=0$.
Then for a generic $f\in C(K,\RR^n)$ we have $\iH^h(\partial f(K))=0$ and $\iH^{n-1}(\partial f(K))>0$.
\end{theorem}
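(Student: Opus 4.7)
I would split the proof into three parts corresponding to the three claims: a lower bound $\iH^{n-1}(\partial f(K))>0$, the topological dimension upper bound $\dim_T \partial f(K)\leq n-1$, and the generalized Hausdorff measure upper bound $\iH^h(\partial f(K))=0$ for every gauge $h$ with $h(r)/r^{n-1}\to 0$. Once these are established, the equalities $\dim_T\partial f(K)=\dim_H\partial f(K)=n-1$ follow by combining them with Theorem~\ref{t:<}.

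For the lower bound, I would use Theorem~\ref{t:int} to obtain $\inter f(K)\neq\emptyset$ for a generic $f$, and then argue by projection: since $f(K)$ is compact (hence bounded), for any coordinate projection $\pi\colon\RR^n\to\RR^{n-1}$ every line in the kernel direction through a point of $\inter f(K)$ must exit the bounded set $f(K)$ and so cross $\partial f(K)$, yielding $\pi(\inter f(K))\subset\pi(\partial f(K))$. Since $\pi$ is $1$-Lipschitz and $\pi(\inter f(K))$ is a non-empty open subset of $\RR^{n-1}$,
\[ \iH^{n-1}(\partial f(K))\geq\iH^{n-1}(\pi(\inter f(K)))>0, \]
and hence $\dim_H\partial f(K)\geq n-1$.

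For the topological upper bound, I would observe that for every $f\in C(K,\RR^n)$ the boundary $\partial f(K)$ is a closed subset of $\RR^n$ with empty interior: any open $B\subset\partial f(K)$ would satisfy $B\subset f(K)$ and hence $B\subset\inter f(K)$, contradicting $B\cap\inter f(K)=\emptyset$. A classical dimension-theoretic fact then gives $\dim_T\partial f(K)\leq n-1$.

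The main content is the measure upper bound. By Lemma~\ref{l:int} and Corollary~\ref{c:R} I would reduce to the case that every non-empty open $U\subset K$ satisfies $\dim_T U\geq n$, and then show that $\mathcal{M}=\{f:\iH^h(\partial f(K))=0\}$ is co-meager. First, for density: given $f$ and $\varepsilon>0$, take a cubical grid of $\RR^n$ of mesh $\delta\ll\varepsilon$, form the regular closed finite polytope $A=\bigcup\{Q:Q\text{ a grid cube meeting }f(K)\}$ containing $f(K)$, and apply Lemma~\ref{l:stab} to finitely many disjoint small open balls in $K$---one per cube of $A$---together with Tietze's extension theorem to construct $g\in B(f,\varepsilon)$ with $g(K)=A$; then $\partial g(K)=\partial A$ is a finite union of $(n-1)$-dimensional cube faces, which has finite $\iH^{n-1}$-measure and hence $\iH^h(\partial A)=0$ under the hypothesis $h(r)/r^{n-1}\to 0$. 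The main obstacle is upgrading density to co-meagerness, because the topological boundary operator on $\iK(\RR^n)$ is not continuous in the Hausdorff metric. The plan is to verify, via arguments analogous to Lemma~\ref{l:Baire} together with the Borel structure of $\iK(\RR^n)$, that $\mathcal{M}_\eta=\{f:\iH^h(\partial f(K))<\eta\}$ has the Baire property, and then to combine the stability from Lemma~\ref{l:stab}---which gives $\inter A\subset g'(K)\subset N_\rho(A)$ and hence $\partial g'(K)\subset\partial A\cup(g'(K)\setminus A)$ for $g'\in B(g,\rho)$---with a refined control on the excess $g'(K)\setminus A\subset N_\rho(A)\setminus A$, to show that $\mathcal{M}_\eta$ is of second category in every open subset of $C(K,\RR^n)$. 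The delicate step is designing the polyhedral $g$ so that small perturbations $g'$ move only a thin Lipschitz-like $(n-1)$-dimensional piece of $K$ across $\partial A$, so that $\iH^h(g'(K)\setminus A)$ can be bounded by $C\cdot h(\rho)/\rho^{n-1}\cdot\mathrm{area}(\partial A)$, which tends to zero with $\rho$ by hypothesis on $h$.
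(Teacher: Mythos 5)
Your lower bound via projection and your observation that $\partial f(K)$ is closed with empty interior (hence $\dim_T\partial f(K)\leq n-1$) are fine, and your density construction is in the spirit of the paper's Lemma~\ref{l:GnK}. But there are three genuine gaps. The first is small: you claim the equalities follow ``by combining with Theorem~\ref{t:<}'', yet $\iH^{n-1}(\partial f(K))>0$ only gives $\dim_H\partial f(K)\geq n-1$, and $\dim_T\leq\dim_H$ yields nothing from below for $\dim_T$. You still need $\dim_T\partial f(K)\geq n-1$; the paper gets this from the classical fact (\cite[Theorem~1.8.12]{E}) that the boundary of a bounded non-empty open subset of $\RR^n$ has topological dimension at least $n-1$, applied to $\inter f(K)$, whose boundary lies in $\partial f(K)$.

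The second gap is the reduction step. ``By Lemma~\ref{l:int} and Corollary~\ref{c:R} reduce to the case $\dim_T U\geq n$ for all non-empty open $U$'' does not work for a statement about $\partial f(K)$: restriction to $C=K\setminus V$ only controls $\partial f(C)$, while $\partial f(K)$ may contain points of $f(V)$ that are invisible to $f|_C$. The paper handles this with the inclusion $\partial f(K)\subset\bigcup_{i}f(K_i)\cup\partial f(C)$ (where $V=\bigcup_i K_i$, $K_i$ compact of topological dimension $\leq n-1$) together with a separate result, Theorem~\ref{t:better}, asserting that for a generic $f$ one has $\iH^h(f(K_i))=0$; that theorem (proved via density of maps whose image lies in an $(n-1)$-dimensional polyhedron and right-continuity of a modified gauge) has no counterpart in your plan, so the contribution of the low-dimensional part of $K$ to $\partial f(K)$ is simply not controlled.

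The third and most serious gap is in passing from your single-scale estimate to $\iH^h(\partial f(K))=0$ on a co-meager set. For $g'\in B(g,\rho)$ your shell argument traps $\partial g'(K)$ in an annular neighbourhood of $\partial A$ and bounds its covering at scale comparable to $\rho$; this bounds the \emph{premeasure} $\iH^h_{c\rho}(\partial g'(K))$, and premeasures at a fixed scale are lower bounds for $\iH^h$, not upper bounds --- inside that thin shell, $\partial g'(K)$ could a priori still have large or infinite $\iH^h$-measure. So your plan to show that $\mathcal{M}_\eta=\{f:\iH^h(\partial f(K))<\eta\}$ is of second category in every ball stalls exactly at the point where membership in $\mathcal{M}_\eta$ must be verified for the perturbed maps; iterating the construction over all scales is what is needed, and once you do that the Baire-property machinery becomes superfluous. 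This is precisely the paper's route: the set $\iG_n(K)$ of maps with ``inner-stable'' images equal to finite unions of balls is dense (Lemma~\ref{l:GnK}, via Lemma~\ref{l:stab}), and
\[ \iF=\bigcap_{m=1}^{\infty}\ \bigcup_{g\in\iG_n(K)} U\bigl(g,\eps(g,m)\bigr),\qquad \eps(g,m)<\min\{r(g)/2,1/m\} \text{ chosen via Lemma~\ref{l:cn}}, \]
is a dense $G_\delta$ on which $\iH^h_{\eps}(\partial f(K))\leq 1/m$ for arbitrarily small $\eps$, hence $\iH^h(\partial f(K))=0$. In particular no measurability of $f\mapsto\iH^h(\partial f(K))$ is needed, and your ``delicate step'' about which piece of $K$ moves across $\partial A$ is irrelevant: the estimate is purely a covering bound on the shell in $\RR^n$, valid for every map uniformly close to $g$.
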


\begin{notation} Let $n\in \NN^+$ and let $K$ be a compact metric space. Let
$\dim_{*}$ be one of $\dim_T$, $\dim_H$, or $\dim_P$. For each $f\in C(K,\RR^n)$ let
\[ S_{*}(f)=\{y\in f(K): \dim_{*} f^{-1}(y)<d^n_{*}(K)\}. \]
\end{notation}

Theorems~\ref{t:fractal1} and~\ref{t:bound} imply the following.

\begin{corollary} \label{c:alt} Let $n\in \NN^+$ and assume that $\dim_{*}$ is one of $\dim_T$, $\dim_H$, or $\dim_P$.
Let $K$ be a compact metric space with $d^n_{*}(K)>0$. Exactly one of the following holds:
\begin{enumerate}[(a)]
\item $\dim_H S_{*}(f)=n-1$ for a generic $f\in C(K,\RR^n)$;
\item  $\inter S_{*} (f)\neq \emptyset$ for a generic $f\in C(K,\RR^n)$.
\end{enumerate}
Moreover, \eqref{a1} is equivalent to the statements of Theorem~\ref{t:fractal1}.
\end{corollary}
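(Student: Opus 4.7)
The plan is to split into cases depending on whether the equivalent conditions of Theorem~\ref{t:fractal1} hold, and to establish the equivalence asserted in the ``moreover'' clause. First I would observe that $(a)$ and $(b)$ are incompatible: if $\inter S_{*}(f) \neq \emptyset$ then $\dim_H S_{*}(f) \geq n > n-1$, so a co-meager set of $f$ cannot satisfy both.

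Next I verify that $(a)$ is equivalent to the conditions of Theorem~\ref{t:fractal1}. Assuming Theorem~\ref{t:fractal1} holds, $(i)$ gives $R_{*}(f)=\inter f(K)$ for a generic $f$. By Theorem~\ref{t:alef}, generically $\#f^{-1}(y) \leq n$ for every $y \in \partial f(K)$, so $\dim_{*} f^{-1}(y) = 0 < d^n_{*}(K)$ for such $y$ (using $d^n_{*}(K)>0$), giving $\partial f(K) \subseteq S_{*}(f)$; combined with $S_{*}(f) \subseteq f(K)\setminus R_{*}(f) = \partial f(K)$, we obtain $S_{*}(f) = \partial f(K)$ generically, so Theorem~\ref{t:bound} concludes $\dim_H S_{*}(f) = n-1$. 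Conversely, if $(a)$ holds, then $\dim_H S_{*}(f) = n-1 < n$ generically forces $\iH^n(S_{*}(f))=0$ and hence $\inter S_{*}(f) = \emptyset$; the generic decomposition $f(K) = R_{*}(f) \cup S_{*}(f)$ provided by the Main Theorem then forces $R_{*}(f)$ to be dense in $\inter f(K)$, which is condition $(ii)$ of Theorem~\ref{t:fractal1}.

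For the remaining implication, assume Theorem~\ref{t:fractal1} fails; I must show that $\mathcal{B} := \{f : \inter S_{*}(f) \neq \emptyset\}$ is co-meager. The proof of $(ii)\Rightarrow(iii)$ in Theorem~\ref{t:fractal1} already produces, from the failure of $(iii)$, a second-category set $\iF \subseteq \mathcal{B}$: each $f \in \iF$ enjoys a non-empty open $U_f \subseteq \inter f(K)$ with $\dim_{*} f^{-1}(y) < d^n_{*}(K)$ for $y \in U_f$, so $U_f \subseteq \inter S_{*}(f)$. Writing $\mathcal{B} = \bigcup_{q \in \QQ^n,\, r \in \QQ^+}\{f : B(q,r) \subseteq S_{*}(f)\}$ and combining Lemma~\ref{l:Baire} with the closed condition $B(q,r) \subseteq f(K)$, each piece lies in $\sigma(\mathbf{A})$, so $\mathcal{B}$ has the Baire property. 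Hence it suffices to show $\mathcal{B}$ is of second category in every non-empty open ball $\iU \subseteq C(K,\RR^n)$.

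This localization is the main obstacle. Given $\iU = B(f_0,\varepsilon)$, I plan to repeat the construction of $\iF$ inside $\iU$ by taking the compact sets $C \subset \inter D \subset D \subset U$ (from the violation of $(iii)$) with diameter in $K$ small enough that the oscillation of $f_0$ on $D$ is much less than $\varepsilon$; a Tietze extension then produces $f^{*} \in B(f_0,\varepsilon)$ with $f^{*}(C) \cap f^{*}(K\setminus\inter D) = \emptyset$, together with an open neighborhood of $f^{*}$ inside $\iU$ on which this separation persists. The delicate step will be arranging a separating perturbation of norm less than $\varepsilon$ when $f_0(C)$ sits inside the interior of $f_0(K\setminus\inter D)$; I plan to handle this by selecting the base point $x_0 \in C$ (to which $f^{*}$ is collapsed) with $f_0(x_0)$ near the boundary of $f_0(K\setminus\inter D)$, whose existence is ensured after further shrinking $D$ inside the set $K\setminus V$ from Lemma~\ref{l:int}. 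Intersecting the resulting open set with the co-meager sets $\iA$ and $\iB$ from the proof of Theorem~\ref{t:fractal1} then yields a second-category subset of $\mathcal{B} \cap \iU$, as required.
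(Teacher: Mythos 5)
Your first two paragraphs are correct and are essentially the derivation the paper intends (the paper gives no separate proof, it just cites Theorems~\ref{t:fractal1} and~\ref{t:bound}): mutual exclusivity is immediate; under the conditions of Theorem~\ref{t:fractal1} you combine $R_{*}(f)=\inter f(K)$ with Theorem~\ref{t:alef} to get $S_{*}(f)=\partial f(K)$ generically and then invoke Theorem~\ref{t:bound}; and conversely $\dim_H S_{*}(f)=n-1<n$ together with the generic decomposition $f(K)=R_{*}(f)\cup S_{*}(f)$ coming from Theorem~\ref{mainthm}~(i) forces $R_{*}(f)$ to be dense in $\inter f(K)$, i.e.\ condition (ii) of Theorem~\ref{t:fractal1}. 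All of this is fine.

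The genuine gap is the final implication, precisely the step you flag as the main obstacle, and it cannot be closed along the lines you propose: inside an arbitrary ball $B(f_0,\eps)$ the separation $f^{*}(C)\cap f^{*}(K\setminus \inter D)=\emptyset$ is in general impossible. If $f_0|_{K\setminus U}$ has the whole $\eps$-neighbourhood of $f_0(D)$ among its stable values (Lemma~\ref{l:stab}), then \emph{every} $g\in B(f_0,\eps)$ satisfies $g(D)\subset g(K\setminus \inter D)$, and choosing the collapse point $x_0\in C$ ``near the boundary'' does not help, since $f_0(x_0)\in f_0(D)$ cannot be moved by more than $\eps$. Worse, the statement you are trying to localize fails: take $\dim_{*}=\dim_H$ and $K=[0,1]^{n}\sqcup[0,1]^{n+1}$ (two clopen pieces), so that $d^n_{*}(K)=1>0$ while condition (iii) of Theorem~\ref{t:fractal1} fails for $U=[0,1]^{n}$. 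Let $g_0\in C([0,1]^{n+1},\RR^n)$ be as in Lemma~\ref{l:stab} with image $B(\mathbf{0},2)$, and let $f_0$ be $\mathbf{0}$ on $[0,1]^{n}$ and $g_0$ on $[0,1]^{n+1}$. For every $f\in B(f_0,1/2)$ we have $f([0,1]^{n})\subset \inter f([0,1]^{n+1})$, hence $f(K)=f([0,1]^{n+1})$; intersecting $B(f_0,1/2)$ with the co-meager sets from Theorem~\ref{t:Kirchheim} (pulled back by Corollary~\ref{c:R}) and Theorem~\ref{mainthm}~(i), every $y\in\inter f(K)$ has $\dim_H f^{-1}(y)=1=d^n_{*}(K)$, so $S_{*}(f)\subset\partial f(K)$, which has empty interior. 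Thus $\{f:\inter S_{*}(f)\neq\emptyset\}$ fails to be co-meager for this $K$, so no ball-by-ball second-category argument can succeed. What the failure of (iii) genuinely yields --- via the second-category set $\iA\cap\iB\cap\iC$ in the proof of Theorem~\ref{t:fractal1} --- is only a non-meager set of maps with $\inter S_{*}(f)\neq\emptyset$; alternative (b) with the ``generic'' quantifier is out of reach, and your plan necessarily breaks down at the separation step rather than at a fixable technicality.
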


First we need some preparation. The next theorem is \cite[Theorem~2.1]{BFFH}, which generalizes
Theorem~\ref{t:kato}.

\begin{theorem}[Balka--Farkas--Fraser--Hyde]\label{t:BFFH}
Let $n\in \NN^+$ and let $K$ be a compact metric space. Then for a generic $f\in C(K,\RR^n)$ we have
\[ \dim_T f(K)=\dim_H f(K)=\min\{\dim_T K, n\}. \]
\end{theorem}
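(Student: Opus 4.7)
Let $d=\dim_T K$ and $m=\min\{d,n\}$. Since the monotonicity $\dim_T f(K)\leq \dim_H f(K)$ of Theorem~\ref{t:<} is automatic, the plan is to establish the two generic inequalities $\dim_H f(K)\leq m$ and $\dim_T f(K)\geq m$.

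\textbf{Upper bound.} The inclusion $f(K)\subset\RR^n$ gives $\dim_H f(K)\leq n$ for free, so the only real case is showing $\dim_H f(K)\leq d$ generically when $d<n$. For each $s>d$ and $\eps,\delta>0$ consider
\[\iA(s,\eps,\delta)=\{f\in C(K,\RR^n): \iH^s_\delta(f(K))<\eps\}.\]
A routine perturbation argument shows each $\iA(s,\eps,\delta)$ is open: a cover of $f(K)$ by sets $\{A_i\}$ witnessing membership may be slightly thickened so that it also covers $g(K)$ for every $g$ sufficiently close to $f$ in the sup norm. For density, fix $f\in C(K,\RR^n)$ and $\eta>0$. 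Since $\dim_T K\leq d$, $K$ admits a finite open cover $\iU$ of order $\leq d+1$ with mesh so small that $f$ varies by less than $\eta$ on each $U\in\iU$. Let $P$ be the geometric realization of the nerve of $\iU$, a $d$-dimensional simplicial complex, let $\phi\colon K\to P$ be the canonical barycentric map coming from a partition of unity subordinate to $\iU$, pick a representative $x_U\in U$ for each $U$, and extend the assignment $v_U\mapsto f(x_U)$ affinely over every simplex to obtain a piecewise linear $F\colon P\to\RR^n$. Then $g=F\circ\phi$ is a convex combination of values of $f$ at points sharing a common member of $\iU$ with the argument, so $\|g-f\|_\infty<\eta$. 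Moreover $g(K)\subset F(P)$ is a finite union of Lipschitz images of $d$-simplices, hence $\iH^s(g(K))=0$ for every $s>d$, and in particular $g\in\iA(s,\eps,\delta)$. Intersecting countably many $\iA(s_k,\eps_k,\delta_k)$ with $s_k\downarrow d$ and $\eps_k,\delta_k\downarrow 0$ produces a dense $G_\delta$ set of maps with $\dim_H f(K)\leq d$.

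\textbf{Lower bound.} If $d\geq n$, Theorem~\ref{t:int} says $\inter f(K)\neq\emptyset$ generically, hence $\dim_T f(K)=n=m$. If $d<n$, Hurewicz's Theorem~\ref{t:Hurewicz} gives generically $\#f^{-1}(y)\leq n$ for every $y$, so every fiber is finite and therefore $0$-dimensional. The classical Hurewicz dimension-raising formula, $\dim_T X\leq\dim_T f(X)+\sup_y\dim_T f^{-1}(y)$ for a closed continuous surjection between separable metric spaces, applied to $f\colon K\to f(K)$ (closed because $K$ is compact), forces $\dim_T f(K)\geq d=m$.

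\textbf{Main obstacle.} The heart of the argument is the density step in the upper bound, where a purely topological hypothesis on the source must be translated into a Hausdorff-measure bound on the image in $\RR^n$. The nerve-of-cover construction is the key bridge, manufacturing for each precision $\eta>0$ an explicit $\eta$-approximation of $f$ that factors through a $d$-dimensional polyhedron whose Lipschitz image automatically has Hausdorff dimension $\leq d$. Once density is in hand, openness and the standard Baire category assembly produce the co-meager statement.
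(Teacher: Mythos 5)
The paper itself does not prove Theorem~\ref{t:BFFH}: it is quoted from \cite{BFFH}, so your argument is being measured against a citation rather than an in-text proof. Your self-contained proof is essentially correct, and it only uses ingredients that are already available in the paper: for the upper bound when $\dim_T K<n$ you rebuild, via the nerve-of-a-cover construction, exactly the density fact the paper imports from \cite{E2} in the proof of Theorem~\ref{t:better} (maps whose image lies in a $\dim_T K$-dimensional polyhedron are dense), and then run a standard Baire assembly; for the case $\dim_T K\geq n$ you use Theorem~\ref{t:int}; and for the lower bound when $\dim_T K<n$ you combine Theorem~\ref{t:Hurewicz} with the Hurewicz theorem on closed maps with zero-dimensional fibers applied to $f\colon K\to f(K)$. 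This is a clean route, arguably more elementary than citing \cite{BFFH}, at the cost of re-proving the polyhedral approximation. Two small repairs are needed, neither of which is a genuine gap. First, openness of $\iA(s,\eps,\delta)$ is not quite immediate as stated: a witnessing cover may consist of sets of diameter exactly $\delta$, and thickening them violates the constraint $\diam A_i\leq\delta$; this is fixed by working with finite \emph{open} covers and a strict diameter inequality (as the paper does with the sets $\iF_i$ in the proof of Theorem~\ref{t:better}), or by dropping the $\delta$-constraint altogether and noting that $\sum_j(\diam U_j)^s<1/i$ already forces small mesh, after which the countable intersection still yields $\iH^s(f(K))=0$ for all $s>\dim_T K$. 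Second, the inequality $\dim_T X\leq\dim_T f(X)+\sup_y\dim_T f^{-1}(y)$ for closed maps of separable metric spaces is Hurewicz's dimension-\emph{lowering} theorem (see \cite[Theorem~1.12.4]{E}), not the dimension-raising one; the formula you use is the right one, only the name is off.
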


We need the following better upper bound.

\begin{theorem} \label{t:better} Let $m,n\in \NN^+$ with $m<n$. Let
$K$ be a compact metric space and let $h$ be a gauge function with $\dim_T K=m$ and $\lim_{r\to 0+} h(r)/r^m=0$. Then for a generic
$f\in C(K,\RR^n)$ we have
\[ \iH^h(f(K))=0. \]
\end{theorem}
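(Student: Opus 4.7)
The plan is to exhibit $\iF:=\{f\in C(K,\RR^n): \iH^h(f(K))=0\}$ as a dense $G_\delta$, from which the theorem follows by Baire's theorem. For the $G_\delta$ structure, let $\iG_k$ consist of those $f\in C(K,\RR^n)$ for which there exist open sets $V_1,V_2,\ldots\subset \RR^n$ with $f(K)\subset \bigcup_j V_j$, $\diam V_j\leq 1/k$, and $\sum_j h(\diam V_j)<1/k$. Each $\iG_k$ is open: if $f_0\in \iG_k$ with witness $\{V_j\}$, the compactness of $f_0(K)$ inside the open set $\bigcup_j V_j$ yields some $\eta>0$ with the $\eta$-neighborhood of $f_0(K)$ contained in $\bigcup_j V_j$, and every $f$ with $\|f-f_0\|_\infty<\eta$ then satisfies $f(K)\subset \bigcup_j V_j$. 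Since $\iH^h(f(K))=\lim_{\delta\to 0}\iH^h_\delta(f(K))$ and membership in $\iG_k$ forces $\iH^h_{1/k}(f(K))<1/k$, we have $\bigcap_k \iG_k\subset \iF$; it remains to prove density of each $\iG_k$.

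For density I would use a nerve map. Given $f_0\in C(K,\RR^n)$ and $\rho>0$, choose $\delta>0$ with $\omega_{f_0}(\delta)<\rho$, where $\omega_{f_0}$ is the modulus of continuity of $f_0$. By the classical equality of the small inductive dimension with the covering dimension for separable metric spaces (see \cite{E}), and the compactness of $K$, there is a finite open cover $\iU=\{U_1,\dots,U_N\}$ of $K$ with $\mesh \iU\leq \delta$ such that each $x\in K$ lies in at most $m+1$ of the $U_i$. Pick $x_i\in U_i$, a partition of unity $\{\phi_i\}_{i=1}^N$ subordinate to $\iU$, and set
\[
f(x):=\sum_{i=1}^N \phi_i(x)\,f_0(x_i).
\]
Using $\sum_i\phi_i\equiv 1$ and that $\phi_i(x)>0$ forces $x\in U_i$, one has $\|f(x)-f_0(x)\|\leq \omega_{f_0}(\delta)<\rho$, so $f\in B(f_0,\rho)$. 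Since $|\{i:\phi_i(x)>0\}|\leq m+1$ for every $x$, the value $f(x)$ lies in the simplex $\mathrm{conv}\{f_0(x_i):\phi_i(x)>0\}\subset \RR^n$ of dimension at most $m$, and there are only finitely many such simplices. Hence $f(K)$ is contained in a finite union of $m$-simplices in $\RR^n$.

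A finite union of $m$-simplices in $\RR^n$ (with $m<n$) has finite $\iH^m$-measure, and the hypothesis $h(r)/r^m\to 0$ upgrades this to $\iH^h(f(K))=0$: for every $\eta>0$ one has $h(r)\leq \eta r^m$ for small $r$, so for $\delta$ small $\iH^h_\delta(f(K))\leq \eta\,\iH^m_\delta(f(K))\leq \eta(\iH^m(f(K))+1)$, and letting $\delta\to 0$ then $\eta\to 0$ gives $0$. In particular $f\in \iG_k$, so $\iG_k$ is dense and Baire's theorem concludes the proof. The only non-routine ingredient is the existence of order-$(m+1)$ open covers of arbitrarily small mesh, which is precisely the covering-dimension characterization of $\dim_T K$; the rest --- the Hausdorff-continuity argument for openness of $\iG_k$ and the standard measure comparison $\iH^h\ll \iH^m$ under $h(r)=o(r^m)$ --- is routine.
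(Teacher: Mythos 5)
Your overall strategy is the same as the paper's: exhibit $\{f:\iH^h(f(K))=0\}$ as (containing) a dense $G_\delta$ whose defining classes are open-cover conditions, and get density from maps whose image lies in a finite union of $m$-dimensional simplices. The only real difference is cosmetic-plus-one-gap: you re-prove the polyhedral approximation by hand via an order-$(m+1)$ cover and a nerve map (the paper simply cites the density of maps into $m$-dimensional polyhedra from Engelking), and you skip a reduction the paper makes for a reason.

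The gap is the step ``$\iH^h(f(K))=0$, in particular $f\in\iG_k$.'' Your class $\iG_k$ requires a cover of $f(K)$ by \emph{open} sets $V_j$ with $\sum_j h(\diam V_j)<1/k$, whereas $\iH^h$ is defined using arbitrary covering sets. Passing from an arbitrary cover $\{A_i\}$ to an open cover forces you to enlarge the sets, and any open $V\supset A_i$ has $\diam V>\diam A_i$, hence $h(\diam V)\geq \lim_{t\to (\diam A_i)+}h(t)$; since a gauge function is only assumed non-decreasing, this right limit can exceed $h(\diam A_i)$ by a fixed jump, so $\iH^h$-nullity does not by itself yield membership in $\iG_k$. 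This is exactly why the paper begins by replacing $h$ with its right-continuous modification $g(r)=\lim_{t\to r+}h(t)\geq h(r)$ (which still satisfies $g(r)/r^m\to 0$) and only then identifies the null-measure class with an intersection of open-cover classes. In your argument the gap is easy to close without that reduction, because your perturbed $f$ has image in a finite union $P$ of $m$-simplices: for every $r\leq 1/k$ the set $P$ can be covered by at most $C_P\,r^{-m}$ open sets of diameter at most $r$, so by monotonicity of $h$ alone $\sum_j h(\diam V_j)\leq C_P\,h(r)/r^{m}\to 0$ as $r\to 0+$, giving $f\in\iG_k$ directly and bypassing $\iH^h$ altogether. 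With that one sentence inserted (or with the paper's $g$-modification made at the start), your proof is complete; the openness of $\iG_k$, the inclusion $\bigcap_k\iG_k\subset\iF$, the nerve-map estimate $\|f-f_0\|\leq\omega_{f_0}(\delta)$, and the comparison $\iH^h\leq\eta\,\iH^m$ on sets of finite $\iH^m$-measure are all correct.
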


\begin{proof}
Let $g\colon [0,\infty)\to [0,\infty)$ be the right-continuous modification of
$h$ defined as $g(r)=\lim_{t\to r+} h(t)$. Clearly $g$ is non-decreasing and
$\lim_{r\to 0+} h(r)/r^m=0$ implies that $g(0)=0$ and $\lim_{r\to 0+} g(r)/r^m=0$.
As $g(r)\geq h(r)$ for all $r\geq 0$, it is enough to prove that $\iH^g(f(K))=0$
for a generic $f\in C(K,\RR^n)$. Let
\[ \iF=\{f\in C(K, \RR^n): \iH^g(f(K))=0\}, \]
and for all $i\in \NN^+$ define
\begin{align*} \iF_i=\{&f \in C(K,\RR^n): \textrm{ there are open sets } U_1,\dots,U_k\subset \RR^n \\
& \left.\textrm{such that } f(K)\subset \bigcup_{j=1}^k U_j \textrm{ and } \sum_{j=1}^k g(\diam U_j)<1/i\right\}.
\end{align*}
The sets $\iF_i$ are clearly open. As $K$ is compact and $g$ is right-continuous, we obtain
$\iF=\bigcap_{i=1}^{\infty} \iF_i$. Thus $\iF$ is $G_{\delta}$, so it is enough to prove that
$\iF$ is dense in $C(K,\RR^n)$.
The set
\[ \iG=\{f\in C(K,\RR^n): f(K) \textrm{ is contained in an $m$-dimensional polyhedron} \} \]
is dense in $C(K,\RR^n)$, see \cite[Chapter~1.10]{E2}. Clearly $\iG\subset \iF$, hence
$\iF$ is dense in $C(K,\RR^n)$, too. The proof is complete.
\end{proof}

The hard part of Theorem~\ref{t:bound} is the following.

\begin{theorem} \label{t:M} Let $n\in \NN^+$ and let
$K$ be a compact metric space such that we have $\dim_T U\geq n$ for all non-empty open sets $U\subset K$.
Let $h$ be a gauge function with $\lim_{r\to 0+} h(r)/r^{n-1}=0$. Then for a generic $f\in C(K,\RR^n)$ we have
\[ \iH^h(\partial f(K))=0.\]
\end{theorem}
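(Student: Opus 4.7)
The plan is to establish that $\iG = \{f \in C(K,\RR^n) : \iH^h(\partial f(K)) = 0\}$ is co-meager in $C(K,\RR^n)$ by exhibiting a winning strategy for Player~II in the Banach--Mazur game with target $\iG$, in the spirit of the proof of Theorem~\ref{t:gauge}. The geometric intuition driving the strategy is the following. Suppose $g \in C(K,\RR^n)$ satisfies $\bigcup_i B(y_i,r_i-\delta) \subset g(K) \subset \bigcup_i B(y_i,r_i+\delta)$ for some small $\delta>0$; then $\partial g(K)$ is trapped inside the union of the $2\delta$-wide annuli around the spheres $S(y_i,r_i)$. A $\delta$-neighborhood of such a sphere is covered by $O((r_i/\delta)^{n-1})$ balls of diameter $\lesssim \delta$, so its $\iH^h$-content at scale $\delta$ is bounded by $C_n (r_i/\delta)^{n-1} h(\delta)$, which vanishes as $\delta \to 0$ by the hypothesis $h(r)/r^{n-1} \to 0$. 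Hence, if Player~II can force the image of the current map to be sandwiched between two concentric unions of balls of radii differing by a margin that tends to zero, the limit map automatically satisfies $\iH^h(\partial f(K)) = 0$.

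At step $k$, given $\iU_k$, Player~II first picks $f_k \in \iU_k$ and $\eps_k>0$ with $B(f_k,5\eps_k)\subset \iU_k$, arranging $\eps_k \searrow 0$. By uniform continuity of $f_k$, cover $K$ by finitely many balls $B(x_{k,i},\rho_k)$, for $i=1,\dots,N_k$, on each of which $f_k$ oscillates by less than $\eps_k/n$, and set $y_{k,i}=f_k(x_{k,i})$. Using the hypothesis $\dim_T U \geq n$ for every non-empty open $U\subset K$ together with Lemma~\ref{l:stab}, produce pairwise disjoint compact regular closed sets $D_{k,i}\subset U(x_{k,i},\rho_k)$ and stable onto maps $\phi_{k,i}\colon D_{k,i}\to B(y_{k,i},2\eps_k)$. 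Now follow the Tietze gluing in the proof of Lemma~\ref{l:tech} verbatim (with one $D_{k,i}$ per $x_{k,i}$) to assemble these local maps into $g_k \in B(f_k,4\eps_k)\subset \iU_k$ which equals $\phi_{k,i}$ on $D_{k,i}$, equals $f_k$ outside $\bigcup_i U(x_{k,i},\rho_k)$, and whose Tietze extensions in between take values inside the convex ball $B(y_{k,i},2\eps_k)$. The crucial outcome of this construction is the equality
\[
g_k(K) = \bigcup_{i=1}^{N_k} B(y_{k,i},2\eps_k).
\]

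Next choose $\delta_k \in (0,\eps_k)$, shrinking monotonically across steps, so small that
\[
N_k \cdot C_n \cdot (2\eps_k)^{n-1} \cdot \frac{h(4\delta_k)}{\delta_k^{n-1}} < \frac{1}{k},
\]
which is possible because $h(r)/r^{n-1}\to 0$, and set $\iV_k = B(g_k,\delta_k)$. For any $f \in \iV_k$ one has $f(K)\subset \bigcup_i B(y_{k,i},2\eps_k+\delta_k)$ (from $g_k(K)\subset \bigcup_i B(y_{k,i},2\eps_k)$ and $\|f-g_k\|_\infty<\delta_k$) and $f(K)\supset \bigcup_i B(y_{k,i},2\eps_k-\delta_k)$ (from the stability of each $\phi_{k,i}$). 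Hence $\partial f(K)$ lies in the union of the corresponding $2\delta_k$-wide annuli, and the choice of $\delta_k$ yields $\iH^h_{4\delta_k}(\partial f(K)) < 1/k$. For any $f \in \bigcap_k \iV_k$, the sequence $\iH^h_{4\delta_k}(\partial f(K))$ is non-decreasing in $k$ yet bounded above by $1/k$, so every term must be zero; since $\iH^h(\partial f(K))$ equals the supremum of this sequence, we conclude $\iH^h(\partial f(K))=0$, which is precisely the winning condition for Player~II.

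The main obstacle is securing the equality $g_k(K) = \bigcup_i B(y_{k,i},2\eps_k)$: a naive Tietze extension of the local stable maps $\phi_{k,i}$ would typically leave $g_k(K)$ with extraneous pieces sticking out of the prescribed union of balls, destroying the annular sandwich for perturbations of $g_k$. The Lemma~\ref{l:tech} construction resolves this precisely because each local Tietze extension is performed into the \emph{convex} ball $B(y_{k,i},2\eps_k)$, while on the outer shell of each $B(x_{k,i},\rho_k)$ the map $g_k$ coincides with $f_k$, which has already been placed inside $B(y_{k,i},\eps_k)\subset B(y_{k,i},2\eps_k)$ by the oscillation estimate. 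Once this geometric step is secured, the rest is an elementary surface-area count combined with the decay hypothesis on $h$.
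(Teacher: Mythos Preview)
Your proposal is correct and takes essentially the same approach as the paper: the construction of your map $g_k$ with $g_k(K)=\bigcup_i B(y_{k,i},2\eps_k)$ and stable onto restrictions $\phi_{k,i}$ is exactly the content of the paper's Lemma~\ref{l:GnK}, and your annulus covering estimate is Lemma~\ref{l:cn}. The only cosmetic difference is that the paper writes down the dense open sets $\iF_m=\bigcup_{g\in\iG_n(K)} U(g,\eps(g,m))$ directly and intersects them, whereas you wrap the same construction in a Banach--Mazur game; the latter is unnecessary here since the dense $G_\delta$ is explicit, but it is not wrong.
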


First we deduce Theorem~\ref{t:bound} from Theorems~\ref{t:better} and~\ref{t:M}.

\begin{proof}[Proof of Theorem~\ref{t:bound}] We prove the lower bounds first.
Theorem~\ref{t:int} yields that for a generic $f\in C(K,\RR^n)$ we have $\inter f(K)\neq \emptyset$.
First fix such $f\in C(K,\RR^n)$.
By \cite[Theorem~1.8.12]{E} the boundary of a bounded, non-empty open set in $\RR^n$ has topological dimension at least $n-1$,
so $\dim_T \partial (\inter f(K))\geq n-1$. Clearly $\partial (\inter f(K))\subset \partial f(K)$, so
$\dim_T \partial f(K)\geq n-1$. Let $\pr\colon \RR^n \to \RR^{n-1}$ denote the orthogonal projection of
$\RR^n$ onto its first $n-1$ coordinates (where $\RR^0=\{0 \}$ by convention),
then $\pr(f(K))$ contains a non-empty open set, and clearly we have $\pr(\partial f(K))=\pr(f(K))$.
As Hausdorff measures cannot increase under projections, we obtain
\[ \iH^{n-1}(\partial f(K))\geq \iH^{n-1}(\pr(\partial f(K)))=\iH^{n-1}(\pr(f(K)))>0. \]

Now we prove the upper bounds. It is enough to show that $\iH^h(\partial f(K))=0$ for a generic $f\in C(K,\RR^n)$, then
the choice $h(x)=x^{n-1}/\log(1/x)$ and Theorem~\ref{t:<} will immediately imply that
\[ \dim_T \partial f(K)\leq \dim_H \partial f(K)\leq n-1. \]
Let $V\subset K$ be the maximal open set such that $\dim_T V<n$ and let $C=K\setminus V$.
By Lemma~\ref{l:int} we have $\dim_T (C\cap U)\geq n$ for all open sets $U\subset K$ intersecting $C$.
Assume that $V=\bigcup_{i=1}^{\infty} K_i$, where $K_i\subset K$ are compact.
As $\dim_T K_i\leq n-1$ and a countable intersection of co-meager sets is co-meager,
Theorem~\ref{t:better}, Theorem~\ref{t:M}, and Corollary~\ref{c:R} imply that for a generic
$f\in C(K,\RR^n)$ we have $\iH^h(f(K_i))=0$ for all $i\geq 1$ and $\iH^h(\partial f(C))=0$.
Clearly
\[ \partial f(K)\subset \bigcup_{i=1}^{\infty} f(K_i) \cup \partial f(C), \]
so the countably subadditivity of $\iH^h$ implies that $\iH^h(\partial f(K))=0$ for a generic $f\in C(K,\RR^n)$.
The proof is complete.
\end{proof}

Before proving Theorem~\ref{t:M} we need some more preparation.

\begin{definition} \label{d:GnK} For a compact metric space $K$ and $n\in \NN^+$ define
\begin{align*} \iG_n(K)=&\left\{g\in C(K,\RR^n): g(K)=\bigcup_{i=1}^{k} B(y_i,r) \textrm{ for some } k,~y_i\in \RR^n,  \textrm{ and }  r>0 \right. \\
& \left. ~~ \textrm {such that } \bigcup_{i=1}^{k} B(y_i,r-t)\subset f(K) \textrm{ for all }  0<t<r \textrm{ and }  f\in B(g,t) \right\}.
\end{align*}
\end{definition}

The proof of the following lemma is similar to that of Lemma~\ref{l:tech}.

\begin{lemma} \label{l:GnK} Let $n\in \NN^+$ and let $K$ be a compact metric space such that
we have $\dim_T U\geq n$ for all non-empty open sets $U\subset K$.
Then $\iG_n(K)$ is dense in $C(K,\RR^n)$.
\end{lemma}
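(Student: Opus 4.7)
The plan is to follow the template of Lemma~\ref{l:tech}, simplified so that each piece of $K$ is mapped stably onto a single target ball of a fixed radius $r$, and then to glue the pieces so that $g(K)$ is exactly the prescribed union of those target balls.

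Fix $f_0\in C(K,\RR^n)$ and $\eps>0$; set $r=\eps/3$. By uniform continuity of $f_0$, pick distinct points $x_1,\dots,x_k\in K$ and $0<\delta_2<\delta_1$ such that $K=\bigcup_{i=1}^k B(x_i,\delta_1)$, the closed balls $B(x_i,\delta_2)\subset K$ are pairwise disjoint, and $f_0(B(x_i,\delta_1))\subset U(y_i,r)$, where $y_i=f_0(x_i)$. For each $i$, select a non-empty regular closed set $B_i=\cl U(x_i,\delta_3)\subset U(x_i,\delta_2)$ for some small $\delta_3>0$; since $\inter B_i$ is non-empty and open in $K$, the standing hypothesis forces $\dim_T B_i\geq n$. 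Invoke Lemma~\ref{l:stab} on $B_i$ to obtain a continuous onto map $f_i\colon B_i\to B(y_i,r)$ with the stability $B(y_i,r-t)\subset g'(B_i)$ for every $0<t<r$ and every $g'\in B(f_i,t)$.

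Define $g\colon K\to\RR^n$ by three-part patching: on $B_i$ set $g=f_i$; on $K\setminus\bigcup_i U(x_i,\delta_2)$ set $g=f_0$; and on each collar $\cl U(x_i,\delta_2)\setminus\inter B_i$ extend the boundary data $f_i|_{\partial B_i}$ together with $f_0|_{\partial U(x_i,\delta_2)}$ (both of which already lie in the convex ball $B(y_i,r)$) to a continuous map into $B(y_i,r)$, using Tietze's extension theorem followed by the $1$-Lipschitz nearest-point projection onto the convex set $B(y_i,r)$. The collars are pairwise disjoint by disjointness of the enclosing closed balls $B(x_i,\delta_2)$, and the three prescriptions agree on their shared boundaries $\partial B_i$ and $\partial U(x_i,\delta_2)$, so $g$ is well-defined and continuous on $K$. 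By construction $g(K)\subset\bigcup_i B(y_i,r)$; since each $f_i(B_i)=B(y_i,r)$ the reverse inclusion also holds, and therefore $g(K)=\bigcup_i B(y_i,r)$. The norm bound $\|g-f_0\|_{\infty}\leq 2r<\eps$ is immediate because $g(x),f_0(x)\in B(y_i,r)$ whenever $x\in B(x_i,\delta_1)$.

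It remains to check $g\in\iG_n(K)$: for any $0<t<r$ and $f\in B(g,t)$, the restriction $f|_{B_i}$ lies in $B(f_i,t)$, so the stability of $f_i$ yields $B(y_i,r-t)\subset f(B_i)\subset f(K)$; taking unions over $i$ furnishes the containment required by Definition~\ref{d:GnK}. The main potential obstacle is arranging simultaneously that $g(K)$ equals (and not merely contains) $\bigcup_i B(y_i,r)$, that $\|g-f_0\|_{\infty}<\eps$, and that each $B_i$ inherits the stability coming from Lemma~\ref{l:stab}. This is defused by hard-wiring the inclusion $f_0(B(x_i,\delta_1))\subset B(y_i,r)$ into the choice of $\delta_1$ and $r$: each collar then sees exactly one convex target ball, and Tietze-plus-projection completes the extension without breaking either continuity or the image equality.
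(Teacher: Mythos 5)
Your proof is correct and follows essentially the same route as the paper: cover $K$ by small balls via uniform continuity, place the stable onto maps of Lemma~\ref{l:stab} on disjoint closed balls, glue with Tietze (keeping values in the convex target balls $B(y_i,r)$), and pass the stability to restrictions; your only deviation is the cosmetic one of confining the modification to the disjoint balls $B(x_i,\delta_2)$ with an explicit collar, which if anything makes the well-definedness of the gluing cleaner than in the paper. One phrasing quibble: in the norm bound, for a given $x$ the index $i$ with $g(x)\in B(y_i,r)$ need not be every index with $x\in B(x_i,\delta_1)$, but since the modified region lies in $B(x_i,\delta_2)\subset B(x_i,\delta_1)$ there is always \emph{some} common index, which is all that is needed for $\|g-f_0\|_\infty\leq 2r$.
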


\begin{proof} Assume that $f_0\in C(K,\RR^n)$ and $r>0$ are given,
we need to show that $\iG_n(K)\cap B(f_0,2r)\neq \emptyset$.
Since $K$ is compact and $f_{0}$ is uniformly continuous, there
is a $\delta>0$ and there are finitely many distinct points $x_{1},...,x_{k}\in K$ such that
\begin{equation} \label{eq:K=}  K=\bigcup_{i=1}^{k}B(x_{i},\delta)
\end{equation}
and for all $i\in \{1,\dots,k\}$ we have
\begin{equation} \label{eq:f0b}
f_0(B(x_i,\delta))\subset B(y_i,r),
\end{equation}
where $y_i=f_0(x_i)$. Choose $0<2\eps<\delta$ such that the balls $B(x_{i},2\eps)$ are disjoint and let
$K_i=B(x_i,\eps)$ for all $i\in \{1,\dots,k\}$. As $\dim_T K_i \geq n$, by Lemma~\ref{l:stab} there exist onto maps
$g_i\colon K_i\to B(y_i,r)$ such that $B(y_i,r-t)\subset f_i(K_i)$ for all $0<t<r$ and $f_i\in B(g_i,t)$.
Then \eqref{eq:f0b} and Tietze's extension theorem yield that there are continuous maps
$G_i\colon B(x_i,\delta)\to B(y_i,r)$ such that $G_i=g_i$ on $K_i$ and $G_i=f_0$ on
$B(x_i,\delta)\setminus U(x_i,2\eps)$. Let $g(x)=G_i(x)$
for all $x\in B(x_i,\delta)$ and $i\in \{1,\dots,k\}$. Then the construction and \eqref{eq:K=} imply that
$g\in C(K,\RR^n)$ is well-defined and $g(B(x_i,\delta))=B(y_i,r)$ for all $i$. Therefore
\[ g(K)=\bigcup_{i=1}^{k} B(y_i,r). \]
The construction and \eqref{eq:f0b} imply that
\[ f_0(B(x_i,\delta))\cup g(B(x_i,\delta))=B(y_i,r), \]
which yields that $g\in B(f_0,2r)$.

Finally, assume that $0<t<r$ and $f\in B(g,t)$. Let $f_i=f|_{K_i}$, then clearly $f_i\in B(g_i,t)$,
so for all $i\in \{1,\dots,k\}$ we have
\[ B(y_i,r-t)\subset f_i(K_i)\subset f(K). \]
Hence $g\in \iG_n(K)$, and the proof is complete.
\end{proof}

\begin{lemma} \label{l:cn} For each $n\in \NN^+$ there is a finite constant $c_n$ such that
the set $B(y,r+\eps)\setminus U(y,r-\eps)$ can be covered by at most $c_n ((r+2\eps)/\eps)^{n-1}$ sets of diameter
$\eps$ for all $y\in \RR^n$ and $0<2\eps<r$.
\end{lemma}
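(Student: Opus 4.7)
The plan is to prove the lemma by a standard volume-comparison packing argument in $\RR^n$. By translating we may assume $y=\mathbf{0}$; write $A=B(\mathbf{0},r+\eps)\setminus U(\mathbf{0},r-\eps)$ for the annular shell. First I would choose a maximal $(\eps/3)$-separated subset $\{x_1,\dots,x_N\}\subset A$, i.e.\ a maximal set of points with pairwise distance at least $\eps/3$. By maximality, every $z\in A$ lies within distance $\eps/3$ of some $x_i$, so the balls $B(x_i,\eps/3)$ cover $A$, and each of these balls has diameter at most $2\eps/3\leq\eps$. Thus $N$ sets of diameter $\eps$ suffice to cover $A$, and the whole task reduces to bounding $N$.

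For the upper bound on $N$, I would observe that the balls $B(x_i,\eps/6)$ are pairwise disjoint by the triangle inequality, and that each of them is contained in the slightly enlarged shell $A'=B(\mathbf{0},r+7\eps/6)\setminus U(\mathbf{0},r-7\eps/6)$; the hypothesis $2\eps<r$ ensures $r-7\eps/6>0$, so this enlarged shell is a genuine annulus. Writing $\omega_n$ for the volume of the unit ball in $\RR^n$, disjointness gives
\[
N\,\omega_n(\eps/6)^n\leq \mathrm{vol}(A')=\omega_n\bigl[(r+7\eps/6)^n-(r-7\eps/6)^n\bigr].
\]
By the mean value theorem the right-hand side equals $\omega_n\cdot n\xi^{n-1}\cdot(7\eps/3)$ for some $\xi\in(r-7\eps/6,r+7\eps/6)$, and since $7/6<2$ we have $\xi<r+2\eps$. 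Combining these estimates yields
\[
N\leq \frac{(7n/3)(r+2\eps)^{n-1}\eps}{(\eps/6)^n}=\frac{7n\cdot 6^n}{3}\left(\frac{r+2\eps}{\eps}\right)^{n-1},
\]
so the lemma holds with $c_n=7n\cdot 6^n/3$.

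There is essentially no serious obstacle here: the only thing to check is that the constants line up so that the covering property (diameter $\leq\eps$) and the volume inequality can both be arranged with the same packing, and the slightly enlarged shell $A'$ stays an honest annulus, which is exactly what the assumption $2\eps<r$ delivers. The only minor care required is to write the mean-value expansion so that the factor of $r^{n-1}$ comes out bounded by $(r+2\eps)^{n-1}$ rather than merely by $(r+\eps)^{n-1}$, giving the form of the bound stated in the lemma.
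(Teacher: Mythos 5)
Your proof is correct and follows essentially the same volume-comparison strategy as the paper: the paper counts axis-parallel grid cubes of diameter $\eps$ meeting the shell and bounds their number by the Lebesgue measure of the enlarged annulus $B(y,r+2\eps)\setminus U(y,r-2\eps)$, while you pack disjoint balls of radius $\eps/6$ centered at a maximal $(\eps/3)$-separated set into a slightly enlarged annulus --- the same mechanism with balls in place of cubes. Both arguments conclude with the same elementary estimate on the difference of $n$th powers, so only the explicit constant $c_n$ differs.
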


\begin{proof} Let $n\in \NN^+$, $y\in \RR^n$, and $0<2\eps<r$ be arbitrarily fixed. Let
\[ R_1=B(y,r+\eps)\setminus U(y,r-\eps) \quad  \textrm{and} \quad R_2=B(y,r+2\eps)\setminus U(y,r-2\eps).\]
Define
\[ \delta=\frac{\eps}{\sqrt{n}} \quad \textrm{and} \quad
\iQ_{\delta}=\left\{\prod_{i=1}^n [k_i\delta, (k_i+1)\delta]: k_i\in \ZZ\right\}.
\]
Clearly
\begin{equation*}
(r+2\eps)^n-(r-2\eps)^n=4\eps \sum_{k=0}^{n-1} (r+2\eps)^{n-1-k} (r-2\eps)^{k}\leq 4\eps n (r+2\eps)^{n-1}.
\end{equation*}
For each $Q\in \iQ_{\delta}$ we have $\diam Q=\eps$, and if $Q\cap R_1\neq \emptyset$
then $Q\subset R_2$. Thus the above inequality yields that
the number of sets of diameter $\eps$ needed to cover $R_1$ is at most
\begin{align*}
\#\{Q\in \iQ_{\delta}: Q\cap R_1\neq \emptyset\}&\leq \#\{Q\in \iQ_{\delta}: Q\subset R_2\} \\
&\leq \delta^{-n} \iL^n(R_2) \\
&=\eps^{-n} n^{n/2} e_n((r+2\eps)^n-(r-2\eps)^n) \\
&\leq 4e_n n^{n/2+1}\left(\frac{r+2\eps}{\eps}\right)^{n-1} \\
&=c_n \left(\frac{r+2\eps}{\eps}\right)^{n-1},
\end{align*}
where $\iL^n$ denotes the $n$-dimensional Lebesgue measure, $e_n=\iL^n(B(\mathbf{0},1))$,
and $c_n=4e_n n^{n/2+1}$. The proof is complete.
\end{proof}

Now we are ready to prove Theorem~\ref{t:M}.

\begin{proof}[Proof of Theorem~\ref{t:M}] For all $g\in \iG_n(K)$ choose $k(g)\in \NN^+$ and $r(g)>0$
according to Definition~\ref{d:GnK}, and for each $m\in \NN^+$ let
$\varepsilon(g,m)$ be a small enough positive number to be chosen later such that
$\eps(g,m)<r(g)/2$ and $\lim_{m\to \infty} \eps(g,m)=0$. Define
\[ \iF=\bigcap_{m=1}^{\infty} \iF_m, \quad \textrm{where} \quad \iF_m=\bigcup_{g\in \iG_n(K)} U(g,\varepsilon(g,m)).\]
Lemma~\ref{l:GnK} yields that $\iF_m$ is a dense open set in $C(K,\RR^n)$ for every $m$, so $\iF$ is co-meager in $C(K,\RR^n)$.
We will prove that $\iH^h(\partial f(K))=0$ for each $f\in \iF$. Let us fix an arbitrary $f\in \iF$ and $m\in \NN^+$. Choose $g\in \iG_n(K)$ such that $f\in U(g,\varepsilon(g,m))$. It is enough to prove that $\eps=\eps(g,m)$ satisfies
\begin{equation} \label{eq:part} \iH^{h}_{\eps}(\partial f(K))\leq \frac 1m.
\end{equation}
Set $r=r(g)$ and $k=k(g)$. By the definition of $\iG_n(K)$ there are $y_1,\dots,y_k\in \RR^n$ such that $g(K)=\bigcup_{i=1}^k B(y_i,r)$ and 
\[\bigcup_{i=1}^{k} B(y_i,r-\varepsilon) \subset f(K)\subset \bigcup_{i=1}^{k} B(y_i,r+\varepsilon).\]
Thus
\begin{equation} \label{eq:gKU}
\partial f(K)\subset \bigcup_{i=1}^{k} (B(y_i,r+\varepsilon)\setminus U(y_i,r-\eps)).
\end{equation}
By Lemma~\ref{l:cn} there is a constant $c_n\in \RR^+$ such the sets
$B(y_i,r+\varepsilon)\setminus U(y_i,r-\eps)$ can be covered by at most
$c_n((r+2\eps)/\eps)^{n-1}$ sets of diameter $\eps$.
Now define $\eps$ such that $0<\eps<\min\{r/2, 1/m\}$ and
\begin{equation} \label{eq:eps}
\frac{h(\eps)}{\eps^{n-1}}\leq \frac{(r+2\eps)^{1-n}}{kc_n m}.
\end{equation}
Therefore
\eqref{eq:gKU} and \eqref{eq:eps} imply that
\[ \iH^{h}_{\eps} (\partial f(K))\leq kc_n\left(\frac{r+2\eps}{\eps}\right)^{n-1} h(\varepsilon)\leq \frac 1m,\]
so \eqref{eq:part} holds. The proof is complete.
\end{proof}

\section{Fibers of maximal dimension} \label{s:max}

The Main Theorem implies the following.

\begin{corollary} \label{c:maxtop} Assume that $K$ is a compact metric space and $n\in \NN^+$ such that $n\leq \dim_T K<\infty$.
Then for a generic $f\in C(K,\RR^n)$ there is a non-empty open set $U_f\subset \RR^n$ such that
for all $y\in U_f$ we have
\[ \dim_T f^{-1}(y)=d_{T}^n(K). \]
In particular, for a generic $f\in C(K,\RR^n)$ we have
\[ \max\{\dim_T f^{-1}(y): y\in \RR^n\}=d_T^{n}(K). \]
\end{corollary}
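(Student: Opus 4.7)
The plan is to combine Theorem~\ref{t:dtn} with the Main Theorem (Theorem~\ref{mainthm}), exploiting the fact that topological dimension only takes values in $\{-1,0,1,2,\ldots\}\cup\{\infty\}$. Since $n\leq \dim_T K<\infty$, Theorem~\ref{t:dtn} gives $d^n_T(K)=\dim_T K-n$, which is a non-negative integer. Let us fix any real number $d$ with $d^n_T(K)-1<d<d^n_T(K)$ (for instance $d=d^n_T(K)-1/2$).

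The Main Theorem provides a co-meager set $\iM\subset C(K,\RR^n)$ such that for every $f\in \iM$:
\begin{enumerate}[(a)]
\item $\dim_T f^{-1}(y)\leq d^n_T(K)$ for all $y\in \RR^n$;
\item there is a non-empty open set $U_f:=U_{f,d}\subset \RR^n$ with $\dim_T f^{-1}(y)\geq d$ for all $y\in U_f$.
\end{enumerate}
Fix $f\in \iM$ and $y\in U_f$. Since $\dim_T f^{-1}(y)$ is either $-1$, a non-negative integer, or $\infty$, and $d$ lies strictly between the two consecutive integers $d^n_T(K)-1$ and $d^n_T(K)$, the lower bound $\dim_T f^{-1}(y)\geq d$ actually forces $\dim_T f^{-1}(y)\geq d^n_T(K)$. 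Combining this with (a) yields $\dim_T f^{-1}(y)=d^n_T(K)$ for all $y\in U_f$, which is the first assertion. The "in particular" statement follows at once: the supremum in Theorem~\ref{t:max} is realised as a maximum at any $y\in U_f$.

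There is no genuine obstacle here beyond the observation that finiteness of $\dim_T K$ makes $d^n_T(K)$ an integer and that topological dimension itself is integer-valued; both ingredients are needed to upgrade the strict inequality supplied by the Main Theorem into equality with the extremal value $d^n_T(K)$. Note that the analogous argument fails for $\dim_H$ and $\dim_P$ precisely because these dimensions can take non-integer values, which is why the sharpness result for $\dim_T$ is stronger than the one available in Theorem~\ref{t:max}.
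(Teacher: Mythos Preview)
Your proof is correct and matches the paper's intended argument: the paper simply states that the corollary follows from the Main Theorem, and you have filled in exactly the right detail---namely that $d_T^n(K)$ is a non-negative integer (by Theorem~\ref{t:dtn} and the hypothesis $\dim_T K<\infty$) and that topological dimension is integer-valued, so the strict lower bound from part~\eqref{b} of the Main Theorem upgrades to equality. One minor remark: your closing reference to Theorem~\ref{t:max} is unnecessary (and a forward reference), since the ``in particular'' statement follows immediately from what you have just shown together with the upper bound (a).
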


Thus the supremum is attained in Corollary~\ref{c:main1} if $\dim_{*}=\dim_T$ and $\dim_T K$ is finite.
We show that this is true in general by following the proof of \cite[Theorem~4.1]{BBE2}.

\begin{theorem} \label{t:max} Let $n\in \NN^+$ and let $K$ be a compact metric space with $\dim_T K\geq n$.
Let $\dim_{*}$ be one of $\dim_T$, $\dim_H$, or $\dim_P$. For a generic $f\in C(K,\RR^n)$ we have
\[ \max\{\dim_{*}f^{-1}(y): y\in \RR^n\}=d^n_{*}(K). \]
\end{theorem}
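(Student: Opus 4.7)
The plan is to show that the set
\[ \iM'' := \{f \in C(K,\RR^n) : \exists\, y^* \in \RR^n \text{ with } \dim_{*} f^{-1}(y^*) \geq d^n_{*}(K)\} \]
is co-meager; combined with part~(\ref{a}) of Theorem~\ref{mainthm} this yields both the equality and the attainment of the maximum. The case $d^n_{*}(K)=0$ is immediate, since $\dim_T K \geq n$ forces $f(K) \neq \emptyset$ and any $y \in f(K)$ satisfies $\dim_{*} f^{-1}(y) \geq 0 = d^n_{*}(K)$. So assume $d^n_{*}(K)>0$ and fix $d_k \nearrow d^n_{*}(K)$. The Baire property of $\iM''$ is automatic from Lemma~\ref{l:Baire}: for every $k$ the set $\{(f,y) : \dim_{*} f^{-1}(y) \geq d_k\}$ lies in $\sigma(\mathbf{A})$, so does its intersection over $k$, and $\iM''$ is the projection onto $C(K,\RR^n)$, which again lies in $\sigma(\mathbf{A})$.

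Following \cite[Theorem~4.1]{BBE2}, I would prove co-meagerness of $\iM''$ by exhibiting a winning strategy for Player~II in the Banach--Mazur game on $C(K,\RR^n)$. At round~$k$ Player~II picks a non-empty open $\iV_k \subset \iU_k$ together with a closed ball $B_k \subset \RR^n$ satisfying $B_k \subset B_{k-1}$ for $k \geq 2$, $\diam B_k \leq 1/k$, and the invariant
\[ \iV_k \subset \iH^*(d_k, B_k) := \{f \in C(K,\RR^n) : \dim_{*} f^{-1}(y) \geq d_k \text{ for every } y \in B_k\}. \]
If this is maintained at every round, then for any $f \in \bigcap_k \iV_k$ the Cantor intersection $\{y^*\}=\bigcap_k B_k$ is a single point with $\dim_{*} f^{-1}(y^*) \geq d_k$ for every $k$, so $f \in \iM''$ and Player~II wins.

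The whole argument then boils down to the following key lemma: given a non-empty open $\iU \subset C(K,\RR^n)$, a closed ball $B_{\mathrm{prev}} \subset \RR^n$ (or no constraint on $B$ at the first round), parameters $d<d^n_{*}(K)$ and $\eps>0$, there exist a non-empty open $\iV \subset \iU$ and a closed ball $B \subset B_{\mathrm{prev}}$ with $\diam B \leq \eps$ such that $\iV \subset \iH^*(d, B)$. Its proof combines four ingredients: (i)~a variant of Lemma~\ref{l:dn} producing a nested family of compacta $C_d \subset K$ with $d^n_{*}(C_d \cap U) > d$ and $\dim_T(C_d \cap U) \geq n$ for every open $U$ meeting $C_d$; (ii)~the strengthened forms of the Main Theorem for each of the three dimensions (via the proofs of Theorems~\ref{t:gauge},~\ref{t:B}, and~\ref{t:top} applied to $C_d$), giving, for a generic $g \in C(C_d,\RR^n)$, $\dim_{*} g^{-1}(y) \geq d$ for every $y \in \inter g(C_d)$; (iii)~Definition~\ref{d:D} and Lemma~\ref{l:D} applied to $C_d$, which make the inclusion $B \subset \inter f(C_d)$ stable on a neighborhood of a chosen $f_0$; and (iv)~Corollary~\ref{c:R}, lifting generic properties from $C(C_d,\RR^n)$ up to $C(K,\RR^n)$ via restriction. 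The main obstacle is a \emph{compatibility} condition: one must produce some $f_0 \in \iU$ with $B_{\mathrm{prev}} \cap \inter f_0(C_d) \neq \emptyset$, which is not automatic for arbitrary $\iU$ and $B_{\mathrm{prev}}$. To handle it, Player~II at the previous round must reserve a ball around its representative whose radius is substantially larger than the subsequent $\diam B_{\mathrm{prev}}$, leaving room to perform a Lemma~\ref{l:tech}-style surgical modification of the representative on a small piece of $C_d$ that redirects part of its image into $B_{\mathrm{prev}}$ while remaining inside $\iU$. Combining this with Lemma~\ref{l:iF} to transfer the resulting second-category property between balls and intersecting with the generic set from (ii)--(iv) delivers the desired $\iV$ and $B$, and the Banach--Mazur strategy goes through.
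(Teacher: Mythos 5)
Your overall plan (Banach--Mazur game, shrinking target balls $B_k$, invariant ``$\iV_k$ open and $\dim_{*}f^{-1}(y)\geq d_k$ for all $f\in\iV_k$ and all $y\in B_k$'') has two genuine gaps, and they sit exactly where the real difficulty of the theorem lies. First, your invariant requires an \emph{open} set of functions all of whose fibers over every point of a ball have dimension at least $d_k$, and none of the ingredients you cite can produce this for $\dim_H$ or $\dim_P$: the Main Theorem, Theorem~\ref{t:gauge}, and the $\iH(r_1,r_2,y_0)$/Lemma~\ref{l:iF} machinery only yield co-meager or second-category families, and genericity cannot be upgraded to an open condition. A single open ball in $C(K,\RR^n)$ can only encode finitely many stable constraints in the style of Lemma~\ref{l:stab} or Lemma~\ref{l:tech}; these force fibers to be non-empty, or give a pre-measure bound at one scale, but never $\dim_H f^{-1}(y)\geq d$ for \emph{all} nearby $f$. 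Indeed, in the paper's own game argument for Theorem~\ref{t:gauge} the dimension bound holds only for functions in $\bigcap_k\iV_k$, never for the members of a single $\iV_k$. Second, even where such open sets exist, the compatibility across rounds is not resolved by ``reserving a large ball around Player~II's representative'': the radius available to Player~II is dictated by Player~I's previous move, and Player~I may answer with an arbitrarily small ball around an adversarial $f\in\iV_k$ whose image of the next rich compactum $C_{d_{k+1}}$ (or of $C_{d_{k+2}}$, needed one round later) lies far from $B_k$. The invariant gives no control over where these sets are mapped, because a fiber with $\dim_{*}\geq d_k$ need not meet any $C_{d_m}$ at all (for Hausdorff and packing dimension the high-dimensional part of a fiber may sit inside the removed $F_\sigma$ sets of small topological dimension). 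The surgery you propose then requires a perturbation of size comparable to $\dist(f(C_{d_{k+1}}),B_k)$, which exceeds the radius of $\iU_{k+1}$, so the strategy cannot be maintained. Strengthening the invariant to record the images of all future $C_{d_m}$ simultaneously runs into the fact that $\bigcap_m C_{d_m}$ may be a single point, as in the space of Theorem~\ref{t:ex}.

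That example also explains why a target $y^{*}$ pinned down by balls chosen independently of the limit function is the wrong shape of statement: there the unique fiber of maximal dimension lies over $y_f=f(\mathbf{0})$, so the target must move with $f$. The paper's proof is built around exactly this point: Lemma~\ref{l:DD} (using countable stability of $d^n_{*}$) produces an anchor $x_0$ together with compacta $K_m\subset B(x_0,1/m)\setminus\{x_0\}$ on which $d^n_{*}$ is locally $>d_m$; the target is $y_f=f(x_0)$; Lemma~\ref{l:max} shows that generically $f(x_0)\in f(K_m)$ for infinitely many $m$ (here the accumulation of $K_m$ at $x_0$ makes the required perturbation no larger than the local oscillation of $f$, which is automatically small --- this is what your ``reserved ball'' was trying, and failing, to simulate); and Lemma~\ref{l:max2}, via Kuratowski--Ulam and the Main Theorem on small pieces of $K_m$, shows that generically either $\dim_{*}f^{-1}(f(x_0))\geq d_m$ or $f(x_0)\notin f(K_m)$. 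Combining these co-meager conditions over all $m$ gives a fiber of dimension $d^n_{*}(K)$ through $f(x_0)$, with no need for any open set of functions with uniformly large fibers. To repair your approach you would have to let the balls $B_k$ track the image of such a distinguished point, at which stage you have essentially rediscovered the paper's argument.
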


We need some lemmas for the proof, the next one is \cite[Lemma~7.2]{B}.

\begin{lemma} \label{l:max} Let $n\in \NN^+$ and let $K$ be a compact metric space with
$x_0\in K$. Let $K_m\subset K$ be compact sets such that
\begin{enumerate}[(i)]
\item  $\dim_{T}K_m\geq n$ for all $m\in \mathbb{N}^+$ and
\item \label{eq:ifi} $\diam \left(K_m\cup \{x_0\}\right)\to 0$ if $m\to \infty$.
\end{enumerate}
Then for a generic $f\in C(K,\RR^n)$ we have $f(x_0) \in f(K_m)$ for
infinitely many $m$.
\end{lemma}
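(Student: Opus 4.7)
The Main Theorem already supplies $\dim_* f^{-1}(y) \leq d^n_*(K)$ uniformly in $y$, and Corollary~\ref{c:main1} records that $\sup_y \dim_* f^{-1}(y) = d^n_*(K)$ is realized as a supremum for a generic $f$. Theorem~\ref{t:max} asserts the stronger fact that this supremum is actually attained. The plan is to realize it as $y^\ast = f(x_0)$ at a distinguished point $x_0 \in K$ around which the value $d^n_*(K)$ is concentrated, following \cite[Theorem~4.1]{BBE2}.

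By the countable stability of $d^n_*$ for closed sets (Corollary~\ref{c:stable}), let $V \subset K$ be the maximal open subset with $d^n_*(V) < d^n_*(K)$; fix $x_0 \in K \setminus V$, so that every open neighborhood of $x_0$ has $d^n_*$-value equal to $d^n_*(K)$. Applying countable stability to the $F_\sigma$ annular sets $(B(x_0, 2^{-m}) \setminus U(x_0, 2^{-m-1})) \cap K$, I build pairwise disjoint compacts $K_m \subset K \setminus \{x_0\}$ with $K_m \subset B(x_0, 2^{-m})$ and $d^n_*(K_m) \geq d^n_*(K) - 1/m$, so that $\dim_T K_m \geq n$ by Fact~\ref{f:equiv} (for $m$ large) and $\diam(K_m \cup \{x_0\}) \to 0$. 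Combining the Main Theorem on each $K_m$ with Corollary~\ref{c:R} yields, for a generic $f \in C(K,\RR^n)$, a non-empty open set $W_{m,k}(f) \subset V_{m,k}(f)$ for every $m, k \in \NN^+$, where
\[ V_{m,k}(f) := \bigl\{y \in \RR^n : \dim_*(f|_{K_m})^{-1}(y) \geq d^n_*(K) - 2/m\bigr\}. \]

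The crux is to show that a generic $f$ satisfies $f(x_0) \in V_{m,k}(f)$ for infinitely many $m$. Lemma~\ref{l:Baire} ensures each event $\{f : f(x_0) \in V_{m,k}(f)\}$ is in $\sigma(\mathbf{A})$ and hence has the Baire property, and I would prove the claim by a Banach--Mazur winning strategy for Player~II analogous to that used in Lemma~\ref{l:max}. Given Player~I's move $\iU$ targeting an enumerated pair $(M,k)$, Player~II picks $m \geq M$ so large that $K_m$ and its image under any $f_0 \in \iU$ are concentrated near $x_0$ and $f_0(x_0)$; redefines $f_0$ on $K_m$ via the Main Theorem to produce $f_0' \in \iU$ with a non-empty open $W \subset V_{m,k}(f_0') \cap B(f_0(x_0), \eta)$ for small $\eta$; and applies Tietze's extension theorem on a neighborhood of $x_0$ disjoint from $K_m$ to set $f_0'(x_0) \in W$. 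The ball $\iV \subset \iU$ around $f_0'$ is chosen small enough that $g(x_0) \in W$ for all $g \in \iV$ (an open condition), while preservation of the inclusion $W \subset V_{m,k}(g)$ is enforced across subsequent game rounds on a co-meager subset of $\iV$.

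Given the crux, for every $k$ we can find $m$ with $f(x_0) \in V_{m,k}(f)$, so
\[ \dim_* f^{-1}(f(x_0)) \geq \dim_*(f|_{K_m})^{-1}(f(x_0)) \geq d^n_*(K) - 2/m, \]
and letting $m \to \infty$ (then $k \to \infty$) yields $\dim_* f^{-1}(f(x_0)) \geq d^n_*(K)$; the Main Theorem's upper bound then gives equality, so $y^\ast = f(x_0)$ attains the maximum. The principal obstacle is the Banach--Mazur step: whereas Lemma~\ref{l:max} only requires $f(x_0)$ to hit the image $f(K_m)$ (a closed condition), here $f(x_0)$ must land in an $f$-dependent open subset of $V_{m,k}(f)$ whose location shifts discontinuously under perturbation of $f$. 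Overcoming this relies on the geometric fact that $W_{m,k}(f) \subset f(K_m)$ forces all candidate good values to cluster around $f(x_0)$ as $m \to \infty$, together with Lemma~\ref{l:Baire} to legitimize the Baire-category framework in which the game is played.
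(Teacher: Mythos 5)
Your proposal does not prove the statement at hand. The statement is Lemma~\ref{l:max} itself: for a generic $f$, the single value $f(x_0)$ lands in $f(K_m)$ for infinitely many $m$. What you sketch instead is a proof of Theorem~\ref{t:max}, and at its decisive step you write that the claim ``$f(x_0)\in V_{m,k}(f)$ for infinitely many $m$'' would be proved ``by a Banach--Mazur winning strategy for Player~II analogous to that used in Lemma~\ref{l:max}'' --- that is, you appeal to the very lemma you were asked to prove, so the hitting statement is never established and the argument is circular. Worse, the step you defer is strictly stronger than the lemma: you need $f(x_0)$ to fall into an $f$-dependent set of values with large fibers, and the condition $\dim_{*}(f|_{K_m})^{-1}(y)\geq d$ for all $y$ in a fixed open $W$ is not an open (nor perturbation-stable) condition on $f$, so ``choosing $\iV$ small enough'' and ``enforcing preservation of $W\subset V_{m,k}(g)$ on a co-meager subset of $\iV$ across subsequent rounds'' are not legitimate moves: in the Banach--Mazur game Player~II plays open sets only, and one cannot restrict to a co-meager subset mid-game --- that co-meagerness is exactly what would have to be proved. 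This difficulty is precisely what the paper avoids by splitting Theorem~\ref{t:max} into Lemma~\ref{l:max} (pure hitting, no dimensions) and Lemma~\ref{l:max2} (for generic $f$, either $\dim_{*}f^{-1}(f(x_0))\geq d$ or $f(x_0)\notin f(D)$), the latter handled via Lemma~\ref{l:Baire} and the Kuratowski--Ulam theorem rather than by steering $f(x_0)$ into moving open sets.

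A proof of Lemma~\ref{l:max} needs none of the fiber-dimension machinery and is much more elementary. For fixed $M$ it suffices to show that $\{f: f(x_0)\in f(K_m) \textrm{ for some } m\geq M\}$ contains a dense open set; intersecting over $M$ then gives the lemma. Given $f_0$ and $\eps>0$, if $x_0\in K_m$ for some $m\geq M$ there is nothing to do on this ball; otherwise use uniform continuity and hypothesis~\eqref{eq:ifi} to pick $m\geq M$ and a neighborhood $U$ of $K_m$ with $x_0\notin U$ and $f_0(U\cup\{x_0\})\subset B(y_0,\eps/4)$, where $y_0=f_0(x_0)$. Since $\dim_T K_m\geq n$, Lemma~\ref{l:stab} gives a map of $K_m$ onto a small ball $B(y_0,r)$ with the stable-value property, and Tietze's extension theorem splices it into $f_0$ inside $U$ without moving $f_0$ by more than $\eps$ and without changing the value at $x_0$. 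For every $g$ in a small ball $B(F,t)$ around the perturbed map $F$ (with $t<r/2$) one has $B(y_0,r-t)\subset g(K_m)$ while $|g(x_0)-y_0|\leq t<r-t$, hence $g(x_0)\in g(K_m)$ robustly on an open set of maps. Your proposal imports the Main Theorem, $d^n_{*}$, and descriptive-set-theoretic measurability only because it targets the wrong statement; for the lemma itself these are irrelevant, and the one step your sketch genuinely needs is left unproved.
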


\begin{lemma} \label{l:DD} Let $n\in \NN^+$ and let $K$ be a compact metric space with $\dim_T K\geq n$.
Let $\dim_{*}$ be one of $\dim_T$, $\dim_H$, or $\dim_P$.
Then there exists an $x_0\in K$ such that
for every $d<d^n_{*}(K)$ and $\eps>0$ there is a compact set $D\subset B(x_0,\eps)\setminus \{x_0\}$
such that $d^n_{*}(D\cap U)>d$ for all open sets $U \subset K$ intersecting $D$.
\end{lemma}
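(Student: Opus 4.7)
The plan is to construct, for each $d<d^n_{*}(K)$, a canonical compact ``core'' $C_d\subset K$ that is locally of $d^n_{*}$-dimension strictly greater than $d$ everywhere, then to choose $x_0$ in the intersection of a countable nested family of such cores, and finally, given $\eps$ and $d$, to produce $D$ by re-running the core construction inside a small closed ball centered at a point of $C_{d_k}$ distinct from $x_0$.

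For the cores, fix $d<d^n_{*}(K)$ and let $V_d=\bigcup\{W\subset K:W\text{ open, }d^n_{*}(W)\le d\}$. The Lindel\"of property of this open subset of $K$ expresses $V_d$ as a countable union, so $V_d$ is $F_\sigma$, and countable stability of $d^n_{*}$ for $F_\sigma$ sets (Corollary~\ref{c:stable}) gives $d^n_{*}(V_d)\le d$. Put $C_d:=K\setminus V_d$; this is non-empty because $d^n_{*}(K)>d$, and the defining property
\[ d^n_{*}(C_d\cap U)>d \quad\text{for every open }U\subset K\text{ meeting }C_d \]
follows by contradiction: otherwise $U=(U\cap C_d)\cup(U\cap V_d)$ is an $F_\sigma$ union forcing $d^n_{*}(U)\le d$ by Corollary~\ref{c:stable}, hence $U\subset V_d$, contradicting $U\cap C_d\ne\emptyset$.

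Pick any sequence $d_k\nearrow d^n_{*}(K)$ with $d_k>-1$. Because $V_d$ is monotone in $d$, the $C_{d_k}$ are nested compact non-empty sets, and a point $x_0$ in their intersection lies in $C_d$ for every $d<d^n_{*}(K)$ (take $k$ with $d_k\ge d$). If $x_0$ were isolated in some $C_{d_k}$, the core property applied to an isolating open set would give $d^n_{*}(\{x_0\})>d_k>-1$, contradicting $d^n_{*}(\{x_0\})=-1$ (by Fact~\ref{f:equiv}, since $\dim_T\{x_0\}\le n-1$). So $x_0$ is an accumulation point of each $C_{d_k}$.

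Given $\eps>0$ and $d<d^n_{*}(K)$, choose $k$ with $d_k>d$, then $x_1\in C_{d_k}$ with $0<\rho(x_0,x_1)<\eps/2$, and $r\in(0,\min\{\rho(x_0,x_1),\eps-\rho(x_0,x_1)\})$. The compact space $K':=C_{d_k}\cap B(x_1,r)$ lies in $B(x_0,\eps)\setminus\{x_0\}$, and since the open set $U(x_1,r)\subset K$ meets $C_{d_k}$ at $x_1$, the core property gives $d^n_{*}(K')\ge d^n_{*}(U(x_1,r)\cap C_{d_k})>d_k>d$. Now re-run the core construction inside $K'$ (replacing $K$ by $K'$ and open-in-$K$ by open-in-$K'$) to obtain a non-empty compact $D:=K'\setminus V'$ satisfying $d^n_{*}(D\cap V)>d$ for every $V$ open in $K'$ that meets $D$; applying this with $V=U\cap K'$ for any open $U\subset K$ meeting $D$ yields the required $d^n_{*}(D\cap U)>d$, since $V\cap D=U\cap D$. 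The main obstacle that the re-centering bypasses is a boundary-sphere pathology: a naive annular truncation of $C_{d_k}$ around $x_0$ could leave $D\cap U$ trapped on a bounding sphere of the annulus and defeat any direct dimension lower bound, whereas the intrinsic construction inside $K'$ works only with opens of $K'$ and is free of such boundary artifacts.
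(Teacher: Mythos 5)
Your proof is correct, and it takes a mildly but genuinely different route from the paper's, though both rest on the same key tool: the countable stability of $d^n_{*}$ for closed and $F_{\sigma}$ sets (Corollary~\ref{c:stable}). The paper proceeds in three steps: it first produces $x_0$ with $d^n_{*}(U)=d^n_{*}(K)$ for every open $U\ni x_0$ by a finite-subcover compactness argument; then, given $d$ and $\eps$, it finds a compact $C\subset B(x_0,\eps)\setminus\{x_0\}$ with $d^n_{*}(C)>d$ by exhausting the punctured ball with the annuli $B(x_0,\eps)\setminus U(x_0,1/i)$ and applying stability for closed sets; finally it refines $C$ to $D$ by deleting all members of a countable basis of $C$ with $d^n_{*}\le d$ --- exactly your ``core'' operation, with a countable basis in place of Lindel\"of. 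You instead run the core construction globally at every level $d_k$ (in the spirit of Lemma~\ref{l:dn}), take $x_0$ in the intersection of the nested cores $C_{d_k}$, use Fact~\ref{f:equiv} to see that $x_0$ is an accumulation point of each core, and replace the annulus step by a small closed ball about a nearby core point $x_1\ne x_0$, whose intersection with $C_{d_k}$ already has $d^n_{*}>d$ by the local core property; a second core pass inside that ball yields $D$, and restricting an open $U\subset K$ to $V=U\cap K'$ (with $V\cap D=U\cap D$) transfers the relative statement to the absolute one. Your individual steps all check out: stability applied to $U=(U\cap C_d)\cup(U\cap V_d)$, monotonicity, $d^n_{*}(\{x_0\})=-1$, the choice of $r$ keeping $K'$ inside $B(x_0,\eps)\setminus\{x_0\}$. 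What your variant buys is that the chosen $x_0$ automatically carries the local lower bounds needed later (so the annulus/stability step disappears) and that the case $d^n_{*}(K)=0$ is treated uniformly, whereas the paper dispatches it separately with a singleton $D$; what the paper's route buys is a quicker selection of $x_0$ (one finite subcover) at the price of the extra annulus argument.
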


\begin{proof} If $d^n_{*}(K)=0$ then let $x_0\in K$ be an accumulation point of $K$ and let
$D=\{x_1\}$ such that $x_1\in B(x_0,\eps)\setminus \{x_0\}$.
Thus we may assume that $d^n_{*}(K)>0$.

First we prove that there is an $x_0\in K$ such that $d^n_{*}(U)=d^{n}_{*}(K)$ for all open sets $U\subset K$ containing $x_0$.
Assume to the contrary that for all $x\in K$ there is an open set $U_x\subset K$ such that $x\in U_x$ and $d^n_{*}(U_x)<d^n_{*}(K)$.
As $K$ is compact, $\{U_x: x\in K\}$ contains a finite cover $\{U_1,\dots, U_k\}$ of $K$.
Then $\bigcup_{i=1}^k U_i=K$ and $\sup\{d^n_{*}(U_i) : 1\leq i\leq k\}<d^{n}_{*}(K)$, which contradicts the countable
stability of $d^n_{*}$ for $F_{\sigma}$ sets.

Fix an $x_0$ as above. Assume that $d<d^n_{*}(K)$ and $\eps>0$ are given. Define $B_0=B(x_0,\eps)$,
then clearly $d^n_{*}(B_0)=d^n_{*}(K)$. Now we prove that there is a compact set $C\subset B_0\setminus \{x_0\}$ with
$d^n_{*}(C)>d$. Assume to the contrary that there is no such $C$, then the
sets $B_{i}=B(x_0,\eps)\setminus U(x_0,1/i)$ satisfy $d^n_{*}(B_i)\leq d$ for all $i\in \NN^+$.
Clearly $B_0=\bigcup_{i=1}^{\infty} B_i \cup \{x_0\}$, and
\[ \sup_{i\geq 1} d^n_{*}(B_i\cup \{x_0\})\leq \max\{d,0\}<d^n_{*}(K)=d^n_{*}(B_0), \]
which contradicts the
countable stability of $d^n_{*}$ for closed sets.

Finally, let $C\subset B(x_0,\eps) \setminus \{x_0\}$ be a compact set with
$d^n_{*}(C)>d$. It is enough to show that there is a compact set $D\subset C$ such that $d^n_{*}(D\cap U)>d$ for all open sets
$U\subset K$ intersecting $D$. Let $\iU$ be a countable open basis for $C$ and define
\[ D=C\setminus \bigcup \{U\in \iU: d^n_{*}(U)\leq d\}. \]
Clearly $D$ is compact, and the countable stability of $d^n_{*}$ for $F_{\sigma}$ sets yields that
 $d^n_{*}(C\setminus D)\leq d$. Assume to the contrary that there is an $U\in \iU$ intersecting $D$
 such that $d^{n}_{*}(D\cap U)\leq d$. Then clearly $d^n_{*}(U\setminus D)\leq d^n_{*}(C\setminus D)\leq d$, and
 the definition of $D$ yields that $d^n_{*}(U)>d$.
Therefore
\[ \max \{d^n_{*}(D\cap U),\, d^n_{*} (U\setminus D)\}\leq d<d^{n}_{*}(U), \]
which contradicts the countable stability of $d^n_{*}$ for $F_{\sigma}$ sets.
\end{proof}

The following lemma will be the heart of the proof of Theorem~\ref{t:max}.

\begin{lemma} \label{l:max2} Let $n\in \NN^+$ and let $D\subset K$ be compact metric spaces with $x_0\in K\setminus D$.
Assume that $d<d_{*}^{n}(D\cap U)$ for all open sets $U\subset K$ intersecting $D$.
Then for a generic $f\in C(K,\RR^n)$ either $\dim_{*}f^{-1}(f(x_0))\geq d$ or $f(x_0)\notin f(D)$.
\end{lemma}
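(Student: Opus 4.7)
The strategy is to transfer the problem to the product space $C(D,\RR^n)\times \RR^n$ via the continuous open map
\[ \Phi\colon C(K,\RR^n)\to C(D,\RR^n)\times \RR^n,\quad \Phi(f)=(f|_{D},f(x_0)). \]
Openness of $\Phi$ is a quick Tietze argument: given $f_0\in C(K,\RR^n)$ and $\eps>0$, any $(g,y)$ with $\|g-f_0|_{D}\|<\eps$ and $|y-f_0(x_0)|<\eps$ is hit by some $f=f_0+h\in B(f_0,\eps)$, where $h\in C(K,\RR^n)$ is a Tietze extension (with the same sup norm bound) of the function equal to $g-f_0|_{D}$ on $D$ and to $y-f_0(x_0)$ at $x_0$, the latter being continuous because $D\cup \{x_0\}$ is closed in $K$ (as $x_0\notin D$). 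By Lemma~\ref{l:cat} it therefore suffices to prove that
\[ \iS=\{(g,y)\in C(D,\RR^n)\times \RR^n : y\notin g(D)\text{ or }\dim_{*}g^{-1}(y)\geq d\} \]
is co-meager; $\Phi^{-1}(\iS)$ is then co-meager in $C(K,\RR^n)$, and is contained in the conclusion of the lemma since $(f|_D)^{-1}(f(x_0))\subset f^{-1}(f(x_0))$.

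The complementary set $\iB=\{(g,y):y\in g(D),\ \dim_{*}g^{-1}(y)<d\}$ is the intersection of the $\sigma(\mathbf{A})$-set supplied by Lemma~\ref{l:Baire} with the closed set $\{(g,y):\dist(y,g(D))=0\}$, hence has the Baire property. By the Kuratowski--Ulam theorem it is enough to prove that for a generic $g\in C(D,\RR^n)$ the $g$-section $\iB_g=\{y\in g(D):\dim_{*}g^{-1}(y)<d\}$ is meager in $\RR^n$.

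I would establish meagerness of $\iB_g$ by combining two generic facts about $g\in C(D,\RR^n)$. The hypothesis says $d<d^n_{*}(D\cap V)$ for every open $V\subset K$ meeting $D$, so Fact~\ref{f:equiv} forces $\dim_T(D\cap V)\geq n$ for every such $V$ (the lemma is trivial if $d\leq -1$, so we may assume $d>-1$). Hence: (a) applying Theorem~\ref{t:bound} to $D$, for a generic $g$ one has $\dim_H\partial g(D)\leq n-1$; being closed of Lebesgue measure zero, $\partial g(D)$ is nowhere dense in $\RR^n$. (b) Following the proof of $\eqref{iii}\Longrightarrow \eqref{i}$ in Theorem~\ref{t:fractal1} with $K$ replaced by $D$ and the threshold $d_{*}^n(K)$ replaced by $d$, for a generic $g\in C(D,\RR^n)$ every $y\in \inter g(D)$ satisfies $\dim_{*}g^{-1}(y)\geq d$. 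Combining (a) and (b), $\iB_g\subset g(D)\setminus \inter g(D)\subset \partial g(D)$ for a generic $g$, so $\iB_g$ is meager, completing the Kuratowski--Ulam argument.

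The main technical point is (b): one must re-run the Banach-category argument of Theorem~\ref{t:fractal1} with a threshold $d$ that may be strictly smaller than $d_{*}^n(D)$. The argument passes through verbatim because the only role played by hypothesis \eqref{iii} in that proof is to guarantee $d_{*}^n(K_i)\geq d_m$ on the pieces $K_i=B(x_i,\delta_2)$ used there; in our adapted setting we take $x_i\in D$ and $K_i=B(x_i,\delta_2)\cap D$, and our hypothesis yields $d_{*}^n(K_i)\geq d_{*}^n(U(x_i,\delta_2)\cap D)>d$ directly, which is exactly the input needed for Lemma~\ref{l:iF}, Lemma~\ref{l:D}, Lemma~\ref{l:FBaire} and the Main Theorem to combine as in Theorem~\ref{t:fractal1}.
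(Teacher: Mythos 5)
Your proposal is correct, and its first half coincides with the paper's own proof: the same reduction through the continuous open map $f\mapsto (f|_{D},f(x_0))$ combined with Lemma~\ref{l:cat}, the Baire property of the exceptional set via Lemma~\ref{l:Baire}, and the Kuratowski--Ulam theorem. Where you genuinely diverge is in showing that the sections are generically small. The paper uses a lighter argument: it fixes a countable family of balls $B_{i,j}=D\cap B(z_i,1/j)$ around a dense sequence in $D$, applies the Main Theorem to each of them (the hypothesis gives $d<d^{n}_{*}(B_{i,j})$, hence $\dim_T B_{i,j}\geq n$), and concludes that for a generic $f\in C(D,\RR^n)$ every non-empty open $V\subset \RR^n$ meeting $f(D)$ contains a non-empty open set of points $y$ with $\dim_{*}f^{-1}(y)\geq d$; this already makes the good section co-meager, which is all that Kuratowski--Ulam requires. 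You instead prove the stronger statement that generically \emph{every} $y\in \inter g(D)$ satisfies $\dim_{*}g^{-1}(y)\geq d$, by re-running the proof of $\eqref{iii}\Longrightarrow \eqref{i}$ of Theorem~\ref{t:fractal1} with the localized threshold $d$. Your identification of the only place where hypothesis \eqref{iii} enters --- the inequality $d^{n}_{*}(K_i)>d$ for the pieces $K_i=B(x_i,\delta_2)\cap D$, which your hypothesis supplies by monotonicity from $U(x_i,\delta_2)\cap D$ --- is accurate, and the adaptation does go through (Lemma~\ref{l:D} applies since every non-empty relatively open subset of $D$ has topological dimension at least $n$, hence is uncountable), so your argument is valid; the price is that you carry along the whole $\mathcal{D}_m$/Lemma~\ref{l:iF}/second-category machinery of Theorem~\ref{t:fractal1}, which this lemma does not actually need. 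Two minor remarks: the appeal to Theorem~\ref{t:bound} in your step (a) is superfluous, since $\partial g(D)$ is the boundary of a closed subset of $\RR^n$ and is therefore nowhere dense for \emph{every} $g\in C(D,\RR^n)$; and the lemma is in fact trivial whenever $d\leq 0$ (the fiber through $x_0$ is non-empty), not only for $d\leq -1$, though your reduction to $d>-1$ is enough for your use of Fact~\ref{f:equiv}.
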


\begin{proof} Clearly we may assume that $d>0$. We need to prove that the set
\[ \mathcal{F}=\left\{f\in C(K,\RR^n): \dim_{*}f^{-1}(f(x_0))\geq d \textrm{ or } f(x_0)\notin f(D) \right\} \]
is co-meager in $C(K,\RR^n)$. Consider
\[ \Gamma=\left\{(f,y)\in C(D, \RR^n)\times \mathbb{R}^n: \dim_{*}f^{-1}(y)\geq d
\textrm{ or } y\notin f(D)\right\}.\]
First assume that $\Gamma$ is co-meager in $C(D,\RR^n)\times \mathbb{R}^n$,
we prove that $\mathcal{F}\subset C(K,\RR^n)$ is also co-meager. Let
\[R\colon C(K,\RR^n)\to C(D,\RR^n)\times \mathbb{R}^n, \quad R(f)=(f|_{D},f(x_0)).\]
Clearly $R$ is continuous, and Tietze's extension theorem implies that it is open.
Thus Lemma~\ref{l:cat} yields that
$\mathcal{F}=R^{-1}(\Gamma)$ is co-meager.

Finally, we prove that $\Gamma$ is co-meager in $C(D,\RR^n)\times \mathbb{R}^n$.
Lemma~\ref{l:Baire} yields that $\Gamma$ is in $\sigma(\mathbf{A})$, so it has the Baire property.
Hence it is enough to prove by the Kuratowski-Ulam Theorem \cite[Theorem~8.41]{Ke}
that for a generic $f\in C(D,\RR^n)$ for a generic $y\in \mathbb{R}^n$ we have $(f,y)\in \Gamma$.
Let $\{z_i\}_{i\geq 1}$ be a dense set in $D$ and for all $i,j\in
\mathbb{N}^{+}$ define $B_{i,j}=D\cap B(z_i,1/j)$. For all $i,j$ let
\begin{align*} \mathcal{G}_{i,j}=\{&f\in C(B_{i,j},\RR^n): \textrm{ there is a non-empty open set } \\
&U_f\subset \RR^n \textrm{ such that } \dim_{*}f^{-1}(y) \geq d \textrm{ for all } y\in U_f\},
\end{align*}
and let
\[ R_{i,j}\colon C(D,\RR^n)\to C(B_{i,j},\RR^n), \quad R_{i,j}(f)=f|_{B_{i,j}}.\]
Define
\[ \mathcal{G}=\bigcap_{i,j\in \mathbb{N}^{+}} R_{i,j}^{-1} (\mathcal{G}_{i,j}). \]
Our condition yields $d^n_{*}(B_{i,j})>d$, so $\dim_T B_{i,j}\geq n$ by Fact~\ref{f:equiv}. Therefore the Main Theorem implies that
$\mathcal{G}_{i,j}$ are co-meager in $C(B_{i,j},\RR^n)$. Corollary~\ref{c:R} yields that
$R_{i,j}^{-1} (\mathcal{G}_{i,j})$ are co-meager in
$C(D,\RR^n)$, and as a countable intersection of co-meager sets $\mathcal{G}$ is also
co-meager in $C(D,\RR^n)$. Fix $f\in \mathcal{G}$. It is sufficient to verify
that $\Gamma_{f}=\{y\in \mathbb{R}^n: (f,y)\in \Gamma\}$ is co-meager. Let
$V\subset \mathbb{R}^n$ be an arbitrary non-empty open set, it is enough to
prove that $\Gamma_{f}\cap V$ contains a non-empty open set. We may assume that $f(D) \cap V\neq \emptyset$, otherwise
$V\subset \Gamma_f $ and we are done.
Then there exist $i,j\in \mathbb{N}^{+}$ such that $B=B_{i,j}$ satisfies $f(B)\subset V$.
The definition of $\mathcal{G}$ implies that there is a non-empty open set
$U_{f|_{B}}\subset V$ such that for all $y\in U_{f|_{B}}$ we
have
\[ \dim_{*}f^{-1}(y)\geq \dim_{*} (f|_{B})^{-1}(y)\geq d. \]
Hence $U_{f|_{B}} \subset \Gamma_{f}\cap V$, and the proof of the lemma is complete.
\end{proof}

Now we are able to prove Theorem~\ref{t:max}.

\begin{proof}[Proof of Theorem~\ref{t:max}]
By the Main Theorem it is enough to prove that for a generic
$f\in C(K,\RR^n)$ there exists a $y_f\in \RR^n$ such that
$\dim_{*} f^{-1}(y_f)\geq d_{*}^{n}(K)$. We may assume that $d^n_{*}(K)>0$, and
let $d_m$ be a positive sequence such that $d_m \nearrow d^n_{*}(K)$.
By Lemma~\ref{l:DD} there exists $x_0\in K$ such that for each $m\geq 1$ there is a compact set
$K_m\subset B(x_0,1/m)\setminus \{x_0\}$ such that $d^n_{*}(K_m\cap U)>d_m$ for all open sets $U\subset K$ intersecting
$K_m$. As $d^{n}_{*}(K_m)\geq d_m>0$, Fact~\ref{f:equiv} yields $\dim_T K_m\geq n$.
Therefore we can apply Lemma~\ref{l:max} for the sequence $\{ K_m\}_{m\geq 1}$ in $K$,
and Lemma~\ref{l:max2} for all $K_m\subset K$ with $d_m$. These imply that for a generic $f\in C(K,\RR^n)$ we have $f(x_0) \in f(K_m)$
for infinitely many $m$, and for every $m$ either $d^n_{*} (f^{-1}(f(x_0)))\geq d_m$ or $f(x_0)\notin f(K_m)$.
Hence there is a strictly increasing sequence $\{ m_i \}_{i \geq 1}$ depending on $f$ such that
$d^n_{*}(f^{-1}(f(x_0)))\geq d_{m_i}$
for all $i$, that is,
\[ d^n_{*}(f^{-1}(f(x_0)))\geq \sup\{d_{m_i}: i\geq 1\}=d^n_{*}(K).\]
This concludes the proof.
\end{proof}

Assume that $\dim_{*}$ is one of $\dim_H$ or $\dim_P$, or $\dim_{*}=\dim_T$ and $\dim_T K=\infty$.
Then we prove that Theorem~\ref{t:max} is best possible, that is, there is exactly one fiber of maximal dimension.

\begin{theorem} \label{t:ex} For each $n\in \NN^+$ there is a compact set $K\subset \RR^{n+1}$ such that
for each $f\in C(K,\RR^n)$ there is a $y_f\in \RR^n$ such that
\begin{enumerate}[(i)]
\item \label{HP1} $d_H^n(K)=1$ and $d_{P}^{n}(K)=n+1$,
\item \label{HP2} $\dim_H f^{-1}(y)<1$ for a generic $f\in C(K,\RR^n)$ for all $y\in \RR^n\setminus \{y_f\}$,
\item \label{HP3} $\dim_P f^{-1}(y)<n+1$ for every $f\in C(K,\RR^n)$ for all $y\in \RR^n\setminus \{y_f\}$.
\end{enumerate}
\end{theorem}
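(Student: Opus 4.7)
The plan is to construct $K$ as the union of a base point $\mathbf{0}$ and a sequence of shrinking pieces of the form $X_k\times[0,1]^n$, where $X_k\subset[0,1]$ is a self-similar Cantor set with $\dim_H X_k = \dim_P X_k = s_k := 1-1/k$ (an appropriate middle-$\alpha$ Cantor set will do). I will take $I_k\subset\RR^{n+1}$ to be a scaled and translated copy of $X_k\times[0,1]^n$, positioned so that the $I_k$ are pairwise disjoint, disjoint from $\mathbf{0}$, and satisfy $\max\{|x|:x\in I_k\}\to 0$ as $k\to\infty$. Finally I set $K=\{\mathbf{0}\}\cup\bigcup_{k=1}^{\infty}I_k$, which is a compact subset of $\RR^{n+1}$.

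For (i), the bi-Lipschitz invariance of $d_H^n$, $d_P^n$, and $\dim_P$ together with Lemmas~\ref{l:prodH},~\ref{l:prodP}, and~\ref{l:dimp} will give $d_H^n(I_k)=s_k$ and $d_P^n(I_k)=\dim_P I_k=s_k+n<n+1$. Since $\dim_T\{\mathbf{0}\}=0\leq n-1$, Fact~\ref{f:equiv} yields $d_H^n(\{\mathbf{0}\})=d_P^n(\{\mathbf{0}\})=-1$, so the countable stability of $d_H^n$ and $d_P^n$ for $F_\sigma$ sets (Corollary~\ref{c:stable}) delivers $d_H^n(K)=\sup_k s_k=1$ and $d_P^n(K)=\sup_k(s_k+n)=n+1$.

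For (ii) and (iii) I will set $y_f=f(\mathbf{0})$. The essential geometric observation is that for any $y\neq y_f$, the uniform continuity of $f$ on $K$ together with the clustering $I_k\to\{\mathbf{0}\}$ forces $f^{-1}(y)\cap I_k=\emptyset$ for all but finitely many $k$. Hence $f^{-1}(y)\subset I_{k_1}\cup\dots\cup I_{k_m}$ for some finite subcollection depending on $f$ and $y$. Assertion (iii) is then immediate, with no genericity needed: monotonicity of packing dimension gives
\[
\dim_P f^{-1}(y)\leq\max_{1\leq i\leq m}\dim_P I_{k_i}=\max_{1\leq i\leq m}(s_{k_i}+n)<n+1.
\]

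For (ii), I will apply Theorem~\ref{mainthm} to each $I_k$ (noting $\dim_T I_k\geq\dim_T[0,1]^n=n$) to see that $\iF_k:=\{g\in C(I_k,\RR^n):\dim_H g^{-1}(y)\leq s_k\text{ for all }y\in\RR^n\}$ is co-meager in $C(I_k,\RR^n)$. Corollary~\ref{c:R} then promotes this to co-meagerness of $R_k^{-1}(\iF_k)$ in $C(K,\RR^n)$, where $R_k$ is restriction to $I_k$; intersecting over $k$ yields a co-meager class of $f\in C(K,\RR^n)$ such that $\dim_H(f|_{I_k})^{-1}(y)\leq s_k$ for every $k$ and every $y$. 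For such $f$ and any $y\neq y_f$ the finite-subcollection observation gives $\dim_H f^{-1}(y)\leq\max_i s_{k_i}<1$. I expect the only non-formal ingredient to be the existence of the Cantor sets $X_k$ with the prescribed coinciding Hausdorff and packing dimension, which is standard, so no substantial obstacle remains.
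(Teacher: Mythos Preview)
Your proposal is correct and follows essentially the same approach as the paper's proof: build $K$ as a union of shrinking product pieces $[0,1]^n\times X_k$ (with $\dim_H X_k=\dim_P X_k=1-1/k$) clustering at the origin, set $y_f=f(\mathbf{0})$, and use that for $y\neq y_f$ the fiber is confined to finitely many pieces. The only cosmetic difference is that the paper applies the Main Theorem to the initial unions $D_k=\bigcup_{i\leq k}K_i$ rather than to each piece individually, and does not bother making the pieces pairwise disjoint; both variants work equally well.
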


\begin{proof} For each $i\in \NN^+$ let $C_i\subset [0,1/i]$ be compact sets such that $0\in C_1$ and
\[ \dim_H C_i=\dim_P C_i=1-1/i. \]
Define
\[ K=\bigcup_{i=1}^{\infty} K_i, \textrm{ where } K_i=[0,1/i]^n\times C_i. \]
Let $\mathbf{0}$ denote the origin of $\RR^{n+1}$, then $\mathbf{0}\in K$ yields that $K$ is compact.
Since $d_H^n$ and $d_P^n$ are countably stable for closed sets, Lemmas~\ref{l:prodH} and \ref{l:prodP} imply that
\begin{align*}
d_{H}^n(K)&=\sup_{i\geq 1} d^n_{H}(K_i)=\sup_{i\geq 1} \dim_H C_i=1, \\
d_{P}^n(K)&=\sup_{i\geq 1} d^n_{P}(K_i)=\sup_{i\geq 1} \dim_P C_i+n=n+1.
\end{align*}
Therefore \eqref{HP1} holds. 

For each $f\in C(K,\RR^n)$ let $y_f=f(\mathbf{0})$.
For all $k\in \NN^+$ let $D_k=\bigcup_{i=1}^k K_i$ and let
\[ \iF_k=\{f\in C(D_k,\RR^n): \dim_H f^{-1}(y)\leq d^n_H(D_k) \textrm{ for all } y\in \RR^n\}. \]
For each $k\in \NN^+$ define
\[ R_k\colon C(K,\RR^n)\to C(D_k,\RR^n), \quad R_k(f)=f|_{D_k}. \]
Finally, let
\[ \iF=\bigcap_{k=1}^{\infty} R_k^{-1}(\iF_k).\] 
By the Main Theorem $\iF_k\subset C(D_k,\RR^n)$ are co-meager, and
Corollary~\ref{c:R} implies that $R_k^{-1}(\iF_k)\subset C(K,\RR^n)$ are co-meager as well.
As a countable intersection of co-meager sets, $\iF\subset C(K,\RR^n)$ is also co-meager. Let $f\in \iF$ and $y\in \RR^n\setminus \{y_f\}$,
we prove that $\dim_H f^{-1}(y)<1$. Indeed, there is a $k=k(f,y)\in \NN^+$ such that $f^{-1}(y)\subset D_k$. Thus the
definition of $\iF$, the countable stability of $d^n_H$, and Lemma~\ref{l:prodH} imply that
\[ \dim_H f^{-1}(y)\leq d^n_{H}(D_{k})=\sup_{i\leq k}d^{n}_{H}(K_i)=\sup_{i\leq k} \dim_H C_i=1-1/k<1. \]
Thus \eqref{HP2} holds. 

The countable stability of packing dimension and Lemma~\ref{l:dimp} imply that
\[ \dim_P D_k=\sup_{i\leq k}\dim_P K_i=\sup_{i\leq k} \dim_P C_i+n=1-1/k+n<n+1. \]
Finally, let $f\in C(K,\RR^n)$ and $y\in \RR^n\setminus \{y_f\}$ be arbitrary. Then there exists a
$k=k(f,y)\in \NN^+$ such that $f^{-1}(y)\subset D_k$, so
\[ \dim_P f^{-1}(y)\leq \dim_P D_k<n+1. \]
Hence \eqref{HP3} holds, and the proof is complete.
\end{proof}

\begin{fact} \label{f:ex2} There is a compact metric space $K$ such that
$\dim_T K=\infty$, and for each $f\in C(K,\RR^n)$ there is a
$y_f\in \RR^n$ such that $\dim_{T} f^{-1}(y)<\infty$ for all $y\in \RR^n\setminus \{y_f\}$.
\end{fact}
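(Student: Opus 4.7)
The plan is to imitate the construction in Theorem~\ref{t:ex}, replacing the product blocks $K_i=[0,1/i]^n\times C_i$ (chosen there to have bounded topological dimension with prescribed fractal dimensions) by blocks of topological dimension tending to infinity, all clustered at a single base point.

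First I would build the space. For each $i\in\NN^+$, fix a compact metric space $(K_i,d_i)$ with $\dim_T K_i=i$ (e.g.\ $K_i=[0,1]^i$), rescaled so that $\diam K_i\le 1/i$. Introduce an abstract point $x_0$ and set $K=\{x_0\}\cup\bigsqcup_{i=1}^\infty K_i$ with the metric
\[
d(x,y)=\begin{cases} d_i(x,y) & \text{if } x,y\in K_i,\\ 1/i+1/j & \text{if } x\in K_i,\ y\in K_j,\ i\neq j,\\ 1/i & \text{if } x\in K_i,\ y=x_0.\end{cases}
\]
A routine check verifies the triangle inequality, and any sequence in $K$ either lies eventually in some $K_i$ (where compactness of $K_i$ supplies a convergent subsequence) or visits infinitely many distinct $K_i$'s (and then converges to $x_0$), so $K$ is compact. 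Since each $K_i\subset K$ is closed and $\dim_T K_i=i$, monotonicity gives $\dim_T K=\infty$.

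Next I would verify the fiber property. Fix $f\in C(K,\RR^n)$ and set $y_f=f(x_0)$. Given $y\in\RR^n\setminus\{y_f\}$, continuity at $x_0$ yields $\eps>0$ with $f(B(x_0,\eps))\subset B(y_f,|y-y_f|/2)$, so $y\notin f(B(x_0,\eps))$. By construction $K\setminus B(x_0,\eps)=\bigcup\{K_i:1/i\ge\eps\}$ is a \emph{finite} union of the closed blocks $K_i$, so
\[
f^{-1}(y)\subset\bigcup_{i\le k}K_i,\qquad k=\lfloor 1/\eps\rfloor.
\]
Countable stability of $\dim_T$ for $F_\sigma$ sets then gives $\dim_T f^{-1}(y)\le \max_{i\le k}\dim_T K_i=k<\infty$, which is what we need.

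There is essentially no obstacle here: the argument is a direct transplant of the bouquet-at-a-basepoint idea from the proof of Theorem~\ref{t:ex}, and the only small bookkeeping is to make sure the chosen metric on $K$ is genuinely compact and that the $K_i$'s accumulate only at $x_0$, both of which follow immediately from the explicit formula for $d$.
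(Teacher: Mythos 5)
Your proof is correct and is essentially the paper's argument: blocks of topological dimension $i$ clustered at a single base point $x_0$, with $y_f=f(x_0)$, so that every fiber over $y\neq y_f$ avoids a neighborhood of $x_0$, hence lies in a finite union of blocks whose dimension is bounded via stability of $\dim_T$ for closed sets. The only difference is cosmetic: the paper realizes the blocks as $K_i=[0,1/i]^i\times\{\mathbf{0}\}$ inside the Hilbert cube (so they automatically accumulate at $\mathbf{0}$), while you build an abstract one-point union with an explicit metric, which works equally well.
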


\begin{proof} Let $[0,1]^{\NN}$ be the Hilbert cube endowed with a complete metric compatible with the product topology,
and let $\mathbf{0}\in [0,1]^{\NN}$ be the zero sequence. Let us define $K\subset [0,1]^{\NN}$ as
\[K=\bigcup_{i=1}^{\infty} K_i, \textrm{ where } K_i=[0,1/i]^{i}\times \{\mathbf{0}\}.\]
As $\mathbf{0}\in K$, the set $K$ is compact. For all $f\in C(K,\RR^n)$ and $k\in \NN^+$ let $y_f=f(\mathbf{0})$ and
$D_k=\bigcup_{i=1}^k K_i$. Fix $f\in C(K,\RR^n)$ and $y\in \RR^n\setminus \{y_f\}$. Then there exists a $k=k(f,y)\in \NN^+$ such that $f^{-1}(y)\subset D_k$. The countable stability of topological dimension for closed sets yields that
\[ \dim_T f^{-1}(y)\leq \dim_T D_k=\sup_{i\leq k} \dim_T K_i=k<\infty. \]
The proof is complete.
\end{proof}

\subsection*{Acknowledgments}
The author is grateful to Zolt\'an Buczolich and M\'arton Elekes for their helpful suggestions.

\end{document}